\newcounter{dummy}
\newcommand\myitem[1][]{\item[#1]\refstepcounter{dummy}\def\@currentlabel{#1}}
\newsavebox{\measure@tikzpicture}
	\def\tikz@width{#1}%
\DeclareSymbolFontAlphabet{\mathbb}{AMSb}
\newcommand{\thistheoremname}{}
\newtheorem*{genericthm*}{\thistheoremname}
\newenvironment{namedthm*}[1]
{\renewcommand{\thistheoremname}{#1}%
	\begin{genericthm*}}
	{\end{genericthm*}}
\newcommand{\Bairespace}[1][]{
	\ifthenelse{\equal{#1}{}}{\functions{\N}{\N}}{\functions{#1}{\N}}
}
\newcommand{\bbL}{\mathbb{L}}
\newcommand{\bbX}{\mathbb{X}}
\newcommand{\Cantorspace}[1][]{
	\ifthenelse{\equal{#1}{}}{\functions{\N}{2}}{\functions{#1}{2}}
}
\newcommandx{\concatenation}[2][1 = undefined, 2 = undefined]{
	\ifthenelse{\equal{#1}{undefined}}{{}\smallfrown}{
		\ifthenelse{\equal{#2}{undefined}}{\bigoplus #1}{\bigoplus_{#1} #2}
	}
}
\newcommandx{\functions}[3][3 =]{
	\ifthenelse{\equal{#3}{}}{#2^{#1}}{#2_{#3}^{#1}}
}
\newcommand{\Gzero}[1][]{
	\ifthenelse{\equal{#1}{}}
	{\mathbb{G}_0}
	{\mathbb{G}_{0,n}}
}
\newcommandx{\Hzero}[2][2 = undefined]{
	\ifthenelse{\equal{#2}{undefined}}
	{\mathbb{H}_{#1}}
	{\mathbb{H}_{#1, #2}}
}
\newcommandx{\intersection}[2][1 =, 2 =]{
	\ifthenelse{\equal{#1}{}}{\cap}{
		\ifthenelse{\equal{#2}{}}{\bigcap #1}{{\bigcap_{#1} #2}}
	}
}
\newcommand{\Lzero}[1][]{\ifthenelse{\equal{#1}{}}{\bbL_0}{L_{0, #1}}}
\newcommand{\Lzerospace}[1][]{\ifthenelse{\equal{#1}{}}{\bbX_0}{X_{0, #1}}}
\newcommand{\modulo}[1]{\ (\text{mod } 2)}
\newcommand{\N}{\mathbb{N}}
\newcommandx{\product}[2][1 =, 2 =]{
	\ifthenelse{\equal{#1}{}}{\times}{
		\ifthenelse{\equal{#2}{}}{\prod #1}{{\prod_{#1} #2}}
	}
}
\newcommandx{\sequence}[2][2 = undefined]{
	\ifthenelse{\equal{#2}{undefined}}{(#1)}{
		(#1)_{#2}
	}
}
\newcommandx{\set}[2][2 = undefined]{
	\ifthenelse{\equal{#2}{undefined}}{\{ #1 \}}{
		\{ #1 \suchthat #2 \}
	}
}
\newcommandx{\sets}[3][3 =]{
	\ifthenelse{\equal{#3}{}}{[#2]^{#1}}{[#2]^{#1}_{#3}}
}
\newcommand{\suchthat}{\mid}
\newcommandx{\union}[2][1 =, 2 =]{
	\ifthenelse{\equal{#1}{}}{\cup}{
		\ifthenelse{\equal{#2}{}}{\bigcup #1}{{\bigcup_{#1} #2}}
	}
}
\newtheorem{theorem}{Theorem}[section]
\newtheorem{lemma}[theorem]{Lemma}
\newtheorem{claim}[theorem]{Claim}
\newtheorem{proposition}[theorem]{Proposition}
\newtheorem{problem}[theorem]{Problem}
\newtheorem{definition}[theorem]{Definition}
\newtheorem{example}[theorem]{Example}
\newtheorem{remark}[theorem]{Remark}
\numberwithin{equation}{section}
\newcommand{\bd}{\begin{definition}}
	\newcommand{\ed}{\end{definition}}
\DeclareMathOperator{\dom}{dom}
\DeclareMathOperator{\dist}{dist}
\DeclareMathOperator{\didistance}{didist}
\newcommand{\mc}{\mathcal}
\newcommand{\id}{id}
\newcommand{\distance}[3]{\ifthenelse{\isempty{#3}}{\dist(#1,#2)}{\dist^{#3}(#1,#2)}}
\newcommand{\didist}[3]{\ifthenelse{\isempty{#3}}{\didistance(#1,#2)}{\didistance^{#3}(#1,#2)}}
\newcommand{\digraph}[3]{\ifthenelse{\equal{#1}{b}}{\mathbb{#2}_{#3}}
	{{#2}_{#3}}}
\newcommand{\linegraph}[3]{\ifthenelse{\equal{#1}{b}}{\mathbb{#2}_{#3}}
	{#2_{#3}}}
\newcommand{\underlyingspace}[3]{\ifthenelse{\equal{#1}{b}}{\mathbb{#2}_{#3}}
	{#2_{#3}}}
\newcommand{\distanceset}[2]{\ifthenelse{\isempty{#2}}{D(#1)}{D^{#2}(#1)}}
\pgfplotsset{soldot/.style={color=blue,only marks,mark=*}}
\definecolor{pastelred}{rgb}{1.0, 0.41, 0.38}
\definecolor{pastelblue}{rgb}{0.52, 0.63, 0.94}
\definecolor{pastelyellow}{rgb}{0.99, 0.99, 0.59}
\definecolor{pastelgreen}{rgb}{0.47, 0.87, 0.47}
\definecolor{pastelorange}{rgb}{1.0, 0.7, 0.28}
\newcommand{\dlocal}{\mathsf{DLOCAL}}
\newcommand{\rlocal}{\mathsf{RLOCAL}}
\newcommand{\fG}{\mathcal{G}}
\newcommand{\fB}{\mathcal{B}}
\newcommand{\fA}{\mathcal{A}}
\newcommand{\HOM}{\operatorname{\bf Hom}}
\newcommand{\Homed}{\HOM^{e}}
\newcommand{\schreier}[3]{\operatorname{Sch}(#1,#2,#3)}
\newcommand{\Cay}{\operatorname{Cay}}
\definecolor{shadecolor}{named}{lightgray}
\title{From descriptive to distributed}
\author{Jan Greb\'ik}
\address{Faculty of Informatics, Masaryk University, Botanicka 68A, 60200 Brno, Czech Republic}
\author{Zolt\'an Vidny\'anszky}
\address{E\"otv\"os Lor\'and University, Institute of Mathematics, P\'azm\'any P\'eter stny. 1/C, 1117 Budapest, Hungary}
\begin{document}

\begin{abstract}

    In the past couple of years a rich connection has been found between the fields of \emph{descriptive set theory} and \emph{distributed computing}. Frequently, and less surprisingly, finitary algorithms can be adopted to the infinite setting, resulting in theorems about infinite, definable graphs.     
    
    In this survey, we take a different perspective and illustrate how results and ideas from descriptive set theory provide new insights and techniques to the theory of distributed computing. We focus on the two classical topics from graph theory, \emph{vertex} and \emph{edge colorings}.
    After summarizing the up-to-date results from both areas, we discuss the adaptation of Marks' games method to the LOCAL model of distributed computing and the development of the multi-step Vizing's chain technique, which led to the construction of the first non-trivial distributed algorithms for Vizing colorings.
    We provide a list of related open problems to complement our discussion.
    Finally, we describe an efficient deterministic distributed algorithm for Brooks coloring on graphs of subexponential growth.
\end{abstract}

\maketitle

\section{Introduction}

{\bf Graph colorings} is a fundamental concept in combinatorics and theoretical computer science. Moreover, a myriad of practical problems can be also reformulated in terms of graphs and their colorability (e.g., scheduling and clustering problems, register allocation, etc.\footnote{
	For more on the historical evolution of these problems, and in particular, the amusing history of the four coloring problem see \cite{wilson2014four} or \href{https://en.wikipedia.org/wiki/Graph_coloring}{https://en.wikipedia.org/wiki/Graph\textunderscore coloring}.}), thus, these concepts have numerous applications beyond their theoretical usefulness. A basic question about graph colorings asks for the smallest number of colors needed to color the vertices (or edges) of a given graph $G$ so that adjacent vertices (or edges) have different colors.
In the case of vertex coloring this number is called the {\bf chromatic number} of $G$, denoted by $\chi(G)$, and in case of edge coloring it is called the {\bf chromatic index} of $G$, denoted by $\chi'(G)$.
Classical theorems of \emph{Brooks} and \emph{Vizing} give an upper bound on $\chi(G)$ and $\chi'(G)$, respectively, in terms of the maximum degree $\Delta(G)$ of the graph $G$.
Colorings matching these upper bounds can be found in time polynomial in the size of the graph.
In contrast, deciding (in the case of vertex colorings even approximately) the chromatic number or chromatic index of a graph is an NP-complete problem.

In this survey, we discuss algorithmic aspects of these classical problems from the perspectives of the {\bf LOCAL \emph{model of distributed computing}} and {\bf \emph{measurable graph combinatorics}}.
At first glance, these fields may seem to be at the opposite ends of the spectrum: one focuses on distributed algorithms for large but finite networks, while the other investigates uncountable graphs, and stems from pure measure theory and logic.
However, the recently discovered formal connections, originated in the work of Bernshteyn \cite{Bernshteyn2021LLL} and Elek \cite{elek2018qualitative}, and the ensuing surge of activity proved otherwise.
In fact, researchers in both areas have very often studied \emph{exactly} the same problems, and, now, there are black-box translations of results from one field to the other.
As we only briefly comment on this beautiful correspondence in Section~\ref{subsec:Correspondence}, we refer the reader to the surveys of Bernshteyn \cite{BernshteynSurvey} and Pikhurko \cite{pikhurko2021descriptive_comb_survey} for more comprehensive treatment.

What phenomenon links these fields?
At a high level, in both areas the combinatorial core of the questions of interest boils down to the understanding when a solution to local graph coloring problems can be produced \emph{efficiently in a decentralized way}.
In the seminal paper \cite{Bernshteyn2021LLL}, Bernshteyn proved that efficient distributed algorithms solving a given coloring problem on finite graphs yield automatically the existence of such a measurable coloring on measurable graphs.
As a consequence, many results from the distributed computing literature immediately yield new results in the relatively younger field of measurable graph combinatorics.
While this fundamental result follows the typical direction, namely, ideas from theoretical computer science are used to solve major problems in pure mathematics, it has also opened the gates for myriads of follow up questions. The most important one for this survey might be the following:
\begin{quote}
    \emph{Can measure theoretic techniques have a direct impact on the evolution of distributed algorithms on finite graphs?}
\end{quote}

In this paper, we aim to answer this question in the positive.
Our goal is to complement the views presented in the survey papers by Bernshteyn \cite{BernshteynSurvey} and Pikhurko \cite{pikhurko2021descriptive_comb_survey} by describing the flow of ideas from the infinite world to the finite.
The two main results from measurable graph combinatorics and their application in the theory of distributed computing that we discuss are (a) the {\bf determinacy method of Marks} for showing the \emph{inexistence} of Borel vertex colorings, and (b) {\bf measurable Vizing's theorem}, which proves the \emph{existence} of measurable edge colorings that satisfy the Vizing's bound.
\vspace{+0.2cm}

The paper is structured as follows.
Sections~\ref{sec:Classical}--\ref{sec:Summary} serve as an introduction and a comparison of graph coloring results in the classical, distributed and measurable setting.
We discuss the adaptation of Marks' technique to the LOCAL model of distributed computing in Section~\ref{sec:Marks} and the measurable and distributed analogs of Vizing's theorem in Section~\ref{sec:Vizing}. 
In Section~\ref{sec:Subexp}, we describe the efficient distributed algorithms for Brooks' coloring for graphs of subexponential growth and, finally, in Section~\ref{sec:Problems} we finish the survey with a collection of related and fascinating open problems.

\subsection*{Acknowledgments} 
We are very grateful to Jukka Soumela for letting us use Figure~\ref{fig:GlobalLOCAL}. We are also grateful to Anton Bernshteyn, Yi-Jun Chang, Clinton Conley, Alexander Kechris, Andrew Marks, Oleg Pikhurko, V\'{a}clav Rozho\v{n}, Felix Weilacher, and the anonymous referee for their comments on the first version of this manuscript.

JG was supported by MSCA Postdoctoral Fellowships 2022 HORIZON-MSCA-2022-PF-01-01 project BORCA grant agreement number 101105722. ZV was supported by Hungarian Academy of Sciences Momentum Grant no. 2022-58 and National Research, Development and Innovation Office (NKFIH) grants no.~113047, ~129211.

\section{Classical results and notation}\label{sec:Classical}

We start by fixing a standard graph theoretic notation that will be used throughout the paper. A \emph{graph} $G$ is a pair $(V,E)$, where $V$ is a set and $E\subseteq \binom{V}{2}$ is a set of unordered pairs of $V$, in particular, we do not allow multiple edges or loops. We will also use the notation $V(G)$ for the set of vertices of $G$ and $E(G)$ or just simply $G$ for the set of edges. 

To distinguish finite and countable graphs from the usually uncountable graphs coming from the measurable context we use $G$ for the former and $\fG$ for the latter.
In case $G$ is finite, we set $n:=|V|$ and $m:=|E|$.
The \emph{maximum degree of $G$} is denoted by $\Delta(G)$.
Throughout the paper we assume that $\Delta(G)\in \mathbb{N}$, in fact, most of the time we treat $\Delta(G)$ as a constant that is fixed in advance and independent of $n$.
The line graph of a graph $G=(V,E)$, denoted by $L(G)$ is defined as the pair $(E,F)$, where $\{e,e'\}\in F$ if $e\cap e'\not=\emptyset$.
We write $N_G(v)$ for the set of neighbors of $v\in V$ in $G$.
We set $\deg_G(v)=|N_G(v)|$ and $\deg_{G}(v,A)=|N_G(v)\cap A|$.
We denote as $B_G(v,t)$ the $t$-neighborhood of $v$ in $G$, that is, the restriction of $G$ to vertices of graph distance at most $t$ from $v$, that includes additional labeling that might be defined on $G$.

Standard conventions are used to denote asymptotics, that is, if $f, g: \N \to \N$ are functions, we say that $f \in o(g)$, $f \in O(g)$, $f\in \Omega(g)$ and $f \in \omega(g)$ if $\lim_{n \to \infty} f(n)/g(n)=0$, $\limsup_{n \to \infty} f(n)/g(n)<\infty$, $g \in O(f)$ and $g \in o(f)$, respectively.
Additionally, we write $f \in \Theta(g)$ if $f\in O(g)$ and $g \in O(f)$.
\vspace{+0.2cm}

For $k\in \mathbb{N}$, we write $[k]=\{1,\dots,k\}$.
A \emph{vertex coloring of $G$ (with $k\in \mathbb{N}$ colors)} or simply \emph{$k$-coloring} is a map $c:V\to [k]$ such that $c(v)\not= c(w)$ whenever $\{v,w\}\in E$.
An \emph{edge coloring of $G$} (with $\ell\in \mathbb{N}$ colors) is a map $d:E\to [\ell]$ such that $d(e)\not=d(f)$ whenever $e\not=f$ and $e\cap f \not=\emptyset$.
The {\bf chromatic number of $G$}, denoted by $\chi(G)$, is the minimum $k\in \mathbb{N}$ such that there is a vertex coloring of $G$ with $k$ colors and the {\bf chromatic index of $G$}, denoted by $\chi'(G)$, is the minimum $\ell\in \mathbb{N}$ such that there is an edge coloring of $G$ with $\ell$ colors.
Note that we have $\chi(L(G))=\chi'(G)$ by the definition.

A \emph{partial vertex coloring} (with $k\in \mathbb{N}$ colors) is a partial map $c:V\rightharpoonup [k]$ that is a vertex coloring of the graph induced from $G$ on $\dom(c)\subseteq V$.
We set $U_c=V\setminus \dom(c)$ to denote the set of uncolored vertices.
A \emph{partial edge coloring} $d:E\rightharpoonup [k]$ and the set of uncolored edges $U_d$ is defined analogously.

\subsection{Greedy algorithm}

We start with the classical greedy upper bound.

\begin{theorem}[Greedy coloring]
    Let $G$ be a graph such that $\Delta(G)\in \mathbb{N}$.
    Then $\chi(G)\le \Delta(G)+1$ and $\chi'(G)\le 2\Delta(G)-1$.    
\end{theorem}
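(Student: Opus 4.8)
The plan is to prove the two bounds by the standard greedy argument, processing vertices (respectively edges) one at a time in an arbitrary order and assigning to each object the least color not already used on its already-colored neighbors.

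\textbf{Vertex bound.} First I would fix an arbitrary enumeration $v_1, v_2, \dots$ of $V$ (in the finite case this is literally a listing; in general one argues by transfinite recursion, or simply notes that the statement for a general $G$ with $\Delta(G) \in \mathbb{N}$ follows from the finite case by a compactness/De~Bruijn--Erd\H{o}s argument, but the cleanest route is direct recursion on an ordinal enumeration). Define $c(v_i)$ recursively: having colored $v_1, \dots, v_{i-1}$, let $c(v_i)$ be the smallest element of $[\Delta(G)+1]$ not appearing among $\{c(v_j) : j < i,\ \{v_i, v_j\} \in E\}$. The key observation is that this set of forbidden colors has size at most $\deg_G(v_i) \le \Delta(G)$, so among the $\Delta(G)+1$ available colors at least one is free and $c(v_i)$ is well defined. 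Since at the moment $v_i$ is colored we avoid the colors of all its \emph{already-colored} neighbors, and every neighbor is colored either before or after $v_i$, the final coloring satisfies $c(v) \ne c(w)$ for every edge $\{v,w\}$. Hence $\chi(G) \le \Delta(G)+1$.

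\textbf{Edge bound.} For the chromatic index, I would apply the same greedy scheme to the line graph $L(G)$, using the identity $\chi(L(G)) = \chi'(G)$ recorded just before the theorem. The only extra input needed is a bound on $\Delta(L(G))$: an edge $e = \{x,y\}$ is adjacent in $L(G)$ to every other edge meeting $x$ or $y$, and there are at most $(\deg_G(x) - 1) + (\deg_G(y) - 1) \le 2\Delta(G) - 2$ of these. Therefore $\Delta(L(G)) \le 2\Delta(G) - 2$, and the vertex bound applied to $L(G)$ gives $\chi'(G) = \chi(L(G)) \le \Delta(L(G)) + 1 \le 2\Delta(G) - 1$.

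\textbf{Main obstacle.} There is no real combinatorial obstacle here — the argument is entirely routine. The only point requiring a word of care is the passage from finite to infinite (or uncountable) $G$: one must make sure the greedy recursion is run along a well-ordering of $V$ (respectively $E$), so that ``already-colored neighbors'' is a meaningful notion and the cardinality bound on forbidden colors still holds at each step; the hypothesis $\Delta(G) \in \mathbb{N}$ is exactly what keeps the palette finite and the recursion going. Alternatively, for countable or finite $G$ one can invoke the De~Bruijn--Erd\H{o}s theorem to reduce to the finite case, but the direct recursion is self-contained and I would present that.
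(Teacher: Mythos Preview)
Your proposal is correct and follows essentially the same approach as the paper: enumerate the vertices, greedily assign the least available color (which exists since at most $\Delta(G)$ colors are forbidden), and for the edge bound pass to the line graph. The paper's sketch is terser and only explicitly treats finite $G$, while you add the line-graph degree computation and a remark on the infinite case, but these are elaborations rather than a different route.
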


The proof of this theorem for vertex colorings is very often presented in the form of a sequential algorithm.
Namely, after enumerating the vertices of a finite graph $G$ as $(v_1,\dots,v_{|V|})$, we go through the list inductively and color the current vertex with any of the available colors at this step.
This is always possible as we are allowed to use strictly more colors than is the maximum degree of $G$.
The argument for $\chi'(G)$ is analogous after, for example, passing to $L(G)$.

Note that the sequential algorithm runs in time $O(m)$.
In case that we treat $\Delta$ as a constant, which we usually do, the running time is $O(n)$.
It is one of the basic results in the LOCAL model, which is discussed in the next section, that there is in fact an efficient distributed algorithm producing a greedy coloring, see Theorem~\ref{thm:DistributedGreedy}.

\subsection{Brooks' and Vizing's theorems}

The greedy upper bound for vertex colorings is tight in general.
Namely, if $n\in \mathbb{N}$ and $K_n$ is the complete graph on $[n]$ vertices, then $\chi(K_n)=n$ and $\Delta(K_n)=n-1$.
A classical result of Brooks \cite{Brooks} states that apart from this situation and the case of odd cycles, $\Delta(G)$ colors are enough.

\begin{theorem}[Brooks coloring]
    Let $G$ be a graph such that $\Delta(G)\in \mathbb{N}$ and assume that $G$ does not contain a copy of $K_{\Delta(G)+1}$, or an odd cycle in case $\Delta(G)=2$.
    Then $\chi(G)\le \Delta(G)$.
\end{theorem}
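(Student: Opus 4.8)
The plan is to follow the classical approach via a clever ordering of the vertices so that the greedy algorithm succeeds with $\Delta := \Delta(G)$ colors, combined with a careful case analysis for the remaining obstructions. First I would reduce to the case where $G$ is connected and $\Delta$-regular: if some vertex $v$ has $\deg_G(v) < \Delta$, then one can root a spanning tree at $v$ and order the vertices in decreasing distance from $v$, so that every vertex other than $v$ has at least one neighbor appearing later in the order; greedy coloring in this order uses at most $\Delta$ colors since each vertex other than $v$ has at most $\Delta - 1$ already-colored neighbors when its turn comes, and $v$ itself has degree $< \Delta$. So assume henceforth $G$ is connected and $\Delta$-regular, and $\Delta \ge 3$ (the cases $\Delta \le 2$ being elementary: $\Delta = 0,1$ are trivial, and a connected $2$-regular graph is a cycle, which is $2$-colorable iff it is not odd).

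Next I would handle the case where $G$ is not $2$-connected. If $G$ has a cutvertex $v$, decompose $G$ along $v$ into blocks; in each block the vertex $v$ has degree strictly less than $\Delta$, so by the previous paragraph each block has a $\Delta$-coloring, and these can be permuted to agree on the shared vertex $v$ and then glued. So we may additionally assume $G$ is $2$-connected. The key structural step is then to find three vertices $u_1, u_2, v$ such that $u_1, u_2 \in N_G(v)$, $\{u_1, u_2\} \notin E(G)$, and $G - \{u_1, u_2\}$ is connected. Granting such a triple, one orders the vertices as $u_1, u_2, \dots, v$ where the middle portion is arranged in decreasing distance from $v$ within the connected graph $G - \{u_1,u_2\}$; then greedy coloring assigns $u_1$ and $u_2$ the same color (say color $1$), every vertex in the middle has a later neighbor so sees at most $\Delta - 1$ colored neighbors, and finally $v$ has $\Delta$ neighbors but two of them ($u_1, u_2$) share a color, so at most $\Delta - 1$ colors are forbidden at $v$.

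The main obstacle — and the heart of the proof — is establishing the existence of the triple $(u_1, u_2, v)$ under the assumption that $G$ is $2$-connected, $\Delta$-regular with $\Delta \ge 3$, and contains no $K_{\Delta+1}$. I would argue as follows: if $G$ is $3$-connected, pick any $v$; since $G \ne K_{\Delta+1}$ and $G$ is $\Delta$-regular, $N_G(v)$ cannot be a clique, so there exist non-adjacent $u_1, u_2 \in N_G(v)$, and $3$-connectivity guarantees $G - \{u_1, u_2\}$ is connected. If $G$ is $2$-connected but not $3$-connected, take a separating pair $\{x, y\}$; choosing $v$ to be a neighbor of $x$ (or $y$) lying in an appropriate component together with a second well-chosen neighbor lets one arrange that removing the two neighbors of $v$ leaves the graph connected — this requires tracking which component of $G - \{x,y\}$ one works in and using $\Delta \ge 3$ to ensure enough neighbors are available. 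This separating-pair case is the fiddly part; I expect it to need the most care, and it is where one genuinely uses $\Delta \ge 3$ rather than just $\Delta \ge 2$. Once the triple is secured, the rest is the greedy argument sketched above, which is routine given the Greedy coloring theorem already stated.
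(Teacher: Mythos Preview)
The paper does not actually prove this theorem: it is stated as a classical result with a citation to Brooks' original paper \cite{Brooks}, and the surrounding discussion only records that such a coloring can be found in time $O(m)$ and that no efficient deterministic distributed algorithm exists. So there is no ``paper's own proof'' to compare against.

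Your proposal is the standard Lov\'asz-style argument and is correct in outline. The reduction to the connected $\Delta$-regular case, the block decomposition for the non-$2$-connected case, and the greedy ordering once the triple $(u_1,u_2,v)$ is found are all clean. The only place that deserves a bit more care is exactly where you flag it: in the $2$-connected but not $3$-connected case, the cleanest choice is to take $v$ to be one of the vertices in a $2$-cut $\{x,y\}$ (say $v=x$) and let $u_1,u_2$ be neighbors of $x$ lying in \emph{different} components of $G-\{x,y\}$; non-adjacency of $u_1,u_2$ is then automatic, and connectivity of $G-\{u_1,u_2\}$ follows since every vertex can still reach $x$ through $y$ (using $2$-connectedness of $G$) and $x$ retains at least $\Delta-2\ge 1$ neighbors. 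With that detail filled in, your proof is complete.
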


Brooks coloring can be found in time $O(m)$ as was proved by Skulrattanakulchai \cite{BrooksAlg}, however, and that is one of the main results discussed in this paper, there is no efficient deterministic distributed algorithm even if we assume that the graphs that we consider have high {\bf girth}, i.e., they look locally like trees.
Recall that a classical theorem of Erd\H{o}s \cite{ErdosProbMethod}, proven by the probabilistic method, states that there exist graphs of arbitrarily high girth and chromatic number.
Recall that the \emph{girth} of a graph $G$ is the size of the shortest cycle in $G$.
More related to our investigation is the same result of Bollob\'{a}s \cite{bollobas} for $\Delta$-regular random graphs: random $\Delta$ regular graphs have girth tending to $\infty$ with positive probability as $n\to \infty$ and the chromatic number of such graphs is almost surely $\Theta(\Delta/\log(\Delta))$ as $\Delta\to \infty$.
This gives a lower bound on the number of colors needed for \emph{any} efficient distributed algorithm on locally tree-like graphs.
\vspace{+0.2cm}

In the case of edge colorings the situation is much simpler as was proved by Vizing \cite{Vizing} and independently by Gupta \cite{Gupta}.

\begin{theorem}[Vizing coloring]\label{thm:Vizing}
    Let $G$ be a graph such that $\Delta(G)\in \mathbb{N}$.
    Then $\chi'(G)\in \{\Delta(G),\Delta(G)+1\}$.    
\end{theorem}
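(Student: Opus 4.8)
The plan is to prove the upper bound $\chi'(G)\le\Delta(G)+1$ (the lower bound $\chi'(G)\ge\Delta(G)$ being immediate, since the $\Delta(G)$ edges at a vertex of maximum degree pairwise intersect). Write $\Delta=\Delta(G)$ and fix a palette of $\Delta+1$ colors. Proceeding by induction on $m=|E(G)|$, I would assume that $G$ minus one edge $e_0=\{x,y_0\}$ already carries a proper edge coloring $d$ with $\Delta+1$ colors, and show that $d$ can be modified so that $e_0$ also receives a color. The key bookkeeping notion is that of a \emph{missing color at a vertex}: since every vertex has degree at most $\Delta$ but we have $\Delta+1$ colors, each vertex $v$ has at least one color not appearing on its incident edges; call such a color \emph{free at $v$}. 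If some color is simultaneously free at $x$ and at $y_0$, we color $e_0$ with it and are done, so assume otherwise.

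The heart of the argument is the construction of a \emph{Vizing fan} (Vizing chain) rooted at $x$. Starting from $y_0$, build a maximal sequence of distinct neighbors $y_0,y_1,\dots,y_k$ of $x$ such that the edge $\{x,y_{i+1}\}$ has color $c_i$, where $c_i$ is some fixed color free at $y_i$. I would first handle the case where at some stage a color $c_i$ free at $y_i$ is also free at $x$: then one can ``rotate'' the fan — recolor $\{x,y_{j+1}\}$ with $c_j$ for $j<i$ and color $\{x,y_i\}$ (or $\{x,y_0\}=e_0$) with the shared free color — completing the coloring. The remaining case is that the fan terminates because the chosen free color $c_k$ at $y_k$ already equals $c_j$ for some $j<k$, i.e. it reappears inside the fan. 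Here one invokes an \emph{alternating path} (Kempe chain) argument: let $\alpha$ be a color free at $x$ and $\beta=c_k$; consider the maximal path $P$ through $x$ whose edges alternate between colors $\alpha$ and $\beta$. Swapping $\alpha$ and $\beta$ along $P$ changes which colors are free at the relevant fan vertices, and a careful case analysis — depending on whether $P$ ends at $y_j$, at $y_k$, or at neither — shows that after the swap one of the two rotation moves above becomes available. This produces a proper coloring of all of $G$, completing the induction.

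The main obstacle is the final case analysis around the Kempe chain: one must check that $P$ cannot simultaneously ``interfere'' with both $y_j$ and $y_k$ in a way that blocks every rotation, and this is exactly where the hypothesis that $\{e,e'\}\in E(L(G))$ only when $e\cap e'\ne\emptyset$ (no multi-edges) and the maximality of the fan are used. The bound $\Delta+1$ is not improvable in general — e.g. odd cycles or the Petersen graph are class two — so no slicker counting argument can replace this; the fan-plus-Kempe-chain recoloring is genuinely needed. I would present the fan construction and the two recoloring moves as separate \textbf{claims}, then assemble them, keeping the degree accounting (``at least one free color at every vertex'') explicit at each step since that is the only place the value $\Delta+1$ enters.
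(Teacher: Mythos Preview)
Your proposal is correct and matches exactly the approach the paper invokes: the paper does not give a self-contained proof of Theorem~\ref{thm:Vizing} but refers to ``the standard proof of Vizing's theorem, which is based on the augmenting chain technique,'' and in Section~\ref{subsec:Multi} spells this out as finding, for each uncolored edge, a $d$-augmenting Vizing chain $W(x,e)=F_e{}^\frown P_{\alpha_e,\beta_e}$ consisting of a fan followed by an alternating path---precisely your fan-plus-Kempe-chain recoloring. Your induction, fan construction, rotation moves, and $\alpha/\beta$-path swap are the standard ingredients the paper has in mind.
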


The standard proof of Vizing's theorem, which is based on the augmenting chain technique, implies immediately that edge coloring with $\Delta(G)+1$ colors can be found in time polynomial in $m$.
Currently, the fastest deterministic algorithm for Vizing coloring for general graphs due to Sinnamon \cite{Fast} runs in time $O(m\sqrt{n})$, and the fastest randomized one due to Bhattacharya, Carmon, Costa, Solomon, and Zhang \cite{Faster} runs in time $O(mn^{1/3})$.
We refer the reader to \cite{BernshteynVizing2,Faster,BernshteynVizing3} for more details on the current development and better bounds under some additional assumptions on the maximum degree.
While it is known that there is no efficient deterministic distributed algorithm, the existence of a randomized one is one of the main open problems in the theory of distributed computing, see Section~\ref{sec:Vizing} and Problem~\ref{pr:SublogVizing}.
\vspace{+0.2cm}

We formalize the \emph{augmenting chain technique} as it plays a crucial role both in our investigation of edge colorings in Section~\ref{sec:Vizing} and in our construction of an efficient deterministic distributed algorithm for Brooks' theorem on graphs of subexponential growth in Section~\ref{sec:Subexp}.

\begin{definition}[Augmenting subgraph]
    Let $G=(V,E)$ be a graph, $d:E\rightharpoonup [k]$ be a partial edge coloring and $e\in U_d$ be an uncolored edge.
    A subgraph $H\subseteq G$ is called \emph{{\bf augmenting} for $e$ (and $d$)} if it is connected, $E(H)\setminus \{e\}\subseteq \dom(d)$ and there is a partial edge coloring $d':E\rightharpoonup [k]$ such that $\dom(d')=\dom(d)\cup\{e\}$ and $d(f)=d'(f)$ for every $f\in \dom(d)\setminus E(H)$.

    An augmenting subgraph for partial vertex colorings and an uncolored vertex is defined analogously.
\end{definition}

Vizing's theorem for finite graphs then follows from the fact that there is an augmenting subgraph for every uncolored edge $e\in E$ and any partial edge coloring of $G$ with $\Delta(G)+1$ colors.

In order to produce Vizing coloring by a distributed algorithm, or measurably, we need to control the size and structure of the augmenting subgraphs.
Already the standard proof of Vizing's theorem provides some information in this direction, the augmenting subgraph is of the form \emph{fan} and \emph{alternating path}, see Figure~\ref{fig:Vizing} in Section~\ref{sec:Vizing}.
A detailed explanation of the recent progress on this problem is done in Section~\ref{subsec:ApplicationsVizing}.
We finish this section by stating a purely graph-theoretic consequence of this line of research that was proved by Christiansen \cite{Christiansen}.

\begin{theorem}[Small augmenting subgraphs for edge colorings, Christiansen \cite{Christiansen}]\label{thm:ClassicChristiansen}
    Let $G$ be a graph, $d:E\rightharpoonup [\Delta(G)+1]$ be a partial edge coloring and $e\in U_d$ be an uncolored edge.
    Then there exists an augmenting subgraph $H$ for $e$ such that $|E(H)|\in O(\Delta(G)^7 \log(n))$.    
\end{theorem}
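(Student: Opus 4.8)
The plan is to run Christiansen's \emph{multi-step Vizing chain} construction, which replaces the single ``fan plus alternating path'' augmenting subgraph coming from the classical proof of Vizing's theorem (Theorem~\ref{thm:Vizing}) by a concatenation of several such pieces, arranged so that the alternating paths stay short. Recall the one-step picture for the uncolored edge $e=\{x,y\}$ and the partial edge coloring $d$ using colours $[\Delta(G)+1]$: one forms a \emph{fan} $F$ at $x$ (a maximal run of edges at $x$ produced by chasing missing colours) and then, after rotating the fan, a two-coloured \emph{alternating path} $P$ from the relevant endpoint; shifting along $F$ and flipping $P$ yields a proper $d'$ with $\dom(d')=\dom(d)\cup\{e\}$ that agrees with $d$ off $F\cup P$, so if $|E(P)|$ is below a threshold $\ell=\Delta(G)^{O(1)}$ then $H:=F\cup P$ is an augmenting subgraph with $|E(H)|\in\Delta(G)^{O(1)}$ and we are done. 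The only obstacle is that $P$ can have length $\Omega(n)$.

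To handle a long path one truncates it: pick a vertex $u$ inside an initial segment $P'$ of $P$ whose missing-colour configuration (after the partial shift along $F$ and $P'$) supports a \emph{fresh} fan, commit to the shift along $F\cup P'$ only, and note that the edge of $P'$ at $u$ now behaves like a new uncolored edge $e_1$ for the resulting coloring $d_1$; then recurse on $(d_1,e_1)$. Iterating gives a $k$-step chain $C=(F_0,P_0',\dots,F_{k-1},P_{k-1}',F_k,P_k)$, and the structural core of the proof is a lemma stating that a \emph{consistent} $i$-step chain can always be extended to a consistent $(i+1)$-step chain \emph{unless} its current alternating path is already short (in which case the process halts and outputs $H=\bigcup_{j}(F_j\cup P_j')$), together with the fact that ``un-shifting'' the pieces in reverse order recovers $d$ from the terminal coloring and so certifies that $H$ is augmenting for the \emph{original} edge $e$. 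Here ``consistent'' must record enough disjointness between the (not necessarily consecutive) pieces that no later colour rotation spoils the reversibility of an earlier one; showing that this can always be maintained --- i.e.\ controlling exactly how the successive fans and alternating paths are allowed to overlap --- is, I expect, the most delicate part of the construction.

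It remains to bound the number of steps by $O(\log n)$, and the plan is to do this probabilistically: run the construction above, but at each step where truncation is needed choose the truncation vertex uniformly at random among the $\Omega(\ell)$ admissible ones (and randomize the fan choices). One then shows that the probability that the process survives $k$ steps without halting decays fast enough in $k$ to drop below $1$ already for $k=O(\log n)$: for the chain to continue, each newly attached piece must avoid interfering with the already-bounded earlier portion, a uniformly random truncation point makes such an interference unlikely, and a union bound over the at most $n$ vertices at which a fatal interference could occur completes the estimate. Hence some outcome of the random construction is a genuine augmenting subgraph built from $O(\log n)$ pieces of $\Delta(G)^{O(1)}$ edges each; checking that $\ell$ may be taken polynomial in $\Delta(G)$ --- large enough both to guarantee $\Omega(\ell)$ admissible truncation vertices and to keep the interference probabilities small --- and balancing the per-step size against the number of steps gives the stated bound $|E(H)|\in O(\Delta(G)^{7}\log(n))$. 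Besides the consistency lemma, the part I expect to be genuinely hard is this quantitative survival estimate, since it is what fixes the polynomial dependence on $\Delta(G)$.
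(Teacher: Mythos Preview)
Your structural setup --- multi-step Vizing chains from fans and truncated alternating paths, with a consistency lemma controlling overlaps --- matches Section~\ref{subsec:Multi}, and you are right that the consistency lemma is the delicate structural ingredient. The paper does not prove the theorem itself but sketches Christiansen's argument after Theorem~\ref{thm:CHristiansenTwo}, and that sketch is a \emph{deterministic expansion} count rather than a probabilistic survival estimate: with $\ell\in\operatorname{poly}(\Delta)$, the number of vertices reachable from $e$ by non-overlapping $i$-step chains grows exponentially in $i$ (this is the ``highly non-trivial'' part), so after $i=\Theta(\log n)$ steps either the graph is exhausted or some chain is $d$-augmenting. The paper explicitly contrasts this with Bernshteyn's earlier randomized approach, which needed $\ell\in\Theta(\log n)$ and yielded $O(\log^2 n)$-size augmenting subgraphs only for \emph{some} uncolored edges.

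Your probabilistic bounding argument has a gap. You want $\Pr[\text{no halt in }k\text{ steps}]<1$, but the justification you offer --- continuing requires avoiding interference, random truncation makes interference unlikely, union-bound over $n$ vertices --- controls the wrong event: avoiding interference is what keeps the chain \emph{consistent}, not what makes it \emph{halt}. A run that avoids all interference but keeps meeting long alternating paths continues forever without producing an augmenting subgraph. What actually forces termination is the expansion: there are roughly $\ell^i$ candidate $i$-step chains from $e$, while (by the key property in Section~\ref{subsec:Multi}) each edge lies on at most $\operatorname{poly}(\Delta)^i$ of them, so with $\ell\gg\operatorname{poly}(\Delta)$ these chains cannot all fit into $n$ vertices once $i\gtrsim\log n$. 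Your sketch never invokes this count; as written it is closer to Bernshteyn's original analysis, which delivers neither the $\operatorname{poly}(\Delta)\log n$ bound nor the conclusion for \emph{every} uncolored edge.
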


We remark that up to $\Delta$ factors the bound in the theorem matches the lower bound of Chang, He, Li, Pettie, and Uitto \cite{ChangHLPU20} who found an instance of partial edge coloring and an uncolored edge such that every augmenting subgraph has size $\Omega(\Delta(G) \log(n/\Delta(G)))$.

An analogous result for Brooks' coloring was proved by Panconesi and Srinivasan \cite{panconesi-srinivasan,PanconesiSrinivasan}.
Namely, every uncolored vertex in a partially vertex colored graph $G$ with $\Delta(G)\ge 3$ colors is contained in an augmenting path of length bounded by $O(\log_{\Delta(G)}(n))$.
They also demonstrated that this bound is tight.
We state this result formally in Section~\ref{sec:Subexp}, where we use it in a black-box manner in our construction of an efficient deterministic distributed algorithm for Brooks coloring on graphs of subexponential growth.

\section{LOCAL model}
\label{sec:local}

The definition of the {\bf LOCAL model} of distributed computing by Linial~\cite{linial92LOCAL} was motivated by the desire to understand \emph{distributed algorithms} in huge networks, see the book of Barenboim and Elkin \cite{barenboimelkin_book} for a historical account. 
As an example, consider a huge network of wifi routers. Let us think of two routers as connected by an edge if they are close enough to exchange messages. 
It is desirable that such close-by routers communicate with user devices on different channels to avoid interference.
In graph-theoretic language, we want to properly color the underlying network. 
Observe that producing the greedy coloring, which was easily solved by a sequential algorithm in the previous section, remains a highly interesting and non-trivial problem in the distributed setting, as, ideally, each vertex decides on its output color after only a few rounds of communication with its neighbors.

The LOCAL model of distributed computing formalizes this setup: we have a large network, where each vertex knows the size of the network $n$, and perhaps some other parameters like the maximum degree $\Delta$. In the case of randomized algorithms, each vertex has access to a private random bit string, while in the case of deterministic algorithms, each vertex is equipped with a unique identifier from a range polynomial in the size $n$ of the network. 
In one round, each vertex can exchange any message with its neighbors and can perform an arbitrary computation.
The goal is to find a solution to a given problem, in our case a vertex or edge coloring of the network, in as few communication rounds as possible. 
As the allowed message size is unbounded, a $t$-round distributed algorithm can be equivalently described as a function that maps $t$-neighborhoods to outputs---the output of a vertex is then simply the output of this function applied to the $t$-neighborhood of this vertex.
An algorithm is correct if and only if the collection of outputs at all vertices constitutes a correct solution to the problem. 

The theory of distributed algorithms is extremely rich and the landscape of possible complexities of local graph coloring problems is very well understood.
It should not come as a surprise that vertex and edge colorings play a prominent role in this endeavor.
However, it might not be immediately clear why \emph{any} non-trivial problem can be solved by a distributed algorithm.

Before we formalize the definitions, we describe a simple, yet profound, procedure that is due to Cole and Vishkin \cite{cole86}, which modifies a given vertex coloring of a graph $G$ with $k$ colors to a vertex coloring with roughly $\log(k)^{\Delta(G)}$ colors in one communication round.

\begin{example}[Color reduction algorithm, Section~3.5 in \cite{barenboimelkin_book}]\label{ex:ColorRedAlg}
    Let $G=(V,E)$ be a graph of maximum degree $\Delta\in \mathbb{N}$ and $c:V\to [k]$ be a vertex coloring.
    We describe how to produce a new vertex coloring with $(2\lceil\log(k)\rceil)^\Delta$ colors in one round of communication having $c$ as an input at every vertex of $G$.

    Let us view the color $c(v)$ as a $\{0,1\}$-string of length $\lceil\log(k)\rceil$.
    Given $v\in V$, we list for each $w\in N_G(v)$ the pair that encodes the position of the minimal bit where $c(v)$ and $c(w)$ differ together with the value of $c(v)$ at that bit.
    That is, each $v$ is assigned a string $c'(v)$ of at most $\Delta(G)$ many pairs from the set $\lceil\log(k)\rceil\times \{0,1\}$.
    We leave as an exercise to verify that $c'$ is indeed a proper vertex coloring of $G$ with $(2\lceil\log(k)\rceil)^\Delta$ colors.
\end{example}

A more complicated combinatorial argument, which is due to Linial \cite{linial92LOCAL}, decreases the number of colors to $4(\Delta+1)^2\log^2(k)$ in one round of communication, see Hirvonen and Suomela \cite[Section~4.8]{hirvonen2021distributed} or Rozho\v{n} \cite[Section 2.1.1]{rozhovn2024invitation} for a detailed explanation.

\subsection{Formal definition and basic results}

Our main references are the books of Barenboim and Elkin \cite{barenboimelkin_book} and of Hirvonen and Suomela \cite{hirvonen2021distributed}, and the recent survey of Rozho\v{n} \cite{rozhovn2024invitation}.

When talking about distributed algorithms, we always assume that there is a fixed class of finite graphs on which they operate.
Prominent examples of classes studied in the literature include all finite graphs, graphs of degree bounded by $\Delta\in \mathbb{N}$, acyclic graphs, graphs of bounded neighborhood growth, or graphs that look locally like a fixed graph, e.g., grid graphs, high girth graphs etc.

In this survey, we work, unless stated otherwise, with the class of \emph{graphs of degree bounded by $\Delta\in \mathbb{N}$}.
In particular, the dependence on $\Delta$ is hidden in the asymptotic notation.
\vspace{+0.2cm}

In the high-level overview, we mentioned that a $t$-round distributed algorithm assigns a color to a vertex based on its $t$-neighborhood and every vertex must run the same algorithm.
In particular, if there are vertices with isomorphic $t$-neighborhoods, they should receive the same color.
In order to make the model non-trivial, we need to introduce a way how to \emph{break symmetries} in graphs that are highly symmetric.
In the case of \emph{deterministic distributed algorithms} this is done by assigning a {\bf unique identifier} to each vertex from a range that is polynomial in the size of $V$.
In other words, when we talk about deterministic distributed algorithms, we always assume that the graph $G$ comes equipped with an adversarial injective map $\id:V\to n^c$, where $c\ge 1$ is a fixed constant.
In the case of \emph{randomized distributed algorithms}, each vertex generates independently uniformly an {\bf infinite sequence of random bits}.
Formally, the input graph comes equipped with an adversarial map $i:V\to \{0,1\}^\mathbb{N}$.

\begin{definition}[Distributed algorithm]
        A {\bf deterministic distributed algorithm} $\fA=(\fA_n)_n$ of complexity $(t_n)_n\in \mathbb{R}^\mathbb{N}$ is a function such that for every graph $G$ of size $n$ and every assignment of unique identifiers $\id:V\to n^c$ outputs for every $v\in V$ a color
        $$v\mapsto \fA_n(B_G(v,t_n))$$
        that only depends on the $t_n$-neighborhood of $v$, that is, on $B_G(v,t_n)$ together with $\id\upharpoonright B_G(v,t_n)$.

        A {\bf randomized distributed algorithm} is defined analogously by replacing an assignment of unique identifiers by an assignment of infinite sequences of random bits.        
\end{definition}

Note that we implicitly assume that $n$, the size of the vertex set of $G$, is part of the input of $\fA$.
In particular, the functions in the sequence $\fA=(\fA_n)_n$ can be completely independent.
We also remark that under a mild assumption on the class of graphs under consideration, the constant $c\ge 1$ can be freely thought of as being equal to $1$.
For instance, it is trivial to check that if there is an algorithm for $c\ge 1$, then there is one for any $c\ge c'\ge 1$, and if the class of graphs is closed under enlarging the vertex set the same holds for any $c'\ge c$. 

As it is always clear from the context whether we talk about deterministic or randomized algorithms, we suppress the mentioning of unique identifiers or random bits and simply say that we apply $\fA$ on $G$.
We emphasize again that when constructing distributed algorithms one should think of these assignments as being given to us by an adversary and our algorithm should perform the given task successfully for every such assignment in the case of deterministic algorithms, and with high-probability in case of randomized algorithms.
\vspace{+0.2cm}

As our main goal is to discuss graph coloring problems and to keep the notation simple, we formalize the definition of \emph{LOCAL complexity} for vertex colorings only.
The definition of LOCAL complexity for edge colorings, or more generally for the so-called \emph{locally checkable labeling (LCL) problems} that are discussed in Section~\ref{subsec:GlobalLOCAL}, can be done completely analogously. 
We also remind the reader that we implicitly work with the class of graphs of degree bounded by $\Delta\in \mathbb{N}$, but the definition can be generalized to any class of finite graphs.
 
	\begin{definition}[LOCAL complexity of graph colorings]
		\label{def:local_complexity}
		The \emph{vertex coloring problem with $k\in \mathbb{N}$} colors has a {\bf deterministic LOCAL complexity} $(t_n)_n$ if there is a deterministic distributed algorithm $\fA=(\fA_n)_n$ of complexity $(t_n)_n$ such that for every graph $G$ the output coloring
        $$v\mapsto \fA_n(B_G(v,t_n))$$
        produced by $\fA=(\fA_n)_n$ is a vertex coloring of $G$ with $k$ colors,
		
		The {\bf randomized LOCAL complexity} is defined analogously, with the main difference being that we demand that the output coloring is a vertex coloring of $G$ with $k$ colors with probability at least $1 - 1/n^d$, where $d>0$ is a fixed constant. 
	\end{definition}

Note that the deterministic complexity is an upper bound on the randomized complexity as interpreting the first $\lfloor \log(n^{c})\rfloor$ bits as an element of the space of unique identifiers $n^{c}$ gives an injective map with probability at least $1-\frac{1}{n}$ provided that $c\ge 3$.
\vspace{+0.2cm}

In Example~\ref{ex:ColorRedAlg}, we discussed how to decrease the number of colors of a vertex coloring in one round of communication.
Once we are familiar with this one step procedure, nothing can stop us from iterating it.
For example, if $G$ is a $2$-regular graph and we start with a vertex coloring with $k=2^{2^{2^{2}}}\approx 2\cdot 10^{19729}$ many colors, then this procedure decreases the number of colors in $4$ rounds of communication as follows 
$$2\cdot 10^{19729}\mapsto 17179869184\mapsto 4624\mapsto 676\mapsto 400.$$
It is not coincidental that it took us $4$ steps to reduce a $4$-times iterated exponential to a reasonably small number.
A general argument (using the better bound discussed after Example~\ref{ex:ColorRedAlg} and treating $\Delta$ as a constant) shows that starting from vertex coloring with $n^c$ colors, where $c>0$ is a constant, we arrive to a coloring with $O(1)$ colors after at most $O(\log^*(n))$ many communication rounds.\footnote{Recall that $\log^*(n)$ is defined as the minimum number of times we need to apply $\log$ to decrease $n$ to a value that is at most $2$.}
A particular instance of a starting coloring might be the assignment of unique identifiers that is given to us in the setting of deterministic algorithms.
Combining these ideas allows to prove the upper bound in the following fundamental result.
Note that the last step, that is, getting from constant colors to $\Delta(G)+1$, can be done by a simple distributed greedy algorithm by iterating over the constantly many color classes and simulating the sequential greedy algorithm locally, see again \cite[Section~4.8]{hirvonen2021distributed} and \cite[Section 2.1.1]{rozhovn2024invitation} for more details.
We also refer the reader to \cite[Section~3.6-3.10]{barenboimelkin_book} for a more detailed treatment of the dependency on $\Delta$.

\begin{theorem}[Distributed Greedy coloring\footnote{This is typically referred to as $\Delta$+1-coloring, rather than ``greedy".}, Cole--Vishkin \cite{cole86}, Goldberg--Plotkin--Shannon \cite{goldberg88}, Linial \cite{linial92LOCAL}]\label{thm:DistributedGreedy}
    Let $\Delta\in \mathbb{N}$.
    \begin{itemize}
        \item There is a deterministic distributed algorithm of complexity $O(\log^*(n))$ that produces a vertex coloring with $\Delta+1$ colors on the class of graphs of degree bounded by $\Delta$.
        \item There is a deterministic distributed algorithm of complexity $O(\log^*(n))$ that produces an edge coloring with $2\Delta-1$ colors on the class of graphs of degree bounded by $\Delta$.
    \end{itemize}
    Moreover, in both cases $\Omega(\log^* (n))$ rounds are necessary.
\end{theorem}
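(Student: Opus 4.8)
The plan is to get the upper bounds by iterating the one‑round color‑reduction of Example~\ref{ex:ColorRedAlg} on top of the coloring supplied by the identifiers, and to get the $\Omega(\log^*(n))$ lower bound from Linial's Ramsey argument on neighborhood graphs. For the vertex‑coloring upper bound, note that for a deterministic algorithm the assignment $\id\colon V\to n^c$ is injective, hence already a proper vertex coloring of $G$ with at most $n^c$ colors. Now iterate the improved reduction stated right after Example~\ref{ex:ColorRedAlg}: one communication round turns a proper coloring with $k$ colors into one with $4(\Delta+1)^2\log^2(k)$ colors. Treating $\Delta$ as a constant this is $k\mapsto O(\log^2(k))$, so after $j$ rounds the number of colors is roughly $\log^{(j)}(n)$ up to $\Delta$‑dependent and constant factors; by the very definition of $\log^*$ the color count drops below a fixed threshold $C=C(\Delta)$ after $O(\log^*(n))$ rounds. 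Finally convert a proper $C$‑coloring into a $(\Delta+1)$‑coloring by a distributed greedy sweep: for $i=1,\dots,C$, in round $i$ every still‑uncolored vertex whose class is $i$ picks the least color in $[\Delta+1]$ avoided by its already‑colored neighbors; same‑class vertices are pairwise non‑adjacent, so they may act simultaneously, and $\Delta+1>\deg_G(v)$ always leaves a free color. This costs $O(C)=O(1)$ additional rounds, for a total of $O(\log^*(n))$.

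\textbf{Edge colorings.} Pass to the line graph $L(G)$, which satisfies $\Delta(L(G))\le 2\Delta-2$, and apply the algorithm above to obtain a $(2\Delta-1)$‑coloring of $L(G)$, i.e.\ a $(2\Delta-1)$‑edge coloring of $G$. The simulation is cheap: an edge $e=\{u,v\}$ can reconstruct $B_{L(G)}(e,t)$ together with its labels from $B_G(u,t+1)\cup B_G(v,t+1)$, which $u$ and $v$ gather jointly in $t+1$ communication rounds of $G$; hence a $t$‑round algorithm on $L(G)$ becomes a $(t+1)$‑round algorithm on $G$, and the bound $O(\log^*(n))$ is preserved.

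\textbf{Lower bound.} It suffices to take $\Delta=2$, so that $\Delta+1=2\Delta-1=3$ and the class contains all cycles $C_n$. For the first item we invoke Linial's theorem that a deterministic $t$‑round algorithm properly $3$‑coloring $C_n$ under every injective labeling by $[n]$ must have $t=\Omega(\log^*(n))$: such an algorithm is a proper $3$‑coloring of the neighborhood graph whose vertices are the length‑$(2t+1)$ windows of labels and whose edges join windows that overlap as they would at adjacent vertices, and (after the standard reduction to a monotone labeling of a path) iterating the Ramsey/pigeonhole fact that any $3$‑coloring of the $s$‑subsets of $[m]$ is constant on the $s$‑subsets of some $[m']$ with $m'=\Omega(\log m)$ reduces the problem, at the cost of $\Theta(t)$ applications, to $3$‑coloring $C_m$ by a $0$‑round algorithm with $m=\log^{(\Theta(t))}(n)$; this is impossible for $m>3$, forcing $\log^{(\Theta(t))}(n)\le 3$, i.e.\ $t=\Omega(\log^*(n))$. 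For the second item, a $t$‑round $3$‑edge‑coloring algorithm on $C_n$ yields a $(t+1)$‑round $3$‑vertex‑coloring algorithm on $C_n$: vertex $i$ simulates the edge algorithm on $\{i,i+1\}$ and outputs its color, and since consecutive vertices are thereby assigned the colors of adjacent edges the result is proper, contradicting the first item.

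\textbf{Main obstacle.} The two upper bounds are essentially careful bookkeeping layered on Example~\ref{ex:ColorRedAlg}; the genuine content is the lower bound, and within it the quantitative claim that the $(2t+1)$‑window neighborhood graph on $[n]$ has chromatic number roughly $\log^{(\Theta(t))}(n)$. Tracking the tower‑of‑exponentials dependence through the iterated Ramsey step is the delicate point, but it is classical and in a survey we would cite Linial~\cite{linial92LOCAL} rather than reproduce it.
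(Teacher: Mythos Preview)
Your proposal matches the paper's own treatment: the paper likewise sketches the upper bound by iterating the color reduction of Example~\ref{ex:ColorRedAlg} (or its sharper Linial variant) on top of the identifier coloring, then finishing with a constant-round greedy sweep over the remaining color classes, and for the lower bound simply refers to Linial. One small slip in your edge-coloring lower bound: on an undirected cycle equipped only with IDs, a vertex cannot single out ``the edge $\{i,i+1\}$'' because there is no global orientation available to the algorithm; the clean fix is to note that $L(C_n)\cong C_n$, give each edge the (unordered) pair of its endpoint IDs as a label---this is injective with range of size at most $n^2$---and then apply the vertex lower bound directly to $L(C_n)$, using $\log^*(n^2)=\Theta(\log^*(n))$.
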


For the proof of the lower bound, which is originally due to Linial \cite{linial92LOCAL}, together with a description of randomized distributed algorithm for these colorings that work \emph{without knowledge of the size of the graph}, we refer the reader to the paper of Holroyd, Schramm and Wilson \cite[Sections 1 and 2]{HolroydSchrammWilson2017FinitaryColoring}, where the connection to \emph{random processes}, in particular, to \emph{finitary factors of iid}, is discussed.
\vspace{+0.2cm}

In the light of Theorem~\ref{thm:DistributedGreedy}, we are compelled to raise the following questions.
Can we decrease the number of colors?
Are there similarly efficient distributed algorithms for Brooks or Vizing colorings?
Does randomness give us any advantage?

In order to answer these questions, it is useful to discuss the global picture of possible LOCAL complexities.

\subsection{Global picture of the LOCAL model}\label{subsec:GlobalLOCAL}
The landscape of possible LOCAL complexities of graph coloring problems where the correctness of a solution can be checked locally, so-called \emph{locally checkable labeling (LCL) problems} (see, e.g., \cite[Section 2.4.2]{rozhovn2024invitation} for a formal definition),\footnote{Apart from graph colorings, the class of LCL problems include for instance the perfect matching problem, the sinkless orientation problem and various versions of the unfriendly coloring problem.} is very well understood.
This understanding has been a major success of several recent lines of work on local algorithms, including \cite{naorstockmeyer, brandt19automatic,chang_kopelowitz_pettie2019exp_separation,chang2019time, ghaffari2017complexity, ghaffari2018derandomizing,balliu2018new_classes-loglog*-log*, RozhonG19, ghaffari2019distributed, ghaffari_grunau_rozhon2020improved_network_decomposition,chang2020n1k_speedups,balliu2020almost_global_problems,brandt_grids,brandt_grunau_rozhon2021classification_of_lcls_trees_and_grids} and many others.
For the class of graphs with degree bounded by $\Delta$, a clean picture emerges, see Figure~\ref{fig:GlobalLOCAL}.\footnote{The picture is slightly simplified, in particular, $\log(n)$ and $\log(\log (n))$ complexities are only conjectured \cite{chang2019time}, the best up-to-date general upper bounds in the corresponding regimes are $O(\log^3(n))$ and $O(\log^3(\log (n)))$.}

The two most interesting features of this classification is that (i) there is only one regime when randomness helps, (ii) there are automatic \emph{speed-up} results that often turn a proposed distributed algorithm into a faster one in a completely black-box manner.
In particular, item (ii) automatically answers question of the form, is there an LCL problem that has (deterministic) LOCAL complexity $\Theta(\sqrt{\log(n)})$?

The fundamental result of Bernshteyn \cite{Bernshteyn2021LLL}, mentioned in the introduction, turns colorings produced by efficient distributed algorithms into measurable colorings.
Here, \emph{efficient distributed algorithms} refers to the $o(\log(n))$ regime, see Figure~\ref{fig:GlobalLOCAL}.
In particular, one needs to specify if randomness is allowed as this is the regime, where it gives an advantage.
\vspace{+0.2cm}

\begin{figure}
    \centering
        \includegraphics[scale=0.75]{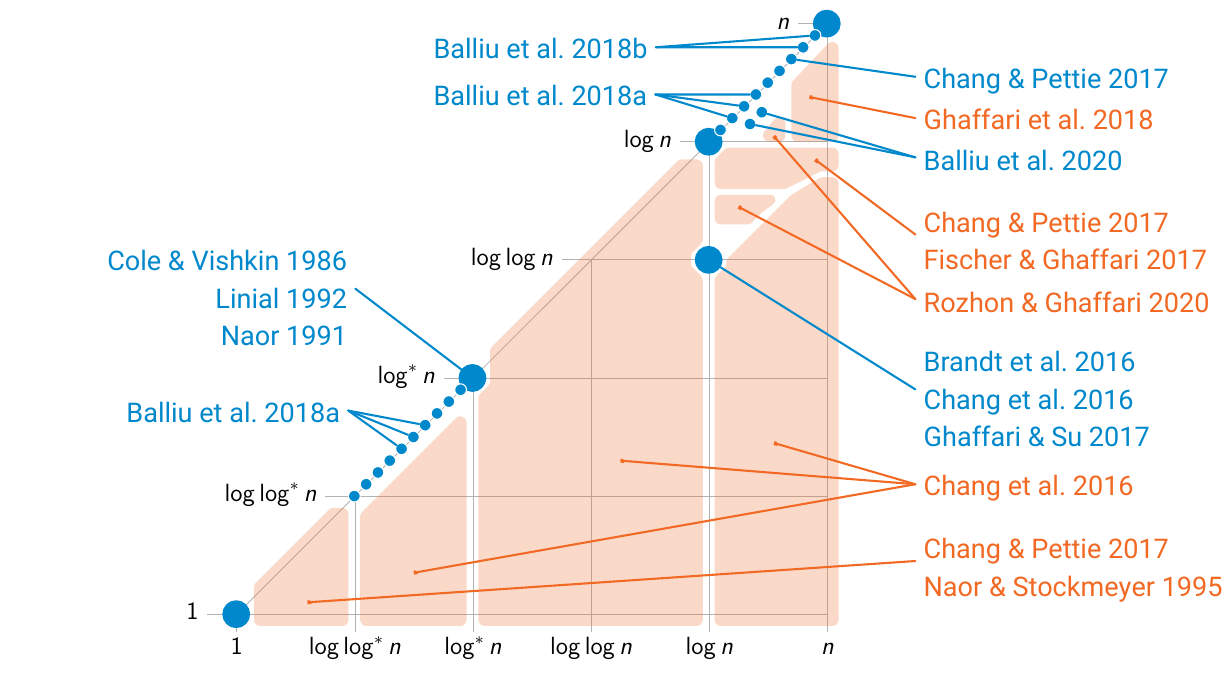}
		\caption{(Courtesy of Jukka Soumela)
        The blue dots represent classes of LCL problems (size of the dots suggest the importance of the class), that is, every LCL problem, e.g., vertex coloring with $\Delta$ colors, belongs to one of the dots.
                \emph{Deterministic LOCAL complexity} is given by the projection of the dot to the horizontal axis and \emph{randomized LOCAL complexity} is given by the projection to the diagonal.
                Light salmon colored areas do not contain any complexity class of LCL problems, in particular, they represent the \emph{speed-up} results.
                }
		\label{fig:GlobalLOCAL}
\end{figure}

To understand the LOCAL complexities of our graph coloring problems, we need to locate the corresponding blue dots in Figure~\ref{fig:GlobalLOCAL}.
We have seen in Theorem~\ref{thm:DistributedGreedy} that the LOCAL complexity of \emph{greedy coloring}, both deterministic and randomized, is $\Theta(\log^* (n))$, the class of LCL problems with this LOCAL complexity is sometimes referred to as the class of \emph{symmetry breaking problems}.

Deterministic lower bounds $\Omega(\log(n))$ for Brooks and strong version of Vizing coloring are fundamental results in the theory of the LOCAL model.
The original argument uses the \emph{round elimination method} of Brandt, see \cite{brandt19automatic} for more details.
In Figure~\ref{fig:GlobalLOCAL}, we see that this automatically implies the randomized lower bound $\Omega(\log(\log(n)))$.
Rather surprisingly the lower bounds also follow from the groundbreaking work of Marks \cite{DetMarks} in Borel combinatorics by applying Bernshteyn's correspondence \cite{Bernshteyn2021LLL}.
We remark that Marks' argument involves infinite games and one of the deepest results in mathematical logic, the Borel determinacy theorem of Martin \cite{martin}.
In Section~\ref{sec:Marks}, we show how to adapt Marks' game technique directly in the LOCAL model and get an alternative and very simple proof of these fundamental lower bounds based on determinacy of finite games.
The following theorem should be compared with Theorem~\ref{thm:DetMarks}.

\begin{theorem}[Distributed lower bounds for colorings, \cite{brandt_etal2016LLL,ChangHLPU20}]\label{thm:LowerBoundsDistributed}
    Let $\Delta\ge 3$.
    \begin{itemize}
        \item Vertex coloring with $\Delta$ colors on the class of finite graphs of degree bounded by $\Delta$ that do not contain $K_{\Delta+1}$ needs $\Omega(\log(n))$ rounds in the deterministic LOCAL model.
        \item Edge coloring with $2\Delta-2$ colors on the class of finite graphs of degree bounded by $\Delta$ needs $\Omega(\log(n))$ rounds in the deterministic LOCAL model.
    \end{itemize}
\end{theorem}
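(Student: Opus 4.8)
The plan is to prove both lower bounds by a single argument that reduces them to the $\Omega(\log^* n)$-type symmetry-breaking phenomenon combined with a \emph{round elimination} or \emph{lower-bound-via-games} strategy; since the statement credits \cite{brandt_etal2016LLL,ChangHLPU20}, the cleanest route is to reduce to the known hardness of sinkless orientation, but I would prefer to present the self-contained game-theoretic argument that the survey is building toward in Section~\ref{sec:Marks}. Concretely, I would first observe that it suffices to work on the class of high-girth $\Delta$-regular graphs: any $t$-round algorithm only sees $B_G(v,t)$, so on a graph of girth $>2t+1$ every such neighborhood is the rooted $\Delta$-regular tree of depth $t$ together with an identifier labeling. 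Fixing such a graph $G$ on $n$ vertices with girth $\omega(\log n)$ (these exist, e.g. random $\Delta$-regular graphs à la \cite{bollobas}, or explicit Ramanujan-type constructions), an algorithm $\fA$ of complexity $t_n = o(\log n)$ must produce a correct coloring purely from depth-$t_n$ identifier-labeled trees.

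The key step is then a counting/indistinguishability argument showing this is impossible for $t_n$ too small. For the edge-coloring case, the idea is that a proper edge coloring with $2\Delta-2$ colors of a high-girth $\Delta$-regular graph is a rigid combinatorial object: locally it forces, around each vertex, a constrained pattern, and one shows via the round elimination framework of Brandt (or via Marks' game, adapting the $(\Delta+1)$-element ``board'' to the $2\Delta-2$-edge-coloring problem) that if the problem had complexity $t$ then a related ``one step easier'' problem $\Pi'$ on the tree would have complexity $t-1$, and iterating $\Theta(\log n)$ times would yield a $0$-round algorithm for a problem that provably has no $0$-round solution on the regular tree (because a $0$-round algorithm is a function of the identifier alone and an adversary can defeat it). The vertex-coloring case with $\Delta$ colors, excluding $K_{\Delta+1}$, is handled analogously: on high-girth graphs the exclusion of $K_{\Delta+1}$ is automatic, and $\Delta$-coloring the $\Delta$-regular tree is again a problem to which round elimination (equivalently, Marks' determinacy-flavored game, now with a $\Delta$-element board per the analogue of Theorem~\ref{thm:DetMarks}) applies, giving the same $\Omega(\log n)$ tower.

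The main obstacle is making the round-elimination/game step precise: one must define the ``relaxed'' intermediate problems $\Pi_i$ correctly so that (i) solvability of $\Pi_i$ in $t$ rounds really does imply solvability of $\Pi_{i+1}$ in $t-1$ rounds on trees, and (ii) the base problem $\Pi_{\Theta(\log n)}$ is genuinely $0$-round-unsolvable — this is exactly where the combinatorics of the specific color counts $\Delta$ and $2\Delta-2$ (rather than $\Delta+1$ and $2\Delta-1$) enters and must not collapse. I would handle this either by citing the round elimination analysis of \cite{brandt_etal2016LLL,ChangHLPU20} as a black box for the intermediate problems, or, in the spirit of this survey, by invoking the adaptation of Marks' game to the LOCAL model developed in Section~\ref{sec:Marks}: the players alternately extend identifier-labeled partial neighborhoods, Borel (here: finite) determinacy hands one player a winning strategy, and a winning strategy for the ``spoiler'' translates into a high-girth graph with an identifier assignment on which $\fA$ necessarily produces a monochromatic adjacent pair unless $t_n = \Omega(\log n)$. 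The remaining details — verifying high-girth $\Delta$-regular graphs of every size $n$ exist and that the adversary can realize the game positions inside an actual $n$-vertex graph — are routine padding arguments.
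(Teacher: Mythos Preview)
Your high-level plan matches the paper's approach in Section~\ref{sec:Marks}: reduce to high-girth graphs so that neighborhoods are trees, then use a finitary adaptation of Marks' game with strategy stealing to rule out a $\Delta$-coloring. The paper also explicitly notes, as you do, that the round-elimination route of \cite{brandt_etal2016LLL,ChangHLPU20} is the alternative.

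However, there is a genuine gap at exactly the step you flag as ``routine padding.'' The strategy-stealing argument works cleanly only when the players may label the tree \emph{without} any injectivity constraint on the identifiers; the paper stresses that ``adding rules to the game in the most na\"ive way (e.g., requiring one of the players to use injective IDs) ruins the symmetry on which the strategy stealing arguments rely.'' So your sentence ``a winning strategy for the `spoiler' translates into a high-girth graph with an identifier assignment'' hides the real obstacle: the labeling produced by playing strategies against each other need not be injective, hence need not be a legal LOCAL instance, and you cannot simply ``realize the game positions inside an actual $n$-vertex graph.''

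The paper's fix is a specific technical device you do not mention: the \emph{homomorphism graph / ID graph} technique. One replaces raw identifiers by edge-label-preserving homomorphisms from the edge-colored tree $T'_\Delta$ into a target graph $\mathcal{H}$ of girth $>2t+2$ (so that Proposition~\ref{pr:injectivity} forces local injectivity automatically), and then runs the game inside $\Homed(T^{r}_\Delta,\mathcal{H})$. The strategy stealing now transfers, via Theorem~\ref{t:tr2local}, an $r$-round GLOCAL $\Delta$-coloring into an \emph{edge-labeled} $\Delta$-coloring of $\mathcal{H}$; one then needs Lemma~\ref{l:existenceof}, a probabilistic construction of nicely edge-labeled graphs $\mathcal{H}_n$ with $\chi^{el}(\mathcal{H}_n)>\Delta$ and girth $\omega(\log n)$, to close the argument. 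None of this is padding; it is the new content of Section~\ref{sec:Marks}. Your sketch would become a proof once you insert this mechanism in place of the ``adversary realizes game positions'' step.
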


\subsection{Upper bound for Brooks and Vizing colorings}
Finding optimal distributed algorithms for graph colorings and decreasing the number of colors, ideally matching the Brooks and Vizing bounds, has been a major challenge in the theory of distributed computing.
In this section, we give a brief overview of the up-to-date results.
Recall that for simplicity we treat $\Delta$ as a constant, see the survey of Rozho\v{n} \cite[Section~1.6]{rozhovn2024invitation} for references, results and open problems in the regime when the complexity depends on $\Delta$ as well.
\vspace{+0.2cm}

The local complexity, both deterministic and randomized, of the greedy vertex coloring is $\Theta(\log^*(n))$ by Theorem~\ref{thm:DistributedGreedy}.
The first non-trivial result concerning distributed Brooks colorings, that remained the state of the art for more than two decades, was achieved by Panconesi and Srinivasan \cite{panconesi-srinivasan,PanconesiSrinivasan}.
The combinatorial core of their algorithm is based on the augmenting graph result for Brooks' theorem that is discussed after Theorem~\ref{thm:ClassicChristiansen}, and yields both a deterministic and a randomized distributed algorithm of local complexity $O(\log^3(n))$.

In the special case of trees of degree bounded by $\Delta\ge 3$, Barenboim and Elkin \cite{BarenboimElkinPaper} described a deterministic distributed algorithm of local complexity $O(\log(n))$ and Chang, Kopelowitz and Pettie \cite{chang_kopelowitz_pettie2019exp_separation} gave a randomized one of local complexity $O(\log(\log(n)))$, when $\Delta$ is sufficiently large.
Importantly, these upper bounds exactly match the lower bounds of Brandt, Fischer, Hirvonen, Keller, Lempi{\"a}inen, Rybicki, Suomela and Uitto \cite{brandt_etal2016LLL} that were proved for the class of graphs of girth $\Omega(\log(n))$, in particular, they also apply to the class of finite trees.

The following are the up-to-date results of Ghaffari, Hirvonen, Kuhn and Maus \cite{ghaffari2020Delta_coloring}, see also \cite{FastDistributedBrooksTheorem}.
Note that these results are only one logarithmic factor away from the lower bounds.

\begin{theorem}[Distributed Brooks coloring, \cite{ghaffari2020Delta_coloring}]\label{thm:DistributedBrooks}
    Let $\Delta\ge 3$.
    \begin{itemize}
        \item Vertex coloring with $\Delta$ colors on the class of finite graphs of degree bounded by $\Delta$ that do not contain $K_{\Delta+1}$ has deterministic local complexity $O(\log^2(n))$.
        \item Vertex coloring with $\Delta$ colors on the class of finite graphs of degree bounded by $\Delta$ that do not contain $K_{\Delta+1}$ has randomized local complexity $O(\log^2(\log(n)))$.
    \end{itemize}
\end{theorem}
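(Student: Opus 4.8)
The plan is to derive Brooks' $\Delta$-coloring from the Panconesi--Srinivasan short augmenting path result quoted after Theorem~\ref{thm:ClassicChristiansen}, by setting up an iterative extension of a partial $\Delta$-coloring and controlling the interference between the recolorings. First I would produce, in $O(\log^*(n))$ rounds, a proper $(\Delta+1)$-coloring via Theorem~\ref{thm:DistributedGreedy}, and then discard its last color class; this yields a partial $\Delta$-coloring $d_0$ whose uncolored set $U_0$ is an independent set in $G$. The remaining task is to extend $d_0$ to a total $\Delta$-coloring. By \cite{panconesi-srinivasan,PanconesiSrinivasan}, at any stage every uncolored vertex in a partial $\Delta$-coloring of $G$ (with $\Delta\ge 3$ and $K_{\Delta+1}\not\subseteq G$) lies in an augmenting path of length $L\in O(\log_\Delta n)$; and by the definition of an augmenting subgraph for vertex colorings, performing the associated recoloring enlarges the colored set by exactly that one vertex and creates no new uncolored vertices. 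Consequently, if $A$ is a set of uncolored vertices whose augmenting paths are pairwise disjoint, one may recolor along all of them simultaneously, keeping the coloring proper and decreasing the number of uncolored vertices by $|A|$.

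The heart of the matter is then to process $U_0$ in few rounds of pairwise non-interfering augmentations. Two augmenting paths of length at most $L$ rooted at vertices $u,v$ can overlap only when $d_G(u,v)\le 2L+2$, so the relevant conflict structure sits inside the power graph $G^{2L+2}$. I would compute a network decomposition of (the uncolored part of) $G^{2L+2}$: a partition into $t\in O(\log n)$ classes, each inducing in $G^{2L+2}$ a graph whose components (``clusters'') have diameter $O(\log n)$, i.e.\ $G$-diameter $O(L\log n)$. Processing the classes one after another, and within a class all clusters in parallel, each cluster is a bounded-diameter region; since two clusters of the same class are at $G$-distance $>2L+2$, the $L$-neighborhoods touched by their augmentations are disjoint, so a local sequential simulation of the Brooks augmenting procedure inside each cluster is consistent. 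Balancing the parameters so that a cluster is handled in $O(\log n)$ rounds and there are $O(\log n)$ classes gives the deterministic bound $O(\log^2 n)$. One extra point is needed because $K_{\Delta+1}\not\subseteq G$ does not prevent $G$ from having almost-complete neighborhoods: a near-$K_{\Delta+1}$ necessarily has a non-edge, whose two endpoints can be forced to share a color, and this ``slack'' is what lets the local list-coloring sub-steps succeed; it is built into the partial colorings being extended.

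For the randomized complexity $O(\log^2\log n)$ I would prepend a shattering step: in one round every uncolored vertex tries a uniformly random admissible color and keeps it if it does not conflict. A standard calculation shows this colors a constant fraction of the vertices and, after passing to a suitable constant power of $G$, leaves a residual uncolored graph that with high probability breaks into components of size $N\in\operatorname{poly}(\log n)$. Running the deterministic $O(\log^2 N)$ procedure above independently on each component then costs $O(\log^2\log n)$ rounds, and combining the two phases yields the claimed randomized bound.

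The step I expect to be the main obstacle is keeping the round count at $O(\log^2 n)$ deterministically. A black-box use of the strongest known network-decomposition algorithm for $G^{2L+2}$ is already too slow once the blow-up by $L\in\Theta(\log_\Delta n)$ is accounted for, so the argument of \cite{ghaffari2020Delta_coloring} must instead interleave the decomposition with the augmentation and show, in the spirit of the Vizing-chain technique of Section~\ref{sec:Vizing}, that the $\Theta(n)$ augmenting paths can be rerouted so as to avoid one another; carrying this out together with the almost-clique case analysis is the technical core, and is what separates the $O(\log^2 n)$ upper bound from the $\Omega(\log n)$ lower bound of Theorem~\ref{thm:LowerBoundsDistributed}.
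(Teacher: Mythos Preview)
The paper does not prove this theorem; it is quoted from \cite{ghaffari2020Delta_coloring} without argument, and the surrounding text explicitly attributes the Panconesi--Srinivasan augmenting-path scheme you describe with the older bound $O(\log^3 n)$. Your deterministic sketch is exactly that scheme, and you yourself concede in the last paragraph that it does not reach $O(\log^2 n)$: network decomposition of $G^{2L+2}$ with $L\in\Theta(\log_\Delta n)$ costs an extra logarithmic factor. Pointing at where the saving ``must'' come from is not a proof, so as written you have reproduced the $O(\log^3 n)$ result, not the stated one.

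The randomized sketch has a concrete gap. After discarding the $(\Delta+1)$st color class, $U_0$ is independent, so an uncolored vertex $v$ sees only colored neighbors. Either some color in $[\Delta]$ is missing among them, in which case $v$ can be colored \emph{deterministically}, or all $\Delta$ colors appear, in which case $v$ has \emph{no} admissible color to sample. Thus your ``every uncolored vertex tries a uniformly random admissible color and keeps it if it does not conflict'' is not a shattering step at all: since $U_0$ is independent there are no conflicts between trials, the step colors exactly the vertices that already had slack, and it provides no probabilistic control on the component structure of the residual set. The genuine randomized $\Delta$-coloring arguments require a different mechanism (an almost-clique/sparse decomposition, degree reduction, or a layered LLL on carefully chosen bad events) before any shattering-then-deterministic-cleanup template becomes applicable; the one-line random trial you propose does not substitute for this.
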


For results that concern decreasing the number of colors even further, we refer the reader to the papers of Bamas and Esperet \cite{BamasEsperet} for matching optimal coloring and Chung, Pettie, and Su \cite{chung2017LLL} for colorings of graphs of girth at least $3$, $4$ or $5$.
\vspace{+0.2cm}

Applying the vertex coloring results to the line graph of a given graph $G$ implies immediately the same upper bounds for edge colorings of $G$.
Namely, the greedy edge coloring with $2\Delta-1$ colors have a local complexity $\Theta(\log^*(n))$.
Similarly, the same bounds as in Theorem~\ref{thm:DistributedBrooks} apply to edge colorings with $2\Delta-2$ colors.
Chang, He, Li, Pettie, and Uitto \cite{ChangHLPU20} showed that edge coloring with $2\Delta-2$ colors have a deterministic lower bound $\Omega(\log(n))$ even when restricted to the class of finite trees of degree bounded by $\Delta$.
In the same paper \cite{ChangHLPU20}, the authors developed a randomized algorithm of local complexity $O(\operatorname{poly}(\log(\log(n)))$ for edge coloring with $\Delta+\sqrt{\Delta}\operatorname{poly}(\log(n))$ colors.

Getting closer to the Vizing bound, Su and Vu \cite{SuVu} deviced a randomized distributed algorithm for edge coloring with $\Delta+2$ colors of local complexity $\operatorname{poly} (\log(n))$.
Finally, Bernshteyn \cite{BernshteynVizing} matched the Vizing bound by both deterministic and randomized distributed algorithms of local complexity $\operatorname{poly} (\log(n))$.
This was subsequently improved by Christiansen \cite{Christiansen}, and Bernstehyn and Dhawan \cite{BernshteynVizing2} to the current state of art, see Section~\ref{sec:Vizing} for more details.

\begin{theorem}[Distributed Vizing coloring, \cite{BernshteynVizing2}]\label{thm:DistributedVizing}
    Let $\Delta\in \mathbb{N}$.
    \begin{itemize}
        \item Edge coloring with $\Delta+1$ colors on the class of finite graphs of degree bounded by $\Delta$ has deterministic local complexity $O(\operatorname{poly}(\log(\log(n)))\log^5(n))$.
        \item Edge coloring with $\Delta+1$ colors on the class of finite graphs of degree bounded by $\Delta$ has randomized local complexity $O(\operatorname{poly}(\log(\log(n)))\log^2(n))$.
    \end{itemize}    
\end{theorem}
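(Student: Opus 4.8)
The plan is to build the $(\Delta+1)$-edge-coloring by \emph{iteratively extending} a partial proper edge coloring $d\colon E\partialto[\Delta+1]$, starting from the empty coloring, in $O(\log(n))$ \emph{phases}, each of which colors a constant fraction of the currently uncolored set $U_d$ in parallel. The combinatorial fuel is Theorem~\ref{thm:ClassicChristiansen}: every $e\in U_d$ has an augmenting subgraph $H_e$ with $|E(H_e)|=O(\Delta^7\log(n))=O(\log(n))$, treating $\Delta$ as a constant per our convention. Being connected and containing $e=\{v,w\}$, such an $H_e$ lies inside $B_G(v,O(\log(n)))\cup B_G(w,O(\log(n)))$; so if every uncolored edge could realize the recoloring prescribed by its $H_e$ simultaneously, the coloring would be finished after $O(\log(n))$ rounds. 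The obstruction is that the $H_e$ for different $e$ can overlap, and their recolorings then clash — so the real content of a phase is to isolate a large \emph{conflict-free} subfamily of augmenting subgraphs and apply all of them at once.

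A phase proceeds in two steps. In the \textbf{first step}, every $e\in U_d$ locally constructs an augmenting subgraph $H_e$ via the \emph{multi-step Vizing chain}: a short Vizing fan at one endpoint of $e$, followed by a short alternating-path segment in two colors, after which — rather than following a single, possibly linear-length, alternating path as in the classical proof of Theorem~\ref{thm:Vizing} — one pivots to a fresh fan and repeats, keeping the total length $O(\operatorname{poly}(\Delta)\log(n))$. In the randomized model the pivots are made at random, which is precisely what forces the chain to stay short with high probability; in the deterministic model one runs the local derandomization underlying Theorem~\ref{thm:ClassicChristiansen}. The decisive structural fact is that applying $H_e$ alters $d$ only on $E(H_e)$, so whenever $H_e$ and $H_{e'}$ are vertex-disjoint their recolorings are mutually non-interfering and can be performed together, yielding a proper partial coloring whose domain has grown by both $e$ and $e'$.

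In the \textbf{second step} we select the conflict-free subfamily. Form the \emph{conflict graph} $\mathcal{C}$ on vertex set $U_d$ with $e\sim_{\mathcal{C}}e'$ iff $V(H_e)\cap V(H_{e'})\neq\emptyset$. Since each $H_e$ has $O(\log(n))$ edges in a graph of degree $\le\Delta$, the graph $\mathcal{C}$ has maximum degree $\operatorname{poly}(\log(n))$, and one communication round in $\mathcal{C}$ is simulated by $O(\operatorname{poly}(\log(n)))$ rounds in $G$. A proper coloring of $\mathcal{C}$ with $\operatorname{poly}(\log(n))$ colors then lets us sweep its color classes one at a time, applying all augmenting subgraphs in a class simultaneously — legitimate because a color class is a $\mathcal{C}$-independent set, hence the applied $H_e$ are pairwise vertex-disjoint. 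Deterministically, the coloring of $\mathcal{C}$ is obtained from a network decomposition of $\mathcal{C}$ (Rozho\v{n}--Ghaffari \cite{RozhonG19}), which costs $\operatorname{poly}(\log(n))$ rounds; in the randomized model one instead repeatedly samples $U_d$ down by a $1/\operatorname{poly}(\log(n))$ factor, isolating a constant fraction of the survivors by a union bound over the short chains, with only a cheap $\operatorname{poly}(\log\log(n))$ deterministic cleanup. Either way, the refined analysis of \cite{BernshteynVizing2} shows that a phase colors a constant fraction of $U_d$, so $|U_d|$ drops from $O(n)$ to $0$ in $O(\log(n))$ phases.

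Finally one adds up the costs. There are $O(\log(n))$ phases; in each, the chain construction of the first step costs $O(\log(n))$ rounds, while the conflict resolution of the second step costs $\operatorname{poly}(\log(n))$ rounds deterministically (a network decomposition of $\mathcal{C}$, its $\operatorname{poly}(\log(n))$-round simulation in $G$, and a sweep over its $\operatorname{poly}(\log(n))$ color classes) versus only $\operatorname{poly}(\log\log(n))$ rounds randomized, where shattering handles all but a negligible remainder. Since after applying a batch the new partial coloring again admits short augmenting chains — Theorem~\ref{thm:ClassicChristiansen} holds for \emph{every} partial $(\Delta+1)$-coloring — the phases simply compose, and a careful bookkeeping of the exponents, carried out in \cite{BernshteynVizing2}, yields the claimed $O(\operatorname{poly}(\log\log(n))\log^5(n))$ deterministic and $O(\operatorname{poly}(\log\log(n))\log^2(n))$ randomized complexities. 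The main obstacle is the coupling of the two steps: one must construct the augmenting chains so that they are \emph{simultaneously} short (for locality), locally and deterministically computable, and sparse enough that $\mathcal{C}$ has polylogarithmic degree (so that batching clears a constant fraction of $U_d$ per phase) — and this robustness, preserved under batch applications, is exactly what the multi-step Vizing chain technique and Theorem~\ref{thm:ClassicChristiansen} are engineered to provide.
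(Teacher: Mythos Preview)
The paper does not prove this theorem; it is quoted from \cite{BernshteynVizing2}, and the only argument the survey offers is the remark in Section~\ref{subsec:ApplicationsVizing} that the randomized bound follows directly from the ``Many disjoint augmenting subgraphs'' theorem (Theorem~2.4 in \cite{BernshteynVizing2}), which combines multi-step Vizing chains with the \emph{entropy compression method}. Your outline is consistent with that discussion and with the actual architecture of \cite{BernshteynVizing2}: iterate $O(\log n)$ phases, in each phase build short augmenting chains and augment along a large vertex-disjoint subfamily.

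There is, however, one genuine gap in your sketch. You write that since each $H_e$ has $O(\log n)$ edges in a graph of degree $\le\Delta$, the conflict graph $\mathcal{C}$ has maximum degree $\operatorname{poly}(\log n)$. That inference is not valid: the degree of $e$ in $\mathcal{C}$ is governed by how many \emph{other} chains $H_{e'}$ pass through the vertices of $H_e$, and the key property the survey isolates in Section~\ref{subsec:Multi} is that an edge can lie on up to $\operatorname{poly}(\Delta)^i$ distinct $i$-step Vizing chains. For $i\in\Theta(\log n)$ (as in Christiansen's construction) this bound is polynomial in $n$, not polylogarithmic, so a worst-case deterministic assignment $e\mapsto H_e$ need not give a sparse conflict graph at all. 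This is exactly the obstacle that the entropy compression argument in \cite{BernshteynVizing2} is designed to overcome: by randomizing the pivot points one shows that, with the right bookkeeping, a constant fraction of the randomly built chains are pairwise vertex-disjoint, which is the content of the ``Many disjoint augmenting subgraphs'' theorem the paper quotes. You acknowledge at the end that sparsity of $\mathcal{C}$ is the main obstacle, but the sentence purporting to establish it does not, and the missing ingredient---entropy compression---is precisely what the paper singles out as the new idea in \cite{BernshteynVizing2}.
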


\section{Measurable combinatorics}
\label{sec:measurable}

	Generalizing concepts from finite graph theory to uncountable graphs in the most straightforward way often results in counter-intuitive behavior: the most spectacular example of this is probably the Banach-Tarski paradox. It states that a three-dimensional ball of unit volume can be decomposed into finitely many pieces that can be moved by isometries (distance preserving transformations such as rotations and translations) to form two three-dimensional balls each of them with unit volume(!).
	The graph theoretic problem lurking behind this paradox is the following: fix finitely many isometries of $\mathbb{R}^3$ and then consider a graph where $x$ and $y$ are connected if there is an isometry that sends $x$ to $y$.
	Then our task becomes to find a perfect matching in the appropriate subgraph of this graph -- namely, the bipartite subgraph where one partition contains points of the first ball and the other contains points of the other two balls. 
	Banach and Tarski have shown that, with a suitably chosen set of isometries, the Axiom of Choice implies the existence of such a matching. 
	In contrast, since isometries preserve the Lebesgue measure, the pieces in the decomposition cannot be Lebesgue measurable. Such observations lead to the following heuristic: if we want to use (uncountably) infinite graphs to model the behavior of large finite graphs, we must impose measurability constraints on the objects we are investigating.  
	
	Recently, results in this direction that lie on the border of combinatorics, logic, group theory, and ergodic theory led to an emergence of a new field often called \textit{descriptive} or \textit{measurable combinatorics}. 
	The field focuses on the connection between the discrete and continuous and is largely concerned with the investigation of graph-theoretic concepts. See \cite{laczk,doughertyforeman, marksunger,measurablesquare,marks2016baire,gaboriau,KST,DetMarks,millerreducibility,AsymptoticDim,csokagrabowski,Bernshteyn2021LLL} for some of the most important results, and \cite{kechris_marks2016descriptive_comb_survey, pikhurko2021descriptive_comb_survey,marks2022measurable} for surveys.
	
	The usual setup in descriptive combinatorics is that we have a graph with uncountably many connected components, each being a countable graph of bounded degree. 
	For example, in case of the Banach-Tarski paradox, the vertices of the underlying graph are the points of the three balls, edges correspond to isometries, and the degree of each vertex is bounded by the number of chosen isometries. 
    \vspace{+0.2cm}
    
    Before we formally introduce Borel graphs and their colorings, we illustrate the subject on a classical -and impossible to omit- example from ergodic theory that simulates nicely the type of behavior that one should expect when dealing with measurable sets, and how such behaviors can be dramatically different from the classical case.

\begin{example}[Irrational rotations]\label{ex:IrrationalRotation}
Let $\alpha \in [0,\pi]$ be such that $\frac{\alpha}{\pi}$ is irrational and denote by $T_\alpha$ the rotation of the circle $\mathbb{S}^1$ by $\alpha$.
Define the graph $\fG_\alpha$ with vertex set $\mathbb{S}^1$ as follows.
We say that $x,y\in \mathbb{S}^1$ form an edge in $\fG_\alpha$ if $T_\alpha(x)=y$ or $T_\alpha(y)=x$.

Observe that $\fG_\alpha$ is $2$-regular and, as $\frac{\alpha}{\pi}$ is irrational, acyclic.
In particular, there are uncountably many connectivity components in $\fG_\alpha$ each isomorphic to the canonical graph on $\mathbb{Z}$.
As each of these connectivity components has chromatic number $2$, it follows that $\chi(\fG_\alpha)=2$.
In other words, we can write $\mathbb{S}^1=B_0\sqcup B_1$ such that $T_\alpha(B_0)=B_1$, $T_\alpha(B_0)\cap B_0=\emptyset$ and $T_\alpha(B_1)\cap B_1=\emptyset$.

A typical question in measurable combinatorics asks if the sets $B_0$ and $B_1$ can be Lebesgue measurable, that is, if the \emph{measurable chromatic number of $\fG_\alpha$} is equal to $2$ as well.
We demonstrate that this is \emph{not} possible.
Suppose for a contradiction that $B_0$ and $B_1$ are Lebesgue measurable.
Then by the properties above combined with the fact that $T_\alpha$ preserves measure, we conclude that $\lambda(B_0)=\lambda(B_1)=1/2$.
By the Lebesgue density point theorem, there is a nonempty open interval $U$ with the property that $\lambda(U\setminus B_0)<\frac{1}{3}\lambda(U)$.
But then as $2\alpha/\pi$ is also irrational, there is an odd $n\in\mathbb{N}$ such that $\lambda(T^n_{\alpha}(U) \cap U)>\frac{2}{3}\lambda(U)$, where $T^n_\alpha$ denotes the application of $T_\alpha$ $n$-times.
Note that $T^n_\alpha(B_0)=B_1$ as this holds for every odd $n\in \mathbb{N}$.
An easy volume argument in $U$ then implies that $B_1\cap B_0=T^n(B_0) \cap B_0 \neq \emptyset$, a contradiction.

In a stark contrast, a vertex coloring with $3$ colors can be achieved with Lebesgue measurable sets, see Example~\ref{ex:BasicBorelColoring}.
\end{example}

\subsection{Definitions and basic results}

The connection between the local and global structure of the graphs that are studied in measurable combinatorics is formalized through classical descriptive set theoretic notions, see \cite{kechrisclassical} for introduction to the subject.
Namely, the vertex set of a graph comes equipped with a \emph{$\sigma$-algebra} of measurable, or definable, subsets.
The structure and all the operations on the graphs that we are interested in, for example, graph colorings or partitions, have to be measurable with respect to this $\sigma$-algebra or its powers.

To avoid any pathological behavior, we always assume that the $\sigma$-algebra turns the vertex set into a {\bf standard Borel space}.
Formally, a standard Borel space is a pair $(X,\fB)$, where $X$ is a set and $\fB$ is a $\sigma$-algebra of subsets of $X$ that coincide with a $\sigma$-algebra of Borel sets for some complete separable metric on $X$.
Slightly abusing the notation, we refer to elements of $\fB$ as \emph{Borel sets}.
By the \emph{Borel isomorphism theorem}, all uncountable standard Borel spaces are Borel isomorphic, so one may safely assume that the vertex set of every Borel graph in this paper is, for example, the unit interval $[0,1]$ endowed with the $\sigma$-algebra of Borel sets.
If $(X,\fB)$ is a standard Borel space, then the product $X\times X$ and the set of all unordered pairs $\binom{X}{2}$ is naturally a standard Borel space.

\begin{definition}
    A {\bf Borel graph $\fG$} is a triple $(V,E,\fB)$, where $(V,\fB)$ is a standard Borel space, $(V,E)$ is a graph and $E\subseteq \binom{V}{2}$ is a Borel subset of the standard Borel space $\binom{V}{2}$ of unordered pairs of $V$.
\end{definition}

Borel graphs often appear in the nature.
The most relevant examples for our purposes are so called Schreier graphs of group actions.
These are frequently investigated in the theory of random processes, measured group theory, ergodic theory and dynamics. 

\begin{example}[The Schreier graph $\schreier{\Gamma}{S}{X}$]\label{ex:Schreier}
Let $\Gamma$ be a countable group and $S \subseteq \Gamma$ be a generating set of $\Gamma$.
Assume that $\Gamma\curvearrowright^\cdot X$ is an action of $\Gamma$ on a set $X$.
The \emph{Schreier graph} $\schreier{\Gamma}{S}{X}$ of the action is a graph on the set $X$, where $x \neq x'$ form an edge if there is $\gamma \in S$ such that $\gamma\cdot x=x'$ or $\gamma\cdot x'=x$.

If $(X,\mathcal{B})$ is a standard Borel space and the action $\Gamma\curvearrowright^\cdot X$ is Borel, meaning that $\gamma\cdot\_:X\to X$ is a Borel isomorphism for every $\gamma\in \Gamma$, then $\schreier{\Gamma}{S}{X}$ is a Borel graph.
\end{example}

The simplest example of a Schreier graph is the (right) \emph{Cayley graph of $\Gamma$ with respect to $S$}, denoted by $\Cay(\Gamma,S)$, that is, the Schreier graph that comes from the right multiplication action of $\Gamma$ on itself. 
Observe that the graph in Example~\ref{ex:IrrationalRotation} is of the form $\schreier{\Gamma}{S}{X}$, where $\Gamma=\mathbb{Z}$, $S=\{1\}$ and $X=\mathbb{S}^1$.

Note that if $S$ is finite, then the degree of $\schreier{\Gamma}{S}{X}$ is bounded by $2|S|$.
In particular, the degree is uniformly bounded.
Unless explicitly stated otherwise, we are exclusively concerned with graphs with uniformly bounded degree.

\begin{definition}
    Let $\mathcal{G}=(V,E,\fB)$ be a Borel graph.
    The \emph{Borel chromatic number of $\mathcal{G}$}, denoted by $\chi_\fB(\fG)$, is the minimal $k \in \mathbb{N}$ such that there is $c:V\to [k]$ that is a vertex coloring with $k$ colors and $c^{-1}(\{i\})\in \fB$ for every $i\in [k]$.
    In other words, there is a partition $V=B_1\sqcup \dots,\sqcup B_k$ into Borel $\fG$-independent sets, where $X\subseteq V$ is called \emph{$\fG$-independent} if it does not span any edge in $\fG$.
    
    Analogously, we can define the \emph{Borel chromatic index of $\fG$} and denote it as $\chi'_\fB(\mc{G})$.
\end{definition}

Is there an analogue of greedy colorings in the Borel context?
As we most of the time implicitly assume that the underlying vertex set $(V,\fB)$ is uncountable, it is a priori not even clear that there  is a countable decomposition $V=X_1\sqcup X_2\sqcup \dots$ into $\fG$-independent Borel sets.
Let us remark that the main result of the seminal paper of Kechris-Solecki-Todor\v{c}evi\'c \cite{KST}, which initiated the study of Borel chromatic numbers, was to give a complete characterization of the Borel graphs that admit such a countable decomposition.
In particular, one of their results states that in the bounded degree case such a decomposition always exists.
Compare the following example with Example~\ref{ex:ColorRedAlg}.

\begin{example}[Countable colorings from finite degrees]\label{ex:BasicBorelColoring}
    Let $\fG=(V,E,\fB)$ be a Borel graph that is locally finite, that is, $\deg_\fG(v)\in \mathbb{N}$ for every $v\in V$.
    By the Borel isomorphism theorem, we may assume that $V=2^\mathbb{N}$, that is, every $v\in V$ can be represented by an infinite sequence of $0$s and $1$s.
    The crucial observation is that for every $v\in V$ and a finite set $A\subseteq V\setminus \{v\}$, there is $\ell\in \mathbb{N}$ such that $v\upharpoonright [\ell]\not=x\upharpoonright [\ell]$ for every $x\in A$.
    Hence, for every $v\in V$ we may set $c'(v)=v\upharpoonright [k]$, where $k\in \mathbb{N}$ is the minimal number that have this property with respect to the finite set $N_\fG(v)$.
    It is a matter of routine work with Boolean operations to check that the map $c':V\to 2^{<\mathbb{N}}$, where $2^{<\mathbb{N}}$ is the set of \emph{finite} sequences of $0$s and $1$s, is a Borel map that has the property that $c'(v)\not=c'(w)$ for every $\{v,w\}\in E$. In particular, composing $c'$ with \emph{any} bijection between the countable sets $2^{<\mathbb{N}}$ and $\mathbb{N}$ defines a Borel map $c:V\to \mathbb{N}$ that is a vertex coloring with $\mathbb{N}$ colors, that is,     
    $$V=\bigsqcup_{i\in \mathbb{N}}c^{-1}(\{i\})$$
    is a decomposition into Borel $\fG$-independent sets. 
\end{example}

Example~\ref{ex:BasicBorelColoring} serves as a basic symmetry breaking device in the measurable context that allows to simulate local algorithms.
It is an interesting philosophical problem if the computational power of Borel constructions consists merely of transfinite iteration of this process possibly on increasing graph powers of the underlying graph.

Similarly as in the proof of the upper bound from Theorem~\ref{thm:DistributedGreedy}, it is possible to modify the countable coloring above into a greedy coloring by simulating locally the sequential greedy algorithm.

\begin{theorem}[Borel greedy colorings, \cite{KST}]\label{thm:BorelGreedy}
    Let $\fG=(V,E,\fB)$ be a Borel graph such that $\Delta(\fG)\in \mathbb{N}$.
    Then we have $\chi_\fB(\mathcal{G})\leq \Delta(\fG)+1$ and $\chi'_\fB(\mathcal{G})\leq 2\Delta(\fG)-1$.
\end{theorem}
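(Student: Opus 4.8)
The plan is to prove Theorem~\ref{thm:BorelGreedy} by refining the countable Borel coloring of Example~\ref{ex:BasicBorelColoring} into one that uses only $\Delta(\fG)+1$ colors, mimicking the sequential greedy algorithm but executing all ``rounds'' of the same priority class simultaneously. First I would invoke Example~\ref{ex:BasicBorelColoring} to fix a Borel vertex coloring $c_0:V\to\mathbb{N}$ with countably many colors; this serves as the symmetry-breaking device that plays the role of the vertex enumeration $(v_1,\dots,v_n)$ in the finite proof. The key point is that within each color class $C_i=c_0^{-1}(\{i\})$, the induced graph has no edges, so all vertices of $C_i$ can act ``in parallel'' without conflicting with each other.

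The main construction proceeds by a transfinite (in fact, countable) recursion on $i\in\mathbb{N}$, building an increasing sequence of Borel partial colorings $d_i:V\rightharpoonup[\Delta(\fG)+1]$ with $\dom(d_i)=C_0\cup\dots\cup C_i$. At stage $i$, every $v\in C_i$ examines its (finitely many) neighbors, looks at which colors in $[\Delta(\fG)+1]$ have already been assigned to neighbors lying in $\dom(d_{i-1})=C_0\cup\dots\cup C_{i-1}$, and picks the least color in $[\Delta(\fG)+1]$ not used by any such neighbor; since $\deg_\fG(v)\le\Delta(\fG)$ and we have $\Delta(\fG)+1$ colors available, at least one color is free, exactly as in the greedy argument. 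Because distinct vertices of $C_i$ are non-adjacent, no new conflicts are created among them, and conflicts with earlier classes are avoided by construction; hence each $d_i$ is a genuine partial coloring. The final coloring is $d=\bigcup_i d_i$, which is total since $\bigcup_i C_i=V$, and it is a proper $(\Delta(\fG)+1)$-coloring: any edge $\{v,w\}$ has $v\in C_i$, $w\in C_j$ with, say, $i<j$ (or $i=j$, impossible since $C_i$ is independent), and then $d(w)=d_j(w)$ was chosen to differ from $d_i(v)=d(v)$.

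The step I expect to require the most care is verifying \emph{Borelness} of the resulting coloring, i.e.\ that each $d^{-1}(\{a\})$ for $a\in[\Delta(\fG)+1]$ is Borel. One argues inductively: the set $\{v\in V:d_i(v)=a\}$ is defined from $d_{i-1}$, the membership predicate $v\in C_i$, and quantification over the neighbors of $v$. Since the graph $\fG$ has degree bounded by $\Delta(\fG)$, the neighbor relation is a Borel subset of $V\times V$ whose vertical sections are finite, so by the Lusin--Novikov uniformization theorem the neighbors of $v$ can be enumerated by countably many Borel functions $v\mapsto w_n(v)$; the ``least free color'' is then a Borel function of $v$ and the values $(d_{i-1}(w_n(v)))_n$, being determined by a quantifier-free formula over finitely many Borel-definable quantities. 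Thus each $d_i$ is Borel, and $d=\bigcup_i d_i$ is a countable union of Borel pieces, hence Borel. The chromatic index bound $\chi'_\fB(\fG)\le 2\Delta(\fG)-1$ follows by applying the vertex statement to the line graph $L(\fG)$, which is a Borel graph (the edge set $E\subseteq\binom{V}{2}$ is a standard Borel space, and adjacency in $L(\fG)$ is the Borel relation of sharing an endpoint) with $\Delta(L(\fG))\le 2\Delta(\fG)-2$, so $\chi_\fB(L(\fG))\le 2\Delta(\fG)-1=\chi'_\fB(\fG)$.
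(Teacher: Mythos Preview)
Your proposal is correct and follows exactly the approach the paper sketches: the paper does not give a detailed proof but simply remarks that ``it is possible to modify the countable coloring above into a greedy coloring by simulating locally the sequential greedy algorithm,'' which is precisely what you carry out. One tiny slip: in the final line you write $\chi_\fB(L(\fG))\le 2\Delta(\fG)-1=\chi'_\fB(\fG)$, but what you mean is $\chi'_\fB(\fG)=\chi_\fB(L(\fG))\le 2\Delta(\fG)-1$.
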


In a groundbreaking paper \cite{DetMarks}, Marks proved that the Borel greedy colorings are optimal even for Borel graphs that are regular and acyclic.
The proof uses one of the deepest theorems in descriptive set theory, the determinacy of two player games with a Borel pay-off set, proven by Martin \cite{martin}.

\begin{theorem}[Borel lower bounds for colorings, \cite{DetMarks}]\label{thm:DetMarks}
    Let $\Delta>2$.
    \begin{itemize}
        \item There is an acyclic $\Delta$-regular Borel graph $\fG$ that does not admit Borel vertex coloring with $\Delta$ colors.
        \item There is an acyclic $\Delta$-regular Borel graph $\fG$ that does not admit Borel edge coloring with $2\Delta-2$ colors.
    \end{itemize}
\end{theorem}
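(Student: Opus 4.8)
The plan is to reproduce Marks' determinacy method. I treat the vertex-coloring statement in detail and indicate at the end how the edge-coloring one follows the same scheme with a different payoff. For the graph: fix $\Delta\geq 3$, let $\Gamma$ be the free product of $\Delta$ copies of $\mathbb{Z}/2\mathbb{Z}$ with the canonical involutions $a_1,\dots,a_\Delta$ as generators, let $F\subseteq 2^\Gamma$ be the \emph{free part} of the shift action (the Borel, $\Gamma$-invariant set of points with trivial stabilizer), and let $\fG=\schreier{\Gamma}{\{a_1,\dots,a_\Delta\}}{F}$. A short computation with stabilizers shows that $\fG$ is $\Delta$-regular and acyclic, its components being copies of the $\Delta$-regular tree, so it suffices to prove $\chi_\fB(\fG)>\Delta$. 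Suppose not, and let $c\colon F\to[\Delta]$ be a Borel proper coloring; put $A_i=c^{-1}(\{i\})$, so the $A_i$ are Borel, $\fG$-independent, and cover $F$. I would then aim to contradict one of these three properties: I will produce either some $i$ and a point $x$ with $x,a_i\cdot x\in A_i$ (a monochromatic edge), or a point of $F$ lying outside every $A_i$.

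The engine is a family of games indexed by the generators. For each $i$, deleting the edge $\{1,a_i\}$ from the Cayley tree of $\Gamma$ splits the vertex set into the subtree $L_i$ containing $1$ (reduced words not beginning with $a_i$) and the subtree $R_i$ containing $a_i$ (words beginning with $a_i$). The game $G_i$ has the two players alternately reveal the bits of a point $x\in 2^\Gamma$, according to a fixed schedule tied to this tree decomposition, with Player~I revealing $x\upharpoonright L_i$, Player~II revealing $x\upharpoonright R_i$, and auxiliary moves forcing the produced $x$ to lie in $F$; the payoff declares Player~I the winner exactly when $x\in A_i$. The crucial feature of the $L_i/R_i$ split is the \emph{asymmetry} it creates once we look at the neighbour $y=a_i\cdot x$: since $y(\gamma)=x(a_i\gamma)$ and $\gamma\mapsto a_i\gamma$ interchanges $L_i$ and $R_i$, Player~I's bits simultaneously encode $x\upharpoonright L_i$ and $y\upharpoonright R_i$, whereas Player~II's bits encode $x\upharpoonright R_i$ and $y\upharpoonright L_i$. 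Because each $A_i$ is Borel, $G_i$ has Borel payoff, so by Martin's Borel determinacy theorem it is determined.

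The combinatorial heart is extracting a contradiction from the winning strategies. If for some $i$ Player~I has a winning strategy $\sigma$, then Player~II can mirror $\sigma$ through the involution $x\leftrightarrow a_i\cdot x$: the $R_i$-bits of $x$ that Player~II must supply are precisely the $L_i$-bits of $y=a_i\cdot x$, and $\sigma$ can produce these from the $R_i$-bits of $y$, which are the $L_i$-bits of $x$ that Player~I has already committed to --- provided the schedule has been arranged so that this information is available in time. Running $\sigma$ against its mirror yields an $x$ with $x\in A_i$ and $a_i\cdot x\in A_i$, the monochromatic edge we wanted. If instead Player~II wins every $G_i$, with strategies $\tau_1,\dots,\tau_\Delta$, I would interleave them along the tree: the sets $R_i$ are pairwise disjoint with union $\Gamma\setminus\{1\}$, so each bit of $x$ off the identity lies in exactly one $R_i$ and is assigned by $\tau_i$ as a function of the $L_i$-bits, i.e.\ of $x$ on $\{1\}\cup\bigcup_{i'\neq i}R_{i'}$; one arranges the schedule so that every bit is determined before any strategy calls for it, building a single $x\in F$ that, being a loss for Player~I in each $G_i$, avoids every $A_i$ and hence lies outside $\bigcup_i A_i=F$ --- a contradiction. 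For the edge-coloring statement one keeps $\fG$ and runs the analogous games, the players now also fixing a distinguished $a_i$-edge at $x$ and the payoff comparing the colours (under a hypothetical Borel edge coloring with $2\Delta-2$ colours) of the two $a_i$-adjacent edges; the same determinacy-plus-mirroring scheme forces two adjacent edges to share a colour, and the count closes because $2(\Delta-1)$ colours cannot keep the $\Delta$ directions apart on both sides of an edge.

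I expect the main obstacle to be exactly this scheduling bookkeeping. One must commit, once and for all, to move orders for the games $G_i$ so that simultaneously (i) each $G_i$ is a legitimate game of length $\omega$ whose outcome is a point of $F$, (ii) in the one-sided case the mirror strategy never needs a bit before it is played, and (iii) in the two-sided case the cyclic dependencies among $\tau_1,\dots,\tau_\Delta$ --- each consuming the outputs of the others --- resolve, so that a single consistent $x$ is produced. Verifying that the tree structure of $\Gamma$ permits such a schedule is, I expect, where the genuine depth of the result resides, and it is the reason Borel determinacy, rather than a measure- or category-theoretic argument, is needed: the obstruction detected here is invisible to the measurable and Baire-category versions of the chromatic number, which can equal $\Delta$.
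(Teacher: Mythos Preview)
Your core plan---games on the Cayley tree of $\Gamma_\Delta$, Borel determinacy, then two strategy-stealing cases---is Marks' method as sketched in Section~6. Modulo swapping the labels Player~I and Player~II, your setup coincides with his: Marks' Player~I controls one branch (your $R_i$), his Player~II controls the rest (your $L_i$), and his Player~I wins iff $c(x)\neq i$; your mirror is his ``play the big player's strategy against its $a_i$-shift'' and your interleave is his ``rotate the small player's strategies''. The scheduling you flag as the main obstacle is not where the difficulty lies: with the \emph{small} player (your Player~II) moving first and bits revealed level by level, the interleave is acyclic because each small-player round-$n$ move depends only on bits at distance $<n$, hence only on the other strategies' earlier rounds, and the mirror is acyclic because the $a_i$-shift raises distance by one, so each copy's round-$n$ input is supplied by the other copy's round-$(n-1)$ output.

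The genuine gap is your ``auxiliary moves forcing the produced $x$ to lie in $F$'': you give no mechanism, and in $2^\Gamma$ nothing prevents the players from producing a periodic point, so neither of your contradictions fires. This is \emph{not} how Marks or the paper (Section~6.4) proceeds. Rather than constraining the game, Marks first shows that the \emph{non}-free part of the shift admits a Borel edge-labeled $\Delta$-coloring $c'$ (one for which $c'(x)=c'(\alpha_i\cdot x)=i$ never holds); combined with a hypothetical Borel $\Delta$-coloring of the free part this gives such a coloring on all of $2^{\Gamma_\Delta}$, and the game---now played on the whole space with no free-part constraint---shows no such Borel coloring can exist there. One can realize your outline instead by playing in $(2^\omega)^{\Gamma_\Delta}$ and having each player choose fresh labels so the outcome is injective and hence free, but this changes the underlying space and must be made explicit and checked against both strategy-stealing moves. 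Your edge-coloring sketch is separately too thin to evaluate: the closing ``count'' is not an argument, and Marks' proof of the $2\Delta-2$ bound uses a different game structure rather than a direct pigeonhole on edge colours.
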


One should compare the above statement with Theorem \ref{thm:LowerBoundsDistributed}. We will discuss the proof of this result in some detail in Section \ref{sec:Marks}.

\subsection{Measure, as the analogue of randomness}
In ergodic theory, measured group theory or the theory of random processes on countable graphs, people study combinatorial problems on Borel graphs that are equipped with an additional structure, \emph{Borel probability measure} that satisfies the \emph{mass transport principle}.
This concept is also known under the names \emph{pmp (probability measure preserving) action}, \emph{graphing} or \emph{unimodular random network}.

Formally, let $\fG=(V,E,\fB)$ be a Borel graph and $\mu$ be a Borel probability measure on $(V,\fB)$.
We say that $\mu$ satisfies the \emph{mass transport principle} if
\begin{equation}\label{eq:MassTransport}
    \int_{A}\deg_{\fG}(x,B) \ d\mu=\int_{B}\deg_{\fG}(x,A) \ d\mu
\end{equation}
for every $A,B\in \fB$.
That is, counting edges from a set $A$ to $B$ and from $B$ to $A$ is the same.

Typical examples of Borel graphs that satisfies the mass transport principle are the Schreier graphs, introduced in Example~\ref{ex:Schreier}, under the additional assumption that the Borel action preserves a given Borel probability measure $\mu$.
We complement this family of examples with examples coming from the theory of random processes.

\begin{example}[Examples of mass transport]
    \emph{(a) iid graphings:}
    Let $\Gamma$ be a countable group and $S \subseteq \Gamma$ be a finite \emph{symmetric} generating set of $\Gamma$.
    The vertex set of the iid graphing $\fG_\Gamma$ consists of all maps $i:\Gamma\to\{0,1\}^\mathbb{N}$, and two such maps $i\not=j$ form an edge if there is $s\in S$, such that
    $$i(s^{-1}\cdot \gamma)=j(\gamma)$$
    for every $\gamma\in \Gamma$.
    If we endow the vertex set with the iid power of the coin-flip measure, we get that almost surely every connectivity component of $\fG_\Gamma$ is isomorphic to $\Cay(\Gamma,S)$.
    It is easy to check that $\fG_\Gamma$ is indeed a graphing.
    \vspace{+0.2cm}

    \emph{(b) percolation graph on $\mathbb{Z}^d$} (see \cite{aldouslyons2007} for an introduction to the subject):
    A more complicated example of a graphing can be described using the notion of \emph{site} (or vertex) \emph{percolation}.
    Consider the iid graphing $\fG_{\mathbb{Z}^d}$ and fix $p\in [0,1]$ with the property that the site percolation on $\mathbb{Z}^d$ contains infinite cluster with positive probability.
    Define a subset of the vertex set $A$ to consist of those functions $i:\mathbb{Z}^d\to\{0,1\}^\mathbb{N}$ such the the origin belongs to an infinite cluster.
    Define $\fG_{\mathbb{Z}^d,p}$ to be the graph induced in $A$.
    It can be again easily shown that, after re-normalizing the measure, $\fG_{\mathbb{Z}^d,p}$ is indeed a graphing.
    Observe that $\fG_{\mathbb{Z}^d,1}=\fG_{\mathbb{Z}^d}$.
\end{example}

The mass transport principle translates some of the intuition about finite graphs into the measurable setup.
In the following definition of a \emph{measured graph}, we work in bigger generality and do not require any condition on the Borel probability measure.

\begin{definition}[Measured graph and measurable chromatic numbers]

A \emph{measured graph} is a quadruple $\mathcal{G}=(V,E,\fB,\mu)$, where $(V,E,\fB)$ is a Borel graph and $\mu$ is a Borel probability measure on $(V,\fB)$.
A measured graph $\fG$ is called a \emph{graphing} if $\mu$ satisfies the mass transport principle.
\vspace{+0.2cm}

The \emph{$\mu$-measurable chromatic number of $\fG$}, denoted by $\chi_\mu(\fG)$ is the minimal $k\in\mathbb{N}$ such that there is a $\mu$-null set $N\in \fB$ and a decomposition $V=N\sqcup B_1\sqcup\dots\sqcup B_k$ such that $B_i$ is a Borel $\mathcal{G}$-independent set for every $i\in [k]$.
In other words, the Borel chromatic number is equal to $k$ off of a $\mu$-null set.

The \emph{$\mu$-measurable chromatic index of $\fG$} is defined analogously and is denoted by $\chi'_\mu(\fG)$.    
\end{definition}

It follows immediately that $\chi_\mu(\fG)\le \chi_\fB(\fG)$ for every measured graph $\mathcal{G}=(V,E,\fB,\mu)$, where we abuse the notation and define $\chi_\fB(\fG)$ to be the Borel chromatic number of the underlying Borel graph of $(V,E,\fB)$ of $\fG$.
We remark that it is one of the most intriguing problems in the area to find examples of local graph coloring problem that could be solved on graphings but not on measured graphs in general.

Does ignoring events that happen on null sets help with graph colorings, in particular, can the inequality $\chi_\mu(\fG)\le \chi_\fB(\fG)$ be strict?
Does the analogy between deterministic and randomized algorithms, and Borel and measurable colorings go deeper?
The answer to the first question is clear yes as we demonstrate next.
The answer to the second question is more complicated, see Remark~\ref{rem:HigherRandom}.
\vspace{+0.2cm}

Investigating measurable chromatic numbers on bounded degree Borel graphs has a rich history that connects the theory of random graphs, graph limits, random processes and measured group theory, see the survey of Kechris and Marks \cite[Section~6]{kechris_marks2016descriptive_comb_survey} for many results in this direction.
An illustrative example of this rich connection is the general lower bound $\Omega(\Delta/\log(\Delta))$ on measurable chromatic numbers of acyclic $\Delta$-regular graphs (where $\Delta$ is large) that follows from the seminal result of Bollob\'as \cite{bollobas} on the size of maximal independent set in random regular graphs combined with the theory of \emph{local-global} convergence developed by Hatami, Lov\'asz and Szegedy \cite{hatamilovaszszegedy}.

Complementing Theorem~\ref{thm:DistributedBrooks}, the measurable analogue of Brooks theorem was proved by Conley, Marks and Tucker-Drob \cite{conley2016brooks}.

\begin{theorem}[Measurable Brooks coloring, \cite{conley2016brooks}]\label{thm:MeasureBrooks}
    Let $\Delta\ge 3$ and $\fG=(V,E,\fB,\mu)$ be a measured graph that does not contain $K_{\Delta+1}$ as a subgraph and such that $\Delta(\fG)\le \Delta$.
    Then $\chi_\mu(\fG)\le \Delta$.    
\end{theorem}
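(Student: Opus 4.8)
The plan is to combine the augmenting path result of Panconesi and Srinivasan (stated just after Theorem~\ref{thm:ClassicChristiansen}: every uncolored vertex in a partially $\Delta$-colored graph with $\Delta\ge 3$ lies in an augmenting path of length $O(\log_\Delta(n))$) with a measure-theoretic exhaustion argument. First I would fix a Borel $\mathbb{N}$-coloring $c_0\from V\to\mathbb{N}$ as in Example~\ref{ex:BasicBorelColoring}; this serves as the symmetry-breaking device replacing the unique identifiers of the LOCAL model. Starting from the totally undefined partial $\Delta$-coloring, I would build an increasing sequence of Borel partial colorings $d_0\subseteq d_1\subseteq\dots$ with $\dom(d_k)$ Borel, arranging that $\mu(U_{d_k})\to 0$, and then take the union; the complement of $\bigcup_k\dom(d_k)$ is the null set $N$ in the statement.

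The key step is a single \emph{augmentation round}: given a Borel partial coloring $d$ with $U_d$ of positive measure, produce a Borel partial coloring $d'\supseteq d$ (off a controlled set) with $\mu(U_{d'})\le(1-\varepsilon)\mu(U_d)$ for some fixed $\varepsilon>0$. Here is where the Brooks augmenting path (rather than edge) result enters: each uncolored vertex $v$ admits an augmenting path $P_v$ of \emph{bounded} length — but "bounded" in the classical theorem means $O(\log_\Delta n)$, which is not literally a constant. To get an honest constant I would instead pass to the finite-$n$ bound only as a black box on finite truncations, OR, more cleanly, use that on a bounded-degree Borel graph one can apply the classical result to large finite balls and extract augmenting structures of length $O(\log_\Delta |B_\fG(v,t)|)=O(t)$ — so after fixing a radius $t$ one gets augmenting paths contained in $t$-balls for a positive-measure set of $v$'s (this uses that, by locality, for $\mu$-a.e.\ $v$ the relevant finite-combinatorics argument stabilizes). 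One then selects a Borel \emph{independent} family of such paths (a Borel maximal such family exists by the usual Borel-greedy/Kechris--Solecki--Todor\v{c}evi\'c machinery applied to the auxiliary "conflict graph" on paths, which again has bounded degree since paths have bounded length), recolors along each selected path simultaneously — the recolorings do not interfere because the paths are vertex-disjoint and bounded — and checks that a definite fraction of $U_d$ gets colored, because a maximal independent set of bounded-degree paths dominates a constant fraction.

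The main obstacle I anticipate is precisely reconciling the $O(\log_\Delta n)$ length bound of Panconesi--Srinivasan with the need, in the Borel setting, for augmenting structures supported on \emph{uniformly bounded} neighborhoods, so that each round is a genuine "local" (finite-radius) Borel operation and the union over rounds stays Borel. The honest fix is to run the rounds with a growing radius $t_k\to\infty$: in round $k$ one colors all uncolored vertices whose augmenting path (for the current coloring) fits inside its $t_k$-ball, and one argues via the finite Brooks bound that for $\mu$-a.e.\ $v$ this happens at some finite stage, so $\mu(U_{d_k})\to 0$. One must verify that "the augmenting path for the current Borel coloring fits in the $t_k$-ball" is a Borel condition and that the whole construction — a countable transfinite-free recursion of finite-radius Borel recolorings — stays within the Borel $\sigma$-algebra; this is routine but is the part requiring care. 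Everything else (existence of Borel maximal independent families of bounded-degree auxiliary graphs, Borelness of $c_0$, non-interference of disjoint recolorings) follows from Theorem~\ref{thm:BorelGreedy} and the basic toolkit of Section~\ref{sec:measurable}. Finally, discarding the measure-zero leftover $N$ yields the decomposition $V=N\sqcup B_1\sqcup\dots\sqcup B_\Delta$ and hence $\chi_\mu(\fG)\le\Delta$.
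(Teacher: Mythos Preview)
The paper does not prove Theorem~\ref{thm:MeasureBrooks}; it is cited from \cite{conley2016brooks}, and the paper only remarks that it also follows in a black-box way from Bernshteyn's correspondence (Theorem~\ref{thm:Bernshteyn}) applied to the randomized distributed Brooks algorithm (Theorem~\ref{thm:DistributedBrooks}). Your direct-construction route is a reasonable alternative strategy, but as written it has a real gap.

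The central problem is the assertion that the partial colorings form an increasing chain $d_0\subseteq d_1\subseteq\cdots$ whose union is the desired coloring. Augmenting along a path \emph{recolors} already-colored vertices, so $d_{k+1}$ does not extend $d_k$ as a partial function; only $\dom(d_k)$ is increasing. Hence ``take the union'' is meaningless: you need the pointwise limit $\lim_k d_k(v)$ to exist $\mu$-a.e., and this requires a Borel--Cantelli argument exactly as in the paper's treatment of augmenting-path constructions (Section~\ref{subsec:LyonsNazarov}, condition~\eqref{eq:BC}) --- one must arrange something like $\sum_k t_k\,\mu(U_{d_k})<\infty$ so that a.e.\ vertex is recolored only finitely often. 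You never address stabilization, and your two proposed mechanisms are in tension: geometric decay $\mu(U_{d_{k+1}})\le(1-\varepsilon)\mu(U_{d_k})$ at a \emph{fixed} radius would give a summable series, but you yourself note the radius must grow; conversely, letting $t_k\to\infty$ with no quantitative decay on $\mu(U_{d_k})$ gives no control on the sum. A secondary issue: Panconesi--Srinivasan (Theorem~\ref{t:pancsri}) assumes a \emph{single} uncolored vertex, which your $d_k$ do not satisfy; this is fixable by restricting to the component of $v$ in $\dom(d_k)\cup\{v\}$ as in Proposition~\ref{l:augmenting}, but you should say so.
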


Combining Theorem~\ref{thm:DetMarks} and Theorem~\ref{thm:MeasureBrooks} yields for every $\Delta\ge 3$ the existence of an acyclic $\Delta$-regular Borel graph $\fG$ such that $\chi_\mu(\fG)\le \Delta<\Delta+1=\chi_\fB(\fG)$ for every Borel probability measure $\mu$.\footnote{In fact, for large $\Delta$ the gap is much bigger as the measurable chromatic number of acyclic $\Delta$-regular graphs is bounded by $O(\Delta/\log(\Delta))$ by \cite[Theorem~3.9]{Bernshteyn2021LLL}.}
Note that this result exactly matches with Theorem~\ref{thm:LowerBoundsDistributed} and Theorem~\ref{thm:DistributedBrooks} from the theory of distributed computing.
While all four theorems were proven independently, it follows from the correspondence of Bernshteyn \cite{Bernshteyn2021LLL} that Theorem~\ref{thm:DetMarks} implies Theorem~\ref{thm:LowerBoundsDistributed}, and Theorem~\ref{thm:DistributedBrooks} implies Theorem~\ref{thm:MeasureBrooks} in a black-box manner.
\vspace{+0.2cm}

In the case of measurable edge colorings, Cs\'{o}ka, Lippner and Pikhurko \cite{csokalippnerpikhurko} showed that $\chi'_\mu(\fG)\le \Delta+1$ for a graphing $\fG=(V,\fB,E,\mu)$ that does not contain odd cycles and proved an upper bound of $\Delta+O(\sqrt{\Delta})$ colors for graphings in general.
In a related result, Bernshteyn \cite[Theorem 1.3]{BernshteynEarlyLLL} proved that $\Delta+o(\Delta)$ colors are enough (even for the so-called list-coloring version) provided that the graphing factors to the shift action $\Gamma\curvearrowright [0, 1]^\Gamma$ of a finitely generated group $\Gamma$.
Answering a question of Ab\' ert, the first author and Pikhurko \cite{grebik2020measurable} proved a measurable version of Vizing's theorem for graphings, that is, $\chi'_\mu(\fG)\le \Delta+1$ for any graphing $\fG=(V,\fB,\mu,E)$.
As we discuss in Section~\ref{sec:Vizing}, the technique developed in \cite{grebik2020measurable} was applied in the LOCAL model by Bernshteyn \cite{BernshteynVizing} and others \cite{Christiansen,BernshteynVizing2}.
Finally, the full measurable analogue of Vizing's theorem was derived by the first author \cite{GrebikVizing}.

\begin{theorem}[Measurable Vizing coloring, \cite{GrebikVizing}]\label{thm:MeasureVizing}
    Let $\fG=(V,E,\fB,\mu)$ be a measured graph.
    Then $\chi'_\mu(\fG)\le \Delta(\fG)+1$.        
\end{theorem}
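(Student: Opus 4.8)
For $\Delta(\fG)\le 2$ the statement is trivial and even achievable by a \Borel coloring, so assume $\Delta(\fG)\ge 3$; then $2\Delta(\fG)-2\ge\Delta(\fG)+1$, so Theorem~\ref{thm:DetMarks} already exhibits acyclic $\Delta(\fG)$-regular \Borel graphs with no \Borel $(\Delta(\fG)+1)$-edge coloring. Hence the theorem cannot follow from a purely \Borel construction: one must discard a $\mu$-null set. The plan is to produce a sequence $d_0,d_1,\dots$ of \Borel partial $(\Delta(\fG)+1)$-edge colorings with $\dom(d_n)$ increasing whose uncolored sets $U_{d_n}$ have vanishing mass, and then to extract an almost-everywhere limit. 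The relevant notion of mass is the finite \Borel measure $\mu_E$ on the standard \Borel space $E$ given by $\mu_E(F):=\int_V|\{f\in F: v\in f\}|\,d\mu(v)$; it is countably additive, satisfies $\mu_E(E)\le\Delta(\fG)$, and bounds the $\mu$-measure of the vertex set meeting $F$ from above. If $\mu_E(U_{d_n})\to 0$ and the $d_n$ converge $\mu_E$-almost everywhere to a \Borel partial coloring $d_\infty$, then the set $N$ of vertices incident to an edge not in $\dom(d_\infty)$ is \Borel and $\mu$-null, and the restriction of $d_\infty$ to $\fG$ with the vertices of $N$ deleted is a proper \Borel edge coloring with $\Delta(\fG)+1$ colors; this is exactly the conclusion $\chi'_\mu(\fG)\le\Delta(\fG)+1$.

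Everything rests on a one-step lemma: there is $c=c(\Delta(\fG))>0$ such that every \Borel partial $(\Delta(\fG)+1)$-edge coloring $d$ with $\mu_E(U_d)>0$ can be turned, by a \Borel operation that recolors only a \Borel set of edges, into a \Borel partial $(\Delta(\fG)+1)$-edge coloring $d'$ with $\dom(d')\supseteq\dom(d)$ and $\mu_E(U_{d'})\le(1-c)\,\mu_E(U_d)$; iterating from $d_0=\emptyset$ gives $\mu_E(U_{d_n})\le(1-c)^n\mu_E(E)\to 0$. The lemma is proved with \emph{augmenting subgraphs} in the sense defined earlier, concretely with \emph{Vizing chains} (a \emph{fan} followed by an \emph{alternating path}, as in the classical proof of Theorem~\ref{thm:Vizing}), organized via the \emph{multi-step Vizing chain} machinery of Section~\ref{sec:Vizing}. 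One runs the randomized multi-step Vizing chain procedure rooted at each uncolored edge $e$, obtaining an augmenting subgraph $H_{e,s}$ for $e$ that depends on $e$ and on a random re-rooting seed $s$; the key quantitative input, depending on $\Delta(\fG)$ only and crucially not on any ambient size parameter, is that a definite $\mu_E$-fraction of uncolored edges admit, for some seed, an augmenting subgraph of bounded size $R=R(\Delta(\fG))$. Fixing a \Borel selection $e\mapsto H(e)$ of such a subgraph on the corresponding set $S_d\subseteq U_d$, the \Borel graph on $S_d$ joining $e$ to $e'$ whenever $V(H(e))\cap V(H(e'))\ne\emptyset$ has maximum degree $\Delta(\fG)^{O(R)}$, hence by Theorem~\ref{thm:BorelGreedy} it admits a \Borel proper coloring with finitely many colors. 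The color class $S_d'\subseteq S_d$ of largest $\mu_E$-mass satisfies $\mu_E(S_d')\ge c\,\mu_E(U_d)$ and the subgraphs $\{H(e):e\in S_d'\}$ are pairwise vertex-disjoint; augmenting along all of them at once is a single well-defined \Borel operation (disjointness keeps the recolorings from interfering and keeps the output a proper partial coloring), it colors every edge of $S_d'$, and it changes colors only inside $\bigcup_{e\in S_d'}E(H(e))$. Thus $\mu_E(U_{d'})\le\mu_E(U_d)-\mu_E(S_d')\le(1-c)\,\mu_E(U_d)$.

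The main obstacle is the quantitative claim that a definite $\mu_E$-fraction of uncolored edges carries a bounded-size augmenting subgraph, with the bound independent of the graph. Its finitary counterpart is Christiansen's Theorem~\ref{thm:ClassicChristiansen}, whose bound $O(\Delta^7\log n)$ still grows with $n$, while the lower bound of Chang, He, Li, Pettie and Uitto~\cite{ChangHLPU20} shows that no uniform bound can hold at \emph{every} uncolored edge. This is exactly what the multi-step Vizing chain is for: iterating the fan/alternating-path construction and re-rooting randomly whenever a chain threatens to grow long yields a procedure whose chain length, as a random variable in the seed and for a fixed partial coloring, has an exponential tail with parameters controlled by $\Delta(\fG)$ alone --- the same phenomenon underpinning the distributed Vizing algorithms of Theorem~\ref{thm:DistributedVizing}. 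Establishing this tail bound, and checking that all of the selections above can be performed in a \Borel way, is the technical core.

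Finally, one must justify the passage to the limit, since Vizing chains recolor edges and the sequence $d_n$ need not stabilize pointwise. This is handled by first proving, for each $\varepsilon>0$, the approximate statement that there is a \Borel partial $(\Delta(\fG)+1)$-edge coloring with $\mu_E(U_d)<\varepsilon$ --- reached in finitely many applications of the one-step lemma --- and then a separate limiting argument producing from these approximations a coloring defined $\mu$-almost everywhere. No mass-transport or graphing hypothesis is used anywhere: the only averaging is through the finite measure $\mu_E$ together with Fubini over the seed space, so the argument applies to arbitrary measured graphs, as claimed.
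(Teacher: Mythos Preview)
Your one-step lemma is false as stated, and the sentence ``no mass-transport or graphing hypothesis is used anywhere'' is precisely where the argument breaks. You claim there exist $c,R$ depending only on $\Delta$ such that a $c$-fraction (in $\mu_E$) of uncolored edges always carries an augmenting subgraph of size $\le R$. But $\mu$ is an \emph{arbitrary} Borel probability measure: take the Chang--He--Li--Pettie--Uitto instance $(G_n,d_n)$ with an uncolored edge $e_n$ whose every augmenting subgraph has size $\Omega(\Delta\log(n/\Delta))$, and let $\mu$ be the Dirac mass at one endpoint of $e_n$. Then $\mu_E(U_{d_n})$ is supported on $e_n$, so your lemma forces an augmenting subgraph for $e_n$ of size $R(\Delta)$, contradicting the lower bound once $n$ is large. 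The tail bound you invoke for the multi-step Vizing chain is not ``controlled by $\Delta$ alone'': in \cite{BernshteynVizing,Christiansen,BernshteynVizing2} the rate is $\Theta(1/\log n)$, which is exactly why Theorems~\ref{thm:ClassicChristiansen} and~\ref{thm:DistributedVizing} carry a $\log n$ factor.

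The paper's route is different in kind. It does \emph{not} seek bounded-size augmenting subgraphs; it builds colorings $d_n$ with no augmenting multi-step Vizing chain of length $\le\ell_n=2^n$, and the whole point is that the double-counting \eqref{eq:DoubelCount} then gives $\nu(U_{d_n})\in O(1/\ell_n^2)$, after which \eqref{eq:BC} and Borel--Cantelli yield a.e.\ convergence. That double count, in the infinite setting, \emph{is} the mass transport principle \eqref{eq:MassTransport} applied to the auxiliary graph $\mathcal H_{d_n}$; without it the two sides need not balance (your Dirac example again: the chains from $e_n$ run into a region of zero mass). The passage from graphings (Theorem~\ref{thm:VizingGraphing}) to arbitrary measured graphs is exactly what you declared unnecessary: one replaces $\mu$ by an equivalent measure whose Radon--Nikodym cocycle satisfies \eqref{eq:Vizing}, so that weight and length of a chain become comparable and a weighted mass-transport estimate survives, and then runs the argument with $3$-step chains. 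That cocycle-smoothing step is the technical core of \cite{GrebikVizing}, and nothing in your outline substitutes for it.
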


Combining Theorem~\ref{thm:MeasureVizing} and Theorem~\ref{thm:DetMarks} gives for every $\Delta\ge 3$ an example of an acyclic $\Delta$-regular Borel graph $\fG$ such that $\chi'_\mu(\fG)\le \Delta+1<2\Delta-1=\chi'_\fB(\fG)$ for every Borel probability measure $\mu$.

A \emph{Borel version} of Vizing's theorem is possible under additional assumption on the structure of the graph.
Most notably, Bowen and Weilacher \cite{BowenWeilacher} showed that $\chi'_\fB(\fG)\le \Delta(\fG)+1$ under the assumption that $\fG$ is bipartite and has finite \emph{Borel asymptotic dimension}, notion that generalizes the classical notion of \emph{asymptotic dimension} introduced by Gromov \cite{Gromov} to the setting of measurable combinatorics by Conley, Jackson, Marks, Seward and Tucker-Drob \cite{AsymptoticDim}, and Bernshteyn and Dhawan showed that $\chi'_\fB(\fG)\le \Delta(\fG)+1$ under the assumption that $\fG$ has subexponential neighborhood growth.
Stronger results in the special case of free Borel $\mathbb{Z}^d$-actions, that is, Borel edge coloring with $2d$ colors which is the analogue of K\H{o}nig's line coloring theorem in this setting, were obtained independently around the same time in \cite{ArankaFeriLaci,spencer_personal,grebik_rozhon2021toasts_and_tails,weilacher2021borel}.

\begin{remark}[Baire measurable colorings]\label{rem:Baire}
A \emph{topological relaxation} of Borel colorings is formalized using the notion of \emph{Baire measurable functions} and \emph{meager sets}.
Similarly to the measurable case, we are interested in coloring a given Borel graph off of a topologically negligible set for any compatible Polish topology that generates the underlying standard Borel structure.
We refer the reader to the book of Kechris \cite{kechrisclassical} for basic notions and to the survey of Kechris and Marks \cite{kechris_marks2016descriptive_comb_survey} for results in that direction.
\end{remark}

\begin{remark}[Derandomization]\label{rem:HigherRandom}
As illustrated in Figure~\ref{fig:GlobalLOCAL} and by the distributed Brooks' theorem, in the regime $o(\log(n))$ randomness gives an advantage in the LOCAL model for solving LCL problems on graphs of degree bounded by $\Delta\ge 3$.
This behavior might change when we restrict to a smaller class of graphs.
For example, randomness does not help on graphs that look locally like the grid $\mathbb{Z}^d$, see \cite{brandt_grids}, and the celebrated conjecture of Chang and Pettie \cite{chang2019time} would imply the same for any graph class of subexponential growth.
\vspace{+0.2cm}

On the side of measurable combinatorics, Cs\'{o}ka, Grabowski, M\'{a}th\'{e}, Pikhurko and Tyros \cite{OlegLLL} found a Borel version of the Lov\'{a}sz Local Lemma for graphs of subexponential growth, which when combined with the correspondence of Bernshteyn shows that if an LCL problem has \emph{randomized} LOCAL complexity $O(\log(n))$ then it can be solved in a \emph{Borel} way on such graphs, see \cite[Theorem~2.15]{Bernshteyn2021LLL}.
This is a particular instance of a derandomization on graphs of subexponential growth, see also \cite{ConleyTamuz,thornton2021orienting,BernshteynSubexp}.

Naturally, it is tempting to ask about a higher analogue of derandomization.
That is, for what class of Borel graphs does the classes of LCL problems that admit \emph{measurable} and \emph{Borel} solution coincide?
While the answer is positive for Borel graphs that look locally like $\mathbb{Z}$ even if we allow LCL problems with inputs \cite{grebik_rozhon2021LCL_on_paths}, a counterexample for Borel graphs that look locally like $\mathbb{Z}^2$ was recently constructed by Berlow, Bernshteyn, Lyons and Weilacher \cite{berlow2025separating}.

A related question asks whether assuming that a given LCL problem can be solved measurably on \emph{all} measured graphs of degree at most $\Delta$ implies the existence of a Borel solution on subexponential growth Borel graphs of degree at most $\Delta$.
The answer to this question turned out to be negative as well, in a recent work in progress a counterexample was found by Bowen, the first author and Rozho\v{n}.
Namely, it can be shown that the LCL problem that is a union of the sinkless orientation problem and ``mark the line'' problem can be solved measurably on all graphs, but does not admit a Borel solution on graphs that can have growth arbitrarily close to a linear growth.
\end{remark}

\section{Summary of coloring results and related topics}\label{sec:Summary}

For the convenience of the reader, we summarize the results presented in previous sections in a table.
The second column, that is, greedy coloring, refers to both a vertex coloring with $\Delta+1$ colors and an edge coloring with $2\Delta-1$ colors.
Note that while the conjectured LOCAL complexity for Brooks coloring is $\Theta(\log(n))$ deterministic and $\Theta(\log(\log(n)))$ randomized, the randomized LOCAL complexity of Vizing coloring remains mysterious, see Problem~\ref{pr:SublogVizing}.

Recall that we treat $\Delta$ as a constant, in particular, $O(n)=O(m)$, where $n$ and $m$ denote the sizes of the vertex and edge set of a given finite graph $G$.
We denote as $\dlocal$ and $\rlocal$ the deterministic and randomized LOCAL model respectively.
\vspace{+0.5cm}

\begin{tabular}{ |p{4cm}||p{4cm}|p{3cm}|p{4.3cm}|  }
 \hline
 \multicolumn{4}{|c|}{Graphs of degree bounded by $\Delta$} \\
 \hline
 & Greedy coloring & Brooks coloring & Vizing coloring\\
 \hline\hline
 $\dlocal$ upper bound & $O(\log^* (n))$   & $O(\log^2(n))$  & $\operatorname{poly}(\log(\log(n)))\log^5(n)$  \\
 &   \cite{cole86,goldberg88,linial92LOCAL} & \cite{ghaffari2020Delta_coloring} & \cite{BernshteynVizing2} \\
 \hline
 $\dlocal$ lower bound & $\Omega(\log^* (n))$  & $\Omega(\log(n))$ & $\Omega(\log(n))$ \\
 &  \cite{linial92LOCAL} & \cite{brandt_etal2016LLL} &  \cite{brandt_etal2016LLL} \\
 \hline
 $\rlocal$ upper bound & $O(\log^* (n))$  & $O(\log^2(\log(n)))$ & $\operatorname{poly}(\log(\log(n)))\log^2(n)$ \\
 &  \cite{cole86,goldberg88,linial92LOCAL} & \cite{ghaffari2020Delta_coloring} & \cite{BernshteynVizing2}\\
 \hline
 $\rlocal$ lower bound & $\Omega(\log^* (n))$  & $\Omega(\log(\log(n)))$  & $\Omega(\log(\log(n)))$ \\
 & \cite{linial92LOCAL} & \cite{chang_kopelowitz_pettie2019exp_separation}  & \cite{ChangHLPU20}\\
 \hline\hline
 Borel & yes  & no  & no \\
 & \cite{KST} & \cite{DetMarks} & \cite{DetMarks}\\
 \hline
 measurable & yes  & yes  & yes \\
 & \cite{KST} & \cite{conley2016brooks} & \cite{GrebikVizing} \\
 \hline\hline
 augmenting & $\Theta(1)$   & $\Theta(\log(n))$ & $\Theta(\log(n))$ \\
 subgraphs & &  \cite{PanconesiSrinivasan} &  \cite{ChangHLPU20,Christiansen}\\
 \hline\hline
 deterministic & $O(n)$ & $O(n)$ & $O(n\log(n))$\\
 sequential algorithm& & \cite{BrooksAlg} & \cite{Gabow}\\
 \hline
 randomized & $O(n)$ & $O(n)$ & $O(n)$\\
  sequential algorithm& & \cite{BrooksAlg} & \cite{BernshteynVizing2}\\
 \hline
\end{tabular}

\vspace{+0.5cm}

\subsection{Decidability and complexity}\label{subsec:Complexity}
Complexity considerations are an important topic not only in classical graph theory but also in measurable combinatorics and distributed computing, where they give natural lower bounds on the existence of measurable solutions or efficient distributed algorithms.
\vspace{+0.2cm}

We have seen that the Brooks and Vizing colorings can be constructed by a polynomial time algorithm.
What if we want to find a ``true'' coloring that would match the chromatic number or index of a given graph?
A classical result of Karp \cite{karp1975computational} says that deciding the existence of a vertex coloring with $k$ colors of a given graph is $\operatorname{\bf{NP}}$-complete for every $k \geq 3$.
The situation with chromatic numbers is more complicated, even approximate computations are known to be hard in various senses.
For example, we have the following.

\begin{theorem}[\cite{zuckerman2006linear}]
	For every $\varepsilon>0$ it is $\operatorname{\bf{NP}}$-hard to decide the chromatic number up to a factor of $n^{1-\varepsilon}$ of a graph of size $n$.
\end{theorem}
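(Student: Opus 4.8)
The plan is to follow the standard route: start from the PCP theorem, derive hardness of approximating the independence number $\alpha(G)$ within a constant factor, amplify the gap to $n^{1-\varepsilon}$, and then transfer this to the chromatic number. The last two steps carry all the weight, and both rest on explicit constructions of extractors/dispersers with near-optimal parameters; producing these is precisely the contribution of \cite{zuckerman2006linear}.

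First I would invoke the FGLSS form of the PCP theorem: there is an absolute constant $c<1$ and a polynomial-time reduction from $3$-SAT to graphs $G$ on $N$ vertices, together with a threshold $K=K(N)$, so that satisfiable instances give $\alpha(G)\ge K$ and unsatisfiable instances give $\alpha(G)\le cK$. Hence ``Gap-Independent-Set'' with a fixed constant gap is NP-hard. The naive amplification is a suitable $k$-fold graph product, producing a graph whose independence number is essentially $\alpha(G)^k$ on $N^k$ vertices; this multiplies the gap to $c^{-k}$ but also raises the vertex count to $N^k$, so the gap grows only like a fixed power of the new size --- yielding hardness within $n^{\delta}$ for some fixed $\delta>0$, not $n^{1-\varepsilon}$. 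To do better one retains, instead of all $k$-tuples, a pseudo-random subfamily $\mathcal{F}$ of size $N^{1+o(1)}$ that behaves like a disperser, so that every large independent set still survives while the vertex count stays close to $N$. A uniformly random $\mathcal{F}$ works --- this is the Berman--Schnitger randomized graph product behind H\aa stad's clique-inapproximability result conditional on $\mathbf{NP}\nsubseteq\mathbf{ZPP}$ --- and the crux is to replace this random choice by an explicit extractor for sources of linear min-entropy whose entropy loss is an arbitrarily small constant fraction of the min-entropy. It is exactly this constant-fraction (as opposed to constant-factor) entropy loss that turns the exponent $1-\Omega(1)$ of earlier work into $1-\varepsilon$ for every $\varepsilon>0$. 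The outcome is: for every $\varepsilon>0$, approximating $\alpha(G)$ within $n^{1-\varepsilon}$ is NP-hard.

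To pass to the chromatic number, note first the trivial inequality $\chi(G)\cdot\alpha(G)\ge |V(G)|$, so in the ``small $\alpha$'' case $\chi$ is automatically large; the nontrivial half --- that in the ``large $\alpha$'' case $\chi$ can be forced small --- is supplied by the Feige--Kilian construction. From a graph $G$ with a gap in $\alpha(G)$ one builds a graph $H$ on $n'=n^{1+o(1)}$ vertices by combining $G$ with a dispersed family of index-sets, arranged so that: when $\alpha(G)\ge K$, the vertex set of $H$ splits into few independent sets, hence $\chi(H)$ is small; when $\alpha(G)\le cK$, every independent set of $H$ is tiny, so $\chi(H)\ge n'/(\text{tiny})$ is large. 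The ratio between the two values is $(n')^{1-\varepsilon}$, and the index-set family is once more produced by the explicit disperser, keeping the blow-up sub-polynomial. Chaining the two reductions yields NP-hardness of approximating $\chi(G)$ within $n^{1-\varepsilon}$ for every $\varepsilon>0$.

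I expect the real obstacle to be the middle step: constructing the explicit extractor with linear min-entropy and entropy loss an arbitrarily small constant fraction, and verifying that substituting it for the random subproduct preserves the gap up to $n^{o(1)}$ factors. Everything else --- feeding in the PCP theorem, the product bookkeeping, and the Feige--Kilian packaging --- is comparatively routine once this pseudo-random object is in hand; indeed the $n^{1-\varepsilon}$ bound was already known under $\mathbf{NP}\nsubseteq\mathbf{ZPP}$, and the only genuinely new ingredient needed for the unconditional NP-hardness in \cite{zuckerman2006linear} is this derandomization.
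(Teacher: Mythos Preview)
The paper does not prove this theorem at all: it is merely quoted as a known result from \cite{zuckerman2006linear}, with no argument given. So there is no ``paper's own proof'' to compare your proposal against.

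Your sketch is a reasonable high-level summary of Zuckerman's actual argument --- PCP/FGLSS constant-gap hardness for independent set, derandomized graph product via explicit linear-min-entropy extractors/dispersers to amplify to an $n^{1-\varepsilon}$ gap, and the Feige--Kilian packaging to transfer to chromatic number --- and you correctly identify the extractor construction as the substantive contribution. But none of this is in the present paper, which simply cites the result as background in its complexity discussion.
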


Nevertheless, a combination of the remarkable theorems of Emden-Weinert, Hougardy and Kreuter \cite{Emden}, and Molloy and Reed \cite{Molloy} shows that there is a dichotomy for detecting large chromatic numbers via local obstacles, akin to Brooks' theorem.
Let $\Delta\in \mathbb{N}$ and set $k_\Delta$ to be the maximum integer such that $(k+1)(k+2)\le \Delta$, i.e., $k_\Delta\approx \sqrt{\Delta}-2$.

\begin{theorem}\label{thm:RemarkableResult}
    Let $\Delta$ be large.
    Then we have the following.
    \begin{itemize}
        \item \cite{Emden} It is $\operatorname{\bf{NP}}$-complete problem to check whether $\chi(G)\le c$ for graphs of degree bounded by $\Delta$, where $3\le c\le \Delta-k_\Delta-1$,
        \item \cite{Molloy} There is a linear time deterministic algorithm to check whether $\chi(G)\le c$ for graphs of degree bounded by $\Delta$, where $\Delta-k_\Delta\le c$.
        Moreover, producing vertex coloring with $c$ colors, if it exists, can be done in polynomial time.
    \end{itemize}
\end{theorem}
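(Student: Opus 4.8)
The plan is to handle the two items separately, since they pull in opposite directions and rely on different machinery.

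\emph{The $\mathbf{NP}$-completeness half.} Membership in $\mathbf{NP}$ is immediate: guess a coloring and verify it in time $O(m)$. For hardness I would reduce from $3$-\textsc{Coloring} restricted to graphs of some fixed maximum degree $D_0$, which is classically $\mathbf{NP}$-complete. Given such an instance $G$ with $\Delta(G)\le D_0$, the goal is to build $G'$ with $\Delta(G')\le\Delta$ so that $\chi(G')\le c$ if and only if $\chi(G)\le 3$. The skeleton: attach to each vertex of $G$ a private \emph{palette gadget} --- a graph of chromatic number close to $c$ but small maximum degree, joined to the vertex so that in any $c$-coloring the vertex is confined to a fixed $3$-element subset of $[c]$ --- and then add a sparse global \emph{synchronization} structure forcing all palette gadgets to leave the \emph{same} three colors free. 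Without synchronization the reduction would only encode a list-coloring instance and would fail (e.g.\ $K_4$ is colorable from suitably spread $3$-lists, yet $\chi(K_4)=4>3$). Both the palette gadget and, especially, the synchronization gadget must fit inside a degree budget of $\Delta-O(1)$ at every vertex, and sparse near-maximally-colorful gadgets --- complete multipartite graphs with small parts, or Kneser-type graphs --- have a number of vertices that is \emph{quadratic} in their chromatic number; this is the arithmetic behind the threshold $(k_\Delta+1)(k_\Delta+2)\le\Delta$. I expect the design and degree accounting of a color-forcing, synchronizing gadget family --- simultaneously near-chromatic-$c$, rigid enough to pin colors, and of maximum degree $\le\Delta-O(1)$ --- to be the main obstacle, and getting the constants right is precisely what restricts the reduction to $c\le\Delta-k_\Delta-1$.

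\emph{The polynomial-time half.} In the complementary regime $c\ge\Delta-k_\Delta$ the chromatic number is ``forced to be witnessed locally'' --- a quantitative, algorithmic form of the Borodin--Kostochka phenomenon. The plan is to establish the structural dichotomy: for $\Delta$ large and $c\ge\Delta-k_\Delta$, a graph $G$ of maximum degree $\le\Delta$ has $\chi(G)>c$ \emph{if and only if} $G$ contains a subgraph from a finite list $\mathcal{L}_{\Delta,c}$ of obstructions (including $K_{c+1}$ and boundedly many other critical graphs), each of order bounded by a function of $\Delta$ alone. The easy direction is that each obstruction has chromatic number $>c$. The substantial direction --- that an obstruction-free $G$ is $c$-colorable --- is the Molloy--Reed input: decompose $G$ into ``dense'' pieces, each a near-clique whose chromatic number is pinned down exactly and which must be $K_{c+1}$-free lest it already be an obstruction, together with a ``sparse'' remainder colored by a Lov\'asz Local Lemma / R\"odl-nibble argument, patch the two along the interface, and extract a bounded certificate of non-$c$-colorability whenever the patching fails. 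Given $\mathcal{L}_{\Delta,c}$, the decision algorithm is linear: scan the $O(n)$ bounded-radius neighborhoods of $G$ and brute-force test each for a copy of every $H\in\mathcal{L}_{\Delta,c}$, which is constant work since $\Delta$ and the $|V(H)|$ are constants. Producing an explicit $c$-coloring when no obstruction appears uses the constructive (Moser--Tardos-style) versions of the probabilistic steps above, running in polynomial time. The main obstacle here is the structural theorem itself: proving that non-$c$-colorability in the near-$\Delta$ regime always admits a \emph{bounded-size} certificate --- the deep content of the Molloy--Reed program, and a statement that fails as soon as $c$ drops below $\Delta-k_\Delta$, as the first half shows.
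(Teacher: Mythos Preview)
The paper does not actually prove this theorem: it is stated as a black-box citation of two results from the literature (Emden-Weinert--Hougardy--Kreuter for the first item and Molloy--Reed for the second), with no proof or proof sketch given. This is a survey paper, and Theorem~\ref{thm:RemarkableResult} is simply quoted as background in the complexity discussion of Section~\ref{subsec:Complexity}.

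Your proposal, then, is not comparable to anything in the paper --- you are sketching what you believe the proofs in \cite{Emden} and \cite{Molloy} look like. As such sketches go, yours is in the right spirit on both halves: the Emden-Weinert--Hougardy--Kreuter reduction does proceed via gadgets whose size is governed by the arithmetic $(k+1)(k+2)\le\Delta$, and the Molloy--Reed side does rest on a dense/sparse decomposition together with probabilistic coloring of the sparse part and a bounded local certificate of non-colorability. That said, your sketch is quite far from a proof: the gadget design in the first half and the structural theorem in the second are each the entire content of the respective papers, and you have (correctly) flagged them as ``the main obstacle'' without supplying them. For the purposes of this survey, simply citing the two sources --- as the paper does --- is the appropriate level of detail.
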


A recent work of Bamas and Esperet \cite{BamasEsperet} showed that the same dichotomy\footnote{The threshold in their result is off by $1$ compared to Theorem~\ref{thm:RemarkableResult}} holds in the LOCAL model as well.
In particular, they showed that in the regime when $c$ is above the threshold, there is a fast randomized distributed algorithm that produces vertex coloring with $c$ colors or finds a local obstruction if such coloring does not exists.
On the other hand, they show that if $c$ is below the threshold, then the LOCAL complexity of any distributed algorithm that produces vertex coloring with $c$ colors on graphs that have chromatic number equal to $c$ is $\Omega(n)$.
These results have a complete analogue in measurable combinatorics, see \cite[Section~3.A]{Bernshteyn2021LLL}.
\vspace{+0.2cm}

For edge colorings, the situation is simpler in the sense that $\chi'(G)\in\{\Delta(G),\Delta(G)+1\}$ by Vizing's theorem.
There are known sufficient conditions for $\chi'(G)=\Delta(G)$.
For example, by K\H{o}nig's theorem it is enough if $G$ is bipartite, see the book of Stiebitz, Scheide, Toft, and Favrholdt \cite{EdgeColoringBook} for more details in this direction.
Holyer \cite{holyer1981np}, however, showed that distinguishing between the two cases in general is complicated.

\begin{theorem}[\cite{holyer1981np}]
    For every $\Delta\ge 3$.
	It is $\operatorname{\bf{NP}}$-complete to decide if a graph $G$ of maximum degree at most $\Delta$ satisfies $\chi'(G)\le \Delta$.
\end{theorem}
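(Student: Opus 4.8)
Membership in $\operatorname{\bf{NP}}$ is immediate: a proper edge coloring $d\colon E\to[\Delta]$ is a polynomial-size certificate, and checking that $d(e)\neq d(f)$ for every pair of intersecting edges takes time $O(|E|\Delta)$. The substance of the statement is $\operatorname{\bf{NP}}$-hardness, and the plan is to prove it first for $\Delta=3$ — that is, to show that deciding whether a cubic graph has a $3$-edge-coloring is $\operatorname{\bf{NP}}$-complete, which by Theorem~\ref{thm:Vizing} is exactly the problem of deciding $\chi'(G)\le\Delta$ for $\Delta=3$ — and then to bootstrap to arbitrary $\Delta\ge 3$. The reduction in the cubic case is from $3$-SAT.

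The engine behind the gadgets is a parity fact about $3$-edge-colorings of cubic graphs. If $G$ is cubic and $E=E_1\sqcup E_2\sqcup E_3$ is a proper $3$-edge-coloring, then each $E_i$ is a perfect matching, so for any $S\subseteq V(G)$ the number of $E_i$-edges crossing the cut $\partial S$ equals $|S|-2a_i$ where $a_i$ counts $E_i$-edges inside $S$; hence $|\partial S\cap E_i|\equiv|S|\pmod 2$, and in particular all three of $|\partial S\cap E_1|,|\partial S\cap E_2|,|\partial S\cap E_3|$ have the same parity. Consequently any $3$-edge-cut of $G$ carries exactly one edge of each color (three odd nonnegative integers summing to $3$ must be $1+1+1$). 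This is what lets one build \emph{wires}: a wire joining two gadgets is a bundle of three edges forming a cut, and the colors on its three strands are forced to be a permutation of $\{1,2,3\}$, so after fixing orientation conventions inside the gadgets a wire transmits a discrete signal — ultimately a single bit — from one gadget to the next.

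With this in hand, from a $3$-CNF formula $\varphi$ I would assemble a cubic graph $G_\varphi$ out of three kinds of components joined by wires: a \emph{variable component} for each variable, forced in any $3$-edge-coloring into one of exactly two states and equipped with enough outgoing wires to reach every occurrence of the variable (fan-out); a \emph{clause component} for each clause, attached to the three wires carrying its literals and designed so that the wire colors extend to a proper $3$-edge-coloring of the component if and only if at least one incoming literal is true; and \emph{negation components} inserted on wires to realize negated literals. One then checks that $G_\varphi$ is cubic, is built from $\varphi$ in polynomial time, and satisfies $\chi'(G_\varphi)=3$ if and only if $\varphi$ is satisfiable. I expect the main obstacle — the technical core of Holyer's argument \cite{holyer1981np} — to be the explicit design and verification of these components: making the variable component genuinely bistable, making the clause component colorable in precisely the satisfying configurations, and reconciling all the local color conventions so that signals compose consistently around the cycles of $G_\varphi$; the $3$-edge-cut parity fact is the lever that forces the required rigidity.

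It remains to pass from $\Delta=3$ to $\Delta\ge 4$. Here one attaches to each vertex of a cubic instance $G$ a degree-raising gadget of maximum degree $\Delta$ that, in any proper $\Delta$-edge-coloring, rigidly occupies $\Delta-3$ of the colors on its edges at the attachment vertex, leaving exactly three colors available there for the original edges of $G$; the resulting graph $G'$ has maximum degree $\Delta$ and satisfies $\chi'(G')\le\Delta$ if and only if $\chi'(G)\le 3$, so cubic hardness transfers. Producing such freezing gadgets and verifying the equivalence is a now-standard padding argument along the lines of Leven and Galil, and with it in place the theorem follows for every $\Delta\ge 3$.
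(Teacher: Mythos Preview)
The paper does not supply a proof of this theorem: it is a survey, and the statement is quoted as a classical result with a bare citation to Holyer \cite{holyer1981np}. So there is no in-paper argument to compare against.

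That said, your sketch is a faithful outline of the standard proof. The parity lemma you state (in a cubic graph with a proper $3$-edge-coloring, any $3$-edge-cut meets each color class exactly once) is exactly the structural fact Holyer exploits to make his ``inverting component'' and to wire gadgets together; your variable/clause/negation decomposition matches his reduction from $3$-SAT, and you are right that the real work---which you explicitly defer---is the concrete design and verification of the components. For $\Delta\ge 4$ the padding argument you cite (Leven--Galil) is indeed the standard way to lift cubic hardness to arbitrary maximum degree. As written your proposal is a correct proof \emph{plan} rather than a proof: the gadget constructions are the substance and you have not supplied them, but nothing you wrote is wrong or misdirected.
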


In descriptive set theory, complexity of a collection of structures with a given property, for example, Borel graphs of a given Borel chromatic number, is measured using the complexity of the set of \emph{codes} for these structures.
These codes can be identified with real numbers, hence a collection of structures with a given property can be thought of as a subset of $\mathbb{R}$, and the complexity is measured using the \emph{projective hierarchy} of subsets of $\mathbb{R}$, see \cite[Section~37]{kechrisclassical} for the definition and basic properties of projective sets.
Roughly speaking, the complexity measures the number of alternating quantifiers over $\mathbb{R}$ that are needed for the definition of the set.
For instance, the standard definition of the set of Borel graphs of Borel chromatic number at most $k\in \mathbb{N}$, that is,
\begin{equation}\label{eq:Codes}
    \{\fG=(V,E,\fB):\exists \text{ Borel } c:V\to [k] \text{ such that } \forall \{x,y\}\in E \ c(x)\not=c(y) \},
\end{equation}
can be expressed by first assuming $(V,\fB)=[0,1]$ and then encoding the parameters $E$ and $c$ as real numbers.
In particular, the set from \eqref{eq:Codes} is defined using $\exists\forall$ quantification over $\mathbb{R}$.
Such sets are called \emph{$\mathbf{\Sigma}^1_2$ sets}.
The class of $\mathbf{\Sigma}^1_2$ sets gives a natural upper bound on the complexity of codes for the existence of graph colorings on Borel graphs.
Analogously to $\operatorname{\bf{NP}}$-completeness, if both quantifiers $\exists\forall$ in the definition of a set are necessary, then we say that it is \emph{$\mathbf{\Sigma}^1_2$-complete}.
Surprisingly, there is an analogue of the aforementioned result of Karp in measurable combinatorics.
In fact, the following result implies a strong failure of Brooks'-like theorems in the Borel context.

\begin{theorem}[\cite{todorvcevic2021complexity,brandt_chang_grebik_grunau_rozhon_vidnyaszkyhomomorphisms}]\label{thm:BrooksComplexity}
    Let $k\ge 3$.
    The set of Borel graphs of Borel chromatic number at most $k$ is $\mathbf{\Sigma}^1_2$-complete set.
    This is true even if we restrict to the class of $k$-regular acyclic Borel graphs.
 In contract, Borel graphs admitting a Borel $2$-coloring for a $\mathbf{\Pi}^1_1$ set. 
\end{theorem}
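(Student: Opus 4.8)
The statement has three parts: the $\mathbf{\Sigma}^1_2$ upper bound for $\{\fG:\chi_\fB(\fG)\le k\}$, which is exactly the observation recorded around \eqref{eq:Codes}; the matching $\mathbf{\Sigma}^1_2$-hardness, together with its refinement to $k$-regular acyclic graphs; and the $\mathbf{\Pi}^1_1$ upper bound when $k=2$. The plan is to dispatch the $k=2$ case by a soft dichotomy argument, and to prove hardness via the homomorphism-graph machinery of \cite{brandt_chang_grebik_grunau_rozhon_vidnyaszkyhomomorphisms}, which is Marks' determinacy method \cite{DetMarks} repackaged so as to depend Borel-uniformly on a parameter; the $\mathbf{\Sigma}^1_2$ upper bound together with hardness yields completeness.

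For $k=2$: ``$\fG$ is bipartite'', i.e.\ $\fG$ has no odd cycle, is the negation of the $\mathbf{\Sigma}^1_1$ statement ``there exist finitely many vertices spanning an odd cycle'', hence is a $\mathbf{\Pi}^1_1$ condition on the code. Moreover, a dichotomy theorem of $\mathbb{G}_0$-type (worked out in this context in \cite{todorvcevic2021complexity}) shows that a bipartite Borel graph $\fG$ either satisfies $\chi_\fB(\fG)\le 2$ or receives a continuous homomorphism from one of a fixed countable family of ``obstruction'' Borel graphs, the latter being a $\mathbf{\Sigma}^1_1$ condition. Therefore ``$\chi_\fB(\fG)\le 2$'' is the conjunction of a $\mathbf{\Pi}^1_1$ condition with the negation of a $\mathbf{\Sigma}^1_1$ condition, hence $\mathbf{\Pi}^1_1$. (Alternatively one argues effectively, checking that a Borel $2$-coloring---which on each connected component is canonical up to a global swap---may, whenever it exists, be taken $\mathbf{\Delta}^1_1$ in the code, so that ``$\chi_\fB(\fG)\le 2$'' unfolds as a countable union over $\mathbf{\Delta}^1_1$-codes of $\mathbf{\Pi}^1_1$ statements.) This half is routine compared with the hardness construction.

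For the hardness I would fix a $\mathbf{\Sigma}^1_2$-complete set $A$ and, by Shoenfield's normal form, present it as $A=\{x\in\oom:\exists y\in\oom\ \text{the tree }T_{x,y}\text{ on }\N\text{ is well-founded}\}$ for a continuous assignment $(x,y)\mapsto T_{x,y}$. I would then build, continuously in $x$, an auxiliary Borel structure $\mathcal{A}_x$ coding the whole family $(T_{x,y})_y$, and let $\mathcal{H}_x$ be the associated homomorphism graph, whose vertices are the relevant partial homomorphisms into $\mathcal{A}_x$; this construction automatically yields a $k$-regular (after routine degree bookkeeping) acyclic Borel graph, and $x\mapsto\mathcal{H}_x$ is continuous. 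The two implications to verify are: (a) if $T_{x,y_0}$ is well-founded for some $y_0$, then its well-foundedness rank drives a transfinite recursion that assembles a Borel $k$-coloring of $\mathcal{H}_x$, so $\chi_\fB(\mathcal{H}_x)\le k$; (b) if every $T_{x,y}$ is ill-founded, then a Borel $k$-coloring of $\mathcal{H}_x$ would, through the pressing-down / game argument at the heart of the homomorphism-graph method---which ultimately rests on Martin's Borel determinacy \cite{martin}---manufacture a Borel object certifying well-foundedness where none exists, a contradiction. Hence $\chi_\fB(\mathcal{H}_x)\le k\iff x\in A$, which together with the $\mathbf{\Sigma}^1_2$ upper bound gives $\mathbf{\Sigma}^1_2$-completeness already within $k$-regular acyclic Borel graphs.

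The main obstacle is the design and verification of $\mathcal{H}_x$: it must remain acyclic and exactly $k$-regular uniformly in $x$, depend continuously on $x$, and---the crux---have its Borel $k$-colorability pinned precisely to the existence of a well-founded $T_{x,y}$. Establishing implication (b) is the delicate step, since one must rule out every Borel $k$-coloring, including colorings that vary wildly across $\mathcal{H}_x$ rather than being locally constant; this is exactly where Marks' determinacy technique (or the finitary surrogate developed in Section~\ref{sec:Marks}) is indispensable, and making that argument uniform in the $\mathbf{\Sigma}^1_2$-parameter $x$ is what the homomorphism-graph formalism is built to handle.
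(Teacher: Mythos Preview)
The paper is a survey and does not prove this theorem; it is quoted with citations only. What the paper \emph{does} provide is the homomorphism-graph machinery of Section~\ref{sec:Marks} (in particular Theorems~\ref{pr:chromatic} and~\ref{t:transfer}), and that machinery is precisely the engine of the $\mathbf{\Sigma}^1_2$-hardness proof in \cite{brandt_chang_grebik_grunau_rozhon_vidnyaszkyhomomorphisms}. Your plan for hardness---parametrize the target structure by $x$ so as to encode a family of trees $(T_{x,y})_y$, form the edge-labeled homomorphism graph $\mathcal{H}_x$, and use Borel determinacy to tie Borel $k$-colorability of the resulting $k$-regular acyclic graph to the $\mathbf{\Sigma}^1_2$-complete predicate $\exists y\,(T_{x,y}\text{ is well-founded})$---is exactly the shape of that argument, and your identification of implication~(b) as the step where Martin's theorem is indispensable is accurate. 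For the $\mathbf{\Pi}^1_1$ upper bound at $k=2$, either of your two sketches is viable; the effective one (when a Borel $2$-coloring exists it can be taken $\mathbf{\Delta}^1_1$ in the code, so one need only quantify over hyperarithmetic witnesses) is closest to how \cite{todorvcevic2021complexity} handles it. The only substantive gap you leave open, and you flag it yourself, is the concrete choice of the target $\mathcal{A}_x$: in the cited proofs this is a shift-type graph whose $\Game\mathbf{\Delta}^1_1$ (edge-labeled) chromatic behavior is governed by the ordinal rank of the encoded tree, and carrying out that verification is where the real work lies.
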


It is worth mentioning that in the past couple of years a number of further results have been shown concerning descriptive complexity of combinatorial notions, revealing interesting connections particularly to the complexity theory of constraint satisfaction problems (CSPs). 

Note that graph coloring problems are special cases of homomorphism problems between relational structures (also called CSPs), for example, a $k$-coloring of a graph is just a homomorphism to the complete graph $K_k$. In the finite world, this kind of homomorphism problems are well investigated: a major recent breakthrough is that deciding the existence of a homomorphism to any $H$ is either in $\text{P}$ or $\text{NP}$-complete, \cite{bulatov2017dichotomy,zhuk2020proof,brady2022notes}. In fact, a complete algebraic understanding of structures $H$ for which the homomorphism problem is easy/hard has been reached. 

Thornton \cite{thornton2022algebraic} has initiated a systematic study of the complexity of Borel homomorphism problems of the following sort: given a finite relational structure $H$, decide the descriptive of complexity of Borel structures (with the same signature) that admit a Borel homomorphism to $H$. Interestingly, if the $H$ homomorphism problem is known to be $\text{NP}$-complete, then the corresponding Borel collection is $\mathbf{\Sigma}^1_2$-complete. However, recent results of the authors \cite{grebikcomplexity} show a stark contrast with the finite world: if $H$ corresponds to solving systems of linear equations over a finite field--this problem is in $P$ in the finite case, by Gaussian elimination--the Borel problem is still $\mathbf{\Sigma}^1_2$-complete. 

A different, wide open question is the investigation of the complexity of \emph{hyperfinite} Borel graphs, see, e.g., \cite{jackson2002countable,kechris2024theory}. Note that establishing that hyperfinite Borel graphs form a $\mathbf{\Sigma}^1_2$ set would yield a negative answer to some of the most important conjectures concerning this concept. In this direction, the first author and Higgins \cite{grebik2024complexity} have shown that deciding whether a Borel graph has \emph{Borel asymptotic dimension $1$}, which is a strengthening of hyperfiniteness, is $\mathbf{\Sigma}^1_2$-complete. Further, surprisingly, it turns out \cite{frisch2024hyper}that $\mathbf{\Sigma}^1_2$-completeness of hyperfiniteness also follows from a negative answer to the \emph{increasing union conjecture} \cite{jackson2002countable}, that is, the statement that hyper-hyperfinite equivalence relations are hyperfinite.

\begin{remark}[Deciding the Global Picture]
    It is an exciting question to understand for what graph classes is the membership question for LCL problems in Figure~\ref{fig:GlobalLOCAL} itself decidable.
    That is, after we fix a graph class, is there an algorithm that would take an LCL problem as an input and output its LOCAL complexity?
    As it is possible to simulate the halting problem using LCL problems on $\mathbb{Z}^2$, the membership problem is known to be \emph{undecidable} on graphs that look locally like the grid $\mathbb{Z}^d$, whenever $d>1$, see \cite{naorstockmeyer} and \cite{brandt_grids}.
    
    The same questions can be asked in the context of measurable combinatorics.
    That is, after we fix a class of Borel graphs, is there an algorithm that would take an LCL problem as an input and output whether it can be solved in a Borel way on every Borel graph in the class?
    Our current knowledge here is identical with the one in the LOCAL model.
    For example, undecidability for LCL problems on Borel graphs that look locally like $\mathbb{Z}^d$, where $d\ge 1$, has been independently established by Gao, Jackson, Krohne and Seward \cite{GJKS}.
\end{remark}

\subsection{Formal connections}\label{subsec:Correspondence}
In this section we give a brief overview of the formal connections between distributed computing and measurable combinatorics that were first discovered in the seminal paper of Bernshteyn \cite{Bernshteyn2021LLL}.
Throughout this section, we work with a fixed class of finite graphs that \emph{approximate locally} a class of Borel graphs.
One should think of the class of finite graphs of degree bounded by $\Delta$ and the class of Borel graphs of degree bounded by $\Delta$, or the class of finite trees and the class of Borel acyclic graphs.
Under these mild assumptions, Bernshteyn \cite{Bernshteyn2021LLL} proved the following, see Figure~\ref{fig:GlobalLOCAL}.

\begin{theorem}\label{thm:Bernshteyn}
    Let $\Pi$ be a graph coloring problem, or an LCL problem in general.
    \begin{itemize}
        \item If the deterministic LOCAL complexity of $\Pi$ is $o(\log(n))$, then $\Pi$ admits a Borel solution.
        \item If the randomized LOCAL complexity of $\Pi$ is $o(\log(n))$, then $\Pi$ admits a measurable solution.
    \end{itemize}
\end{theorem}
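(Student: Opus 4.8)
The plan is to establish both implications via the same mechanism: a fast LOCAL algorithm, viewed as a function on bounded-radius neighborhoods, can be ``localized'' against the canonical Borel symmetry-breaking device from Example~\ref{ex:BasicBorelColoring}, and the sublogarithmic complexity guarantees that this localization terminates within a Borel-measurable (or $\mu$-measurable) set. First I would set up the correspondence between a finite graph $G$ equipped with an adversarial identifier assignment $\id\from V\to n^c$ and a bounded-degree Borel graph $\fG=(V,E,\fB)$: fix $n$, and observe that any Borel function $V\to [n^c]$ that is a proper coloring of the power graph $\fG^{2t_n+1}$ (which exists and uses only countably many values by Theorem~\ref{thm:BorelGreedy}, after noting $\fG^{2t_n+1}$ still has bounded degree) can serve as a \emph{locally injective labeling on $(2t_n+1)$-balls}, i.e.\ it separates any two vertices at distance $\le 2t_n+1$. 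This is exactly the feature of the adversarial $\id$ that a deterministic LOCAL algorithm $\fA$ of complexity $t_n$ is allowed to exploit, since $\fA$ only inspects $B_\fG(v,t_n)$ together with the identifiers on that ball.

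The key steps, in order, would be: (1) For the deterministic half, fix the target LCL problem $\Pi$ and the algorithm $\fA=(\fA_n)_n$ with $t_n\in o(\log n)$. The size parameter $n$ appears as an input, so I would work on the \emph{connected components} of $\fG$ and feed $\fA$ a ``virtual size'' $n$ growing along an exhaustion; the condition $t_n = o(\log n)$ ensures that for every radius $r$ we may choose $n$ with $t_n \le r$ while still $n^c$ exceeds the number of colors needed to properly color $\fG^{2r+1}$, so the Borel labeling from step (1) above is a legitimate identifier assignment for that value of $n$. (2) Define the candidate Borel solution by $v\mapsto \fA_n\bigl(B_\fG(v,t_n)\bigr)$, where the identifiers on the ball are read off the Borel labeling. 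Since $\fA_n$ is a fixed function, $B_\fG(v,t_n)$ is a Borel-in-$v$ piece of data (bounded degree makes the isomorphism type of the labeled ball a Borel function of $v$), so the output is a Borel function $V\to\text{colors}$. (3) Verify correctness: $\fA$ is guaranteed to produce a valid $\Pi$-solution for \emph{every} identifier assignment on \emph{every} graph of the class of the right size, in particular for ours; local checkability of $\Pi$ means the ``valid solution'' predicate is a conjunction over bounded-radius neighborhoods of a condition already enforced vertexwise, so it transfers verbatim. (4) For the randomized half, replace the identifier labeling by an i.i.d.\ assignment of infinite bit strings $i\from V\to\{0,1\}^\N$, which is automatically Borel and which the Borel isomorphism theorem lets us realize as the identity-type coordinate; now $v\mapsto \fA_n(B_\fG(v,t_n))$ is Borel, and the success guarantee is that the probability of a valid output is $\ge 1-1/n^d$. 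Taking $n\to\infty$ along the exhaustion and a Borel--Cantelli argument over the (at most countably many, up to the measure) components shows the output is a valid $\Pi$-solution off a $\mu$-null set, i.e.\ a measurable solution.

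The main obstacle I expect is step (1)--(2) in the deterministic case: making precise that the Borel labeling built from Theorem~\ref{thm:BorelGreedy} on $\fG^{2t_n+1}$ genuinely simulates an adversarial $\id\from V\to n^c$ \emph{despite $n$ being a moving target}. The subtlety is that $\fA$ is permitted to behave completely differently for different $n$ (the $\fA_n$ need not cohere), so one must pin down a single $n$ (equivalently, restrict to finite ``budgeted'' portions of each component) for which $t_n$ is below the radius governing the labeling's injectivity while simultaneously $n^c$ is large enough to host the required number of colors --- this is precisely where $t_n \in o(\log n)$ (rather than merely $O(\log n)$) is essential, since we need $n^c \gg \Delta^{O(t_n)}$, i.e.\ $c\log n \gg t_n\log\Delta$. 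A secondary technical point is handling graphs whose components are infinite: one restricts to an increasing sequence of finite connected ``patches,'' runs $\fA$ on each with its own $n$, and checks that because $\fA$'s output at $v$ depends only on $B_\fG(v,t_n)$, the outputs on overlapping patches agree once the patch is large enough around $v$, yielding a well-defined global Borel function. Once these bookkeeping issues are settled, correctness is immediate from local checkability and the per-instance guarantee of $\fA$, and no further combinatorial input is needed.
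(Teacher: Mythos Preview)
The paper does not prove this theorem; it is quoted as Bernshteyn's result with a citation to \cite{Bernshteyn2021LLL} and no argument is reproduced. Your outline is, in its core mechanism, the standard one from that source: a Borel proper coloring of a sufficiently high power of $\fG$ furnishes a locally injective labeling that a deterministic LOCAL algorithm cannot distinguish from a genuine identifier assignment, and the hypothesis $t_n\in o(\log n)$ is exactly what makes the required palette size $\Delta^{O(t_n)}+1$ fit inside $n^c$.

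Your handling of the size parameter $n$, however, has a real gap. You propose running $\fA$ on an exhaustion by finite patches, ``each with its own $n$,'' and then arguing that the outputs cohere because each depends only on $B_\fG(v,t_n)$. This fails: the functions $\fA_n$ for distinct $n$ may be completely unrelated (you yourself note this), so the output at a fixed vertex will in general jump as $n$ changes and there is no limit to take. The fix is simpler than what you attempt: choose a \emph{single} $n$ once and for all, large enough that $\Delta^{2(t_n+r)+1}+1\le n^c$ (with $r$ the checking radius of $\Pi$), produce one Borel labeling $V\to[n^c]$ via Theorem~\ref{thm:BorelGreedy} applied to $\fG^{(2(t_n+r))}$, and run $\fA_n$ globally with that labeling. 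Correctness at each $v$ is then verified by embedding the labeled ball $B_\fG(v,t_n+r)$ into a finite graph of size exactly $n$ in the class, extending the labels to a globally injective identifier assignment, and invoking correctness of $\fA_n$ on that finite instance; no exhaustion is needed. In the randomized half the same ``fix a single $n$'' correction applies, and your remark about ``at most countably many components up to the measure'' is mistaken (a bounded-degree Borel graph typically has uncountably many components); controlling the null set requires a genuine measure-theoretic argument---Fubini against the product of $\mu$ with the i.i.d.\ randomness to bound the expected $\mu$-measure of the set of violated constraints, followed by further work to pass from small failure measure to zero---that your sketch does not supply.
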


Note that this result allows a black-box translations of upper bounds from distributed computing to measurable combinatorics and lower bounds in the opposite directions.
We refer the reader to \cite[Section~3]{Bernshteyn2021LLL} to see some of the striking applications of Theorem~\ref{thm:Bernshteyn}.
\vspace{+0.2cm}

A natural question arises: can the implications in Theorem~\ref{thm:Bernshteyn} be reversed?
That is, can the existence of a Borel or a measurable solution imply the existence of an efficient distributed algorithm?
It turns out that in the setting of deterministic distributed algorithms, the right analogue on the descriptive set theory sides are \emph{continuous solutions}.
Gao, Jackson, Krohne and Seward \cite{GJKS} combinatorially characterized what LCL problems can be solved in a continuous way on $\mathbb{Z}^d$.
Their result immediately implies that this class coincide with LCL problems of deterministic LOCAL complexity $o(^d\sqrt{n})$ on finite graphs that look locally like $\mathbb{Z}^d$, see \cite[Section~7]{grebik_rozhon2021toasts_and_tails} for a high-level overview of the argument.
This was generalized independently by Bernshteyn \cite{Bernshteyn2021local=cont} and Seward \cite{Seward_personal} to Cayley graphs of finitely generated groups, and by Brandt, Chang, Greb\'{i}k, Grunau, Rozho\v{n} and Vidny\'{a}nszky \cite{DeterministicTrees} to regular trees.

We refer the reader to \cite{grebik_rozhon2021LCL_on_paths,grebik_rozhon2021toasts_and_tails,brandt_chang_grebik_grunau_rozhon_vidnyaszky2021LCLs_on_trees_descriptive,HolroydSchrammWilson2017FinitaryColoring,BernshteynWeilacherLLL,GJKS2,lyons2024descriptive} for further information about the relationship between the complexity classes of LCL problems for particular graph classes and for connections to the theory of random processes, in particular, \emph{finitary factors of iid}.

\section{Marks' games}
\label{sec:Marks}
In this section we discuss the adaptation of seminal results of Marks, see Theorem \ref{t:marksmain} and \cite{DetMarks}, to the LOCAL model. This adaptation has been useful in several different ways. First, it resulted in new proofs of lower bounds for colorings and new results for general LCL problems on bounded degree trees in the LOCAL model. Second, a certain technical difficulty pointed towards introducing the main novelty to be discussed, namely the \emph{homomorphism graph/ID graph technique} in the LOCAL model. Third, a transfer of this technique to descriptive combinatorics found a number of new applications there.

As discussed in Section \ref{sec:measurable}, Marks has shown that the greedy upper bound for vertex and edge coloring of $\Delta$-regular graphs is sharp in the Borel context, even in the case of acyclic graphs. His key idea was to define two player games, where, by strategy stealing arguments, the existence of winning strategies for any of the players contradicts the existence of Borel colorings. In what follows, we will only describe these techniques in the case of vertex colorings, nevertheless, they can be used in a much wider context, to rule out for example matchings or homomorphisms. These generalizations are often non-trivial and require additional technical work, e.g., considering hypergraphs instead of graphs and defining more complicated games, see \cite{DetMarks,brandt_chang_grebik_grunau_rozhon_vidnyaszkyhomomorphisms,thornton2021orienting}.

Let us also mention that the first lower bounds for deterministic vertex colorings of trees of degree bounded by $\Delta$ were proved in \cite{brandt_etal2016LLL}, and later Brandt \cite{brandt19automatic} has formalized these ideas in his celebrated \emph{round elimination method}. Roughly speaking, what the round elimination method does is the following: assuming that a labeling problem $\Pi$ on regular trees is solvable by an $r$-round algorithm, it constructs a new problem $\Pi'$ solvable by an $r-1$-round algorithm. Now, if $\Pi$ happens to be a fixed point of the operation $'$, then it must be solved by a $0$-round algorithm showing that either $\Pi$ is trivial, or the original assumption was false.\footnote{See \url{https://github.com/olidennis/round-eliminator} for a fully automated computation of $\Pi'$.} 

This method has a striking similarity to Marks' technique in certain cases. For example, in the case of so called sinkless orientation problem, which is a fixed point of $'$, round elimination essentially produces strategies for the players in the games similar to what Marks considered. Hence, it would be very tempting to think that these methods are the same, say, for fixed points of the operation $'$. This is not the case in general: there are fixed point LCL problems on trees solvable in the Borel context but unsolvable in the LOCAL model. Nevertheless, understanding the exact nature of this connection is still an exciting open problem, see \cite{lyons2024descriptive}.

The adaptation of Marks method will be illustrated by proving the following.

\begin{theorem}
	\label{t:marksmainloc}
	There is no deterministic algorithm of complexity $o(\log (n))$ that produces a vertex coloring with $\Delta$ colors of acyclic graphs of degree $\leq \Delta$. 
\end{theorem}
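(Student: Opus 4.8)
The plan is to adapt Marks' determinacy argument to the finite, distributed setting, replacing Borel determinacy of infinite games by determinacy of \emph{finite} games, and replacing "Borel coloring" by "output of an $o(\log n)$-round algorithm". The starting point is the observation that an $o(\log n)$-round algorithm on a graph with identifiers drawn from a polynomial range can only "see" a ball of radius $t_n = o(\log n)$, and on an acyclic graph of degree $\le \Delta$ such a ball has size $\le \Delta^{O(t_n)} = n^{o(1)}$. So if we build a gadget graph on roughly $N$ vertices that looks locally like a tree and has enough "room", the algorithm's decision at a designated vertex depends only on a small local patch of identifiers, and we can fight it adversarially.

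First I would set up the game. Fix $\Delta$, and suppose toward a contradiction that $\fA = (\fA_n)_n$ is a deterministic algorithm of complexity $t_n = o(\log n)$ producing a proper $\Delta$-coloring of acyclic graphs of degree $\le \Delta$. Following Marks, for each color $i \in [\Delta]$ one sets up a two-player game $G_i$ played on a (finite, depth-bounded) tree: the two players alternately extend a rooted $\Delta$-regular tree and, crucially, alternately assign the unique identifiers to the newly revealed vertices, within the allowed polynomial range. One player (the "coloring" player, or rather the player trying to certify that the root can get color $i$) wins if, when $\fA$ is run on the resulting labeled graph, the root receives color $i$; the other player wins otherwise. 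Because the tree has bounded depth (governed by $t_n$) and the identifier range is finite, each $G_i$ is a \emph{finite} game, hence determined — this is the point where Martin's Borel determinacy theorem in the Borel setting is replaced by the trivial determinacy of finite games, which is the promised simplification.

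Next comes the strategy-stealing / gluing step, which is the heart of Marks' argument. If for some color $i$ the "root gets color $i$" player has a winning strategy, one glues $\Delta$ copies of the relevant gadget around a central vertex (or along an edge, in the edge-coloring variant) so that the central configuration is forced into a coloring conflict: every neighbor of the central vertex can be driven to the same color $i$, contradicting properness; conversely, if the opposing player wins in every $G_i$, one assembles the $\Delta$ counter-strategies into a single acyclic graph of degree $\le \Delta$ on which the root is forced to avoid all $\Delta$ colors — again a contradiction. The combinatorial content is that the gadgets can be wired together into a genuinely acyclic graph of the right degree, of total size $n$, with identifiers in the range $n^c$, while keeping the balls of radius $t_n$ around the relevant vertices isomorphic (as labeled graphs) to the plays of the games, so that $\fA$ really is forced to behave as in the game. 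Here one uses $t_n = o(\log n)$ quantitatively: the depth of the game trees must be taken as some $r_n$ with $t_n \le r_n$ and $\Delta^{r_n}$ still small compared to $n$, which is exactly where the $o(\log n)$ threshold (as opposed to $O(\log n)$) is used, matching the lower bound with the reach of Bernshteyn's correspondence and Theorem~\ref{thm:DetMarks}.

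The main obstacle I expect is the bookkeeping around identifiers: in the Borel setting Marks uses a "background" Borel labeling as an infinite supply of symmetry-breaking information, and the players play elements of a Polish space; in the LOCAL model the identifiers are a genuinely finite resource with an adversarial assignment, so one must check that the games can be played with identifiers from a \emph{fixed} polynomial range while still leaving enough freedom for the gluing construction to produce pairwise-distinct identifiers globally on the assembled $n$-vertex graph. A related subtlety is ensuring acyclicity is preserved under gluing — the gadgets are trees, but the way they are joined around the central vertex/edge must not create short cycles and must not blow up the degree beyond $\Delta$. Once these are handled, the contradiction is obtained exactly as in Marks' proof, and the theorem follows; this is essentially the argument carried out in detail in the sections that follow, cross-referencing Theorem~\ref{t:marksmain} and Theorem~\ref{thm:LowerBoundsDistributed}.
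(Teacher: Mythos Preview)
Your high-level plan—finite games plus strategy stealing—is exactly the skeleton of the paper's argument, and you correctly flag the identifier bookkeeping as the main obstacle. But this obstacle is not mere bookkeeping: it is the central difficulty, and your proposal does not contain the idea that overcomes it. The paper states explicitly that ``adding rules to the game in the most na\"{i}ve way (e.g., requiring one of the players to use injective IDs) ruins the symmetry on which the strategy stealing arguments rely.'' Concretely: when you play the winning strategies $\sigma_0,\dots,\sigma_{\Delta-1}$ against one another (each controlling one branch of the root), each strategy picks identifiers on its own branch with no knowledge of what the others are doing, so nothing prevents two different strategies from choosing the same identifier on different branches. The assembled tree then fails to have unique IDs, and you cannot legitimately run the LOCAL algorithm on it. Any attempt to enforce global injectivity inside the game rules destroys the branch-by-branch symmetry that makes the ``rotate the strategies'' gluing (your Lemma~\ref{l:firstplayerloc} analogue) and the ``play two Player~II strategies across an edge'' step (your Lemma~\ref{l:secondplayerloc} analogue) work.

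The paper's resolution is the \emph{homomorphism graph} technique, which your proposal is missing entirely. Instead of having the players choose raw identifiers, one fixes an auxiliary finite graph $\mathcal{H}$ carrying a $\Delta$-edge-labeling, and the players choose \emph{vertices of $\mathcal{H}$}, subject only to the local rule that the resulting labeling is an edge-label-preserving homomorphism $T'_\Delta \to \mathcal{H}$. This constraint is local and symmetric, so strategy stealing goes through unchanged (Transfer Principle~2, Theorem~\ref{t:tr2local}) and produces an edge-labeled $\Delta$-coloring of $\mathcal{H}$. The point is that if $\mathcal{H}$ has girth larger than $2r+2$, any such homomorphism is automatically \emph{injective on every $(r{+}1)$-ball} (Proposition~\ref{pr:injectivity}), so the $\mathcal{H}$-labels can legitimately serve as unique identifiers for an $r$-round algorithm. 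One then needs a sequence $(\mathcal{H}_n)$ with simultaneously high girth and $\chi^{el}(\mathcal{H}_n)>\Delta$; this is supplied by a probabilistic construction via random regular graphs (Lemma~\ref{l:existenceof}), and here is where the $o(\log n)$ hypothesis actually bites: random regular graphs on $n$ vertices have girth $\Theta(\log n)$, so one can secure girth exceeding $2t_n+2$ precisely when $t_n\in o(\log n)$.
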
 

There are three main ideas in the proof, all of which we will discuss in this section. As mentioned above, the most important one comes from \cite{DetMarks}:
\begin{shaded}
\begin{center}
 1. Games and strategy stealing to rule out the colorings.
\end{center}
\end{shaded}
As we will see, executing this idea runs into certain problems with the uniqueness of the IDs in the LOCAL model and with yielding acyclic graphs in the descriptive case. Let us also mention that this issue is present in Brandt's construction, as well; he overcame it by using probabilistic arguments that do not work in the Borel context. 
\vspace{+0.2cm}

In \cite{brandt_chang_grebik_grunau_rozhon_vidnyaszky2021LCLs_on_trees_descriptive} the homomorphism graph/ID graph technique was invented, which works in both LOCAL and Borel realms. 
\begin{shaded}
\begin{center}
2. Homomorphism graphs/ID graphs to enforce injectivity. 	
\end{center}
\end{shaded}
Subsequently, it became apparent that homomorphism graphs serve as a general tool to transfer properties of large degree graphs to bounded degree graphs. This has been widely utilized in \cite{brandt_chang_grebik_grunau_rozhon_vidnyaszkyhomomorphisms} in the descriptive case, but not yet in the LOCAL model.
\begin{shaded}
\begin{center}
	3. Formulating transfer principles and using games as voting systems. 
\end{center}	
\end{shaded}

Before turning to the proof, let us recall that by Bernshteyn's correspondence theorem (Theorem \ref{thm:Bernshteyn}) inexistence of Borel solutions to a locally checkable problem automatically implies a corresponding result in the deterministic LOCAL model in $o(\log n)$-many rounds, so formally, for the below results, we don't have to spell out the finitary adaptations. Despite this, we would like to describe them, since it has been developed in parallel to Bernshteyn's theorem, moreover, we believe that there should be models stronger than deterministic LOCAL (see Problem \ref{p:variant}), for which these techniques still could be used.
\vspace{+0.2cm}

\noindent{\emph{Technical preliminaries.}} In the rest of this section, $T_\Delta$ will stand for the $\Delta$-regular infinite tree and $T^r_\Delta$ for an $r$-neighborhood $B_{T_\Delta}(v,r)$ of any (every) vertex $v$ in $T_\Delta$. As discussed in Section \ref{sec:local} a deterministic $r$-round LOCAL algorithm on a fixed graph is the same as a map from $r$-neighborhoods with unique IDs of vertices of the graph. Therefore, sometimes we will consider $T^r_\Delta$ as being rooted in $v$ (which does not change the isomorphism type of such graphs), and we will evaluate algorithms on such neighborhoods, yielding a color on the root. Also, we will see that it is possible to prove lower bounds already from the existence of algorithms that map labeled copies of $T^r_\Delta$ to different colors, hence we could restrict our attention to only such maps. Moreover, in the case of vertex colorings, it suffices to check the validity of the algorithm on any pair of adjacent vertices, thus we will typically construct labelings of $T^{r+1}_\Delta$ to derive a contradiction. 

It will be also convenient in the first part of our discussion to consider maps from labeled trees with labelings that are not necessarily injective. This motivates the following definitions (here ``G" comes from generalized).

\begin{definition}
	
	An \emph{$r$-round GLOCAL algorithm (on trees with degrees $\leq \Delta$)} is a partial function $\mathcal{A}_r$ from (isomorphism classes of) such labeled rooted trees of radius $\leq r$ to some set $\Sigma$.\footnote{Note that in the definition of GLOCAL, we do not require the algorithm to always have an output. However, in practice, when we say that such an algorithm exists on some graph class, it necessarily outputs colors on vertices of those graphs. Thus, alternatively, we could require that $\mathcal{A}_r$ is defined on every isomorphism class, but only correct on the class of graphs in question.}	
	
\end{definition}

If $G$ is a labeled tree with $\Delta(G) \leq \Delta$, we can \emph{run $\mathcal{A}_r$ on $G$}: at any vertex $v$ of $G$ we take the labeled isomorphism class of $B_G(v,r)$ and feed it into $\mathcal{A}_r$. This gives a partial vertex labeling of $G$.

\begin{definition}
	Let $\mathcal{F}$ be a collection of finite labeled trees of degrees $\leq \Delta$. An $r$-round GLOCAL  algorithm $\mathcal{A}_r$ \emph{vertex colors $\mathcal{F}$}, if for any $G \in \mathcal{F}$, if $\mathcal{A}_r$ is run on $G$ in the above sense, it yields a vertex coloring of $G$ (in particular, every vertex must get a color). 
	
\end{definition}

\subsection{Games} We describe the most straightforward finitary adaptation of the original proof of Marks. This, in itself is not suitable to exclude colorings in the LOCAL model, as the labels come from the set $\{0,1,2,3\}$, and hence results only in a less interesting statement, but still encompasses the main idea. For the sake of simplicity, we will consider labeled $T^{r+1}_3$'s with labels $\in \{0,1,2,3\}$, so that  \emph{neighboring vertices must have different labels}, i.e., the labeling is injective on the edges. Denote this collection by $EI^{r+1}_3$. This can be viewed as a simplified version of the LOCAL model, in which the IDs are not unique, and the algorithm does not know the size of the graph.
\begin{theorem}
	Let $r$ be arbitrary. There is no $r$-round GLOCAL algorithm to $3$-color EI$^{r+1}_3$.
\end{theorem}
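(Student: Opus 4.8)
\textit{Proof proposal.} The plan is to carry out Marks' strategy-stealing argument in its simplest finitary incarnation. Suppose towards a contradiction that $\mathcal{A}_r$ is an $r$-round GLOCAL algorithm that vertex colors $EI^{r+1}_3$ with colors $\{1,2,3\}$. The key structural feature to exploit first is that the input class is extremely rigid: every member of $EI^{r+1}_3$ is a radius-$(r+1)$ ball in some proper $4$-labeling of the $3$-regular tree $T_3$, and the label set $\{0,1,2,3\}$ carries no distinguished element — permuting labels by any $\pi \in S_4$ is a symmetry of $EI^{r+1}_3$ that commutes with running $\mathcal{A}_r$. Since $\mathcal{A}_r$ only ever sees this $4$-labeling, and not the genuine unique identifiers of the full LOCAL model, all of these symmetries are at our disposal; this is precisely what makes the baby case go through cheaply.

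Next I would set up Marks' game. Given $\mathcal{A}_r$, play a finite two-player perfect-information game in which Builder and Spoiler cooperatively construct a proper $4$-labeled $3$-regular configuration consisting of the radius-$r$ neighborhoods of the two endpoints $u,v$ of one designated ``central'' edge, alternately extending the partial configuration by a single vertex (or a BFS layer) subject to properness and $3$-regularity, with the endpoint labels among the moves. Since the configuration has bounded depth and bounded branching, the game is finite, hence trivially determined — no Borel determinacy is needed here, which is the whole point of the finitary adaptation. The payoff is phrased in terms of the two colors $\mathcal{A}_r$ assigns across the central edge: Builder tries to keep $\mathcal{A}_r(u)\neq\mathcal{A}_r(v)$ (i.e. to keep the output looking like a proper coloring along that edge), Spoiler tries to force $\mathcal{A}_r(u)=\mathcal{A}_r(v)$.

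The argument then splits on who has the winning strategy. If Spoiler wins, then running Spoiler's strategy against any fixed sequence of Builder moves produces a concrete labeled graph in $EI^{r+1}_3$ in which $\mathcal{A}_r$ assigns the same color to two adjacent vertices — immediately contradicting that $\mathcal{A}_r$ vertex colors $EI^{r+1}_3$. If instead Builder has a winning strategy $\tau$, I would stage two runs of $\tau$ against each other: running $\tau$ with the central endpoint labeled $i$ pins down the color $\mathcal{A}_r$ gives to that endpoint, yielding a function $f:\{0,1,2,3\}\to\{1,2,3\}$; since $4>3$, the pigeonhole principle gives $i\neq j$ with $f(i)=f(j)$; now feed one run of $\tau$ (endpoint label $i$) the moves produced by the other run of $\tau$ (endpoint label $j$) as its opponent's moves on the connecting edge, and symmetrically — this is legitimate exactly because the four labels are interchangeable — so that the two runs assemble into a single member of $EI^{r+1}_3$ containing an edge $\{v_i,v_j\}$ with $\mathcal{A}_r(v_i)=f(i)=f(j)=\mathcal{A}_r(v_j)$, again contradicting properness of the output.

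The step I expect to be the main obstacle is this last splicing of two runs of $\tau$ along an edge, together with certifying that the assembled object genuinely lies in $EI^{r+1}_3$. The subtlety is that inside a single radius-$(r+1)$ ball of $EI^{r+1}_3$ no two adjacent vertices can both see an untruncated radius-$r$ neighborhood, so one has to choose the move order and the depths on the two sides of the central edge carefully — trading an off-by-one between the two sides and using a boundary layer that the algorithm's validity requirement cannot exploit — so that the glued tree is a globally consistent, cycle-free, properly $4$-labeled $T^{r+1}_3$, and so that the color forced at the central endpoint really depends only on its label. Finite determinacy and reading a bad input off a Spoiler win are then routine; and precisely this scheme, with genuine injective identifiers in place of the four labels, is what the homomorphism-graph/ID-graph technique of the following subsections is designed to rescue.
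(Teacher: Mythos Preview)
Your high-level plan --- finite games, determinacy, pigeonhole on four labels into three colors, then splice --- is the right one, but the specific game you set up does not deliver the function $f$ you need. The claim ``running $\tau$ with the central endpoint labeled $i$ pins down the color $\mathcal{A}_r$ gives to that endpoint, yielding a function $f:\{0,1,2,3\}\to\{1,2,3\}$'' is unjustified and in general false: a Builder winning strategy only guarantees $\mathcal{A}_r(u)\neq\mathcal{A}_r(v)$; it does not force $\mathcal{A}_r(u)$ to take a fixed value independent of Spoiler's play. Since the $r$-ball of each endpoint contains both players' moves, the output color at $u$ can vary with the opponent, so no function $f$ is defined and the pigeonhole step never gets off the ground. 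The ``off-by-one'' worry you flag is not the obstruction; the obstruction is that your single conflict/no-conflict game is too coarse.

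The paper repairs this with a finer game structure. For each root label $v\in\{0,1,2,3\}$ and each target color $i\in\{0,1,2\}$ there is a separate game $\mathbb{G}(v,i)$, building just the radius-$r$ ball around a root labeled $v$: Player $I$ labels one subtree, Player $II$ the other two, and Player $I$ wins iff the algorithm's output at the root is $\neq i$. The $1$-versus-$2$ asymmetry is the whole point: if Player $I$ won $\mathbb{G}(v,i)$ for all three $i$, the three Player-$I$ strategies fill the three subtrees and force the root to receive no color at all. Hence for every $v$ there is a color $c(v)$ for which Player $II$ wins (this is where determinacy of the finite game is used). Pigeonhole then gives distinct labels $v,w$ with $c(v)=c(w)=i$, and it is the two Player-$II$ strategies --- each of which \emph{does} force a specific output color, namely $i$ --- that get spliced across an edge into a member of $EI^{r+1}_3$. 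So the pigeonhole is applied to ``which color Player $II$ can force at a single root,'' not to an output function of a Builder strategy in an edge game.

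A small separate correction: it is not true that permuting labels by $\pi\in S_4$ ``commutes with running $\mathcal{A}_r$.'' The input class $EI^{r+1}_3$ is $S_4$-invariant, but a GLOCAL algorithm is an arbitrary partial function on labeled balls and may treat labels asymmetrically. The argument does not need or use any such equivariance; only the numerics $4>3$ matter.
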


\begin{proof}
	Towards contradiction, assume that such an $r$-round GLOCAL algorithm $\mathcal{A}_r$ exists. For every $v \in \{0,1,2,3\}$ and $i \in \{0,1,2\}$ define the game $\mathbb{G}(v,i)$ as follows.
	Two players, $I$ and $II$ label with numbers $ \in \{0,1,2,3\}$ the $r$-neighborhood of the rooted $3$-regular tree from the root, according to Figure \ref{fig:game}, in rounds, alternatingly.

	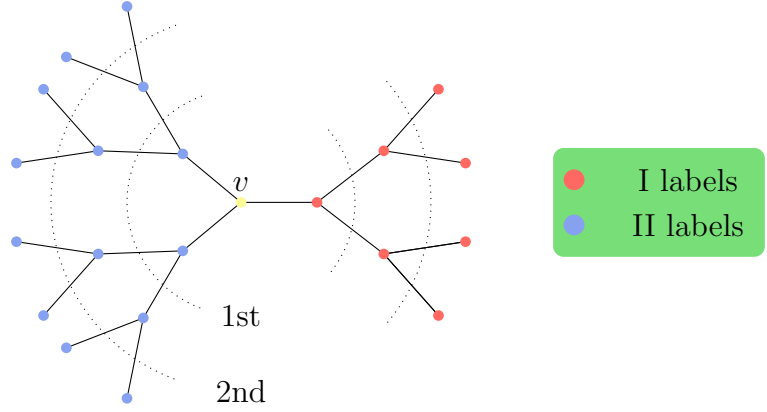
\begin{figure}

		\tikzstyle{mybox} = [draw=black, fill=blue!20, very thick,
		rectangle, rounded corners, inner sep=10pt]

		\tikzstyle{fancytitle1} =[fill=pastelred,rounded corners]
		\tikzstyle{fancytitle2} =[fill=pastelblue,rounded corners]
		\tikzstyle{legend} =[fill=pastelgreen,rounded corners]
		
		\begin{tikzpicture}
		\coordinate (0) at (0,0);
		
		\coordinate (a) at ($(0) +(0:1)$);
		\coordinate (b) at ($(0) +(140:1)$);
		\coordinate (c) at ($(0) +(220:1)$);
		
		\coordinate (ab) at ($(0) +(20:2)$);
		\coordinate (ac) at ($(0) +(-20:2)$);
		
		\coordinate (aba) at ($(0) +(30:3)$);
		\coordinate (abc) at ($(0) +(10:3)$);
		
		\coordinate (aca) at ($(0) +(-10:3)$);
		\coordinate (acb) at ($(0) +(-30:3)$);
		
		\coordinate (ba) at ($(0) +(130:2)$);
		\coordinate (bc) at ($(0) +(160:2)$);
		
		\coordinate (ca) at ($(0) +(200:2)$);
		\coordinate (cb) at ($(0) +(230:2)$);
		
		\coordinate (bab) at ($(0) +(140:3)$);
		\coordinate (bac) at ($(0) +(120:3)$);
		
		\coordinate (bcb) at ($(0) +(150:3)$);
		\coordinate (bca) at ($(0) +(170:3)$);
		
		\coordinate (cab) at ($(0) +(190:3)$);
		\coordinate (cac) at ($(0) +(210:3)$);
		
		\coordinate (cba) at ($(0) +(220:3)$);
		\coordinate (cbc) at ($(0) +(240:3)$);
		
		\coordinate (1st) at ($(0) +(270:1.5)$);
		\coordinate (2nd) at ($(0) +(270:2.5)$);
		\coordinate (edgelabel) at (0.5,0);
		
		\draw (0) -- (a);
		\draw (0) -- (b);
		\draw (0) -- (c);
		
		\draw (a) -- (ab);
		\draw (a) -- (ac);
		
		\draw (b) -- (ba);
		\draw (b) -- (bc);
		
		\draw (c) -- (ca);
		\draw (c) -- (cb);
		
		\draw (ab) -- (abc);
		\draw (ab) -- (aba);
		
		\draw (ac) -- (acb);
		\draw (ac) -- (aca);
		
		\draw (ac) -- (acb);
		\draw (ac) -- (aca);
		
		\draw (ba) -- (bac);
		\draw (ba) -- (bab);
		
		\draw (bc) -- (bca);
		\draw (bc) -- (bcb);
		
		\draw (ca) -- (cab);
		\draw (ca) -- (cac);
		
		\draw (cb) -- (cba);
		\draw (cb) -- (cbc);
		
		
		
		\node[above] at (0) {$v$};
	
		\node[] at (1st) {1st};
		\node[] at (2nd) {2nd};
		
		\fill[color=pastelyellow] (0) circle(2pt);
		
		\foreach \dot in {b,c,ba,bc,ca,cb,cab,cac,cba,cbc,bab,bac,bca,bcb} {
			\fill[color=pastelblue] (\dot) circle(2pt);
		}
		
		\foreach \dot in {a,ab,ac,aba,abc,aca,acb} {
			\fill[color=pastelred] (\dot) circle(2pt);
		}
		
		\draw[dotted] (40:2.5) arc (40:-40:2.5) ;
		
		\draw[dotted] (110:2.5) arc (110:250:2.5) ;
		
		\draw[dotted] (40:1.5) arc (40:-40:1.5) ;
		
		\draw[dotted] (110:1.5) arc (110:250:1.5) ;
		
		\node [legend] (ll) at (5.5,0) {
			\begin{tikzpicture}[strong/.style={line width=0.5mm},weak/.style={line width=0.1mm}]
			\node[fancytitle1] at (0.5,1.2) {};
			\node[fancytitle2] at (0.5,0.6) {};
			
			\node[] (v) at (2,1.2)  {I labels};
			\node[] (j) at (2,0.6)  {II labels};
			
			\end{tikzpicture}
		};
		
	\end{tikzpicture}
	\centering
	
	\caption{The game $\mathbb{G}(v,i)$}
	\label{fig:game}
\end{figure}
	
	The root is initially labeled by $v$. In the $n$th round $I$ and $II$ label all the vertices having distance $n$ from the root. In the first round $I$ labels one of the neighbors of the root, then $II$ labels the other two neighbors. Then $I$ continues with labelling the unlabelled neighbors of her first move (i.e., her ``side"), then $II$ with his, etc. The only rule which have to be maintained is that neighboring vertices must get different labels along the game. After $r$-rounds they label an $r$-neighborhood of the root, thus the following winning condition is well defined:
	\[I \text{ wins in $\mathbb{G}(v,i)$ iff $\mathcal{A}_r$ evaluated on the labeled $r$-neighborhood}\] \[\text{obtained by the end of the game gives a color different from $i$}.\] 
	
	The basic strategy stealing observation is the following.
	\begin{lemma}
		\label{l:firstplayerloc0}
		For every $v$ there is an $i$ so that $I$ has no winning strategy in $\mathbb{G}(v,i)$.
	\end{lemma}
	\begin{proof}
		Playing these winning strategies, $(\sigma_i)_{i \in \{0,1,2\}}$, against each other (in the obvious manner, by ``rotating them", see Figure \ref{f:strat}) would yield in a partial labeling of $T^{r}_3$ on which $\mathcal{A}$ does not output any color.
	\end{proof}
	\begin{figure}

		\tikzset{every picture/.style={line width=0.75pt}} 
		
		\begin{tikzpicture}[x=0.75pt,y=0.75pt,yscale=-1,xscale=1]
		
		\draw    (202.23,151) -- (257.25,150.73) ;
		\draw [color={rgb, 255:red, 208; green, 2; blue, 27 }  ,draw opacity=1 ]   (274.34,199.7) .. controls (226.64,184.73) and (237.13,112.62) .. (271.16,101.7) ;
		\draw    (202.37,150.81) -- (169.42,106.75) ;
		\draw [color={rgb, 255:red, 208; green, 2; blue, 27 }  ,draw opacity=1 ]   (198.61,63.88) .. controls (214.96,111.13) and (150.75,145.59) .. (121.74,124.73) ;
		\draw    (202.37,150.81) -- (169.17,193.67) ;
		\draw [color={rgb, 255:red, 208; green, 2; blue, 27 }  ,draw opacity=1 ]   (117.89,170.39) .. controls (161.52,145.96) and (206.78,203.07) .. (191.36,235.31) ;
		\draw    (398.23,149) -- (453.25,148.73) ;
		\draw [color={rgb, 255:red, 74; green, 144; blue, 226 }  ,draw opacity=1 ]   (501.42,211.75) .. controls (460.42,195.75) and (465.39,102.66) .. (499.42,91.75) ;
		\draw    (453.25,148.73) -- (489.17,181.67) ;
		\draw    (453.25,148.73) -- (488.17,112.67) ;
		\draw [color={rgb, 255:red, 74; green, 144; blue, 226 }  ,draw opacity=1 ]   (350.17,89.83) .. controls (390.42,112.75) and (385.07,198.73) .. (351.42,210.75) ;
		\draw    (398.23,149) -- (360.67,113.22) ;
		\draw    (398.23,149) -- (360.17,183.67) ;
		
		\draw (381,124) node [anchor=north west][inner sep=0.75pt]   [align=left] {};
		
		\draw (414.22,93.77) node [anchor=north west][inner sep=0.75pt]  [rotate=-5.78] [align=left] { };
		\draw (276,89.4) node [anchor=north west][inner sep=0.75pt]    {$\sigma _{0}$};
		\draw (179,238.4) node [anchor=north west][inner sep=0.75pt]    {$\sigma _{1}$};
		\draw (196,41.4) node [anchor=north west][inner sep=0.75pt]    {$\sigma _{2}$};
		\draw (204,126.23) node [anchor=north west][inner sep=0.75pt]    {$v$};
		
		\draw (466,84.4) node [anchor=north west][inner sep=0.75pt]    {$\tau$};
		\draw (367,83.33) node [anchor=north west][inner sep=0.75pt]   [align=left] {$\sigma$};
		\draw (396,128.73) node [anchor=north west][inner sep=0.75pt]    {$v$};
		\draw (445,127.73) node [anchor=north west][inner sep=0.75pt]    {$w$};
		
		\end{tikzpicture}
		\caption{Strategy stealing }
		\label{f:strat}
	\end{figure}
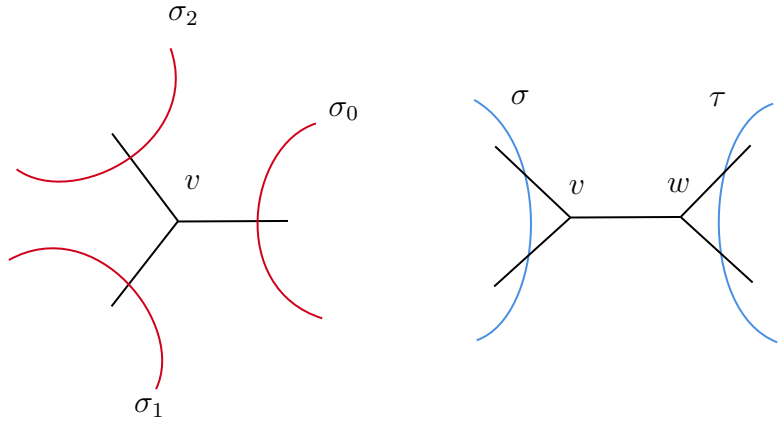
	Now, by the pigeonhole principle, there are distinct $v,w \in \{0,1,2,3\}$ and an $i$ so that $II$ has winning strategies in $\mathbb{G}(v,i)$ and $\mathbb{G}(w,i)$, denote these by $\sigma$ and $\tau$. Play $\sigma$ and $\tau$ against each other so that for $\sigma$ we pretend that $I$'s first move was $w$, and for $\tau$ we pretend that it was $v$ in the first round. Then, in the second round we use the response from $\tau$ as $I$'s play for $\sigma$, and similarly for $\tau$ we use $\sigma$'s response, etc (see again Figure \ref{f:strat}).
	
	This way we get a partial labeling of $T^{r+1}_3$, extend it in an arbitrary manner to an element of EI$^{r+1}_3$. Now, evaluating $\mathcal{A}_r$ on the central nodes labeled by $v$ and $w$, we must obtain the color $i$ at both vertices, since $\sigma$ and $\tau$ are winning strategies, contradicting that $\mathcal{A}_r$ correctly solves the $3$-coloring problem.

\end{proof}

\subsection{Homomorphism graphs} Of course, the key question now is how to use these ideas to obtain results about the ordinary LOCAL model. The reader can convince themselves that adding rules to the game in the most na\"ive way (e.g., requiring one of the players to use injective IDs) ruins the symmetry on which the strategy stealing arguments rely. 

Observe that the above restrictions on the labeling can be stated equivalently as requiring the labeling to give a homomorphism from $T^{r}_3$ to $K_4$, the complete graph on $4$ vertices, and this, restricted injectivity can be maintained throughout our games. One might replace $K_4$ with any other graph, motivating the following definition.

\begin{definition}
	Let $\HOM(T^r_\Delta,\mathcal{H})$ be the set of labelings of $T^r_{\Delta}$ that are homomorphisms to $\mathcal{H}$.
\end{definition}

Now, of course, requiring homomorphisms alone cannot guarantee injectivity. However, adding edge labeling helps: instead of the $\Delta$-regular tree consider it equipped with a proper edge $\Delta$-coloring and assume that $\mathcal{H}$ is equipped with a $\Delta$ edge labeling (which is typically not a proper coloring). To emphasize that we consider edge colored version of $T_\Delta$, we denote it by $T'_{\Delta}$. The next simple observation is the key.

\begin{proposition}
	\label{pr:injectivity}
	Assume that $\varphi:T'_\Delta \to \mathcal{H}$ an edge label preserving homomorphism and $\mathcal{H}$ is acyclic. Then $\varphi$ is injective. Similarly, for any $k>0$ if the girth of $\mathcal{H}$ is $>2k+2$ then $\varphi$ is injective in every $k+1$-neighborhood.
\end{proposition}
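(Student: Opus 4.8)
The plan is to prove the contrapositive-flavoured fact that any failure of injectivity forces a forbidden configuration in $\mathcal{H}$ — a loop, a single edge forced to carry two labels, or a short cycle — none of which can occur under the stated hypotheses. Concretely, suppose $\varphi$ identifies two distinct vertices, and let $x=v_0,v_1,\dots,v_n=y$ with $n\geq 1$ be the unique path joining two such vertices in the tree $T'_\Delta$; in the local statement, take $x,y$ inside a fixed ball $B_{T'_\Delta}(u,k+1)$, so that $n\leq 2k+2$. Among all pairs $0\leq i<j\leq n$ with $\varphi(v_i)=\varphi(v_j)$ — a nonempty set, as it contains $(0,n)$ — choose one minimizing $\ell:=j-i$. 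Since $v_i,\dots,v_j$ is a geodesic in the tree it consists of pairwise distinct vertices, its $\varphi$-image $w_0=\varphi(v_i),\dots,w_\ell=\varphi(v_j)=w_0$ is a closed walk in $\mathcal{H}$ with consecutive vertices adjacent (a homomorphism sends edges to edges), and by minimality of $\ell$ the vertices $w_0,\dots,w_{\ell-1}$ are pairwise distinct.

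Next I would run a short case analysis on $\ell$. If $\ell=1$, the edge $\{v_i,v_{i+1}\}$ maps to $\{w_0,w_0\}$, a loop, which $\mathcal{H}$ does not have. If $\ell=2$, the edges $\{v_i,v_{i+1}\}$ and $\{v_{i+1},v_{i+2}\}$ share the vertex $v_{i+1}$, hence receive distinct colors in the proper edge coloring of $T'_\Delta$; yet both are sent by the edge-label-preserving map $\varphi$ onto the single edge $\{w_0,w_1\}$ of $\mathcal{H}$, so that edge would have to carry two different labels, which is impossible. If $\ell\geq 3$, then $w_0,\dots,w_{\ell-1}$ are $\ell$ distinct vertices with consecutive ones (cyclically) adjacent, i.e.\ a cycle of length $\ell$ in $\mathcal{H}$; in the global case this contradicts acyclicity, and in the local case $\ell\leq n\leq 2k+2$ contradicts $\mathrm{girth}(\mathcal{H})>2k+2$. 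Every branch yields a contradiction, so $\varphi$ is injective, respectively injective on every $(k+1)$-neighborhood.

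I do not anticipate a genuine obstacle; the only points requiring care are to package a failure of injectivity into a \emph{shortest} repeated pair before appealing to the girth/acyclicity hypothesis, so that the resulting image walk is an honest cycle with no earlier coincidences, and to keep in mind that the two degenerate lengths $\ell=1$ and $\ell=2$ are ruled out not by the girth bound but by $\mathcal{H}$ being loopless together with $\varphi$ respecting the edge labels of the properly edge-colored tree.
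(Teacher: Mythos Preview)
Your proof is correct and follows essentially the same approach as the paper's. The paper's argument is a compressed version of yours: it observes that acyclicity of $\mathcal{H}$ forces any closed walk to backtrack, hence one may jump directly to your $\ell=2$ case (adjacent edges $(u,v),(v,w)$ with $\varphi(u)=\varphi(w)$), and derives the same edge-label contradiction; your explicit minimality argument and case split on $\ell$ simply make this reduction visible and handle the girth version uniformly.
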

\begin{proof}
	To see the first statement, note that since its range is acyclic, if $\varphi$ was not injective then there would be some $(u,v)$ and $(v,w)$ distinct edges in $T'_\Delta$ so that $\varphi(u)=\varphi(w)$. But as $(u,v)$ and $(v,w)$ are adjacent, their color is different, so the color of their image must also be different, contradicting $\varphi(u)=\varphi(w)$. The proof of the second statement is identical.	
\end{proof}
	
	Thus, it is useful to consider the next definition. 

\begin{definition}
	Assume that the graph $\mathcal{H}$ is equipped with a $\Delta$ edge labeling. Let $\Homed(T^{r}_\Delta,\mathcal{H})$ be the set of labelings of $B_{T'_\Delta}(v,r)$ (for any $v$ in $T'_\Delta$) that are homomorphisms to $\mathcal{H}$ and preserve edge labels.
\end{definition}

	A way to view the graph $\mathcal{H}$ is as a ``set of rules", or a ``game plan'', for the players in the game, as we will soon see.
\subsection{Transfer principles}
Now, we can summarize the intuition which guides us. We take a suitable graph $\mathcal{H}$, and consider all the possible homomorphism labelings of the tree $T_\Delta$ to $\mathcal{H}$. We would like to prove that there is no ``local" coloring rule to $\Delta$-color the resulting graphs. Assume that there is such a rule. For each $v \in V(\mathcal{H})$ we can label the tree in a lot of ways so that the root is labeled by $v$. Of course, all these labeled rooted trees get a color, according to the rule. We use the game to decide the most popular color $i$, that is, when $II$ has a winning strategy in $\mathbb{G}(v,i)$. It turns out, that this transfers the original coloring to a coloring of $\mathcal{H}$.

Let us state the transfer principle for the distributed case.

\begin{theorem}[Transfer principle $1$, LOCAL]
	\label{t:tr1local}
	Let $\mathcal{H}$ be a finite graph. TFAE:
	\begin{enumerate}
		\item $\mathcal{H}$ admits a $\Delta$-coloring.
		\item There exists a $0$-round GLOCAL algorithm to $\Delta$-color $\HOM(T^{r}_\Delta,\mathcal{H})$, for all $r$.
		\item There exist an $r$ and an $r$-round GLOCAL algorithm to $\Delta$-color $\HOM(T^{r+1}_\Delta,\mathcal{H})$. 
		
	\end{enumerate}
\end{theorem}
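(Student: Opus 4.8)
The plan is to establish the cycle $(1)\Rightarrow(2)\Rightarrow(3)\Rightarrow(1)$. The first two implications are immediate; all the content sits in $(3)\Rightarrow(1)$, which is the finitary avatar of Marks' strategy‑stealing argument, using only determinacy of \emph{finite} games as a black box. For $(1)\Rightarrow(2)$: given a proper coloring $c\colon V(\mathcal{H})\to[\Delta]$, let $\mathcal{A}_0$ be the $0$-round algorithm that, seeing a vertex carrying label $a\in V(\mathcal{H})$, outputs $c(a)$; any labeling in $\HOM(T^{r}_\Delta,\mathcal{H})$ is a homomorphism $\varphi$ to $\mathcal{H}$, so adjacent vertices $u,w$ satisfy $\{\varphi(u),\varphi(w)\}\in E(\mathcal{H})$ and hence $c(\varphi(u))\neq c(\varphi(w))$, so $\mathcal{A}_0$ vertex‑colors $\HOM(T^{r}_\Delta,\mathcal{H})$ for every $r$. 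For $(2)\Rightarrow(3)$: a $0$-round algorithm is in particular an $r$-round algorithm for every $r$, so the $0$-round algorithm from $(2)$, which vertex‑colors $\HOM(T^{1}_\Delta,\mathcal{H})$, witnesses $(3)$ with $r=0$ (note $T^{r+1}_\Delta=T^{1}_\Delta$). Finally, we may assume throughout that $\mathcal{H}$ has no isolated vertices: deleting them changes neither the $\Delta$-colorability of $\mathcal{H}$ nor any $\HOM(T^{s}_\Delta,\mathcal{H})$ with $s\geq 1$, and this guarantees both that the games below never reach a stuck position and that any homomorphism of $T^{r}_\Delta$ into $\mathcal{H}$ extends to one of $T^{r+1}_\Delta$.

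Now fix $r$ and an $r$-round GLOCAL algorithm $\mathcal{A}_r$ that $\Delta$-colors $\HOM(T^{r+1}_\Delta,\mathcal{H})$. For $v\in V(\mathcal{H})$ and $i\in[\Delta]$ define the game $\mathbb{G}(v,i)$ exactly as in the Games subsection, but over $T_\Delta$ instead of $T_3$ and with the rule ``\emph{the labeling is a homomorphism into $\mathcal{H}$}'' replacing ``adjacent vertices get distinct labels'' (equivalently, ``the labeling is a homomorphism into $K_4$''): from the root labeled $v$, $I$ labels one child‑subtree of the root and $II$ the remaining $\Delta-1$, the players proceed round by round towards the leaves, and $I$ \emph{wins} iff $\mathcal{A}_r$ evaluated on the resulting labeled radius‑$r$ ball outputs a color $\neq i$. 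Each $\mathbb{G}(v,i)$ is a finite perfect‑information game — finite underlying tree, finite label set $V(\mathcal{H})$, $r$ rounds, no stuck positions — hence determined; this is precisely the point at which the argument becomes finitary, no Borel determinacy being invoked. The strategy‑stealing lemma then says: for every $v$ there is a color $i(v)$ for which $I$ has no winning strategy in $\mathbb{G}(v,i(v))$. Indeed, if $I$ had winning strategies $\sigma_1,\dots,\sigma_\Delta$ in $\mathbb{G}(v,1),\dots,\mathbb{G}(v,\Delta)$, run the $\Delta$ games simultaneously, letting $\sigma_j$ control $I$ in the $j$-th game with ``$I$'s subtree'' being the $j$-th child‑subtree of the root, and letting the moves of $II$ on a subtree $k\neq j$ in the $j$-th game be copied from the moves of $\sigma_k$ on that same subtree in the $k$-th game. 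Since the $\Delta$ child‑subtrees of the root are pairwise non‑adjacent once the root label is fixed, this is well defined round by round, legal, and produces a single homomorphism $\ell\colon T^{r}_\Delta\to\mathcal{H}$ that is simultaneously the terminal position of all $\Delta$ games; extending $\ell$ to some $G\in\HOM(T^{r+1}_\Delta,\mathcal{H})$ and running $\mathcal{A}_r$ at the root yields an output differing from $i$ for every $i\in[\Delta]$, contradicting that $\mathcal{A}_r$ assigns a valid color there.

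By finite determinacy, $II$ therefore has a winning strategy in $\mathbb{G}(v,i(v))$ for every $v$; put $c(v):=i(v)$. To see that $c$ is a proper $\Delta$-coloring of $\mathcal{H}$, suppose $\{v,w\}\in E(\mathcal{H})$ with $c(v)=c(w)=i$, and let $\sigma,\tau$ be $II$-winning strategies in $\mathbb{G}(v,i)$ and $\mathbb{G}(w,i)$. Glue the two games along a central edge: in $\mathbb{G}(v,i)$ pretend $I$'s first move labels a child of the root by $w$, in $\mathbb{G}(w,i)$ pretend $I$'s first move labels a child of the root by $v$, identify these two children with the two opposite roots, and thereafter pass $II$'s round‑$k$ reply in one game to the other game as $I$'s round‑$(k+1)$ move (and symmetrically). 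This is again well defined round by round and legal, and produces a homomorphism into $\mathcal{H}$ on the union of the radius‑$r$ balls around $v$ and $w$, which sits inside a copy of $T^{r+1}_\Delta$; extend it to some $G\in\HOM(T^{r+1}_\Delta,\mathcal{H})$. Running $\mathcal{A}_r$ on $G$, the labeled radius‑$r$ ball around the central vertex carrying label $v$ is precisely the terminal position of $\mathbb{G}(v,i)$ as played, so $\mathcal{A}_r$ outputs $i$ there because $\sigma$ wins for $II$; likewise $\mathcal{A}_r$ outputs $i$ at the adjacent central vertex carrying label $w$. Thus $\mathcal{A}_r$ assigns the same color to two adjacent vertices of $G\in\HOM(T^{r+1}_\Delta,\mathcal{H})$, a contradiction. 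Hence $c$ is proper, proving $(3)\Rightarrow(1)$.

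The step I expect to require the most care is the strategy‑stealing lemma, specifically checking that the ``rotation'' of the $\Delta$ hypothetical $I$-strategies is genuinely well defined and that the resulting global labeling is simultaneously the terminal position of each of the $\Delta$ games; this rests on the mutual non‑adjacency of the root's child‑subtrees given the root label, together with a careful matching of the round structure of the games. A secondary bookkeeping point, common to both strategy‑stealing arguments, is that the verdict of an $r$-round algorithm on an edge is determined by the labeling of a copy of $T^{r+1}_\Delta$; this is what lets the games, played on radius‑$r$ balls, be confronted with $\mathcal{A}_r$, and is the reason $T^{r+1}_\Delta$ rather than $T^{r}_\Delta$ appears in $(3)$.
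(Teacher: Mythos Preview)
Your proof is correct and follows essentially the same approach as the paper: the cycle $(1)\Rightarrow(2)\Rightarrow(3)\Rightarrow(1)$ with the Marks game $\mathbb{G}(v,i)$, the ``rotation'' strategy-stealing to show some $i(v)$ exists for each $v$, and the gluing of two $II$-winning strategies along a central edge to obtain a contradiction. You are in fact slightly more careful than the paper on two bookkeeping points (removing isolated vertices of $\mathcal{H}$ so the game never gets stuck and homomorphisms extend, and explicitly extending the produced labeling of $T^r_\Delta$ to an element of $\HOM(T^{r+1}_\Delta,\mathcal{H})$ before evaluating $\mathcal{A}_r$); the paper's choice $c(v)=\min\{i:\mathbb{G}(v,i)\text{ won by }II\}$ is equivalent to your arbitrary choice of $i(v)$.
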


\begin{proof}
	To see that (1) implies (2), assume that $\mathcal{H}$ admits a $\Delta$-coloring. Given a vertex in some element of $\HOM(T^r_\Delta,\mathcal{H})$, it has a label from $\mathcal{H}$. Color the vertex with the color of its label.
	
	Clearly, (2) $\implies$ (3), so it suffices to show that (3) implies (1), so assume that $\HOM(T^{r+1}_\Delta,\mathcal{H})$ admits an $r$-round GLOCAL algorithm $\mathcal{A}$. For each color $i$ and vertex $v \in V(\mathcal{H})$ consider the game $\mathbb{G}(v,i)$ defined as follows: two players build a neighborhood of a vertex of some element in $\HOM(T^r_\Delta,\mathcal{H})$, according to Figure \ref{fig:game}, where the vertex is labeled by $v$ and the two players have to maintain that the labeling is a homomorphism to $\mathcal{H}$ at every step. \[I \text{ wins iff $\mathcal{A}$ evaluated on the labeled $r$-neighborhood}\] \[\text{obtained by the end of the game gives a color different from $i$}.\] 
	
	\begin{lemma}
		\label{l:firstplayerloc}
		For every $v$ there is an $i$ so that $I$ has no winning strategy in $G(v,i)$.
	\end{lemma}
	\begin{proof}
		Playing these winning strategies against each other would yield in a labeling of $T^r_\Delta$ on which $\mathcal{A}$ does not output any color.
	\end{proof}
	
	The next lemma finishes the proof of the transfer principle. 
	\begin{lemma}
		\label{l:secondplayerloc}
		For a $v \in V(\mathcal{H})$ let
		\[c(v)=\min \{i: \text{$\mathbb{G}(v,i)$ is won by $II$}\}.\] Then $c$ is $\Delta$-coloring of $\mathcal{H}$. 
	\end{lemma}
	\begin{proof}
		Assume that this is not the case, i.e., there exists some $(v,w) \in \mathcal{H}$ so that $c(v)=c(w)$. Then $II$ has winning strategies $\sigma$ and $\tau$ in the games $\mathbb{G}(v,i)$ and $\mathbb{G}(w,i)$. Now play $\sigma$ and $\tau$ against each other so that for $\sigma$ we pretend that $I$'s first move was $w$ and for $\tau$ we pretend that it was $v$ in the first round. Then in the second round we use the response from $\tau$ as $I$'s play for $\sigma$ and similarly for $\tau$ we use $\sigma$'s response, etc.
		
		This way we get a partial labeling of $T^{r+1}_\Delta$, extend this in an arbitrary manner to obtain an element of $\HOM(T^{r+1}_\Delta,\mathcal{H})$. Now, evaluating $\mathcal{A}$ on the central nodes labeled by $v$ and $w$, we must obtain the color $i$ at both vertices, since $\sigma$ and $\tau$ are winning strategies, contradicting that $\mathcal{A}$ solves the $\Delta$-coloring problem. 
	\end{proof}

\end{proof}
This transfer principle alone is insufficient to get results for the LOCAL model, since the labelings typically are not injective. To remedy this, we have to consider edge labeled versions of the homomorphism graphs. For technical reasons (namely, in order for the players not to get stuck in the game), we assume that the edge labeling of $\mathcal{H}$ is \emph{nice}, that is, every vertex is adjacent to edges with all possible colors.

\begin{definition}
	\label{d:weakcoloring} Assume that $\mathcal{G}$ is a graph equipped with an edge $\Delta$-labeling $e$. An \emph{edge labeled $\Delta$-coloring} of $\mathcal{G}$ is a map $c:V(\mathcal{G}) \to \Delta$ so that there are no adjacent vertices $v,w$ with $c(v)=c(w)=e(v,w)$. If such a coloring exists, we will say that the \emph{edge labeled chromatic number of $\mathcal{G}$ is $\leq{\Delta}$}, and denote this fact by $\chi^{el}(\mathcal{G}) \leq \Delta$. 
\end{definition}

\begin{theorem}[Transfer principle $2$, LOCAL]
	\label{t:tr2local}
	Let $\mathcal{H}$ be a finite graph together with a nice edge $\Delta$-labeling. TFAE:
	\begin{enumerate}
		\item $\chi^{el}(\mathcal{H}) \leq \Delta$.
		\item there is a $0$-round GLOCAL algorithm producing a $\Delta$-edge labeled coloring of $\Homed(T^{r}_\Delta,\mathcal{H})$, for every $r$.
		\item  there is an $r$ and an $r$-round GLOCAL algorithm  producing a $\Delta$-edge labeled coloring of $\Homed(T^{r+1}_\Delta,\mathcal{H})$. 
		
	\end{enumerate}
\end{theorem}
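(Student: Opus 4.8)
The plan is to follow the proof of Theorem~\ref{t:tr1local} almost verbatim, carrying along one extra datum --- the edge labels --- and invoking niceness at the two places where it is genuinely needed. For $(1)\Rightarrow(2)$ I would take an edge labeled $\Delta$-coloring $c\colon V(\mathcal{H})\to\Delta$ and use the $0$-round algorithm that colors a vertex $x$ of an element of $\Homed(T^r_\Delta,\mathcal{H})$ by $c$ applied to its $\mathcal{H}$-label. If $x,y$ are joined in $T'_\Delta$ by an edge labeled $b$, then, since the labeling is an edge-label-preserving homomorphism, their labels span an edge of $\mathcal{H}$ labeled $b$, so $c$ does not give $b$ to both of them; this is exactly the condition in Definition~\ref{d:weakcoloring}. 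The implication $(2)\Rightarrow(3)$ is immediate, since a $0$-round algorithm is an $r$-round algorithm for every $r$.

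The substance is $(3)\Rightarrow(1)$. Assume $\mathcal{A}$ is an $r$-round GLOCAL algorithm producing an edge labeled $\Delta$-coloring of $\Homed(T^{r+1}_\Delta,\mathcal{H})$. For $v\in V(\mathcal{H})$ and a color $i$, define the game $\mathbb{G}(v,i)$ as in Figure~\ref{fig:game}: the two players alternately build an $r$-neighborhood of a root inside an element of $\Homed(T^r_\Delta,\mathcal{H})$, the root is pre-labeled by $v$, and at every stage the partial labeling must be an edge-label-preserving homomorphism into $\mathcal{H}$ --- with the single new stipulation that player $I$'s opening move labels the neighbor of the root \emph{across the edge labeled $i$}. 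Niceness guarantees that a vertex already carrying a label $x\in V(\mathcal{H})$ has, for every edge label $a$, an $a$-labeled neighbor in $\mathcal{H}$; hence no player is ever stuck, so $\mathbb{G}(v,i)$ is an honest finite game and therefore determined. The winning condition is unchanged: $I$ wins iff $\mathcal{A}$, evaluated on the final labeled $r$-neighborhood of the root (extended, using niceness, to an honest member of $\Homed$), returns a color other than $i$.

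Now the two lemmas transfer. For the analogue of Lemma~\ref{l:firstplayerloc}, if for some $v$ player $I$ won all of $\mathbb{G}(v,0),\dots,\mathbb{G}(v,\Delta-1)$ with strategies $\sigma_0,\dots,\sigma_{\Delta-1}$, I would rotate them around the root as in Figure~\ref{f:strat}: here tying the game index $i$ to the edge labeled $i$ is what makes this work, since $\sigma_i$ governs the branch across the $i$-labeled edge, these $\Delta$ branches are pairwise disjoint, and so the plays interlock consistently with no need to permute any edge labels. The resulting complete edge-label-preserving homomorphism of $T^r_\Delta$ is one on which $\mathcal{A}$ would owe a color distinct from every $i\in\Delta$, which is absurd; hence some $\mathbb{G}(v,i)$ is won by $II$, and I set $c(v)=\min\{i:\mathbb{G}(v,i)\text{ is won by }II\}$. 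For the analogue of Lemma~\ref{l:secondplayerloc}, suppose $c(v)=c(w)=a$ with $\{v,w\}$ an $a$-labeled edge of $\mathcal{H}$; take winning strategies $\sigma,\tau$ for $II$ in $\mathbb{G}(v,a)$ and $\mathbb{G}(w,a)$ and play them against each other along an $a$-labeled edge $\{p,q\}$ of $T'_\Delta$: in $\sigma$ pretend $I$'s first move labels $q$ by $w$, in $\tau$ pretend it labels $p$ by $v$ (legitimate, as $\{v,w\}$ is an $a$-edge of $\mathcal{H}$), and thereafter relay each strategy's responses to the other as $I$'s moves. The union of the two plays determines $B_{T'_\Delta}(p,r)$ and $B_{T'_\Delta}(q,r)$; extend it, using niceness, to an element of $\Homed(T^{r+1}_\Delta,\mathcal{H})$. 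Since $\sigma$ and $\tau$ win for $II$, $\mathcal{A}$ outputs $a$ at both $p$ and $q$, which are joined by an $a$-labeled edge --- contradicting that $\mathcal{A}$ produces an edge labeled $\Delta$-coloring. Hence $c$ witnesses $\chi^{el}(\mathcal{H})\le\Delta$.

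The main obstacle, I expect, is keeping the strategy-stealing symmetric once edge labels are present: in the unlabeled setting of Theorem~\ref{t:tr1local} one permutes the $\Delta$ branches at a vertex freely, but here such a permutation would have to be realized by an automorphism of the \emph{edge-colored} tree, which relabels edges and hence destroys the edge-label-preserving homomorphism property, ejecting us from $\Homed$. The fix is conceptual rather than computational --- let the color index $i$ double as the branch (edge-label) index, which is possible precisely because the number of colors, the vertex degree, and the number of edge labels are all equal to $\Delta$ --- and once this alignment is in place the niceness hypothesis carries the rest of the weight: it keeps the games deadlock-free and lets every partial play be completed to a genuine element of $\Homed$. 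A secondary, routine point that still deserves care is the radius bookkeeping in the second lemma, namely checking that the two interleaved $r$-round plays really do pin down all of $B_{T'_\Delta}(p,r)$ and $B_{T'_\Delta}(q,r)$ before the extension step.
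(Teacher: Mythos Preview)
Your proposal is correct and follows essentially the same approach as the paper: the paper's own proof simply states that one modifies the game $\mathbb{G}(v,i)$ from Theorem~\ref{t:tr1local} by requiring that the edge between the root and $I$'s first move be colored $i$, and then asserts that Lemmas~\ref{l:firstplayerloc} and~\ref{l:secondplayerloc} go through. You have supplied exactly this argument, together with the details the paper omits --- in particular the observation that aligning the color index $i$ with the edge label $i$ is what preserves the strategy-stealing symmetry in $\Homed$, and that niceness is what keeps the games deadlock-free and allows the requisite extensions.
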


\begin{proof}
	The proof is a straightforward modification of the proof of Theorem \ref{t:tr1local}, in particular, the implications (1) $\implies$ (2) $\implies$ (3) follow immediately.
	
	To see that (3) $\implies$ (1), for any $v \in \mathcal{H}$ and $i \leq \Delta$ define the game $\mathbb{G}(v,i)$ as in the proof of Theorem \ref{t:tr1local}, with the additional assumption that the edge connecting the vertex labeled by $v$ and $I$s first move has to be colored $i$. 
	
	It is easy to check that the arguments in Lemma \ref{l:firstplayerloc} and Lemma \ref{l:secondplayerloc} still work, and $c$ is an edge labeled $\Delta$-coloring of $\mathcal{H}$. 
\end{proof}

\subsubsection{Back to LOCAL}

Now we are ready to go back to the ordinary LOCAL model, and deduce Theorem \ref{t:marksmainloc}. Let us make a simple observation first.

\begin{claim}
	\label{cl:obvious}
	Assume that $\mathcal{F}$ is a collection of labeled trees of degree $\leq \Delta$ and $f: \N \to \N$ such that for each $G \in \mathcal{F}$ with $|V(G)|=n$
	\begin{enumerate}
		\item the set of labels has size $ \leq n$,
		\item if $n$ is large enough, the labeling is injective on every $f(n)+1$-sized neighborhood of $G$.
	\end{enumerate}
	Assume moreover that there is no $g \in o(f)$ so that there is a $g(n)$-round GLOCAL algorithm to $\Delta$-color every $n$-sized $G \in \mathcal{F}$.	Then there is no $o(f)$-round deterministic distributed algorithm to $\Delta$-color every $n$-sized $G \in \mathcal{F}$.    
\end{claim}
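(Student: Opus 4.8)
The plan is to prove the contrapositive: starting from a deterministic distributed algorithm $\mathcal A=(\mathcal A_n)_n$ of complexity $g(n)\in o(f)$ that $\Delta$-colors every $G\in\mathcal F$, I will build, for each sufficiently large $n$, a $g(n)$-round GLOCAL algorithm $\mathcal B_n$ that $\Delta$-colors every $n$-vertex member of $\mathcal F$. Since $g\in o(f)$, this contradicts the hypothesis. The slogan is that a GLOCAL algorithm is permitted to be a \emph{partial} function, so it can afford to "pretend" that the (only locally injective) labels it sees are genuine unique identifiers, while the correctness check of a $g(n)$-round algorithm only ever inspects radius-$(g(n)+1)$ labeled balls, where by assumption (2) the labels really are injective.

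First I would normalize the identifier range. By the remark following the definition of distributed algorithms we may take the identifier range of $\mathcal A$ to be $[n]$, and --- this is the one place assumption (1) enters --- since the set of labels of an $n$-vertex $G\in\mathcal F$ has size at most $n$, after relabeling we may assume it is a subset of $[n]$, so the labels of $G$ are legitimate potential identifiers. Now set $\mathcal B_n$ to be $\mathcal A_n$, read as a map on labeled rooted trees: on a labeled rooted tree of radius $g(n)$ whose labels are pairwise distinct elements of $[n]$ it returns the color $\mathcal A_n$ assigns to the root when the labels are interpreted as identifiers; on all other inputs it is left undefined. This is a bona fide $g(n)$-round GLOCAL algorithm. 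If $G\in\mathcal F$ has $n$ vertices with $n$ large, then each ball $B_G(v,g(n))$ is a labeled rooted tree of degree $\le\Delta$ (members of $\mathcal F$ are trees), and since $g(n)+1\le f(n)+1$ for large $n$, assumption (2) makes the labels on $B_G(v,g(n)+1)$, hence on $B_G(v,g(n))$, pairwise distinct; so $\mathcal B_n$ is defined at every vertex of $G$ and outputs a total vertex-labeling.

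It remains to check properness. Fix an edge $\{v,w\}$ of $G$. I would extend the restriction $\ell\!\upharpoonright\! B_G(v,g(n)+1)$ --- which is injective into $[n]$ --- to a bijection $\mathrm{id}'\colon V(G)\to[n]$ (possible since $|V(G)|=n$), and run $\mathcal A_n$ on $G$ with the identifier assignment $\mathrm{id}'$. Because $G\in\mathcal F$ and $\mathcal A$ is correct on $\mathcal F$ for every injective identifier assignment, the result is a proper $\Delta$-coloring of $G$. Since $d_G(v,w)=1$, both $B_G(v,g(n))$ and $B_G(w,g(n))$ lie inside $B_G(v,g(n)+1)$, on which $\mathrm{id}'$ agrees with $\ell$; hence the $\mathrm{id}'$-outputs of $\mathcal A_n$ at $v$ and at $w$ coincide with the $\mathcal B_n$-outputs at $v$ and at $w$. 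As the former are distinct, so are the latter. Since $\{v,w\}$ was arbitrary, $\mathcal B_n$ produces a proper $\Delta$-coloring of $G$, which completes the argument.

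I expect the only genuinely delicate point to be the coupling in the last paragraph: one must make sure that the correctness guarantee of $\mathcal A$, which a priori applies only to truly injective identifier assignments, can be transferred to the non-injective labels carried by members of $\mathcal F$. This works precisely because the output of a $g(n)$-round algorithm at an edge $\{v,w\}$ depends only on the labeled ball $B_G(v,g(n)+1)$, and on that ball the labeling is injective (by (2)) and extends to an honest injective identifier assignment on all of $G$. Everything else --- in particular the bookkeeping around the identifier range, which only invokes assumption (1) and a remark already made in the text --- is routine.
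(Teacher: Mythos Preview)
Your argument is correct and is exactly the paper's one-line proof (``interpret the labels as IDs and run the algorithm'') spelled out carefully; in particular, your coupling step---extending the locally injective labeling on $B_G(v,g(n)+1)$ to an honest injective ID assignment and invoking correctness of $\mathcal A$ there---is precisely the content the paper leaves implicit.
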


\begin{proof}
	If there was such a LOCAL algorithm, by the local injectivity of the labeling, we could interpret the labels as IDs and run the algorithms on the elements of $\mathcal{F}$. 
\end{proof}

The following Lemma guarantees the existence of graphs that satisfy simultaneously the negation of (1) in Theorem \ref{t:tr2local} and Proposition \ref{pr:injectivity}. The proof is based on a probabilistic method. 

\begin{lemma}
	\label{l:existenceof} Let $f \in o(\log n)$. There exists a sequence of nicely $\Delta$ edge labeled graphs $(\mathcal{H}_n)$ with the following properties:
	\begin{enumerate}
		\item $|V(\mathcal{H}_n)|\leq n$.
		\item $\chi^{el}(\mathcal{H}_n)>\Delta$.
		\item the girth of $\mathcal{H}_n$ is $ > 2f(n)+2$.
	\end{enumerate}	
\end{lemma}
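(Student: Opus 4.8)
The plan is to adapt the classical Erd\H{o}s high-girth, high-chromatic-number argument: build $\mathcal{H}_n$ as a union of $\Delta$ sparse random regular graphs, one per colour of the edge labelling, and then excise the short cycles. Fix a constant $k=k(\Delta)\in\N$, to be chosen large below, and set $N:=n$ (we only need large $n$; for small $n$ the girth requirement is anyway incompatible with a nice $\Delta$-edge labelling by the Moore bound). Take $G_1,\dots,G_\Delta$ to be independent uniformly random simple $k$-regular graphs on $[N]$, conditioned on being pairwise edge-disjoint --- an event of probability bounded below by a positive constant $c_0=c_0(\Delta,k)$. Let $\mathcal{H}$ be their union and label each edge of $G_i$ by $i$. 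Then $\mathcal{H}$ is $\Delta k$-regular and the labelling is automatically \emph{nice}: each colour occurs exactly $k$ times at every vertex.

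\emph{Robust failure of edge-labelled colourings.} For $S\subseteq[N]$, an edge-labelled $\Delta$-colouring of $\mathcal{H}[[N]\setminus S]$ is exactly a partition of $[N]\setminus S$ into sets $V_1,\dots,V_\Delta$ with $V_i$ independent in $G_i$. I would first-moment over all such ``partial colourings'' with $|S|\le N/2$. For a fixed one with $|V_i|=n_i$, using independence of the $G_i$ and the fact that $n_i>N/2$ forces probability $0$ (a $k$-regular graph has no independent set of size $>N/2$),
\[
\Pr[\forall i:\ V_i\text{ independent in }G_i]=\prod_i\Pr[V_i\text{ independent in }G_i]\le\prod_i e^{-ckn_i^2/N}\le e^{-ckN/(4\Delta)},
\]
for an absolute constant $c>0$, since $\sum_i n_i^2\ge(\sum_i n_i)^2/\Delta\ge N^2/(4\Delta)$. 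As there are at most $(\Delta+1)^N$ such partial colourings, a union bound --- with the harmless extra factor $c_0^{-1}$ coming from the edge-disjointness conditioning --- shows that, once $k=k(\Delta)$ is large enough, $\Pr[\exists S,\ |S|\le N/2,\ \chi^{el}(\mathcal{H}[[N]\setminus S])\le\Delta]\to 0$. Proving this \emph{robust} version, rather than just $\chi^{el}(\mathcal{H})>\Delta$, is what lets the property survive the clean-up.

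\emph{Few short cycles and local sparsity.} Put $g_n:=2f(n)+2=o(\log n)$. Since $\mathcal{H}$ has bounded degree $\Delta k$, the expected number of cycles of length $\le g_n$ is at most $\sum_{\ell\le g_n}(\Delta k)^\ell\le(\Delta k)^{g_n+1}=n^{o(1)}=o(N)$, so by Markov's inequality this number is $\le N/2$ with probability $\ge 1/2$. A similar but more careful first-moment estimate --- counting connected subgraphs with at most $2g_n+1$ vertices, bounded cyclomatic number and at least as many edges as vertices, where each edge beyond a spanning tree contributes a factor $O(1/N)$ --- shows that with probability $1-o(1)$ no vertex of $\mathcal{H}$ has two distinct neighbours each lying on a cycle of length $\le g_n$. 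Fixing an outcome where all three events hold, let $B$ be the union of the vertex sets of all cycles of length $\le g_n$, put $B':=B\cup N_{\mathcal{H}}(B)$, and set $\mathcal{H}_n:=\mathcal{H}[[N]\setminus B']$. Then $|V(\mathcal{H}_n)|\le n$; every short cycle of $\mathcal{H}$ meets $B'$, so $\mathcal{H}_n$ has girth $>2f(n)+2$; since $|B'|=o(N)\le N/2$ the robust bound gives $\chi^{el}(\mathcal{H}_n)>\Delta$; and by local sparsity each surviving vertex loses at most one edge, hence still meets every colour at least $k-1\ge 1$ times, so the restricted labelling is still nice --- provided $k\ge 2$, which we may assume of $k(\Delta)$.

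\emph{Main obstacle.} There is no single deep step, and the work lies in reconciling three competing demands on the construction. Deleting cycles is forced by the girth requirement but is only affordable in quantity $o(N)$, which is exactly why one must control the expected number of short cycles, and this in turn pins down the regime $g_n=o(\log n)$ and the need for a bounded-degree host graph. Bounded degree then makes the Erd\H{o}s-type union bound succeed only when the colour classes are reasonably dense, i.e.\ $k$ a sufficiently large constant, and the bound must be established in the ``delete up to half the vertices'' form so that it is inherited by $\mathcal{H}_n$. Finally, niceness is required \emph{at every vertex}, not merely off a small set, which is what forces deleting a whole neighbourhood of each short cycle together with the local-sparsity step ruling out vertices adjacent to two short cycles; I expect this last bookkeeping to be the fiddliest part of writing out the details.
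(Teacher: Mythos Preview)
Your route differs from the paper's. The paper keeps the random union $\mathcal{H}_n$ of $\Delta$ independent $d$-regular graphs \emph{without any deletion}: for the girth it quotes a result on short cycles in random regular graphs (McKay--Wormald--Wysocka) to get that $\Pr[\mathrm{girth}(\mathcal{H}_n)>2f(n)+2]$ is positive (in fact of order $\exp(-n^{o(1)})$), and then uses Bollob\'as' independence-ratio bound together with McDiarmid's concentration to get $\Pr[\chi^{el}(\mathcal{H}_n)\le\Delta]\le\exp(-\Omega(n))$, so the two events intersect. Your Erd\H{o}s-style deletion argument is more self-contained and avoids the girth citation, at the price of needing the robust form of $\chi^{el}>\Delta$ and, crucially, preservation of niceness after deletion.

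That last step, as written, does not go through. The local-sparsity event you state --- ``no vertex of $\mathcal{H}$ has two distinct neighbours each lying on a short cycle'' --- is \emph{false} on any outcome containing even one short cycle: a vertex $v\in C$ has both of its $C$-neighbours in $B$. So the three events you intersect have empty intersection unless there are no short cycles at all, which is exactly the regime you were trying to avoid via deletion. Moreover, even the intended restriction to vertices off $B$ does not match your choice to delete $B'=B\cup N_{\mathcal{H}}(B)$: a surviving vertex $w\notin B'$ automatically has \emph{no} neighbour in $B$ (since $w\notin N(B)$), so ``at most one neighbour in $B$'' is vacuous for it, while $w$ may still have several neighbours in $N(B)\setminus B$, all of which are removed. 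The fix is easy: either delete only $B$ and prove that every $v\notin B$ has at most one neighbour in $B$ --- your cyclomatic-number count does give this, since with $v\notin B$ the union of $v$, the two edges $(v,u_i)$, and two short cycles through $u_1,u_2$ always satisfies $E_0-V_0\ge 1$ --- or keep the deletion of $B'$ and prove instead the one-step-longer statement that no $v\notin B'$ has two neighbours in $N(B)$, by the same bookkeeping with two extra edges. Either way your scheme can be completed.
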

\begin{proof}[Proof sketch.]
	Such a sequence can be constructed probabilistically, using the \emph{configuration model} for regular random graphs, see \cite{wormald1999models}.
    Namely, on the vertex set $\{1,2,\dots, n\}$ we independently choose $\Delta$ many $d$-regular random graphs and let $\mathcal{H}_n$ to be the union of those, for a large constant $d$.
    Note that this gives a canonical edge labeling of $\mathcal{H}_n$ with $\Delta$ colors with the property that every vertex is adjacent to exactly $d$ edges of each of the colors.
    
    By the results of Bollob\'as \cite{bollobas} on the independence ratio of random regular graphs and concentration inequalities of McDiarmid \cite{mcdiarmid2002concentration} it can be shown that $\chi^{el}(\mathcal{H}_n)>\Delta$ asymptotically almost surely as $n\to \infty$.
	Finally, seeing the sample $\mathcal{H}_n$ as a random $d\Delta$-regular graph, one can use \cite[Corollary~1]{mckayshortcycles} to show that (3) is satisfied with positive probability.	
\end{proof}

Now we can prove the main theorem on colorings.

\begin{proof}[Proof of Theorem \ref{t:marksmainloc}]
	Towards a contradiction, assume that there exists a LOCAL $\Delta$-coloring algorithm for acyclic graphs of degree $\leq \Delta$ with running time $g \in o(\log n)$. We can assume that $g$ is monotone. Let $(\mathcal{H}_n)_{n \in \N}$ be the sequence guaranteed by Lemma \ref{l:existenceof} for the function $f(n)=g(\Delta n)$.
	
	Consider the family of graphs $\mathcal{F}=\bigcup_{n} \Homed(T^{r_n}_\Delta,\mathcal{H}_n)$, where $r_n$ is chosen minimal so that $|V(T^{r_n}_\Delta)| \geq n $. 
	
	\begin{claim}
		For every graph in $\Homed(T^{r_n}_\Delta,\mathcal{H}_n)$ the labeling is injective in every $f(n)+1$-sized neighborhood.
	\end{claim}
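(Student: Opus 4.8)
The plan is to deduce this directly from Proposition~\ref{pr:injectivity} together with item~(3) of Lemma~\ref{l:existenceof}. Fix $n$ large and let $G\in\Homed(T^{r_n}_\Delta,\mathcal{H}_n)$. By the definition of $\Homed$, $G$ is (a copy of) the edge $\Delta$-coloured ball $B_{T'_\Delta}(w,r_n)$ for some vertex $w$, equipped with a vertex labeling $\varphi\colon V(G)\to V(\mathcal{H}_n)$ that is a graph homomorphism preserving edge labels; this $\varphi$ is precisely ``the labeling'' referred to in the claim. So the task is to show that $\varphi\upharpoonright B_G(v,f(n)+1)$ is injective for every vertex $v$ of $G$, i.e.\ on every $(f(n)+1)$-neighborhood.

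First I would invoke Proposition~\ref{pr:injectivity} with $k=f(n)$ and $\mathcal{H}=\mathcal{H}_n$: by Lemma~\ref{l:existenceof}(3) the girth of $\mathcal{H}_n$ is $>2f(n)+2=2k+2$, so an edge-label-preserving homomorphism to $\mathcal{H}_n$ is injective on every ball of radius $k+1=f(n)+1$. The only point needing care is that $\varphi$ is a priori defined only on the finite ball $B_{T'_\Delta}(w,r_n)$, not on all of $T'_\Delta$. This is harmless, and I would handle it in either of two ways. Since the edge labeling of $\mathcal{H}_n$ is nice, one can first extend $\varphi$ to an edge-label-preserving homomorphism $\tilde\varphi\colon T'_\Delta\to\mathcal{H}_n$ and apply the proposition verbatim to $\tilde\varphi$, then restrict. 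Alternatively, one observes that the proof of Proposition~\ref{pr:injectivity} is entirely local: a putative pair $x\neq y$ with $\varphi(x)=\varphi(y)$ inside $B_G(v,f(n)+1)$, chosen at minimal distance, forces (via the geodesic from $x$ to $y$, which in the tree $G$ stays inside $B_G(v,f(n)+1)$ and has length $\le 2f(n)+2$) either two distinct adjacent tree edges with a common image, contradicting edge-label preservation, or a cycle of length $\le 2f(n)+2$ in $\mathcal{H}_n$, contradicting item~(3) of Lemma~\ref{l:existenceof}; this argument only ever inspects vertices on which $\varphi$ is already defined.

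Either way we conclude that $\varphi$ is injective on $B_G(v,f(n)+1)$ for every $v\in V(G)$, which is exactly the statement of the claim; combined with the obvious bound $|V(\mathcal{H}_n)|\le n$ on the number of labels (Lemma~\ref{l:existenceof}(1)), this verifies both hypotheses of Claim~\ref{cl:obvious} for the family $\mathcal{F}=\bigcup_n\Homed(T^{r_n}_\Delta,\mathcal{H}_n)$ and the function $f$. I do not anticipate a genuine obstacle here; the only subtlety worth spelling out is the boundary case, where $v$ lies near the distance-$r_n$ sphere of $T^{r_n}_\Delta$ and $B_G(v,f(n)+1)$ is a truncated ball --- but a truncated ball is still a subtree of $T'_\Delta$ carrying $\varphi$, so the local argument above applies unchanged.
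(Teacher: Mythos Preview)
Your proposal is correct and follows exactly the same approach as the paper: apply Proposition~\ref{pr:injectivity} with $k=f(n)$, using the girth bound from Lemma~\ref{l:existenceof}(3). The paper's proof is a single sentence to this effect, and the extra care you take with the finite-domain and boundary issues is sound but not something the paper spells out.
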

	\begin{proof}
		Indeed, observe that, as the homomorphism defined by the labels has to preserve edge colors and the girth of $\mathcal{H}_n$ is $> 2f(n)+2$, it must be injective on every $f(n)+1$ sized neighborhood by Proposition \ref{pr:injectivity}.
	\end{proof}
	
	By the minimality of $r_n$, we have that $|V(T^{r_n}_\Delta)| \leq \Delta n$. Thus, the labeling of every graph in $\Homed(T^{r_n}_\Delta,\mathcal{H}_n)$ is injective in every neighborhood of size \[f(n)+1 =g(\Delta n)+1 \geq g(|V(T^{r_n}_\Delta)|)+1,\] showing that the conditions of Claim \ref{cl:obvious} are satisfied. Hence, on the one hand, there is an $o(g)$-time GLOCAL algorithm for the $\Delta$-coloring of $\mathcal{F}$. 
	
	On the other hand, $r_n>c \log n$, for some $c>0$ and, as $g \in o(\log n)$, for a large enough $n$ we have $g(n)<c\log n<r_n$. Therefore, by assumption, there is a $g(n)<r_n$ round GLOCAL algorithm to $\Delta$-color $\Homed(T^{r_n}_\Delta,\mathcal{H}_n)$, which is in particular an $r_n-1$-round GLOCAL algorithm. By Theorem \ref{t:tr2local} this implies $\chi^{el}(\mathcal{H}_n) \leq\Delta$, contradicting the choice of $\mathcal{H}_n$.

\end{proof}

\subsubsection{A localized transfer} As we will see in the descriptive case in the next section (Theorem \ref{t:transfer}), even descriptive combinatorial properties can be transferred through homomorphism graphs. In order to mimic this in the distributed version, we have to consider ``localized" versions of the graph $\mathcal{H}$ instead, i.e., instead of a fixed enumeration, we consider all the possible ID distributions on $\mathcal{H}$.

\begin{definition}
	Let $\HOM(T^r_\Delta,\mathcal{H}^{ID})$ be the set of labelings of $T^r_\Delta$ that are the pullbacks of IDs (i.e., injective labelings with naturals $<|V(\mathcal{H})|$) on $\mathcal{H}$ through a homomorphism.
	
	Similarly, if $\mathcal{H}$ is equipped with an edge $\Delta$-labeling, let $\Homed(T^r_\Delta,\mathcal{H}^{ID})$ be the set of labelings of $T^r_\Delta$ that are the pullbacks of IDs on $\mathcal{H}$ through an edge label preserving homomorphism.
\end{definition}

Now we have the following.

\begin{theorem}[Transfer principle $3$, LOCAL]
	\label{t:tr3local}

	Let $\mathcal{H}$ be a finite graph.
	
	\begin{enumerate}
		\item If $\HOM(T^{r+1}_\Delta,\mathcal{H}^{ID})$ has an $r$-round $\Delta$-coloring GLOCAL algorithm, then $\mathcal{H}$ has an $r$-round $\Delta$-coloring in the LOCAL model.
		\item Assume that $\mathcal{H}$ is equipped with a nice edge labeling. If $\Homed(T^{r+1}_\Delta,\mathcal{H}^{ID})$ has an $r$-round GLOCAL algorithm for edge labeled $\Delta$-coloring, then $\mathcal{H}$ has an $r$-round edge labeled $\Delta$-coloring in the LOCAL model.

	\end{enumerate}
	
\end{theorem}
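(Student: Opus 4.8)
My plan is to follow the proofs of Theorems~\ref{t:tr1local} and~\ref{t:tr2local} almost verbatim; the single new ingredient is the observation that the coloring of $\mathcal{H}$ obtained by ``voting via games'' is in fact produced by an $r$-round LOCAL algorithm, for which I only need to check that each voting game depends on nothing beyond the identifier-labeled $r$-ball around the vertex being colored.

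For part~(1), assume $\mathcal{A}$ is an $r$-round GLOCAL algorithm $\Delta$-coloring $\HOM(T^{r+1}_\Delta,\mathcal{H}^{ID})$, and let an adversarial identifier assignment $\iota$ on $\mathcal{H}$ be given. For $v\in V(\mathcal{H})$ and a color $i$ I would define the finite game $\mathbb{G}(v,i)$ exactly as in the proof of Theorem~\ref{t:tr1local}: the two players build the radius-$r$ ball of a rooted $\Delta$-regular tree in the pattern of Figure~\ref{fig:game}, with the root mapped to $v$, maintaining at every move that the labels describe a homomorphism into $\mathcal{H}$; this play is turned into an identifier-labeled tree by pulling $\iota$ back along the homomorphism, and $I$ wins iff $\mathcal{A}$, evaluated on the resulting radius-$r$ labeled ball, outputs a color different from $i$. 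Since any homomorphism into $\mathcal{H}$ that sends the root to $v$ is distance-non-increasing, it maps the radius-$r$ ball of the tree into the induced subgraph $B_{\mathcal{H}}(v,r)$; hence the legal-move structure of $\mathbb{G}(v,i)$ together with its winning condition --- and therefore the value
\[
c(v):=\min\{\,i:\text{$II$ has a winning strategy in $\mathbb{G}(v,i)$}\,\}
\]
--- is determined by the isomorphism type of $B_{\mathcal{H}}(v,r)$ together with the restriction of $\iota$ to it. So $v\mapsto c(v)$ is an $r$-round LOCAL algorithm on $\mathcal{H}$, and it only remains to see that $c$ is a well-defined proper $\Delta$-coloring.

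This is handled by the two lemmas underlying Theorem~\ref{t:tr1local}, which go through unchanged. If $I$ had a winning strategy in $\mathbb{G}(v,i)$ for all $\Delta$ colors $i$, rotating these strategies against each other (Figure~\ref{f:strat}) yields a homomorphism-labeling of a radius-$r$ ball which, since $\mathcal{H}$ has no isolated vertices, extends to an element of $\HOM(T^{r+1}_\Delta,\mathcal{H}^{ID})$ on which $\mathcal{A}$ assigns no color to the root --- contradicting that $\mathcal{A}$ $\Delta$-colors that class; so for some $i$ player $I$ has no winning strategy, and, the game being finite, $II$ does, which makes $c$ well-defined. If $\{v,w\}\in E(\mathcal{H})$ and $c(v)=c(w)=i$, I would play the $II$-winning strategies for $\mathbb{G}(v,i)$ and $\mathbb{G}(w,i)$ against each other, pretending in the first that $I$'s opening move is a tree vertex mapped to $w$ and in the second that it is mapped to $v$, and relaying each side's responses to the other; the resulting radius-$(r+1)$ labeled ball is an element of $\HOM(T^{r+1}_\Delta,\mathcal{H}^{ID})$ on which $\mathcal{A}$ is forced to output $i$ at the two adjacent roots, contradicting that $\mathcal{A}$ produces a proper coloring.

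Part~(2) I would obtain from part~(1) by the same modifications that turn Theorem~\ref{t:tr1local} into Theorem~\ref{t:tr2local}: work with the edge-$\Delta$-colored tree $T'_\Delta$ and with edge-label-preserving homomorphisms into $\mathcal{H}$, so that every labeling produced lies in $\Homed(T^{r+1}_\Delta,\mathcal{H}^{ID})$; in $\mathbb{G}(v,i)$ additionally require that the edge from the root to $I$'s first move carry color $i$ (a legal demand by niceness, which also ensures that neither player ever gets stuck); and replace the winning condition by the edge-labeled $\Delta$-coloring condition of Definition~\ref{d:weakcoloring}. The first-player and second-player lemmas adapt exactly as before --- in the second one, $e(v,w)=i$ is precisely what makes the two spliced opening edges consistent --- and one concludes that the analogously defined $c$ is an $r$-round-computable edge-labeled $\Delta$-coloring of $\mathcal{H}$. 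I expect the bulk of the (still routine) work to be the bookkeeping behind the locality claim --- that $\mathbb{G}(v,i)$ sees only $B_{\mathcal{H}}(v,r)$ with its identifiers --- and the verification that each partial labeling arising during strategy stealing really does extend to a genuine element of $\HOM(T^{r+1}_\Delta,\mathcal{H}^{ID})$, resp.\ $\Homed(T^{r+1}_\Delta,\mathcal{H}^{ID})$.
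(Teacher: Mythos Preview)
Your proposal is correct and follows essentially the same route as the paper: reuse the $(3)\Rightarrow(1)$ game arguments from Theorems~\ref{t:tr1local} and~\ref{t:tr2local}, and add the single observation that the value $c(v)$ depends only on the identifier-labeled ball $B_{\mathcal{H}}(v,r)$, so that $c$ is computable in the LOCAL model. Your justification of this locality claim (homomorphisms are distance-non-increasing, so the game board and winning condition live inside $B_{\mathcal{H}}(v,r)$) is exactly the content the paper leaves implicit, and your handling of part~(2) via the edge-label modifications matches the paper's one-line reduction.
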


\begin{proof}
	The proofs are the same as the proofs of the (3) $\implies$ (1) implications of Theorems \ref{t:tr1local} and \ref{t:tr2local}. The only thing to observe is that the coloring defined as in Lemma \ref{l:secondplayerloc} in fact only depends on the IDs in an $r$-round neighborhood of $v$ in $\mathcal{H}$, hence it is doable in the LOCAL model.   
\end{proof}

The downside here that there is no hope for an equivalence in general, as the existence of a LOCAL algorithm on $\mathcal{H}$ can happen for trivial reasons: for example if $\mathcal{H}$ is star-like, with constant round LOCAL algorithm one can access global information. However, this information is not present on the homomorphism graph, because of the degree bound. 
\begin{remark}
We conjecture that using this transfer principle one can give a proof of Theorem \ref{t:marksmainloc} without having to go through Lemma \ref{l:existenceof}. 
\end{remark}

\subsection{The descriptive case}

To close our discussion of Marks' method, let us mention that statements completely parallel to the ones described in the preceding section have been shown in the Borel context in \cite{DetMarks,brandt_chang_grebik_grunau_rozhon_vidnyaszkyhomomorphisms}. 

Marks initially proved the following. Consider the group \[\Gamma_\Delta=\langle \alpha_1,
\dots,\alpha_\Delta:\alpha^2_1=\dots=\alpha^2_\Delta=1\rangle,\]
that is, $\Gamma_\Delta$ is the free product of $\Delta$-many involutions.
Observe that $\Cay(\Gamma_\Delta,S)$ is a $\Delta$-regular tree, where $S=\{\alpha_1,\dots,\alpha_\Delta\}$.

\begin{theorem}
	\label{t:marksmain}
	Let $\Delta >2$. Then there exists a $\Delta$-regular acyclic Borel graph with $\chi_B(G)=\Delta+1$. In fact, this holds for the Schreier graph of the free part of the action of $\Gamma_{\Delta}$ on $2^{\Gamma_\Delta}$.
\end{theorem}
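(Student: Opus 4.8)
The plan is to separate the claim into the easy upper bound $\chi_B(\fG)\le\Delta+1$ and the hard lower bound $\chi_B(\fG)>\Delta$, and to obtain the latter by running Marks' determinacy/strategy-stealing scheme --- the infinitary twin of the game argument of this section. Write $S=\{\alpha_1,\dots,\alpha_\Delta\}$, $F=\mathrm{Free}(2^{\Gamma_\Delta})$ and $\fG=\schreier{\Gamma_\Delta}{S}{F}$. Since $\Gamma_\Delta$ acts freely on $F$, the induced subgraph of $\fG$ on each orbit is isomorphic to $\Cay(\Gamma_\Delta,S)$, which, as observed above, is the $\Delta$-regular tree $T_\Delta$; hence $\fG$ is $\Delta$-regular and acyclic, and Theorem~\ref{thm:BorelGreedy} gives $\chi_B(\fG)\le\Delta+1$. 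So it suffices to derive a contradiction from the existence of a Borel proper vertex coloring $c\from F\to[\Delta]$.

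For each generator $\alpha_j$ ($j\in[\Delta]$) and each colour $i\in[\Delta]$ I would introduce an $\omega$-round two-player game $\mathbb{G}_j^i$, the infinite analogue of the games $\mathbb{G}(v,i)$ of Figure~\ref{fig:game}. Deleting the edge $\{1_{\Gamma_\Delta},\alpha_j\}$ of $T_\Delta$ splits it into the single branch $R_j$ (the group elements whose reduced word begins with $\alpha_j$) and the complementary piece $L_j\ni 1_{\Gamma_\Delta}$; left multiplication by $\alpha_j$ is a graph automorphism of $T_\Delta$ interchanging $R_j$ and $L_j$. The players jointly build a configuration $x\in 2^{\Gamma_\Delta}$: Player $I$ fills in the coordinates on $R_j$, Player $II$ those on $L_j$, revealed layer by layer outward from the deleted edge, with the turn order staggered so that the two halves can later be run against each other, and with enough ``disagreement'' moves hard-wired into the rules --- along a fixed enumeration of $\Gamma_\Delta\setminus\{1_{\Gamma_\Delta}\}$, on each side --- to guarantee $x\in F$. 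The winning condition is: $II$ wins iff $c(x)=i$. Since $c$ is Borel this payoff set is Borel, so each $\mathbb{G}_j^i$ is determined by Martin's Borel determinacy theorem~\cite{martin}; this is exactly the point at which, unlike the finite games of the previous subsections, genuine Borel determinacy is needed.

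The rest mirrors Lemma~\ref{l:firstplayerloc0} and the pigeonhole step following it. First, $II$ can have no winning strategy in any $\mathbb{G}_j^i$: the automorphism $\gamma\mapsto\alpha_j\gamma$ carries $\mathbb{G}_j^i$ to the mirrored game in which the roles of $R_j$ and $L_j$ are swapped and the payoff becomes $c(\alpha_j x)=i$, so a winning strategy $\tau$ for $II$ in $\mathbb{G}_j^i$ supplies a winning strategy $\tilde\tau$ for $II$ in the mirrored game; playing $\tau$ against $\tilde\tau$ manufactures a single $z\in F$ with $c(z)=i=c(\alpha_j z)$, contradicting that $\{z,\alpha_j z\}$ is an edge of $\fG$. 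Hence, by determinacy, $I$ wins every $\mathbb{G}_j^i$; fix winning strategies $\sigma_j^i$. Second --- the one place the precise value $\Delta$ enters --- the branches $R_1,\dots,R_\Delta$ are exactly the $\Delta$ subtrees hanging off the root $1_{\Gamma_\Delta}$, hence pairwise disjoint, so the $\Delta$ strategies $\sigma_1^{1},\sigma_2^{2},\dots,\sigma_\Delta^{\Delta}$ operate on disjoint pieces of the configuration and can be ``rotated'' around the root and run simultaneously, each $\sigma_k^{k}$ seeing the remaining branches (and an arbitrarily chosen value at $1_{\Gamma_\Delta}$) as a legal play of its opponent in $\mathbb{G}_k^{k}$. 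The resulting $w\in F$ then satisfies $c(w)\ne k$ for every $k\in[\Delta]$, which is impossible since $c(w)\in[\Delta]$. This contradiction yields $\chi_B(\fG)=\Delta+1$.

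The combinatorial heart --- the two applications of strategy stealing --- should cause no trouble, as it is literally the argument already run for $\mathbb{G}(v,i)$; the genuinely delicate points, and where I would spend the effort, are the pieces of bookkeeping that keep all the self-plays legal: staggering the turn order (with the usual one-layer lag) so that $\tau$ against $\tilde\tau$, and the joint play of the $\sigma_k^{k}$'s, never ask a strategy about a move not yet made; handling the lone coordinate at $1_{\Gamma_\Delta}$ shared by all the $L_k$'s; and arranging the hard-wired rules so that every produced configuration lands in $F$, the only place where $c$ and the edge relation of $\fG$ are defined --- all compatibly with the two splicings. It is worth stressing that acyclicity, a real obstruction in Brandt-style finitary constructions, is free of charge here precisely because $\Gamma_\Delta$ was chosen so that $\Cay(\Gamma_\Delta,S)$ is a tree. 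Finally, by Bernshteyn's correspondence (Theorem~\ref{thm:Bernshteyn}) this Borel statement already implies the $o(\log n)$ lower bound of Theorem~\ref{t:marksmainloc}, and running the same scheme with colouring games on the edge-hypergraph of $T_\Delta$ yields the companion bound $\chi'_B(\fG)\ge 2\Delta-1$, i.e.\ the edge-colouring half of Theorem~\ref{thm:DetMarks}.
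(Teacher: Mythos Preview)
Your outline captures the heart of Marks' method --- infinite games with Borel payoff, Martin's determinacy theorem, and the two strategy-stealing splicings --- and the logic of those splicings is sound. Where your approach diverges from the paper's, and where there is a real gap, is in how you force the produced configurations into the free part $F$. The paper does not constrain the games to $F$ at all. Instead it invokes a separate lemma: the non-free part of the Schreier graph on $2^{\Gamma_\Delta}$ carries a Borel \emph{edge-labeled} $\Delta$-coloring $c'$, meaning $c'(x)=c'(\alpha_i\cdot x)=i$ never occurs. Gluing $c'$ to the assumed proper $\Delta$-coloring of $F$ yields a Borel edge-labeled $\Delta$-coloring of all of $2^{\Gamma_\Delta}$, the games are then played over the entire shift with no freeness constraint, and the contradiction is reached exactly as in the proof of Theorem~\ref{t:tr2local}.

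Your ``hard-wired disagreement'' device is Marks' mechanism for $\N^{\Gamma_\Delta}$, where infinitely many values at each node let a player diagonalize against every potential period entirely on their own side of the split. Over $2^{\Gamma_\Delta}$ this breaks precisely at the generator $\alpha_j$ defining the split: any witness $x(\delta)\neq x(\alpha_j\delta)$ to $\alpha_j\cdot x\neq x$ necessarily has one coordinate in $R_j$ and the other in $L_j$, so it cannot be enforced ``on each side'' as you propose. Concretely, in your second splicing the $\Delta$ strategies $\sigma_1^1,\ldots,\sigma_\Delta^\Delta$ all share the single coordinate $x(1_{\Gamma_\Delta})$; if the $\alpha_k$-disagreement is anchored at the pair $(1_{\Gamma_\Delta},\alpha_k)$ you would need $x(1_{\Gamma_\Delta})\neq x(\alpha_k)$ for every $k$ simultaneously, which is impossible in $\{0,1\}$ once two of the strategy-determined bits $x(\alpha_k)$ differ. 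To close the argument you must either first pass to $\N^{\Gamma_\Delta}$ (where your hard-wiring does work) and then invoke an equivariant Borel map from its free part to $F$, or replace the hard-wiring by the paper's extension-to-the-non-free-part step.
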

In the proof of this theorem, the players play on the infinite tree, games analogous to the ones described above. Interestingly, the existence of winning strategies is not at all clear: every acyclic $\Delta$-regular Borel graph has a $\mu$-measurable $\Delta$-coloring by the results of Conley-Marks-Tucker-Drob \cite{conley2016brooks} for any Borel measure $\mu$, described above, showing that winning strategies might not exist even for Lebesgue-measurable sets. However, since the winning conditions in the games are Borel, to guarantee their existence, one can use Martin's Borel Determinacy theorem, one of the cornerstones of descriptive set theory. 

The problem of non-injectivity of IDs in the Borel context corresponds to the players having to end up in the free part of the above action.  Marks remedied  this by arguing that there exists a Borel $\Delta$-color assignment $c'$ of the Schreier graph on the non-free part, with the property that for no $x$ and $i$ we have $c'(x)=c'(\gamma_i \cdot x)=i$. That is, $\chi^{el}_B$ (defined analogously to Definition \ref{d:weakcoloring}) of the not-free part of the Schreier graphs is $\leq \Delta$. So, if there exists a Borel $\Delta$-coloring of the free part then $\chi^{el}_B(Sch(\Gamma,2^{\Gamma_\Delta})) \leq \Delta$. Now, we can derive a contradiction similarly to the proof of Theorem \ref{t:tr2local}. 

The homomorphism graph can be defined in the Borel context as follows: observe that if $\mathcal{H}$ is a Borel graph, $V(\mathcal{H})^{\Gamma_{\Delta}}$ is a Borel space on which $\Gamma_\Delta$ acts by left-shift.

\begin{definition}
	Let $\mathcal{H}$ be a Borel graph. Let $\HOM(\Gamma_\Delta,\mathcal{H})$ be the restriction of $\schreier{\Gamma_\Delta}{S}{V(\mathcal{H})^{\Gamma_\Delta}}$ to the set
	\[\{h \in V(\mathcal{H})^{\Gamma_\Delta}: \text{$h$ is a graph homomorphism from $\Cay(\Gamma_\Delta,S)$ to $V(\mathcal{H}$)}\}.\]
\end{definition}

Similarly to Lemma \ref{l:secondplayerloc}, one can define a coloring on $\mathcal{H}$ by using a ``voting system" based on the information about the winning strategies in the games analogous to the ones discussed above. Since this requires to know which player has a winning strategy, the resulting coloring will be slightly worse than Borel, namely, in the class $\Game\mathbf{\Delta}^1_1$. Moreover, assuming extra set theoretic axioms and considering a slightly larger class, we can even obtain a full equivalence (below, $\chi_{pr}$ stands for projective chromatic number):
\begin{theorem}[\cite{brandt_chang_grebik_grunau_rozhon_vidnyaszkyhomomorphisms}, transfer principle $1$, Borel]
	\label{pr:chromatic}
 Let $\mathcal{H}$ be a Borel graph.
	\begin{itemize}
		\item $\HOM(\Gamma_\Delta,\mathcal{H})$ admits a Borel homomorphism into $\mathcal{H}$. Thus, for any $n$, if $\chi_B(\mathcal{H}) \leq n$ then $\chi_B(\HOM(\Gamma_\Delta,\mathcal{H})) \leq n$.
		\item If $\chi_{\Game\mathbf{\Delta}^1_1} (\mathcal{H})>\Delta$ then $\chi_B(\HOM(\Gamma_\Delta,\mathcal{H})) >\Delta$.
	\end{itemize}
	Assuming suitable large cardinal hypotheses we also have
	\[\chi_{pr} (\mathcal{H})>\Delta \iff \chi_{pr}(\HOM(\Gamma_\Delta,\mathcal{H})) >\Delta.\]
	
\end{theorem}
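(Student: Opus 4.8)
The plan is to obtain all three items from Marks' game method applied to the tree $\Cay(\Gamma_\Delta,S)=T_\Delta$, reusing the strategy‑stealing arguments behind Lemmas~\ref{l:firstplayerloc} and~\ref{l:secondplayerloc} and feeding in determinacy (Borel for the second bullet, projective for the third) to replace ``has no winning strategy'' by ``the opponent has one''. Throughout I would assume $\mathcal H$ has no isolated vertices (delete them if necessary — this changes neither side, since a homomorphism from $T_\Delta$ misses them anyway), so that the players can never get stuck.

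For the first bullet I would exhibit the Borel homomorphism directly: the evaluation map $\mathrm{ev}\colon\HOM(\Gamma_\Delta,\mathcal H)\to V(\mathcal H)$, $h\mapsto h(1_{\Gamma_\Delta})$, is continuous, hence Borel. If $h$ and $h'$ are adjacent in $\HOM(\Gamma_\Delta,\mathcal H)$, then $h'=\alpha_j\cdot h$ for some $\alpha_j\in S$, so $h'(1_{\Gamma_\Delta})=h(\alpha_j^{-1})=h(\alpha_j)$ since $\alpha_j$ is an involution, and $\{h(1_{\Gamma_\Delta}),h(\alpha_j)\}\in E(\mathcal H)$ because $1_{\Gamma_\Delta}$ and $\alpha_j$ are adjacent in $T_\Delta$ and $h$ is a homomorphism to $\mathcal H$. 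Thus $\mathrm{ev}$ is a Borel graph homomorphism, and precomposing it with any Borel (resp.\ projective) $n$-colouring of $\mathcal H$ gives a Borel (resp.\ projective) $n$-colouring of $\HOM(\Gamma_\Delta,\mathcal H)$; this is the first bullet and also the ``$\Rightarrow$'' direction of the final equivalence.

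For the second bullet I would argue by contraposition. Given a Borel $\Delta$-colouring $c$ of $\HOM(\Gamma_\Delta,\mathcal H)$, for $v\in V(\mathcal H)$ and $i\in[\Delta]$ let $\mathbb{G}(v,i)$ be the infinite game in which the players alternately label the vertices of $T_\Delta$ by elements of $V(\mathcal H)$ following the pattern of Figure~\ref{fig:game}, keeping the root labelled $v$ and keeping the labelling a homomorphism into $\mathcal H$; a run yields some $h\in\HOM(\Gamma_\Delta,\mathcal H)$, and $I$ wins iff $c(h)\neq i$. Since $c$ is Borel and the run-to-$h$ map is continuous, the payoff set is Borel, so by Martin's Borel determinacy theorem each $\mathbb{G}(v,i)$ is determined. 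Two strategy manipulations, verbatim copies of those in Lemmas~\ref{l:firstplayerloc}--\ref{l:secondplayerloc} and using only the symmetry of the $\Delta$-regular tree, now finish the job. First, playing hypothetical $I$-winning strategies for $\mathbb{G}(v,1),\dots,\mathbb{G}(v,\Delta)$ against each other (rotating them around the $\Delta$ branches at the root, each acting as $I$ on its own branch and reading the other branches as $II$'s moves) would build a single $h$ with $c(h)\notin[\Delta]$, which is impossible; hence $\{i: II \text{ wins } \mathbb{G}(v,i)\}\neq\emptyset$, and I may define $c^*(v)=\min\{i: II \text{ wins } \mathbb{G}(v,i)\}$. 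Second, if $\{v,w\}\in E(\mathcal H)$ and $c^*(v)=c^*(w)=i$, then playing the $II$-winning strategies of $\mathbb{G}(v,i)$ and $\mathbb{G}(w,i)$ against each other (each fed the other's moves, with $I$'s pretended first move being $w$, resp.\ $v$) produces an $h\in\HOM(\Gamma_\Delta,\mathcal H)$ with $h(1_{\Gamma_\Delta})=v$ and $h(\alpha_1)=w$; viewed from its two centres this gives adjacent vertices $h$ and $\alpha_1\cdot h$ of $\HOM(\Gamma_\Delta,\mathcal H)$ with $c(h)=c(\alpha_1\cdot h)=i$, contradicting properness of $c$. So $c^*$ is a proper $\Delta$-colouring of $\mathcal H$, and since ``$II$ wins $\mathbb{G}(v,i)$'' is by definition a $\Game\mathbf{\Delta}^1_1$ condition on $v$, the classes $\{v:c^*(v)=j\}$ are finite Boolean combinations of $\Game\mathbf{\Delta}^1_1$ sets; thus $\chi_{\Game\mathbf{\Delta}^1_1}(\mathcal H)\le\Delta$, i.e.\ contrapositively $\chi_{\Game\mathbf{\Delta}^1_1}(\mathcal H)>\Delta$ forces $\chi_B(\HOM(\Gamma_\Delta,\mathcal H))>\Delta$.

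For the third bullet I would rerun the previous paragraph with ``Borel'' replaced everywhere by ``projective'': from a projective $\Delta$-colouring $c$ of $\HOM(\Gamma_\Delta,\mathcal H)$ the games $\mathbb{G}(v,i)$ have projective payoff, so under projective determinacy — which follows from suitable large cardinal hypotheses, e.g.\ infinitely many Woodin cardinals — every $\mathbb{G}(v,i)$ is still determined, the two manipulations are unchanged, and $c^*$ is again a proper $\Delta$-colouring of $\mathcal H$ whose classes are finite Boolean combinations of the projective sets ``$II$ wins $\mathbb{G}(v,j)$'', hence projective. Combined with the easy ``$\Rightarrow$'' direction from the first bullet this yields $\chi_{pr}(\mathcal H)>\Delta\iff\chi_{pr}(\HOM(\Gamma_\Delta,\mathcal H))>\Delta$. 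The only non-elementary ingredients are the two determinacy theorems, used purely as black boxes; the real work — and the place I expect to be most delicate — is checking that the two strategy-stealing plays are legal and really produce elements of $\HOM(\Gamma_\Delta,\mathcal H)$ with exactly the claimed colour values, which is where the full symmetry of $\Cay(\Gamma_\Delta,S)$ is used and where the argument runs exactly parallel to Lemmas~\ref{l:firstplayerloc}--\ref{l:secondplayerloc} and the proof of Theorem~\ref{t:marksmain}.
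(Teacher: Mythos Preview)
Your proposal is correct and follows essentially the same approach the paper sketches in the paragraph immediately preceding the theorem: the evaluation map for the first bullet, and Marks' games with strategy stealing (exactly as in Lemmas~\ref{l:firstplayerloc}--\ref{l:secondplayerloc}) together with Borel, respectively projective, determinacy for the second and third. The paper does not spell out a proof beyond that sketch, and your write-up fills in precisely the details one would expect, including the care about isolated vertices and the verification that the evaluation map really is a graph homomorphism.
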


 Finally, observe that $\Cay(\Gamma_\Delta,S)$ is equipped with a proper edge coloring. Hence, the edge labeling trick is sufficient to produce acyclic graphs here as well. 
 
 \begin{definition}	
 	Assume that the graph $\mathcal{H}$ is equipped with a Borel edge $\Delta$-labeling. Let $\Homed(\Gamma_\Delta,\mathcal{H})$ be the restriction of $\HOM(\Gamma_\Delta,\mathcal{H})$ to the set
 	\[\{h \in V(\HOM(\Gamma_\Delta,\mathcal{H})): \text{$h$ preserves the edge labels}\}.\]
 \end{definition}

 In case $\mathcal{H}$ is acyclic, an argument similar to the proof of Proposition \ref{pr:injectivity} yields that each $h \in \HOM(\Gamma_\Delta,\mathcal{H})$ is injective. Using this, one can show the following.

\begin{theorem}[Transfer principle $2$, Borel]
	\label{t:transfer}
	Assume that $\mathcal{H}$ is a locally countable Borel graph and $e$ is a Borel edge $\Delta$-labeling of $\mathcal{H}$. 	
	\begin{itemize}
		
		\item $\Homed(\Gamma_\Delta,\mathcal{H})$ admits a label preserving Borel homomorphism into $\mathcal{H}$. Thus, if $\chi^{el}_B(\mathcal{H}) \leq \Delta$ then $\chi_B(\Homed(\Gamma_\Delta,\mathcal{H})) \leq \Delta$.
        \item If $\mathcal{H}$ is acyclic then so is $\Homed(\Gamma_\Delta,\mathcal{H})$.
		\item If $\mathcal{H}$ is acyclic and hyperfinite then so is $\Homed(\Gamma_\Delta,\mathcal{H})$.
		\item If $\chi^{el}_{\Game\mathbf{\Delta}^1_1} (\mathcal{H})>\Delta$ then $\chi_B(\Homed(\Gamma_\Delta,\mathcal{H})) >\Delta$.
	\end{itemize}
	Assuming suitable large cardinal hypotheses we also have
	\[\chi^{el}_{pr} (\mathcal{H})>\Delta \iff \chi^{el}_{pr}(\HOM(\Gamma_\Delta,\mathcal{H})) >\Delta.\]

\end{theorem}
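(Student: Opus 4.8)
The plan is to dispose of the four items in turn and then obtain the displayed $pr$-equivalence by re-running the game argument with Martin's Borel determinacy theorem replaced by projective determinacy. First, put on $\Cay(\Gamma_\Delta,S)$ the canonical proper edge $\Delta$-labelling in which the edge joining $g$ to $g\alpha_i$ carries label $i$, and note that $\Homed(\Gamma_\Delta,\mathcal{H})$ really is a Borel graph: its vertex set $\{h\in V(\mathcal{H})^{\Gamma_\Delta}:h\text{ is an edge-label preserving homomorphism}\}$ is a shift-invariant Borel subset of $V(\mathcal{H})^{\Gamma_\Delta}$ (a countable conjunction of Borel conditions, since $\mathcal{H}$ is locally countable and $E(\mathcal{H})$ and $e$ are Borel), carrying the induced edge-labelled Schreier structure. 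The first item is then witnessed by the evaluation map $\varepsilon\colon h\mapsto h(1_{\Gamma_\Delta})$: it is Borel, and if $h'=\alpha_i\cdot h$ is adjacent to $h$ then $h'(1)=h(\alpha_i)$ and $\{1,\alpha_i\}$ is a label-$i$ edge of $\Cay(\Gamma_\Delta,S)$, so $\{h(1),h(\alpha_i)\}$ is an edge of $\mathcal{H}$ of label $i$; hence $\varepsilon$ is edge-label preserving, and composing an edge-labelled $\Delta$-colouring of $\mathcal{H}$ with $\varepsilon$ transports the obstruction verbatim, giving the first item exactly as in the finitary Theorem~\ref{t:tr2local}.

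The second and third items rest on the injectivity phenomenon of Proposition~\ref{pr:injectivity}. If $\mathcal{H}$ is acyclic and some $h\in\Homed(\Gamma_\Delta,\mathcal{H})$ were not injective, a geodesic in the tree $\Cay(\Gamma_\Delta,S)$ between two points with equal $h$-image would map to a nontrivial closed walk in $\mathcal{H}$; acyclicity forces a backtrack $h(x_{j-1})=h(x_{j+1})$, but the two geodesic edges at $x_j$ have distinct $\Cay$-labels, hence so do their images — impossible, since these images coincide. So every $h\in\Homed(\Gamma_\Delta,\mathcal{H})$ is injective, i.e.\ lies in the \emph{free part} of the shift action $\Gamma_\Delta\curvearrowright V(\mathcal{H})^{\Gamma_\Delta}$; thus $\Homed(\Gamma_\Delta,\mathcal{H})$, being a Schreier graph restricted to a shift-invariant subset of that free part, is a disjoint union of $\Delta$-regular trees, which is the second item. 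For the third, write $E$ and $F$ for the component equivalence relations of $\Homed(\Gamma_\Delta,\mathcal{H})$ and of $\mathcal{H}$. The injectivity just proved makes $\varepsilon$ \emph{class-injective} (on the component $\{\gamma\cdot h\}_{\gamma\in\Gamma_\Delta}$ of $h$ one has $\varepsilon(\gamma\cdot h)=h(\gamma^{-1})$, injective in $\gamma$), and $\varepsilon$ is a homomorphism $E\to F$; since $\mathcal{H}$ is locally countable, $F$ is a countable Borel equivalence relation, and hyperfiniteness passes back along class-injective Borel homomorphisms into hyperfinite equivalence relations — if $F=\bigcup_nF_n$ with $F_n$ finite Borel, then $\{(h,h'):hEh'\text{ and }\varepsilon(h)\,F_n\,\varepsilon(h')\}$ exhausts $E$ by increasing finite Borel subequivalence relations — so $\Homed(\Gamma_\Delta,\mathcal{H})$ is hyperfinite.

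For the fourth item I would imitate Marks' strategy-stealing argument. Assume a Borel $\Delta$-colouring $d$ of $\Homed(\Gamma_\Delta,\mathcal{H})$. For $v\in V(\mathcal{H})$ and $i\leq\Delta$ let $\mathbb{G}(v,i)$ be the game in which, following the rotating pattern of Figure~\ref{fig:game}, the two players alternately build $h\in\Homed(\Gamma_\Delta,\mathcal{H})$ with $h(1)=v$ by labelling reduced words of increasing length, maintaining at every finite stage that the partial labelling is an edge-label preserving homomorphism into $\mathcal{H}$, subject to the extra rule that the edge from $1$ to $I$'s first move has label $i$; here the niceness of the edge labelling of $\mathcal{H}$ is exactly what prevents either player from getting stuck. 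Declare that $I$ wins iff $d$ assigns the resulting $h$ a colour other than $i$. The payoff set is Borel, so $\mathbb{G}(v,i)$ is determined by Martin's theorem~\cite{martin}, and the two strategy-stealing lemmas of the finitary case (Lemmas~\ref{l:firstplayerloc} and~\ref{l:secondplayerloc}) carry over: rotating hypothetical $I$-winning strategies $\sigma_1,\dots,\sigma_\Delta$ for the $\Delta$ branches at $v$ against one another would assemble an $h$ with $d(h)\neq i$ for all $i$, impossible, so some $\mathbb{G}(v,i)$ is won by $II$, whence $c(v):=\min\{i:II\text{ wins }\mathbb{G}(v,i)\}$ is defined; and $c$ is an edge-labelled $\Delta$-colouring of $\mathcal{H}$, since a label-$i$ edge $\{v,w\}$ with $c(v)=c(w)=i$ would let us play $II$'s winning strategies in $\mathbb{G}(v,i)$ and $\mathbb{G}(w,i)$ against each other along the common $\alpha_i$-edge to produce adjacent $h,\alpha_i\cdot h\in\Homed(\Gamma_\Delta,\mathcal{H})$ with $d(h)=d(\alpha_i\cdot h)=i$, contradicting that $d$ is proper. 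As ``$II$ wins $\mathbb{G}(v,i)$'' is the game quantifier applied to a Borel set, $c$ is $\Game\mathbf{\Delta}^1_1$, so $\chi_B(\Homed(\Gamma_\Delta,\mathcal{H}))\leq\Delta$ forces $\chi^{el}_{\Game\mathbf{\Delta}^1_1}(\mathcal{H})\leq\Delta$, the contrapositive of the fourth item; and under large cardinals supplying determinacy for projective payoffs the same construction with $d$ merely projective (pull-back along $\varepsilon$ in one direction, the voting construction in the other) yields the displayed equivalence, as for Transfer principle~1 in Theorem~\ref{pr:chromatic}.

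The one genuinely delicate point is the third paragraph: checking that the rotated partial plays glue into a bona fide edge-label preserving homomorphism defined on all of $\Cay(\Gamma_\Delta,S)$ — this is where niceness of the labelling and the free-product-of-involutions structure of $\Gamma_\Delta$ are used — and correctly identifying the complexity of $c$, which is why Martin's Borel determinacy must be invoked rather than, say, measurable determinacy (in line with the positive result of \cite{conley2016brooks}, which shows that such winning strategies need not exist measurably).
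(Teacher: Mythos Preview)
Your proposal is correct and follows essentially the same approach the paper sketches in the surrounding discussion: the evaluation map $h\mapsto h(1_{\Gamma_\Delta})$ for the first item, the injectivity argument of Proposition~\ref{pr:injectivity} for the second and third items (with hyperfiniteness pulled back along the class-injective $\varepsilon$), and Marks' game together with Martin's Borel determinacy (resp.\ projective determinacy under large cardinals) for the fourth item and the final equivalence. The paper does not write out a detailed proof of this theorem but refers to \cite{brandt_chang_grebik_grunau_rozhon_vidnyaszkyhomomorphisms}; your write-up matches that intended argument, including the correct identification of the two delicate points (gluing the rotated plays into a genuine element of $\Homed(\Gamma_\Delta,\mathcal{H})$, and the $\Game\mathbf{\Delta}^1_1$ complexity of the voting map $c$).
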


Since a suitable edge labeled version of $\mathcal{H}=\mathbb{G}_0$ (see \cite{KST,brandt_chang_grebik_grunau_rozhon_vidnyaszkyhomomorphisms}) is hyperfinite and $\chi^{el}_{\Game\mathbf{\Delta}^1_1} (\mathcal{H})>\Delta$, Theorem \ref{t:transfer} yields Theorem \ref{t:marksmain} and can be also used to obtain the results in \cite{conleyhyp}.

\section{Versions of Vizing's theorem}\label{sec:Vizing}

In this section we describe the ideas behind the proof of the measurable version of Vizing's theorem and their applications to the LOCAL model of distributed computing.
\vspace{+0.2cm}

The strategy to construct edge colorings both in measurable and distributed context is to gradually improve a partial coloring using augmenting subgraphs.
To get a feeling about this type of construction in measurable combinatorics, we start by revisiting the result of Lyons and Nazarov \cite{lyons2011perfect}, where they produced a \emph{perfect matching on the $\Delta$-regular tree as a factor of iid}.
Their overall approach, that originated in the work of Nguyen and Onak \cite{nguyen_onak2008matching} that was later formalized by Elek and Lippner \cite{eleklippner}, turned out to be extremely important not only for edge colorings, but also for applications that include for example the measurable circle squaring, or paradoxical decompositions, see \cite{OlegExpasion,measurablesquare,FolnerTilings}.

The argument that Lyons and Nazarov employed to show that the final matching, after the augmenting procedure that might take an infinite number of steps, is well-defined almost surely is conceptually different than the one for the measurable Vizing's theorem.
Namely, their argument uses the fact that the underlying measured graph, iid on $\Delta$-regular tree, \emph{expands in measure} which is a form of \emph{global} expansion.
Since not every acyclic graphing expands in measure, and interestingly Kun \cite{KunPerfectMatching} found examples of $\Delta$-regular acyclic graphings that do not admit measurable perfect matching, we rely on a different tool in the proof of measurable Vizing's theorem: a form of \emph{local expansion} introduced in \cite{grebik2020measurable} that was later generalized in the notion of a \emph{multi-step Vizing chain} by Bernshteyn \cite{BernshteynVizing}.
This type of local expansion have proved to be particularly well-suited for the construction of distributed algorithms \cite{BernshteynVizing,Christiansen,BernshteynVizing2}.
\vspace{+0.2cm}

After discussing the important ideas behind the result of Lyons and Nazarov \cite{lyons2011perfect} in the next subsection, we formally introduce the concept of multi-step Vizing chains.
In the last part, we summarize the applications that include measurable Vizing's theorem and the construction of distributed algorithms of LOCAL complexity $\operatorname{poly}(\log(n))$.

\subsection{Perfect matching on regular trees}\label{subsec:LyonsNazarov}
Recall that a \emph{matching} in a graph $G=(V,E)$ is a collection of edges $M\subseteq E$ such that for every $v\in V$ there is at most one $e\in M$ such that $v\in e$.
We denote as $U_M$ the set of \emph{unmatched vertices of $M$}, that is, $v\in U_M$ if and only if $v\not\in e$ for every $e\in M$.
Vertices that are not unmatched are \emph{covered by $M$}.
We say that $M$ is a \emph{perfect matching} if $U_M=\emptyset$.

A classical result of K\H{o}nig states that a bipartite $\Delta$-regular graph $G$ satisfies $\chi'(G)=\Delta$.
In particular, $G$ admits a perfect matching.
Analogues of K\H{o}nig's theorem in measurable combinatorics are simply false.
First, Laczkovich \cite{laczk} gave an example of a $2$-regular measurably bipartite graphing that does not admit a measurable perfect matching.
Then Marks \cite{DetMarks} showed that there are acyclic $\Delta$-regular Borel bipartite Borel graphs that do not admit Borel perfect matching for every $\Delta\ge 3$.
Finally, Kun \cite{KunPerfectMatching} strengthened Marks' result by finding an acyclic $\Delta$-regular measurably bipartite graphing that does not admit measurable perfect matching for every $\Delta\ge 3$.

On the positive side, Lyons and Nazarov \cite{lyons2011perfect} showed that there is a factor of iid perfect matching on the $\Delta$-regular tree, which implies, by the result of Hatami, Lov\'asz and Szegedy \cite{hatamilovaszszegedy}, that \emph{every} acyclic $\Delta$-regular graphing admits a measurable matching such that the measure of unmatched vertices is arbitrarily small.
\vspace{+0.2cm}

In our exposition of the arguments from \cite{lyons2011perfect} we follow the approach of Elek and Lippner \cite{eleklippner}.
Basically all measurable constructions that are based on augmenting subgraph technique use some version of Theorem~\ref{thm:ElekLippner} in the background.

\begin{definition}
    Let $M$ be a matching in $G=(V,E)$.
    We say that
    $$P=(v_1,v_2,\dots,v_k)\subseteq V$$
    is an \emph{($M$-)augmenting path} if $v_1,v_k\in U_M$ and $\{v_{2i},v_{2i+1}\}\in M$ for every $i\in \{1,\dots,k/2-1\}$.

    \emph{Flipping the edges} on an augmenting path $P=(v_1,v_2,\dots,v_k)$ modifies the matching $M$ to a matching $M'$ that agrees with $M$ outside of $P$ and such that $\{v_{2i-1},v_{2i}\}\in M'$ for every $i\in \{1,\dots, k/2\}$.
\end{definition}

Note that the length of every augmenting path $P$ is even and that all vertices of $P$ are covered after flipping the edges.

We state a version of the result of Elek and Lippner \cite{eleklippner}, which is suitable in our context.

\begin{theorem}[Elek--Lippner \cite{eleklippner}]\label{thm:ElekLippner}
    Let $\fG=(V,E,\fB,\mu)$ be a graphing of degree bounded by $\Delta\in \mathbb{N}$, $M$ be a measurable matching in $\fG$ and $k\in \mathbb{N}$.
    Then there is a measurable matching $M'\subseteq E$ such that
    \begin{equation}\label{eq:BoundMeasure}
        \mu(\{v\in V:\exists w\in N_\fG(v) \ \{v,w\}\in M'\setminus M\})\le k\mu(U_M)
    \end{equation}
    and every $M'$-augmenting path has length at least $k$.
\end{theorem}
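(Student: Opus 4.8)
The plan is to run finitely many rounds of a measurable analogue of the Hopcroft--Karp algorithm: repeatedly flip a Borel maximal family of vertex‑disjoint \emph{shortest} augmenting paths, until no augmenting path with fewer than $k$ vertices survives, and then bound the measure of the vertices touched in the process by a mass transport argument. Since $k$ is fixed and augmenting paths have an even number of vertices, we only need to destroy augmenting paths of the finitely many lengths $2,4,\dots,k-2$, so the construction terminates after at most $k$ stages and no limiting procedure is needed.

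\emph{The construction.} Put $M_1:=M$, and for each odd $j=1,3,5,\dots$ with $j+1<k$, assume $M_j$ has been defined. Let $\mathcal{A}_j$ be the standard Borel space of (unoriented) $M_j$-augmenting paths with exactly $j+1$ vertices, and let $\mathcal{I}_j$ be the intersection graph on $\mathcal{A}_j$, where two such paths are adjacent iff they share a vertex. Because $\Delta(\fG)\le\Delta$ and $j$ is a constant, every vertex of $\fG$ lies on at most $(j+1)\Delta^{j}$ members of $\mathcal{A}_j$, so $\mathcal{I}_j$ is a Borel graph of degree $\le(j+1)^2\Delta^{j}$; by Theorem~\ref{thm:BorelGreedy} it admits a Borel proper coloring with finitely many colors. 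Going through the color classes one at a time and adding a path to our family iff it is vertex-disjoint from all paths added so far, we obtain a Borel \emph{maximal} family $\mathcal{P}_j\subseteq\mathcal{A}_j$ of pairwise vertex-disjoint $M_j$-augmenting paths of length $j+1$. Flipping the edges on all paths of $\mathcal{P}_j$ at once (well-defined since the paths are vertex-disjoint) yields a Borel matching $M_{j+2}:=M_j\triangle\bigcup_{Q\in\mathcal{P}_j}E(Q)$. Let $M'$ be the matching obtained after the last relevant value of $j$.

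\emph{No short augmenting paths.} I claim, by induction on $j$, that $M_{j+2}$ has no augmenting path with at most $j+1$ vertices; applied to the largest $j$ with $j+1<k$ this gives the second conclusion of Theorem~\ref{thm:ElekLippner}. Work inside one connected component, a countable graph. By the inductive hypothesis $M_j$ has no augmenting path with fewer than $j+1$ vertices, so in any component containing a short augmenting path the shortest one has exactly $j+1$ vertices, and $\mathcal{P}_j$ restricts there to a \emph{maximal} vertex-disjoint family of shortest augmenting paths. The classical Hopcroft--Karp lemma---whose proof only manipulates finite symmetric differences of matchings inside one component, hence is insensitive to the component being infinite or to $\mathcal{P}_j$ being selected in a Borel way---then says $M_{j+2}$ has no augmenting path with $\le j+1$ vertices in that component. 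Components with no short augmenting path are untouched by the flip, so the claim holds globally.

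\emph{The measure estimate.} Write $E_j\subseteq V$ for the set of vertices lying on some path of $\mathcal{P}_j$. Each $Q\in\mathcal{P}_j$ has $j+1$ vertices, exactly two of which---its endpoints---lie in $U_{M_j}$, and flipping $Q$ removes precisely those two from the set of uncovered vertices; since the paths are vertex-disjoint, $U_{M_{j+2}}=U_{M_j}\setminus R_j$, where $R_j$ is the disjoint union of the endpoint pairs. Define a Borel weight function $f$ on the component equivalence relation of the graphing by $f(v,w)=\tfrac12$ when $v\in E_j$ and $w$ is one of the two endpoints of the unique path of $\mathcal{P}_j$ through $v$, and $f(v,w)=0$ otherwise; then $\sum_w f(v,w)=\mathbf{1}_{E_j}(v)$ and $\sum_v f(v,w)=\tfrac{j+1}{2}\mathbf{1}_{R_j}(w)$, so the mass transport principle \eqref{eq:MassTransport}, in its routine extension to Borel weight functions on the component relation of a graphing, yields
\[
\mu(E_j)\;=\;\tfrac{j+1}{2}\,\mu(R_j)\;=\;\tfrac{j+1}{2}\bigl(\mu(U_{M_j})-\mu(U_{M_{j+2}})\bigr).
\]
A vertex contributing to the set in \eqref{eq:BoundMeasure} must lie on some flipped path, so that set is contained in $\bigcup_j E_j$; using $j+1<k$ at every stage and telescoping the differences $\mu(U_{M_j})-\mu(U_{M_{j+2}})$ along the chain $M=M_1,M_3,M_5,\dots,M'$, we obtain
\[
\mu\bigl(\{v:\exists w\in N_\fG(v)\ \ \{v,w\}\in M'\setminus M\}\bigr)\;\le\;\sum_j\mu(E_j)\;\le\;\tfrac{k}{2}\sum_j\bigl(\mu(U_{M_j})-\mu(U_{M_{j+2}})\bigr)\;\le\;\tfrac{k}{2}\,\mu(U_M)\;\le\;k\,\mu(U_M),
\]
which is \eqref{eq:BoundMeasure}.

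\emph{Main obstacle.} The one genuinely non-routine point is the third paragraph: verifying that the finite Hopcroft--Karp lemma---flipping a maximal vertex-disjoint family of shortest augmenting paths strictly increases the shortest augmenting-path length---survives the passage to the (possibly infinite) components of the graphing and to the Borel choice of the family. Since that lemma's proof is a purely local count on finite symmetric differences of matchings, this is mostly a matter of unwinding definitions, and the measurability of every intermediate object is handled, as above, by applying the Borel greedy coloring of Theorem~\ref{thm:BorelGreedy} to bounded-degree auxiliary graphs.
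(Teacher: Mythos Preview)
Your proof is correct and takes a genuinely different route from the paper's sketch. The paper runs an \emph{infinite} iteration: it fixes a sequence $(B_i)_{i\in\mathbb{N}}$ of Borel $\fG^{(2k+1)}$-independent sets hitting every vertex infinitely often, and at each stage flips one short augmenting path from each vertex of $B_i$ that still admits one; the main work is then to argue that the pointwise limit $M'=\lim_i M_i$ exists, using that once a vertex is covered it stays covered and only finitely many paths of length $\le k$ pass through any given edge. You instead run finitely many Hopcroft--Karp phases, flipping at each stage a Borel \emph{maximal} vertex-disjoint family of shortest augmenting paths and invoking the classical lemma that this strictly increases the minimum augmenting-path length. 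This trades the limit argument for a finitary induction plus the Hopcroft--Karp lemma (whose proof, as you note, is a local analysis of finite symmetric differences and so is insensitive to the ambient component being infinite). As a dividend you get the sharper bound $\tfrac{k}{2}\mu(U_M)$ in \eqref{eq:BoundMeasure}, since you can attribute each flipped path to \emph{both} its endpoints rather than just one. Both arguments ultimately rest on the same measurability device---applying Theorem~\ref{thm:BorelGreedy} to an auxiliary bounded-degree Borel graph to make the greedy selection Borel---and both invoke mass transport for the measure estimate; the paper's version of the estimate is stated more tersely but is essentially the same double count.
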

\begin{proof}[Proof sketch]
    Let $\fG^{(2k+1)}$ denote the $2k+1$st power of $\fG$, that is, $x,y\in V$ form an edge in $\fG^{(2k+1)}$ if their graph distance in $\fG$ is at most $2k+1$.
    It is easy to check that $\fG^{(2k+1)}$ is a graphing of bounded maximum degree.
    By a straightforward application of Theorem~\ref{thm:BorelGreedy}, there is a sequence $(B_i)_{i\in \mathbb{N}}$ of Borel $\fG^{(2k+1)}$-independent sets such that $\{i\in \mathbb{N}:v\in B_i\}$ is infinite for every $v\in V$.

    Set $M_0=M$ and inductively define a sequence of measurable matchings $(M_i)_{i\in \mathbb{N}}$ as follows.
    Suppose that $M_i$ has been defined.
    If possible pick an $M_i$-augmenting path $P_v$ in $\mathcal{G}$ of length at most $k$ that starts at $v$ for every $v\in B_i$.
    This assignment can be done in Borel way.
    Note that if $v\not=w\in B_i$, then $P_v$ and $P_w$ are edge disjoint.
    In particular, we may define the measurable matching $M_{i+1}$ by flipping the edges on all of these $M_i$-augmenting paths simultaneously.

    We claim that $M'=\lim_{i\to\infty} M_i$ is a well-defined measurable matching that satisfies the claim of the theorem.
    Indeed, the limit is well-defined as every edge is flipped only finitely many times.
    This follows from the combination of the facts that every vertex lies on finitely many paths of length at most $k$ and that once a vertex is covered by $M_i$, then it is covered by $M_j$ for every $i\le j\in \mathbb{N}$.
    Consequently, if $P$ is a finite $M'$-augmenting path starting at $v\in V$, then there is an $i_0\in \mathbb{N}$ such that $P$ is an $M_i$-augmenting path for every $i\ge i_0$.
    In particular, the length of $P$ has to be at least $k$, as otherwise it would be flipped for any $i\ge i_0$ such that $v\in B_i$.

    Finally, \eqref{eq:BoundMeasure} follows from the mass transport principle as every vertex from $U_M$ is responsible for flipping edges at $k$-many other vertices. 
\end{proof}

Let $\fG=(V,E,\fB,\mu)$ be an acyclic $\Delta$-regular graphing and consider the following process.
Start with an empty matching $M_0$ and define inductively $M_{k+1}$ from $M_{k}$ using Theorem~\ref{thm:ElekLippner} so that every $M_{k+1}$-augmenting path has length at least $2k$ (as our main application, Theorem~\ref{thm:Lyons2011}, is for bipartite graphs, we use $2k$ instead of just $k$).
This produces a sequence of measurable matchings $(M_k)_{k\in \mathbb{N}}$.

We would like to claim now that that the sequence $(M_k)_{k\in \mathbb{N}}$ admits almost surely a pointwise limit $M=\lim_{k\to \infty} M_k$ that is the desired measurable perfect matching.
Recall that the pointwise limit is defined by viewing each $M_k$ as its characteristic function.
However, such a limit need not exist as it might be the case that a positive fraction of edges is flipped in the sequence $(M_k)_{k\in \mathbb{N}}$ infinitely often.

Observe that a sufficient condition for the pointwise limit $M=\lim_{k\to \infty} M_k$ to exist almost surely is if
\begin{equation}\label{eq:BC}
    \sum_{k=0}^\infty 2k\mu(U_{M_k})<\infty.
\end{equation}
This follows from a combination of Theorem~\ref{thm:ElekLippner} and Borel--Cantelli lemma.
Hence, it is enough to find a condition that guarantees that $\mu(U_{M_k})\in o(1/k^2)$.
This is the place where expansion in measure kicks in.
To keep things simple, we introduce the concept of measure expansion for measurably bipartite graphs only.

\begin{definition}
    Let $\eta>0$ and $\fG=(V,E,\fB,\mu)$ be an acyclic $\Delta$-regular graphing that admits a measurable bipartition $V=A\sqcup B$.
    We say that $\fG$ is an \emph{$\eta$-expander} if $\mu(N_\fG(X))\ge (1+\eta)\mu(X)$ for every $X\subseteq V$ such that $X\subseteq A$ or $X\subseteq B$, and $\mu(X)\le 1/4$.
\end{definition}

Now we are ready to state a weak version of the result of Lyons and Nazarov \cite{lyons2011perfect}.

\begin{theorem}[Lyons--Nazarov \cite{lyons2011perfect}]\label{thm:Lyons2011}
    Let $\eta>0$ and $\fG=(V,E,\fB,\mu)$ be an acyclic $\Delta$-regular measurably bipartite graphing that is an $\eta$-expander.
    Then $\fG$ admits a measurable perfect matching.
\end{theorem}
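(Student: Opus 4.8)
The plan is to run the iteration already set up just before the statement: starting from $M_0=\emptyset$, use the Elek--Lippner theorem (Theorem~\ref{thm:ElekLippner}) repeatedly to build measurable matchings $(M_k)_{k\in\mathbb{N}}$ such that $M_{k+1}$ has no augmenting path of length less than $2k$, and such that the $\mu$-measure of the set of vertices incident to an edge of $M_{k+1}\setminus M_k$ is at most $2k\,\mu(U_{M_k})$. Recall from the Elek--Lippner construction that once a vertex is covered it stays covered, so $U_{M_0}\supseteq U_{M_1}\supseteq\cdots$ and $U_M=\bigcap_k U_{M_k}$ for the pointwise limit $M:=\lim_k M_k$ (whenever this limit exists). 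By the discussion around \eqref{eq:BC}, it therefore suffices to prove that $\sum_{k=0}^\infty 2k\,\mu(U_{M_k})<\infty$: Borel--Cantelli then shows that $\mu$-almost every edge is flipped only finitely often, so $M$ exists $\mu$-a.e. and is a measurable matching, and $\mu(U_M)=\lim_k\mu(U_{M_k})=0$, i.e. $M$ is a measurable perfect matching. So the entire content is a good quantitative bound on $\mu(U_{M_k})$.

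Two preliminary observations about graphings will be used. Fix the measurable bipartition $V=A\sqcup B$. The mass transport principle applied to the pair $(A,B)$ gives $\Delta\mu(A)=\Delta\mu(B)$, hence $\mu(A)=\mu(B)=1/2$; and any measurable matching is measure preserving on the set of covered vertices (mass transport again), so for any measurable matching $M$ one has $\mu(U_M\cap A)=\mu(U_M\cap B)=\tfrac12\mu(U_M)$. The key lemma is then: \emph{if $\fG$ is an $\eta$-expander and $M$ is a measurable matching with no augmenting path of length less than $2\ell$, then $\mu(U_M)\le (1+\eta)^{-(\ell-1)/2}$} (any fixed exponential decay in $\ell$ would be enough). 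Applying this with $\ell=k$ makes $\mu(U_{M_k})$ decay exponentially, so $\sum_k 2k\,\mu(U_{M_k})<\infty$ and we are done.

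To prove the key lemma I would set $a=\mu(U_M\cap A)=\tfrac12\mu(U_M)$ and run the $M$-alternating breadth-first search from $U_M\cap A$: level $0$ is $U_M\cap A$, each odd level is the set of new $\fG$-neighbours of the previous (even) level, each even level is the set of $M$-partners of the previous (odd) level. Because $M$ has no augmenting path of length less than $2\ell$, every vertex appearing on an odd level of depth at most $\sim 2\ell$ is covered, so the search runs without obstruction; writing $P_j\subseteq A$ for the union of the even levels of depth at most $2j$, one checks $\mu(P_{j+1})=a+\mu(N_\fG(P_j))$ using measure preservation of $M$. While $\mu(P_j)\le 1/4$ the $\eta$-expander hypothesis gives $\mu(P_{j+1})\ge a+(1+\eta)\mu(P_j)$, so the measures $\mu(P_j)$ grow at least by a factor $1+\eta$ per step, and hence either (Case A) $\mu(P_j)\le 1/4$ for all $j\le\ell$, forcing $(1+\eta)^{\ell}a\le\mu(P_\ell)\le 1/4$; or (Case B) $\mu(P_{j_0})>1/4$ for a first index $j_0\le\ell$. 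In Case B, switch to the complement: for $j\ge j_0$ every $B$-vertex missed by the search has all its neighbours in $A\setminus P_j$, so $N_\fG\bigl(B\setminus N_\fG(P_j)\bigr)\subseteq A\setminus P_j$; since $\mu(A\setminus P_j)<1/4$, expansion (applied on the $B$-side, using $\mu(B)=1/2$ and $\mu(N_\fG(P_j))=\mu(P_{j+1})-a$) yields $\mu(A\setminus P_{j+1})\le\tfrac{1}{1+\eta}\mu(A\setminus P_j)-a$, and iterating from $j_0$ to $\ell$ gives $a\le(1+\eta)^{-(\ell-j_0)}/4$. Combined with the first-phase bound $a\le(1+\eta)^{-(j_0-1)}/4$ (which holds because $\mu(P_{j_0-1})\le 1/4$), taking the smaller of the two estimates produces $a\le(1+\eta)^{-(\ell-1)/2}/4$ in all cases, which is the claimed bound.

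The main obstacle is precisely this quantitative lemma: the alternating-BFS expansion argument is clean only while the reachable set has measure at most $1/4$, the regime where the $\eta$-expander hypothesis is active, and one must continue past that point. The fix is the complementary ``shrinking'' estimate on $A\setminus P_j$ described above, but it requires careful bookkeeping — identifying exactly which $B$-vertices are reachable, verifying the identity $\mu(P_{j+1})=a+\mu(N_\fG(P_j))$ and its complement form, and checking that all complements invoked stay below measure $1/4$ (automatic once $a$ is small, which is the only case needed). Everything else — the iteration via Theorem~\ref{thm:ElekLippner}, the use of Theorem~\ref{thm:BorelGreedy} inside it, the two mass-transport observations, and the Borel--Cantelli passage to the limit — is routine.
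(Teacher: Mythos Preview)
Your overall scaffolding --- iterate Theorem~\ref{thm:ElekLippner}, prove exponential decay of $\mu(U_{M_k})$ from expansion, and conclude via Borel--Cantelli --- is exactly what the paper does, and your two mass-transport observations ($\mu(A)=\mu(B)=1/2$ and $\mu(U_M\cap A)=\mu(U_M\cap B)$) match the paper's. The only substantive divergence is in the proof of the key exponential-decay lemma.

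The paper runs the alternating BFS \emph{from both sides}: it builds increasing sequences $(X^k_\ell)$ from $U_{M_k}\cap A$ and $(Y^k_\ell)$ from $U_{M_k}\cap B$, each growing by a factor $(1+\eta)$ per two steps while they stay below measure $1/4$. The endgame is a meet-in-the-middle argument: once both reachable sets exceed $1/4$ inside $A$ (which has measure $1/2$), they intersect, and concatenating the two alternating paths through a common point yields an augmenting path of length at most $\ell_0+\ell_0'+2$; the no-short-augmenting-path hypothesis then forces $\max\{\ell_0,\ell_0'\}\gtrsim k$, giving the bound. Acyclicity is used here to guarantee the concatenation is a genuine path.

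Your route is one-sided: BFS from $U_M\cap A$ only, with a two-phase analysis --- exponential growth of $P_j$ while small, then exponential shrinking of the complement $A\setminus P_j$ via expansion applied to $B\setminus N_\fG(P_j)$ once $P_j$ is large. This is also correct and, as you note, the only delicate point is verifying $\mu(B\setminus N_\fG(P_j))\le 1/4$ so that expansion applies in the second phase; since $\mu(B\setminus N_\fG(P_j))\le \mu(A\setminus P_j)+a$, this holds once $a$ is small enough, and one can bootstrap (or simply shift the threshold to, say, $1/8$) to close the loop. Your argument has the minor advantage of not invoking acyclicity, whereas the paper's meeting argument does; the paper's version, on the other hand, avoids the second-phase bookkeeping entirely by a clean pigeonhole in $A$.
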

\begin{proof}[Proof sketch]
    It is enough to verify that the sequence of measurable matching $(M_k)_{k\in \mathbb{N}}$ defined after Theorem~\ref{thm:ElekLippner} satisfies $\mu(U_{M_k})\in o(1/k^2)$.
    
    Observe that as $\fG$ is regular and satisfies the mass transport principle, we have that $\mu(A)=\mu(B)=1/2$.
    In fact, we have $\mu(U_{M_k}\cap A)=\mu(U_{M_k}\cap B)$ for every $k\in \mathbb{N}$.
    Set $X^k_0=U_{M_k}\cap A$ and define inductively a sequence of measurable sets $(X^k_\ell)_{\ell\in \mathbb{N}}$ as follows.
    If $\ell$ is even, then set $X^k_{\ell+1}=N_\fG(X^k_\ell)\subseteq B$, and if $\ell$ is odd, then set
    $$X^k_{\ell+1}=\{v\in V:\exists w\in X^k_\ell \ \{v,w\}\in M_k\}\subseteq A.$$
    Let $\ell_0\in \mathbb{N}$ be the first (odd) number such that either $\mu(X^k_{\ell_0})> 1/4$ or $\mu(X^k_{\ell_0}\cap U_{M_k})>0$.
    This is well-defined as $\fG$ is an $\eta$-expander.

    Observe that for every $0\le j\le (\ell_0+1)/2$, we have that
    $$(1+\eta)^{j+1}\mu(X^k_0)\le \mu(X^k_{2j+1})=\mu(X^k_{2j+2})$$
    as $\fG$ is a graphing and an $\eta$-expander.
    We define analogously a sequence $(Y^k_\ell)_{\ell\in \mathbb{N}}$ and $\ell'_0\in \mathbb{N}$ for $Y^k_0=U_{M_k}\cap B$.
    In particular,
    $$\mu(U_{M_k})= \mu(X^k_0)+\mu(Y^k_0)\le 2(1+\eta)^{-(\max\{\ell_0,\ell'_0\}+1)/2}.$$
    We finish the proof by showing that $k\le \max\{\ell_0,\ell'_0\}+1$
    
    First observe that if $\mu(X^k_{\ell_0}\cap U_{M_k})>0$, then there are $M_k$-augmenting paths of length $\ell_0+1$.
    By symmetry the same holds for $Y^k_{\ell'_0}$.
    In particular, if $\max\{\ell_0,\ell'_0\}+1<2k$ it must be the case that 
    $$\mu(X^k_{\ell_0}),\mu(Y^k_{\ell'_0})> 1/4.$$
    But then we conclude that $\mu(X^k_{\ell_0+1}\cap Y^k_{\ell_0'})>0$ as $X^k_{\ell_0+1},Y^k_{\ell'_0}\subseteq A$, $\mu(A)=1/2$ and $\mu(X^k_{\ell_0+1})=\mu(X^k_{\ell_0})>1/4$.
    Consequently, this guarantees the existence of $M_k$-augmenting paths of length $\ell_0+1+\ell'_0+1$.
    Indeed, consider a concatenation of any paths from $X^k_0$ to $X^k_{\ell_0+1}\cap Y^k_{\ell_0'}$ and from $X^k_{\ell_0+1}\cap Y^k_{\ell_0'}$ to $Y^k_0$.
    Altogether, this shows $k\le \max\{\ell_0,\ell'_0\}+1$ and the proof is finished.
\end{proof}

\begin{remark}[Approximate K\H{o}nig's line coloring theorem]
    It was observed, for example, by Lyons \cite{LyonsTrees} that the iterative application of these arguments yield a factor of iid edge coloring with $\Delta+1$ colors of the $\Delta$-regular tree such that one of the color classes can have arbitrarily small measure.
    In fact this holds for all bipartite measured graphs of degree bounded by $\Delta$, see \cite{TothShcreier,grebikApprox,GrebikVizing}.
\end{remark}

\subsection{Multi-step Vizing chains}\label{subsec:Multi}
Our goal is to define the notion of a multi-step Vizing chain and discuss the connections to local expansion.
We start by recalling some notation from \cite{grebik2020measurable}.
\vspace{+0.2cm}

Let $G=(V,E)$ be a finite (or countable) connected graph and set $\Delta=\Delta(G)\in \mathbb{N}$.
For the rest of this section we fix a partial edge coloring $d:E\rightharpoonup [\Delta+1]$.
We write $m_d(v)$ for the set of colors missing at $v\in V$, that is, $\alpha\not\in m_d(v)$ if there is an $e\in \dom(d)$ such that $v\in e$ and $d(e)=\alpha$.
Observe that $|m_d(v)|>0$ for every $v\in V$.

A \emph{chain} is a sequence $P=(e_0,e_1,\dots )\subseteq E$ of edges such that for every $i\in \mathbb{N}$ with $e_i,e_{i+1}\in P$ we have $e_i\cap e_{i+1}\not=\emptyset$.
Let $l(P)=|P|$ denote the the number of edges in $P$, we allow $l(P)$ to be infinite.
We say that $P$ is \emph{edge injective} if every edge appears at most once in $P$.
If $P$ is a finite chain with the last edge $e$ and $Q$ is a chain with the first edge $f$ and $e\cap f\not=\emptyset$, then we write $P^\frown Q$ for the chain that is the \emph{concatenation} of $P$ and~$Q$.
If a chain $R$ is of the form $P^\frown Q$, then we say that $P$ is a \emph{prefix} of $R$, denoted by $P\sqsubseteq R$.

We say that a chain $P=(e_i)_{i<l(P)}$ is \emph{$d$-shiftable} if $l(P)\ge 1$, $P$ is edge injective,  $e_0\in U_d$ and $e_j\in \dom(d)$ for every $1\le j<l(P)$.
A $d$-shiftable chain $P$ is called \emph{$d$-proper-shiftable} if $d_P:E\rightharpoonup [\Delta+1]$, the \emph{$P$-shift of $d$}, defined as 
        \begin{itemize}
   	    \item $\dom(d_P)=\dom(d)\cup\{e_0\}\setminus \{e_{l(P)-1}\}$ where we put $\{e_{l(P)-1}\}=\emptyset$ if $l(P)=\infty$,
            \item $d_P(e_i)=d(e_{i+1})$ for every $i+1<l(P)$,
            \item $d_P(f)=d(f)$ for every $f\in \dom(d)\setminus P$;
        \end{itemize}
is a partial edge coloring.
Finally, we say that a $d$-proper-shiftable chain is \emph{$d$-augmenting} if either $l(P)=\infty$ or $P$ is finite with $m_{d_P}(x)\cap m_{d_P}(y)\not=\emptyset$ where $x\not=y$ are the vertices of the last edge $e_{l(P)-1}$ of~$P$.
\vspace{+0.2cm}

A \emph{fan around $x\in V$ starting at $e\in E$} is a finite edge injective chain $F=(e_0,e_1,\dots,e_k)$ such that $x\in e_j$ for every $0\le j\le k$ and $e_0=e$.
If we don't want to specify $x\in V$ and $e\in E$, we simply say that $F$ is a \emph{fan}.

The \emph{$\alpha/\beta$-path starting at $x\in V$}, denoted by $P_d(x,\alpha/\beta)$, is the unique maximal chain $P=(e_i)_{i<l(P)}$ such that $d(e_{i})=\alpha$ (resp.\ $d(e_{i})=\beta$) for every $i<l(P)$ that is even (resp.\ odd).
A \emph{truncated $\alpha/\beta$-path} is a prefix of any $\alpha/\beta$-path $P_d(x,\alpha/\beta)$.
If we dont want to specify the colors in the path, we simply say that $P=P_d(x,\alpha/\beta)$ is an \emph{alternating path}.

\begin{definition}[Vizing Chain]
    Let $e\in U_d$ and $x\in e$.
    A \emph{Vizing chain} $W(x,e)$ is a $d$-proper-shiftable chain of the form $W(x,e)={F_e}^\frown P_{\alpha_e,\beta_e}$, where $F$ is a fan at $x$ starting at $e$ and $P$ is an $\alpha_e/\beta_e$-path for some $\alpha_e\not=\beta_e\in [\Delta+1]$, see Figure~\ref{fig:Vizing}.
\end{definition}

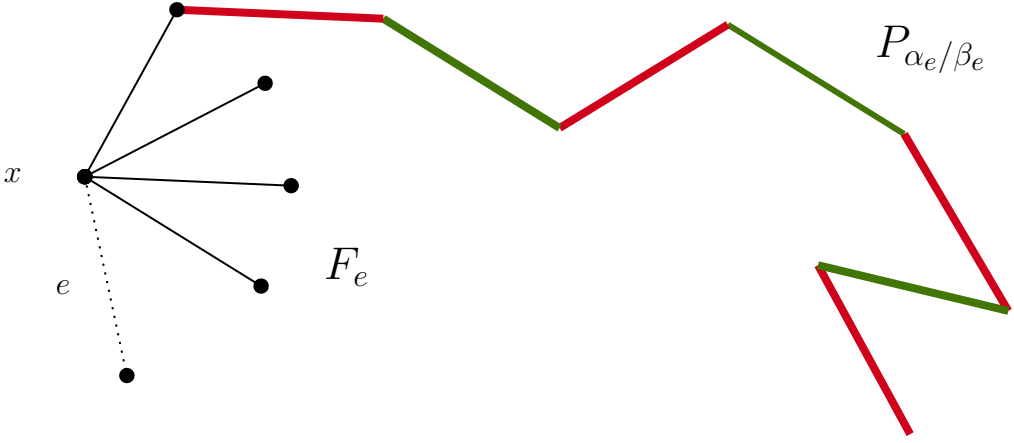
\begin{figure}
    \centering

\tikzset{every picture/.style={line width=0.75pt}} 

\begin{tikzpicture}[x=0.75pt,y=0.75pt,yscale=-1,xscale=1]

\draw    (118,132) -- (206,187) ;
\draw [shift={(206,187)}, rotate = 32.01] [color={rgb, 255:red, 0; green, 0; blue, 0 }  ][fill={rgb, 255:red, 0; green, 0; blue, 0 }  ][line width=0.75]      (0, 0) circle [x radius= 3.35, y radius= 3.35]   ;
\draw [shift={(118,132)}, rotate = 32.01] [color={rgb, 255:red, 0; green, 0; blue, 0 }  ][fill={rgb, 255:red, 0; green, 0; blue, 0 }  ][line width=0.75]      (0, 0) circle [x radius= 3.35, y radius= 3.35]   ;
\draw  [dash pattern={on 0.84pt off 2.51pt}]  (118,132) -- (139,232) ;
\draw [shift={(139,232)}, rotate = 78.14] [color={rgb, 255:red, 0; green, 0; blue, 0 }  ][fill={rgb, 255:red, 0; green, 0; blue, 0 }  ][line width=0.75]      (0, 0) circle [x radius= 3.35, y radius= 3.35]   ;
\draw [shift={(118,132)}, rotate = 78.14] [color={rgb, 255:red, 0; green, 0; blue, 0 }  ][fill={rgb, 255:red, 0; green, 0; blue, 0 }  ][line width=0.75]      (0, 0) circle [x radius= 3.35, y radius= 3.35]   ;
\draw [color={rgb, 255:red, 65; green, 117; blue, 5 }  ,draw opacity=1 ][line width=2.25]    (439,55.5) -- (527,110.5) ;
\draw [color={rgb, 255:red, 208; green, 2; blue, 27 }  ,draw opacity=1 ][line width=3]    (355,107.5) -- (439,55.5) ;
\draw [color={rgb, 255:red, 65; green, 117; blue, 5 }  ,draw opacity=1 ][line width=3]    (267,52.5) -- (355,107.5) ;
\draw [color={rgb, 255:red, 208; green, 2; blue, 27 }  ,draw opacity=1 ][line width=3]    (164,48) -- (267,52.5) ;
\draw    (118,132) -- (164,48) ;
\draw [shift={(164,48)}, rotate = 298.71] [color={rgb, 255:red, 0; green, 0; blue, 0 }  ][fill={rgb, 255:red, 0; green, 0; blue, 0 }  ][line width=0.75]      (0, 0) circle [x radius= 3.35, y radius= 3.35]   ;
\draw [shift={(118,132)}, rotate = 298.71] [color={rgb, 255:red, 0; green, 0; blue, 0 }  ][fill={rgb, 255:red, 0; green, 0; blue, 0 }  ][line width=0.75]      (0, 0) circle [x radius= 3.35, y radius= 3.35]   ;
\draw    (118,132) -- (208,85) ;
\draw [shift={(208,85)}, rotate = 332.43] [color={rgb, 255:red, 0; green, 0; blue, 0 }  ][fill={rgb, 255:red, 0; green, 0; blue, 0 }  ][line width=0.75]      (0, 0) circle [x radius= 3.35, y radius= 3.35]   ;
\draw [shift={(118,132)}, rotate = 332.43] [color={rgb, 255:red, 0; green, 0; blue, 0 }  ][fill={rgb, 255:red, 0; green, 0; blue, 0 }  ][line width=0.75]      (0, 0) circle [x radius= 3.35, y radius= 3.35]   ;
\draw    (118,132) -- (221,136.5) ;
\draw [shift={(221,136.5)}, rotate = 2.5] [color={rgb, 255:red, 0; green, 0; blue, 0 }  ][fill={rgb, 255:red, 0; green, 0; blue, 0 }  ][line width=0.75]      (0, 0) circle [x radius= 3.35, y radius= 3.35]   ;
\draw [shift={(118,132)}, rotate = 2.5] [color={rgb, 255:red, 0; green, 0; blue, 0 }  ][fill={rgb, 255:red, 0; green, 0; blue, 0 }  ][line width=0.75]      (0, 0) circle [x radius= 3.35, y radius= 3.35]   ;
\draw [color={rgb, 255:red, 208; green, 2; blue, 27 }  ,draw opacity=1 ][line width=3]    (484,176.5) -- (530,261.5) ;
\draw [color={rgb, 255:red, 208; green, 2; blue, 27 }  ,draw opacity=1 ][line width=3]    (527,110.5) -- (579,199.5) ;
\draw [color={rgb, 255:red, 65; green, 117; blue, 5 }  ,draw opacity=1 ][line width=3]    (484,176.5) -- (579,199.5) ;

\draw (236,166.4) node [anchor=north west][inner sep=0.75pt]  [font=\Large]  {$F_{e}$};
\draw (511,54.4) node [anchor=north west][inner sep=0.75pt]  [font=\Large]  {$P_{\alpha _{e} /\beta _{e}}{}$};
\draw (76,126.4) node [anchor=north west][inner sep=0.75pt]    {$x$};
\draw (102,182.4) node [anchor=north west][inner sep=0.75pt]    {$e$};

\end{tikzpicture}

    \caption{An example of a Vizing chain $W(x,e)={F_e}^\frown P_{\alpha_e,\beta_e}$.}
    \label{fig:Vizing}
\end{figure}

Recall that the standard proof of Vizing's theorem can be viewed as a sequential algorithm that at each step finds a Vizing chain $W(x,e)$, for a given uncolored edge $e$, that is $d$-augmenting and improves the current coloring $d$ by passing to $d_{W(x,e)}$ and coloring the last edge of $W(x,e)$.
In particular, we get that a $d$-augmenting Vizing chain exists for every uncolored edge.
\vspace{+0.2cm}

Next, we present a general definition of a multi-step Vizing chain and discuss some of the properties related to local expansion.
The high-level idea is to turn the algorithm for finding a Vizing chain into a branching process by repeatedly truncating the alternating path and growing a new Vizing chain after shifting the colors.

\begin{definition}[Multi-step Vizing chain, \cite{BernshteynVizing,Christiansen}]
    Let $e\in U_d$, $x\in e$ and $i\in\mathbb{N}$.
    An \emph{$i$-step Vizing chain} $V(x,e)$ is a $d$-proper-shiftable chain of the form
    $$V(x,e)={F_1}^\frown {P_1}^\frown\dots^\frown {F_i}^\frown {P_i},$$
    where $F_1$ is a fan at $x$ starting at $e$ and for every $1\le j\le i$ we have that $F_j$ is a fan and $P_j$ is a truncated alternating path, see Figure~\ref{fig:VizingMultiChain}.
\end{definition}

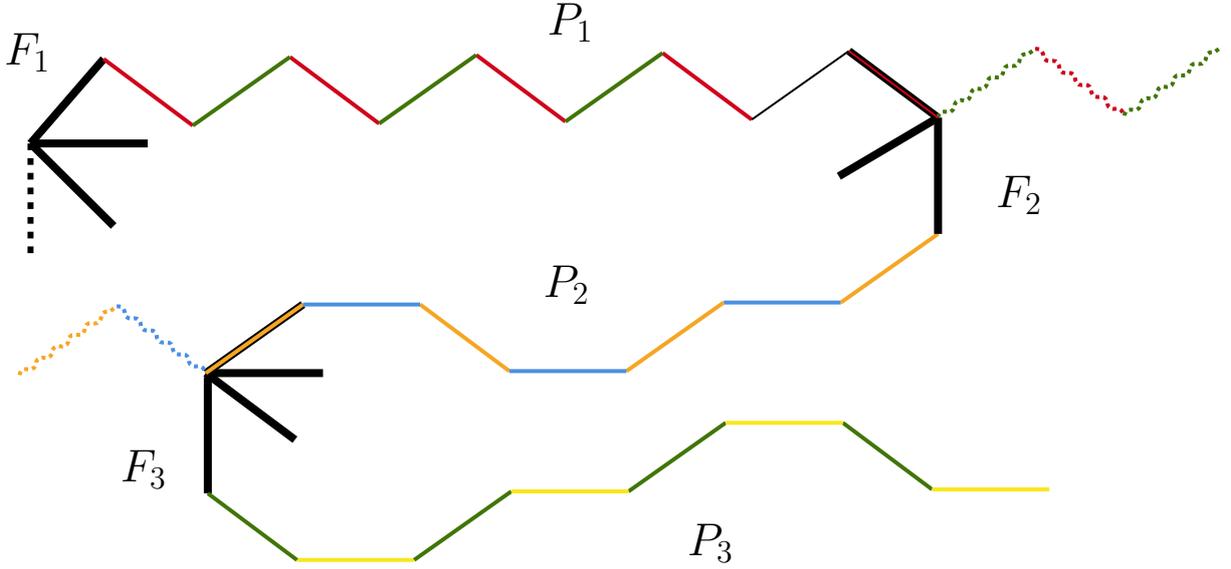
\begin{figure}
    \centering

\tikzset{every picture/.style={line width=0.75pt}} 

\begin{tikzpicture}[x=0.75pt,y=0.75pt,yscale=-1,xscale=1]

\draw [color={rgb, 255:red, 248; green, 231; blue, 28 }  ,draw opacity=1 ][line width=1.5]    (173.5,299) -- (232,299) ;
\draw [color={rgb, 255:red, 74; green, 144; blue, 226 }  ,draw opacity=1 ][line width=1.5]    (386.5,169.5) -- (445,169.5) ;
\draw [color={rgb, 255:red, 208; green, 2; blue, 27 }  ,draw opacity=1 ][line width=1.5]    (356,44) -- (400.5,77.5) ;
\draw [line width=3]    (40.5,89.5) -- (77,47) ;
\draw [line width=3]    (40.5,89.5) -- (99,89.5) ;
\draw [line width=3]    (40.5,89.5) -- (82,131) ;
\draw [line width=2.25]  [dash pattern={on 2.53pt off 3.02pt}]  (40.5,89.5) -- (40.5,148) ;
\draw [color={rgb, 255:red, 208; green, 2; blue, 27 }  ,draw opacity=1 ][line width=1.5]    (263,45) -- (307.5,78.5) ;
\draw [color={rgb, 255:red, 208; green, 2; blue, 27 }  ,draw opacity=1 ][line width=1.5]    (170,46) -- (214.5,79.5) ;
\draw [color={rgb, 255:red, 208; green, 2; blue, 27 }  ,draw opacity=1 ][line width=1.5]    (77,47) -- (121.5,80.5) ;
\draw    (400.5,77.5) -- (449,43) ;
\draw [color={rgb, 255:red, 65; green, 117; blue, 5 }  ,draw opacity=1 ][line width=1.5]    (121.5,80.5) -- (170,46) ;
\draw [line width=3]    (449,43) -- (493.5,76.5) ;
\draw [color={rgb, 255:red, 245; green, 166; blue, 35 }  ,draw opacity=1 ][line width=1.5]    (445,169.5) -- (493.5,135) ;
\draw [line width=3]    (493.5,135) -- (493.5,76.5) ;
\draw [line width=3]    (444,106) -- (493.5,76.5) ;
\draw [line width=3]    (128,205) -- (176.5,170.5) ;
\draw [line width=3]    (128,205) -- (186.5,205) ;
\draw [line width=3]    (128,205) -- (172.5,238.5) ;
\draw [line width=3]    (129,207) -- (129,265.5) ;
\draw [color={rgb, 255:red, 65; green, 117; blue, 5 }  ,draw opacity=1 ][line width=1.5]    (129,265.5) -- (173.5,299) ;
\draw [color={rgb, 255:red, 65; green, 117; blue, 5 }  ,draw opacity=1 ][line width=1.5]    (232,299) -- (280.5,264.5) ;
\draw [color={rgb, 255:red, 245; green, 166; blue, 35 }  ,draw opacity=1 ][line width=1.5]    (235,170.5) -- (279.5,204) ;
\draw [color={rgb, 255:red, 65; green, 117; blue, 5 }  ,draw opacity=1 ][line width=1.5]  [dash pattern={on 1.5pt off 1.5pt}]  (493.5,76.5) .. controls (493.89,74.17) and (495.24,73.21) .. (497.57,73.6) .. controls (499.9,73.99) and (501.26,73.03) .. (501.65,70.7) .. controls (502.04,68.38) and (503.4,67.42) .. (505.72,67.81) .. controls (508.05,68.2) and (509.41,67.24) .. (509.8,64.91) .. controls (510.19,62.58) and (511.54,61.62) .. (513.87,62.01) .. controls (516.2,62.4) and (517.56,61.44) .. (517.95,59.11) .. controls (518.34,56.78) and (519.69,55.82) .. (522.02,56.21) .. controls (524.35,56.6) and (525.7,55.64) .. (526.09,53.31) .. controls (526.48,50.98) and (527.84,50.02) .. (530.17,50.42) .. controls (532.5,50.81) and (533.85,49.85) .. (534.24,47.52) .. controls (534.63,45.19) and (535.99,44.23) .. (538.32,44.62) -- (542,42) -- (542,42) ;
\draw [color={rgb, 255:red, 208; green, 2; blue, 27 }  ,draw opacity=1 ][line width=1.5]  [dash pattern={on 1.5pt off 1.5pt}]  (542,42) .. controls (544.33,41.67) and (545.66,42.68) .. (545.99,45.01) .. controls (546.32,47.34) and (547.66,48.34) .. (549.99,48.01) .. controls (552.32,47.68) and (553.65,48.69) .. (553.98,51.02) .. controls (554.31,53.35) and (555.65,54.36) .. (557.98,54.03) .. controls (560.31,53.7) and (561.64,54.71) .. (561.97,57.04) .. controls (562.3,59.37) and (563.64,60.37) .. (565.97,60.04) .. controls (568.3,59.71) and (569.63,60.72) .. (569.96,63.05) .. controls (570.29,65.38) and (571.63,66.39) .. (573.96,66.06) .. controls (576.29,65.73) and (577.62,66.73) .. (577.95,69.06) .. controls (578.28,71.39) and (579.62,72.4) .. (581.95,72.07) .. controls (584.28,71.74) and (585.61,72.75) .. (585.94,75.08) -- (586.5,75.5) -- (586.5,75.5) ;
\draw [color={rgb, 255:red, 74; green, 144; blue, 226 }  ,draw opacity=1 ][line width=1.5]  [dash pattern={on 1.5pt off 1.5pt}]  (83.5,171.5) .. controls (85.83,171.17) and (87.16,172.18) .. (87.49,174.51) .. controls (87.82,176.84) and (89.16,177.84) .. (91.49,177.51) .. controls (93.82,177.18) and (95.15,178.19) .. (95.48,180.52) .. controls (95.81,182.85) and (97.15,183.86) .. (99.48,183.53) .. controls (101.81,183.2) and (103.14,184.21) .. (103.47,186.54) .. controls (103.8,188.87) and (105.14,189.87) .. (107.47,189.54) .. controls (109.8,189.21) and (111.13,190.22) .. (111.46,192.55) .. controls (111.79,194.88) and (113.13,195.89) .. (115.46,195.56) .. controls (117.79,195.23) and (119.12,196.23) .. (119.45,198.56) .. controls (119.78,200.89) and (121.12,201.9) .. (123.45,201.57) .. controls (125.78,201.24) and (127.11,202.25) .. (127.44,204.58) -- (128,205) -- (128,205) ;
\draw [color={rgb, 255:red, 245; green, 166; blue, 35 }  ,draw opacity=1 ][line width=1.5]  [dash pattern={on 1.5pt off 1.5pt}]  (35,206) .. controls (35.39,203.67) and (36.74,202.71) .. (39.07,203.1) .. controls (41.4,203.49) and (42.76,202.53) .. (43.15,200.2) .. controls (43.54,197.88) and (44.9,196.92) .. (47.22,197.31) .. controls (49.55,197.7) and (50.91,196.74) .. (51.3,194.41) .. controls (51.69,192.08) and (53.04,191.12) .. (55.37,191.51) .. controls (57.7,191.9) and (59.06,190.94) .. (59.45,188.61) .. controls (59.84,186.28) and (61.19,185.32) .. (63.52,185.71) .. controls (65.85,186.1) and (67.2,185.14) .. (67.59,182.81) .. controls (67.98,180.48) and (69.34,179.52) .. (71.67,179.92) .. controls (74,180.31) and (75.35,179.35) .. (75.74,177.02) .. controls (76.13,174.69) and (77.49,173.73) .. (79.82,174.12) -- (83.5,171.5) -- (83.5,171.5) ;
\draw [color={rgb, 255:red, 208; green, 2; blue, 27 }  ,draw opacity=1 ]   (449,43) -- (493.5,76.5) ;
\draw [color={rgb, 255:red, 65; green, 117; blue, 5 }  ,draw opacity=1 ][line width=1.5]    (214.5,79.5) -- (263,45) ;
\draw [color={rgb, 255:red, 65; green, 117; blue, 5 }  ,draw opacity=1 ][line width=1.5]    (307.5,78.5) -- (356,44) ;
\draw [color={rgb, 255:red, 65; green, 117; blue, 5 }  ,draw opacity=1 ][line width=1.5]  [dash pattern={on 1.5pt off 1.5pt}]  (586.5,75.5) .. controls (586.89,73.17) and (588.24,72.21) .. (590.57,72.6) .. controls (592.9,72.99) and (594.26,72.03) .. (594.65,69.7) .. controls (595.04,67.38) and (596.4,66.42) .. (598.72,66.81) .. controls (601.05,67.2) and (602.41,66.24) .. (602.8,63.91) .. controls (603.19,61.58) and (604.54,60.62) .. (606.87,61.01) .. controls (609.2,61.4) and (610.56,60.44) .. (610.95,58.11) .. controls (611.34,55.78) and (612.69,54.82) .. (615.02,55.21) .. controls (617.35,55.6) and (618.7,54.64) .. (619.09,52.31) .. controls (619.48,49.98) and (620.84,49.02) .. (623.17,49.42) .. controls (625.5,49.81) and (626.85,48.85) .. (627.24,46.52) .. controls (627.63,44.19) and (628.99,43.23) .. (631.32,43.62) -- (635,41) -- (635,41) ;
\draw [color={rgb, 255:red, 245; green, 166; blue, 35 }  ,draw opacity=1 ][line width=1.5]    (338,204) -- (386.5,169.5) ;
\draw [color={rgb, 255:red, 245; green, 166; blue, 35 }  ,draw opacity=1 ][line width=1.5]    (128,205) -- (176.5,170.5) ;
\draw [color={rgb, 255:red, 74; green, 144; blue, 226 }  ,draw opacity=1 ][line width=1.5]    (279.5,204) -- (338,204) ;
\draw [color={rgb, 255:red, 74; green, 144; blue, 226 }  ,draw opacity=1 ][line width=1.5]    (176.5,170.5) -- (235,170.5) ;
\draw [color={rgb, 255:red, 65; green, 117; blue, 5 }  ,draw opacity=1 ][line width=1.5]    (446,230) -- (490.5,263.5) ;
\draw [color={rgb, 255:red, 65; green, 117; blue, 5 }  ,draw opacity=1 ][line width=1.5]    (339,264.5) -- (387.5,230) ;
\draw [color={rgb, 255:red, 248; green, 231; blue, 28 }  ,draw opacity=1 ][line width=1.5]    (280.5,264.5) -- (339,264.5) ;
\draw [color={rgb, 255:red, 248; green, 231; blue, 28 }  ,draw opacity=1 ][line width=1.5]    (387.5,230) -- (446,230) ;
\draw [color={rgb, 255:red, 248; green, 231; blue, 28 }  ,draw opacity=1 ][line width=1.5]    (490.5,263.5) -- (549,263.5) ;

\draw (26,32.4) node [anchor=north west][inner sep=0.75pt]  [font=\Large]  {$F_{1}$};
\draw (521,104.4) node [anchor=north west][inner sep=0.75pt]  [font=\Large]  {$F_{2}$};
\draw (84,242.4) node [anchor=north west][inner sep=0.75pt]  [font=\Large]  {$F_{3}$};
\draw (297,17.4) node [anchor=north west][inner sep=0.75pt]  [font=\Large]  {$P_{1}$};
\draw (295,149.4) node [anchor=north west][inner sep=0.75pt]  [font=\Large]  {$P_{2}$};
\draw (367,279.4) node [anchor=north west][inner sep=0.75pt]  [font=\Large]  {$P_{3}$};

\end{tikzpicture}

    \caption{An example of a $3$-step Vizing chain.
    Fans are depicted by black thick edges, while dotted curly edges indicate how the alternating paths continue after they get truncated.    
    }
    \label{fig:VizingMultiChain}
\end{figure}

The concept was first introduced for $i=2$ by the first author and Pikhurko in \cite{grebik2020measurable} and for general $i\in \mathbb{N}$ by Bernshteyn \cite{BernshteynVizing}. 
Note that while the high-level idea described before the definition provides some intuition on how to build multi-step Vizing chains, it is the $d$-proper shiftable requirement on the chains that makes the analysis complicated.
A successful approach to remedy this issue is to count the number of \emph{non-overlapping} multi-step Vizing chains, these might be defined for example by demanding that $P_j\cap P_{j'}=\emptyset$ for every $1\le j<j'\le i$.
A precise analysis of the number of $i$-step Vizing chains for various values of $i$ was done in \cite{grebik2020measurable,BernshteynVizing,Christiansen,BernshteynVizing2}.
In particular, we refer the reader to \cite[Section~2 and 5.1]{BernshteynVizing2} for the description and analysis of the randomized Multi-Step Vizing Algorithm that finds a short augmenting subgraph with high-probability.
\vspace{+0.2cm}

In the remainder of this section, we discuss the connections to local expansion to complement the discussion in Section~\ref{subsec:LyonsNazarov}.

Consider the following problem.
Let $G$ be a finite graph and suppose that for every $e\in U_d$ there is no augmenting subgraph of size at most $\ell\gg \Delta$.
In particular, every ($1$-step) Vizing chain $W(x,e)=F_1^\frown P_1$ assigned to $e\in U_d$ and $x\in e$ has size at least $\ell$.
What can we say about the fraction of uncolored edges $|U_d|/m$?

The \emph{key property} of multi-step Vizing chains, which is essential in all the arguments in \cite{grebik2020measurable,BernshteynVizing,Christiansen,BernshteynVizing2}, is that every edge $f\in \dom(d)$ belongs to at most $\operatorname{poly}(\Delta)^i$-many distinct $i$-step Vizing chains.
Indeed, detecting an uncolored edge $e\in U_d$ such that $f\in V(x,e)$ for some $i$-step Vizing chain basically boils down to following iteratively at most $i$-many alternating paths starting at $f$.
Importantly, this bound does not depend on $\ell$.
On the other hand, we expect, at least when $i$ is a small constant, that the number of $i$-step Vizing chains that we can grow from a given $e\in U_d$ should be of the order $(\ell/i)^i$.
The heuristic here is that we run the ``branching chain'' process $i$-times and in each step $1\le j\le i$ we can truncate the currently last alternating path, that must have length at least $\ell/i$, at almost every of the first $\ell/i$-many edges to create $P_j$.

To derive an estimate on $|U_d|/m$, we formalize these observations in a double counting argument assuming for simplicity that $i=1$.
Define an auxiliary bipartite graph $H_d=U_d\sqcup \dom(d)$, where $(e,f)$ forms an edge if there is $x\in e$ such that $f\in W(x,e)$.
By our assumption we have that $\deg_{H_d}(e)\ge  \ell-1$ for every $e\in U_d$, and by the discussion in the previous paragraph, we have that $\deg_{H_d}(f)\le \operatorname{poly}(\Delta)$ for every $f\in \dom(d)$.
A double counting argument in $H_d$ then yields that
\begin{equation}\label{eq:DoubelCount}
    (\ell-1)|U_d|\le \operatorname{poly}(\Delta) |\dom(d)|\le \operatorname{poly}(\Delta) m.
\end{equation}
In particular, the fraction of uncolored edges is $O(1/\ell)$ if we view $\Delta$ as a constant.

We remark that an analogous computation for the perfect matching problem, assuming that the size of every augmenting path is large, does not give any estimate on the size of unmatched edges as the degrees in both parts of the analogously defined auxiliary bipartite graph might be comparable.

\subsection{Applications}\label{subsec:ApplicationsVizing}

We start by recalling the remarkable result of Christiansen Theorem~\ref{thm:ClassicChristiansen}.

\begin{theorem}[Christiansen \cite{Christiansen}]\label{thm:CHristiansenTwo}
    Let $G$ be a graph, $d:E\rightharpoonup [\Delta(G)+1]$ be a partial edge coloring and $e\in U_d$ be an uncolored edge.
    Then there exists an augmenting subgraph $H$ for $e$ such that $|E(H)|\in O(\Delta^7 \log(n))$.    
\end{theorem}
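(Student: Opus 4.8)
The plan is to argue by contradiction using the multi-step Vizing chains introduced above, together with the \emph{key property} that every colored edge lies in at most $\operatorname{poly}(\Delta)^i$ many $i$-step Vizing chains, and the branching heuristic that, absent short augmenting subgraphs, exponentially many distinct such chains can be grown from $e$. Fix $e\in U_d$ with $x\in e$ and suppose toward a contradiction that there is no augmenting subgraph for $e$ of size at most $\ell:=C\Delta^7\log n$, where $C$ is a large absolute constant; set $i:=\lceil\log m\rceil$ and $L:=\lfloor\ell/(2i)\rfloor$, so that any $i$-step Vizing chain built from prefixes of length $\le L$ has total length $\le\ell/2<\ell$.

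First I would set up the branching process. Starting from $e$ and $x$, grow a fan $F_1$ (size $\le\Delta$) together with its alternating path $P_1$. Since a full Vizing chain is always $d$-augmenting — this is precisely the content of the standard proof of Theorem~\ref{thm:Vizing} — if $|F_1|+|P_1|\le\ell$ we already have a short augmenting subgraph, contradicting the assumption; hence $P_1$ has length $>L$. Truncate $P_1$ to a prefix $P_1'$ of length chosen in $\{1,\dots,L\}$, shift the colors along $F_1{}^\frown P_1'$, and repeat at the new endpoint to obtain $F_2,P_2$, etc., producing an $i$-step Vizing chain $V=F_1{}^\frown P_1'{}^\frown\cdots{}^\frown F_i{}^\frown P_i$ of total length $\le\ell$. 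At each step we are in one of two cases: either the current fan closes or the current alternating path reaches a missing color, in which case the chain built so far is $d$-augmenting and of length $\le\ell$, a contradiction; or the current alternating path has length $>L$ and can be truncated. The crucial structural claim, which is the technical core of the argument, is that all but a $\operatorname{poly}(\Delta)$-controlled fraction of the $L$ possible truncation lengths yield an extension which is again $d$-proper-shiftable and \emph{non-overlapping} with the portion already built; choosing the polynomial in $\Delta$ hidden in $\ell$ large enough, a constant fraction of truncation choices remains admissible at every step.

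Next I would run the double counting. By the previous paragraph, if no augmenting subgraph of size $\le\ell$ exists then the number of distinct non-overlapping $i$-step Vizing chains rooted at $e$ of length $\le\ell$ is at least $(L/\operatorname{poly}(\Delta))^{\,i}$ (the truncation-length sequence determines the chain). On the other hand, by the \emph{key property} each edge of $\dom(d)$ lies in at most $\operatorname{poly}(\Delta)^{\,i}$ such chains, and each chain uses at least one edge, so their total number is at most $m\cdot\operatorname{poly}(\Delta)^{\,i}$ — this is exactly the localized version of the estimate behind \eqref{eq:DoubelCount}. Comparing, $(L/\operatorname{poly}(\Delta))^{\,i}\le m\cdot\operatorname{poly}(\Delta)^{\,i}$, hence $L\le\operatorname{poly}(\Delta)\cdot m^{1/i}$; with $i=\lceil\log m\rceil$ we have $m^{1/i}=O(1)$, so $L\le\operatorname{poly}(\Delta)$. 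A careful accounting of all the $\operatorname{poly}(\Delta)$ factors — fan size, number of colors, the bound on ``dangerous'' truncation points forcing overlaps, and the per-edge multiplicity of $i$-step chains — pins this polynomial at degree $7$, so $L\le c_0\Delta^7$ for an absolute $c_0$. But $L\ge\ell/(4\log n)=(C/4)\Delta^7$, a contradiction once $C>4c_0$. I would not reproduce the bookkeeping and instead invoke the analysis of the Multi-Step Vizing Algorithm from \cite{Christiansen,BernshteynVizing2}.

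The main obstacle, as the text already warns, is precisely the $d$-proper-shiftability requirement: splicing a fan onto a truncated alternating path and then growing a new fan need not preserve shiftability, and the newly grown chain may run back into edges already used, which would both break the counting and spoil the ``augmenting subgraph'' conclusion. Making the branching process robust — isolating the admissible truncation points, proving that most truncations are admissible and produce non-overlapping extensions, and showing the process either terminates in a genuine short augmenting chain or yields exponentially many distinct chains — is the heart of Christiansen's argument and requires a delicate case analysis; the per-edge bound on $i$-step Vizing chains is the engine that drives it, and the logarithmic factor is intrinsic in view of the $\Omega(\Delta\log(n/\Delta))$ lower bound of Chang, He, Li, Pettie and Uitto mentioned after Theorem~\ref{thm:ClassicChristiansen}.
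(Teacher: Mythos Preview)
Your proposal is essentially correct and aligns with the paper's own treatment: neither gives a full proof (the result is quoted from \cite{Christiansen}), and both identify the same strategy of growing $i$-step Vizing chains with truncation length in $\operatorname{poly}(\Delta)$, running for $i=\Theta(\log n)$ steps, and invoking Christiansen's ``highly non-trivial'' structural claim that the branching from \emph{every} uncolored edge is genuinely exponential unless a short $d$-augmenting chain appears. The only cosmetic difference is that the paper phrases the exhaustion step as counting \emph{reachable vertices} (which must exceed $n$ after $\Theta(\log n)$ steps), whereas you count chains and invoke the key $\operatorname{poly}(\Delta)^i$ multiplicity bound for the upper side of the double count; these are equivalent, and your identification of the ``most truncations are admissible and non-overlapping'' claim as the technical core matches the paper exactly.
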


The strategy to derive Theorem~\ref{thm:CHristiansenTwo} is to fix $i,\ell\in \mathbb{N}$ and count the number of vertices reachable by $i$-step Vizing chains with the property that each of its truncated alternating paths have size at most $\ell$.
This strategy originated in \cite{BernshteynVizing}, where Bernshteyn described a randomized algorithm for growing multi-step Vizing chains and showed that \emph{there are many} uncolored edges that admit an augmenting subgraph of size $O(\log^2(n))$.
We remark that for his analysis, he needed both parameters to satisfy $i,\ell\in \Theta(\log(n))$.

The main novelty in the approach of Christiansen is a highly non-trivial argument that even with $\ell\in \operatorname{poly}(\Delta)$ the number of reachable points from every uncolored edge grows exponentially in $i$ unless one of these chains is $d$-augmenting.
This implies, in particular, that after $\Theta(\log(n))$-steps we either exhaust the whole graph or find a $d$-augmenting chain.
\vspace{+0.2cm}

In order to derive a fast distributed algorithm for edge colorings based on augmenting subgraphs of size $\Omega(\log(n))$, one needs to show not only that such subgraphs exists, but also that they can be assigned to a large portion of the uncolored edges in an edge disjoint way.
Results along this line are essential in all the constructions from \cite{BernshteynVizing,Christiansen,BernshteynVizing2}.

We formulate the currently best version of such result that is from \cite{BernshteynVizing2} and that combines the idea of multi-step Vizing chains with the \emph{entropy compression method}.
We remark that the existence of the randomized distributed algorithm from Theorem~\ref{thm:DistributedVizing} is a direct consequence of this result, see \cite{BernshteynVizing2} for more details.

\begin{theorem}[Many disjoint augmenting subgraphs, Theorem~2.4 in \cite{BernshteynVizing2}]
    Let $d:E\rightharpoonup [\Delta+1]$ be a partial edge coloring.
    There exists a randomized distributed algorithm of LOCAL complexity $\operatorname{poly}(\Delta)\log(n)$ that outputs a set $W\subseteq U_d$ with expected size $\mathbb{E}(|W|)\ge |U_d|/\operatorname{poly}(\Delta)$ and an assignment of augmenting subgraphs $H_e$ for $e\in W$ such that 
    \begin{itemize}
        \item the graphs $H_e$, for $e\in W$, are vertex disjoint,
        \item $|E(H_e)|\le \operatorname{poly}(\Delta)\log(n)$.
    \end{itemize}
\end{theorem}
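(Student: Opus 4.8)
The plan is to run, in parallel at every uncolored edge $e\in U_d$, a randomized version of the \emph{Multi-Step Vizing Algorithm} of Bernshteyn \cite{BernshteynVizing}, refined through the entropy-compression analysis of \cite{BernshteynVizing2}. At $e=\{x,y\}$ one uses the i.i.d.\ random bits available in a bounded ball to grow an $i$-step Vizing chain ${F_1}^\frown{P_1}^\frown\dots^\frown{F_i}^\frown{P_i}$ with $i\in\Theta(\log n)$ (the hidden constant depending on $\Delta$), where at step $j$ the current alternating path is truncated at a position chosen uniformly among its first $\ell\in\operatorname{poly}(\Delta)$ edges and, after shifting the colors along the chain built so far, a fresh fan $F_{j+1}$ is grown. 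By the branching estimate behind Theorem~\ref{thm:ClassicChristiansen} — the number of vertices reachable by such chains grows geometrically in $i$ unless one of the intermediate chains has already become $d$-augmenting — for a suitable choice of constants the process reaches, with probability $1-1/\operatorname{poly}(n)$, a $d$-augmenting chain; the subgraph $H_e$ spanned by $\bigcup_j\big(F_j\cup P_j\big)$ is then an augmenting subgraph for $e$ with $|E(H_e)|\le i\ell\in\operatorname{poly}(\Delta)\log n$. (On the negligible failure event we simply declare $e\notin W$.) The output at $e$ depends only on the random bits within distance $\operatorname{poly}(\Delta)\log n$ of $e$, so this is a randomized LOCAL algorithm of the claimed complexity; moreover $H_e$ is genuinely augmenting because it comes from a $d$-proper-shiftable chain whose final segment is $d$-augmenting, and augmenting subgraphs that are pairwise vertex disjoint can be augmented simultaneously.

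It remains to pass from $(H_e)_{e\in U_d}$ to a large, pairwise vertex-disjoint subfamily: we set
\[W:=\{e\in U_d:\text{the chain at }e\text{ succeeded and }V(H_e)\cap V(H_{e'})=\emptyset\text{ for all }e'\in U_d\setminus\{e\}\},\]
so that the $H_e$, $e\in W$, are automatically pairwise disjoint and membership in $W$ is decidable within distance $\operatorname{poly}(\Delta)\log n$. The task is thus to show $\mathbb{E}|W|\ge|U_d|/\operatorname{poly}(\Delta)$. This is the crux, and it cannot be done by a naive first-moment bound over the ``conflict graph'' in which $e\sim e'\iff V(H_e)\cap V(H_{e'})\neq\emptyset$: the only cheap bound on conflict degrees multiplies $|V(H_e)|\le\operatorname{poly}(\Delta)\log n$ by the ``key property'' of Section~\ref{subsec:Multi} that every edge lies on at most $\operatorname{poly}(\Delta)^i=n^{O(\log\Delta)}$ many $i$-step Vizing chains, which is hopelessly large. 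The resolution is that the random process assigns any particular long chain only a tiny probability, so that each edge is visited by $\operatorname{poly}(\Delta)$ of the \emph{grown} chains in expectation; more to the point, a collision between the grown chains at $e$ and $e'$ forces them to share a long common substructure, and encoding such a colliding history costs, up to $\operatorname{poly}(\Delta)$ overhead per shared fan/path, fewer bits than were spent generating the two chains. A Moser--Tardos / witness-tree style compression argument turns this into the estimate that the expected number of $e\in U_d$ whose chain is \emph{not} isolated is at most $(1-1/\operatorname{poly}(\Delta))|U_d|$, i.e.\ $\mathbb{E}|W|\ge|U_d|/\operatorname{poly}(\Delta)$.

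The main obstacle is precisely this entropy-compression step: one has to (i) fix the right notion of a ``bad history'' — a collision of two grown chains, or a chain that fails to become $d$-augmenting within $i$ steps; (ii) build an injective encoding of bad histories into ``saved randomness plus $\operatorname{poly}(\Delta)$-bounded overhead per step'', exploiting that Vizing chains overlap in a controlled way and that each truncation is drawn uniformly from a $\operatorname{poly}(\Delta)$-sized window; and (iii) convert the resulting counting inequality into the stated expectation bound while keeping all $n$-dependence inside $|E(H_e)|\le\operatorname{poly}(\Delta)\log n$. Everything else — correctness of the augmentations, simultaneous augmentation along disjoint subgraphs, and the $\operatorname{poly}(\Delta)\log n$ round complexity of computing $W$ — is routine given Sections~\ref{subsec:Multi} and \ref{sec:local}.
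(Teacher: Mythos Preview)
The paper does not give its own proof of this theorem: it is stated as a citation (``Theorem~2.4 in \cite{BernshteynVizing2}'') and the surrounding text only records that the argument ``combines the idea of multi-step Vizing chains with the \emph{entropy compression method}'', referring the reader to \cite{BernshteynVizing2} for details. So there is nothing in the paper to compare your attempt against beyond that one-line description, and at that level your plan matches: you run the randomized Multi-Step Vizing Algorithm at each uncolored edge, bound chain lengths by $\operatorname{poly}(\Delta)\log n$, and invoke an entropy-compression/witness-tree argument to control collisions.

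That said, your write-up is a proof \emph{outline}, not a proof, and you flag this yourself: the entropy-compression step (your items (i)--(iii)) is exactly the substance of the result, and you have only described what it must accomplish rather than carried it out. One point to be careful about in your setup is the definition of $W$ as the set of edges whose grown chain is disjoint from \emph{every} other grown chain. A priori this could be empty even when the conflict graph has small average degree (think of one long chain touching many short ones); the actual argument in \cite{BernshteynVizing2} controls the probability, for a \emph{fixed} $e$, that its randomly grown chain collides with some other randomly grown chain, and this is where the encoding of colliding histories enters. Your sketch gestures at this, but the reduction from ``expected number of non-isolated $e$'' to an encoding inequality needs to be made precise before the $\mathbb{E}|W|\ge |U_d|/\operatorname{poly}(\Delta)$ conclusion follows.
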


Next, we turn our attention to applications in measurable combinatorics.
Combining the ideas and concepts introduced in the previous parts of this section allows us to sketch a formal proof of the measurable Vizing's theorem for graphings.

\begin{theorem}[Vizing's theorem for graphings, \cite{grebik2020measurable}]\label{thm:VizingGraphing}
    Let $\fG=(V,E,\fB,\mu)$ be a graphing of degree bounded by $\Delta\in \mathbb{N}$.
    Then $\chi'(\fG)\le \Delta+1$.
\end{theorem}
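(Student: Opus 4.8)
The plan is to follow the augmenting--subgraph philosophy of Section~\ref{subsec:LyonsNazarov}, but with the augmenting paths used there for matchings replaced by multi-step Vizing chains, and to iterate the basic improvement step until the set of uncolored edges has been driven into a $\mu$-null set. Concretely, one starts from the empty partial coloring $d_0$ (so $U_{d_0}=E$) and, along a sequence of length scales $\ell_0<\ell_1<\cdots$ tending to infinity, produces measurable partial $(\Delta+1)$-edge-colorings $d_0,d_1,d_2,\dots$ with the properties that (i) $\dom(d_k)$ is increasing, so that $\mu_E(U_{d_k})$ is decreasing, where $\mu_E$ is the natural (finite) edge measure attached to $\mu$; and (ii) outside a $\mu$-null set each edge's color eventually stabilizes, so that $d_\infty=\lim_k d_k$ is a well-defined measurable partial coloring, and moreover $\mu_E(U_{d_\infty})=0$. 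Once this is achieved, $d_\infty$ is a proper edge coloring with $\Delta+1$ colors defined $\mu$-almost everywhere; the remaining (null) set of vertices meeting an uncolored edge is then recolored greedily using Theorem~\ref{thm:BorelGreedy} on a disjoint palette, or simply absorbed, giving $\chi'_\mu(\fG)\le\Delta+1$.

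The engine is a single measurable augmentation step, run exactly as in the proof of Theorem~\ref{thm:ElekLippner}. Given a measurable partial coloring $d$ and a scale $\ell$, pass to a power $\fG^{(r)}$ of $\fG$ with $r$ a fixed polynomial in $\ell$, and use Theorem~\ref{thm:BorelGreedy} to obtain Borel $\fG^{(r)}$-independent sets $(B_i)_{i\in\N}$ with $\{i:v\in B_i\}$ infinite for every $v$. Sweeping over $i$, whenever an uncolored edge incident to $B_i$ currently admits an augmenting multi-step Vizing chain of length $\le\ell$, pick one in a Borel way and shift it (moving the colors along the chain and coloring its last edge); distinct uncolored edges incident to the same $B_i$ have edge-disjoint short chains, so all these flips may be performed simultaneously, and since a colored edge never reverts to being uncolored the sweeps stabilize to a measurable $d'$ with each edge touched only finitely often, just as in Theorem~\ref{thm:ElekLippner}. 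Two estimates make this useful. First, \emph{shrinkage}: after the step no surviving uncolored edge has a short augmenting multi-step Vizing chain -- here one invokes Christiansen's Theorem~\ref{thm:ClassicChristiansen}, internalized to the graphing so that its $\log n$ becomes the running scale, to know that this is a real restriction -- so forming the auxiliary bipartite graph $H_{d'}=U_{d'}\sqcup\dom(d')$ (with $e\sim f$ when $f$ lies on a Vizing chain issued from $e$), using the key property of multi-step Vizing chains that every edge lies on at most $\operatorname{poly}(\Delta)^i$ many $i$-step chains, and applying the mass transport principle \eqref{eq:MassTransport} in place of the finitary double count \eqref{eq:DoubelCount}, one gets $\ell\cdot\mu_E(U_{d'})\le\operatorname{poly}(\Delta)\,\mu_E(\dom(d'))\le\operatorname{poly}(\Delta)$, i.e.\ $\mu_E(U_{d'})\le\operatorname{poly}(\Delta)/\ell$. (This is precisely the point, flagged in Section~\ref{subsec:Multi}, where edge coloring behaves better than matching: the double count actually bounds the measure of uncolored edges, so no extra expansion hypothesis is needed.) Second, \emph{small perturbation}: the edges whose $d$-color differs from their $d'$-color all lie on the shifted chains, one per augmented uncolored edge, each of length $\le\ell$, so a second application of \eqref{eq:MassTransport} -- transporting mass from each recolored edge to the uncolored edge responsible for it -- yields $\mu_E(\{f:d(f)\ne d'(f)\})\le\operatorname{poly}(\Delta,\ell)\cdot\mu_E(U_d)$, the analogue of \eqref{eq:BoundMeasure}.

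Assembling: one chooses the scales $\ell_k$ spread far enough apart that the perturbation bounds $\operatorname{poly}(\Delta,\ell_k)\cdot\mu_E(U_{d_{k-1}})$ of the successive steps are summable, which the shrinkage bound $\mu_E(U_{d_{k-1}})\le\operatorname{poly}(\Delta)/\ell_{k-1}$ makes possible; this is the edge-coloring incarnation of the summability condition \eqref{eq:BC}. By the Borel--Cantelli lemma almost every edge is recolored only finitely often, so $d_\infty=\lim_k d_k$ exists off a $\mu$-null set, is measurable, and satisfies $\mu_E(U_{d_\infty})=\lim_k\mu_E(U_{d_k})=0$, finishing the argument as above. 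The main obstacle is exactly the tension between the two estimates of the augmentation step: forcing $\mu_E(U_{d'})$ to be small wants $\ell$ large, whereas keeping the total perturbation summable wants each uncolored edge to be responsible for only a small amount of recoloring. Reconciling the two is the whole reason one cannot work with the classical fan-plus-alternating-path chains and must instead use multi-step Vizing chains together with Christiansen's exponential-branching analysis: growing an $i$-step chain explores $\operatorname{poly}(\Delta)^i$ vertices unless some truncation is already augmenting, while each edge still lies on only $\operatorname{poly}(\Delta)^i$ chains, so one obtains effectively long augmenting structures at a recoloring cost per uncolored edge that is only $\operatorname{poly}(\Delta)^i$ rather than proportional to the length. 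A secondary but genuinely delicate point is the measurability bookkeeping -- Borel selection of the chains, handling truncated versus completed alternating paths and the $\omega$-fold sweeps, and the choice of the power $r$ -- all of which go through because $\fG$ is a graphing and hence \eqref{eq:MassTransport} is available; without it the statement fails, by Theorem~\ref{thm:DetMarks}.
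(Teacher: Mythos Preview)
Your overall architecture is right, but the arithmetic that makes the Borel--Cantelli step close does not work with the bounds you state, and your proposed repair via Christiansen's theorem does not actually repair it.

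Concretely: from your double count you extract only $\mu_E(U_{d'})\le \operatorname{poly}(\Delta)/\ell$, while your perturbation bound is $\operatorname{poly}(\Delta,\ell)\cdot\mu_E(U_d)\ge \ell\cdot\mu_E(U_d)$. Then the summand at step $k$ is at least $\ell_k/\ell_{k-1}$, and no choice of an increasing sequence $(\ell_k)$ makes $\sum_k \ell_k/\ell_{k-1}$ finite, so the claim ``which the shrinkage bound \dots makes possible'' is false as written. The $O(1/\ell)$ shrinkage is exactly what \eqref{eq:DoubelCount} gives for \emph{one}-step Vizing chains, and one-step chains are genuinely insufficient here. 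Your last paragraph notices the tension but misidentifies the fix: with Christiansen-style $i$-step chains built from $\operatorname{poly}(\Delta)$-long segments, the number of edges reached from an uncolored edge and the number of $i$-step chains through a colored edge are \emph{both} $\operatorname{poly}(\Delta)^i$, so the mass-transport double count gives nothing; and the $O(\log n)$ conclusion of Theorem~\ref{thm:ClassicChristiansen} has no content on a graphing, where there is no $n$ to appeal to.

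The paper's proof resolves this differently: it keeps $i$ fixed at $2$ and lets the segment length $\ell$ grow. The point (which is the computation cited from \cite[Section~3]{grebik2020measurable}) is that in the auxiliary bipartite graph $\mathcal H_{d_n}$ built from $2$-step Vizing chains of length $\le\ell_n$, each uncolored edge reaches $\Omega(\ell_n^{\,2})$ colored edges --- one factor of $\ell_n$ from truncating $P_1$, another from $P_2$ --- while the degree on the colored side stays $\operatorname{poly}(\Delta)$ (still a constant). Mass transport then yields $\nu(U_{d_n})\in O(1/\ell_n^{\,2})$, and with $\ell_n=2^n$ and perturbation $\le 8\ell_n\,\nu(U_{d_n})$ one gets $\sum_n 8\cdot 2^n\cdot O(2^{-2n})<\infty$, exactly the summability you need. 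So the missing idea is the \emph{quadratic} gain from two-step branching with long segments, not Christiansen's exponential branching with short ones.
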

\begin{proof}[Proof sketch]
    As a first step we pass to a Borel probability measure $\nu$ on $E$ that satisfies the mass transport principle on the Borel line graph $L(\fG)$ and has the property that $\mu(\{v:\exists e\in A \ v\in e\})=0$ whenever $\nu(A)=0$ for every measurable set $A\subseteq E$, see \cite[Section~3]{grebik2020measurable}.
    
    Following the same line of reasoning as in Theorem~\ref{thm:ElekLippner}, see \cite[Section~4]{grebik2020measurable}, we construct a sequence of partial measurable edge colorings $(d_n)_{n\in \mathbb{N}}$ with the property that
    \begin{itemize}
        \item [(a)] every $d_n$-augmenting $2$-step Vizing chain has length at least $\ell_n=2^{n}$,
        \item [(b)] $U_{d_n}\supseteq U_{d_{n+1}}$,
        \item [(c)] $\nu(\{e\in E:d_{n+1}(e)\not=d_n(e)\})\le 8\ell_n\nu(U_{d_n})$
    \end{itemize}
    for every $n\in \mathbb{N}$.

    Define the auxiliary bipartite graphing $\mathcal{H}_{d_n}=U_{d_n}\sqcup \dom(d_n)$ by setting $(e,f)$ to be an edge if there is a $2$-step Vizing chain $V(x,e)$ of length at most $\ell_n$ such that $f\in V(x,e)$.
    The computation in \cite[Section~3]{grebik2020measurable} confirms the intuition that $\deg_{\mathcal{H}_{d_n}}(e)\in \Omega(\ell_n^2)$ for every $e\in U_{d_n}$ while the bound $\deg_{\mathcal{H}_{d_n}}(f)\in \operatorname{poly}(\Delta)$ holds for every $f\in \dom(d_n)$ by the argument from Section~\ref{subsec:Multi}.
    As $\nu$ satisfies the mass transport principle, we conclude that 
    $$\nu(U_{d_n})\in O(1/\ell_n^2).$$
    Combined with the property (c) above, we see that the condition analogous to \eqref{eq:BC} is satisfied as
    $$ \sum_{n=0}^\infty 8 \ \ell_n\mu(U_{d_n})=\sum_{n=0}^\infty 2^{n+3}/2^{2n}<\infty.$$
    In particular, $d=\lim_{n\to \infty} d_n$ is a well-defined measurable edge coloring of $\fG$ by the Borel--Cantelli lemma.
\end{proof}

We finish this section with a discussion about the additional ingredients that are needed in the proof of the full measurable Vizing's theorem for general measured graphs, Theorem~\ref{thm:MeasureVizing}.
\vspace{+0.2cm}

It is a standard fact in measurable combinatorics, see for example \cite[Chapter~9]{topics}, that if $\fG=(V,E,\fB,\mu)$ is a measured graph, then we may assume that $\mu$ satisfies a \emph{weighted version of the mass transport principle}.
That is, every pair of points $(v,w)\in V\times V$ is assigned a value $\rho_\mu(v,w)$ that measures the ratio of the infinitesimal mass at $w$ compared to $v$ and satisfies the weighted version of the equality \eqref{eq:MassTransport}.
The function $\rho_\mu$ is known as the \emph{Radon--Nikodym cocycle} of $\fG$ (and $\mu$).
Mimicking the argument from Theorem~\ref{thm:VizingGraphing} and replacing the length of the $2$-step Vizing chains by their relative weight given by $\rho_\mu$, we run into a trouble when estimating the size of $\nu(U_{d_n})$ for every $n\in \mathbb{N}$.
This is because short chains might have big weight which works against us in the double counting argument on $\mathcal{H}_{d_n}$.

The way how to remedy this issue in \cite{GrebikVizing} is to ``smooth out'' the measure $\mu$ so that the value of the Radon--Nikodym cocycle on every pair of adjacent vertices $\{v,w\}\in E$ satisfies
\begin{equation}\label{eq:Vizing}
    1/(2\Delta)\le\rho(v,w)\le 2\Delta.
\end{equation}
This is roughly done as follows.
First, we use Theorem~\ref{thm:BorelGreedy} to assign in a Borel way to $\mu$-almost every $x\in V$ a finite measure $\kappa_x$ supported on the (countable) connectivity component of $x$ with the property that if $\dist_\fG(x,y)=k$, then $\kappa_x(y)=\rho_\mu(x,y)/(4\Delta)^k$.
Then we define a new measure $\nu$ to be the normalization of the convolution of the collection $\{\kappa_x\}_{x\in V}$ and $\mu$.
That is, $\nu(A)$ is equal, up to re-normalization, to
$$\mu\star\{\kappa_x\}_{x\in V}(A)=\int_{x\in V} \sum_{y\in A}\kappa_x(y) \ d\mu(x) $$
for every $A\in \fB$.
A direct computation shows that $\nu$ and $\mu$ are equivalent and $\rho_\nu$ satisfies \eqref{eq:Vizing}.

The estimate in \eqref{eq:Vizing} allows to compare the size and weight of any, for technical reasons, $3$-step Vizing chain in a way that works in our favor for the double counting argument.
We refer the reader to \cite{GrebikVizing} for more details about the arguments.

\section{Brooks' theorem for subexponential growth graphs}\label{sec:Subexp}

In this section we describe a deterministic distributed algorithm of LOCAL complexity $O(\log^*(n))$ that produces Brooks coloring on a class of finite graphs of a fixed subexponential growth that satisfy the assumptions from Brooks' theorem.
This result was likely known to experts in the field but we were unable to find a reference, or a written proof.

By Bernshteyn's correspondence \cite{Bernshteyn2021LLL}, Theorem~\ref{thm:Bernshteyn}, this implies that Brooks' theorem holds in the Borel context for Borel graphs of subexponential growth.
It was communicated to us by Bernshteyn that this result also follows from the proofs in \cite{conley2016brooks} as the graph induced on the set of the vertices where the described construction fails has to be of exponential growth.
This is in stark contrast to Theorem \ref{t:marksmain}; see also Remark~\ref{rem:HigherRandom}, which discusses a derandomization perspective for subexponential growth graphs.
\vspace{+0.2cm}

Recall that $f:\mathbb{N}\to\mathbb{N}$ is \emph{subexponential} if $f \in o((1+\varepsilon)^n)$ for every $\epsilon>0$.

\begin{definition}[Graphs of subexponential growth]
    A function $f:\mathbb{N}\to\mathbb{N}$ \emph{bounds the growth} of a graph $G$ if for every $v \in V(G)$ we have $|B_G(x,r)|<f(r)$.
    We denote by $\mathbb{F}_f$ the class of graphs of growth bounded by $f$.

    We say that class of graphs ${\bf F}$, or a graph $G$, is of a \emph{subexponential growth}, if there is subexponential $f:\mathbb{N}\to\mathbb{N}$ such that ${\bf F}\subseteq \mathbb{F}_f$, or $G\in \mathbb{F}_f$.
\end{definition}

We prove the following.

\begin{theorem}\label{thm:FastSubexpVizing}
    Let $\Delta\ge 3$ and ${\bf F}\subseteq \mathbb{F}_f$ be a class of graphs of a subexponential growth and degree bounded by $\Delta$ that do not contain the complete graph on $\Delta+1$ vertices.
    Then there is a deterministic distributed algorithm for Brooks coloring on ${\bf F}$ of LOCAL complexity $O(\log^*(n))$.
\end{theorem}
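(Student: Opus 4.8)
The plan is to reduce Brooks' coloring on $\mathbf{F}$ to the repeated use of augmenting paths of \emph{constant} radius and then to run many of these in parallel; the subexponential growth bound will be used exactly once, to collapse the $O(\log n)$ length guarantee of Panconesi and Srinivasan into a constant. First I would run the distributed greedy algorithm of Theorem~\ref{thm:DistributedGreedy} to produce a proper $(\Delta+1)$-coloring $\phi$ of $G$ in $O(\log^{*}(n))$ rounds, and set $c:=\restriction{\phi}{\phi^{-1}([\Delta])}$. Then $c$ is a proper partial $\Delta$-coloring of $G$ whose uncolored set $U_c=\phi^{-1}(\{\Delta+1\})$ is an independent set. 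Since each augmentation adds exactly one newly colored vertex and uncolors none, the uncolored set stays a subset of $U_c$ throughout, so every uncolored vertex always has all of its neighbors colored. It remains to color all of $U_c$ by successive augmentation, controlling the locality of each augmenting path and the interference between simultaneous ones.

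The heart of the argument is a localized form of the Panconesi--Srinivasan bound \cite{panconesi-srinivasan,PanconesiSrinivasan}: there is a constant $r^{*}=r^{*}(f,\Delta)$ such that, for every proper partial $\Delta$-coloring of $G$ and every uncolored vertex $u$ whose neighbors are all colored, there is an augmenting path for $u$ contained in $B_G(u,r^{*})$. Indeed, if $\deg_G(u)\le\Delta-1$ then $u$'s colored neighbors use at most $\Delta-1$ of the $\Delta$ colors, so $u$ has a free color and $\{u\}\subseteq B_G(u,0)$ is already an augmenting path; hence we may assume $\deg_G(u)=\Delta$. For every radius $r\ge 1$ the induced subgraph $B_G(u,r)$ then has maximum degree exactly $\Delta\ge 3$, contains no copy of $K_{\Delta+1}$, and has fewer than $f(r)$ vertices, so Panconesi and Srinivasan's theorem, applied inside $B_G(u,r)$, produces an augmenting path for $u$ of length at most $C_0\log_{\Delta} f(r)$ for a constant $C_0$; being a connected subgraph containing $u$, this path lies in $B_G(u,C_0\log_{\Delta} f(r))$. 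Because $f$ is subexponential and $\Delta\ge 3$, one has $C_0\log_{\Delta} f(r)<r-1$ for all $r$ past a threshold $r^{*}=r^{*}(f,\Delta)=O(1)$. Fixing $r=r^{*}$, the path lies in $B_G(u,r^{*}-1)$; since recoloring along it changes the coloring only inside $B_G(u,r^{*}-1)$ while propriety of the new coloring is witnessed already inside $B_G(u,r^{*})$, the path is a genuine augmenting path for $u$ in $G$.

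Finally I would parallelize. By Theorem~\ref{thm:DistributedGreedy} applied to the power graph $G^{(2r^{*})}$ — the graph on $V(G)$ joining two vertices at $G$-distance at most $2r^{*}$, whose maximum degree is at most the constant $\Delta^{2r^{*}}$ — one computes in $O(\log^{*}(n))$ rounds (each round on $G^{(2r^{*})}$ being simulated by $2r^{*}$ rounds on $G$) a proper coloring of $G^{(2r^{*})}$ with $C:=\Delta^{2r^{*}}+1=O(1)$ colors. This splits $U_c=U^{(1)}\sqcup\dots\sqcup U^{(C)}$, where the vertices of each $U^{(j)}$ are pairwise at $G$-distance greater than $2r^{*}$. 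Now process $j=1,\dots,C$ in turn: each still-uncolored $u\in U^{(j)}$ collects $B_G(u,r^{*})$ together with the current coloring ($O(r^{*})$ rounds), computes by the previous paragraph an augmenting path inside $B_G(u,r^{*}-1)$, and applies it. Distinct vertices of $U^{(j)}$ have disjoint radius-$r^{*}$ balls, so the augmentations in one batch do not interfere; a batch colors exactly $U^{(j)}$ and leaves the remaining vertices of $U_c$ uncolored, so for the next batch one simply re-invokes the localized bound for the updated coloring. After $C$ batches every vertex of $U_c$ is colored, giving a proper $\Delta$-coloring of $G$. The augmenting phase costs $O(C\,r^{*})=O(1)$ rounds, so the whole algorithm runs in $O(\log^{*}(n))$ rounds.

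The main obstacle is the localized bound of the second paragraph: everything else (distributed greedy coloring, graph powering, the $O(\log^{*}(n))$-round coloring of bounded-degree graphs) is routine, but collapsing the a priori $O(\log n)$ Panconesi--Srinivasan length to a constant hinges on combining two facts — a radius-$r$ ball in a graph of fixed subexponential growth has fewer than $f(r)$ vertices, and $f(r)$ is eventually dominated by the exponential-in-$r$ quantity $\Delta^{r}$ implicit in the Panconesi--Srinivasan count. One also has to verify the two small but necessary points that a path confined to a constant-radius ball remains augmenting in $G$ (propriety being a local condition once the ball is one layer wider than the path) and that the degenerate low-degree case, where the hypothesis $\Delta\ge 3$ of Panconesi--Srinivasan would fail on the ball, is trivial.
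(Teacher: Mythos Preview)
Your proof is correct and follows the same route as the paper: use subexponential growth to collapse the Panconesi--Srinivasan $O(\log n)$ augmenting-path bound to a constant radius, then parallelize by coloring a constant power of $G$ with constantly many colors and processing the color classes one at a time. One small imprecision: when you invoke Panconesi--Srinivasan ``inside $B_G(u,r)$'', that ball may still contain other uncolored vertices of $U_c$, whereas the theorem as stated in the paper requires a single uncolored vertex; the paper handles this by passing to the connected component of $u$ in $B_G(u,r)\cap(\dom(c)\cup\{u\})$, and you should do the same --- your preliminary step of stripping one color from a $(\Delta+1)$-coloring (which the paper does not bother with) makes $U_c$ independent but does not by itself reduce to a single uncolored vertex in the ball.
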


As an immediate corollary, we get a Borel version of Brooks theorem.
\begin{remark}
Note that the below theorem also follows from \cite[Theorem 2.15]{Bernshteyn2021LLL} combined with the distributed upper bound for Brooks' coloring from \cite{ghaffari_grunau_rozhon2020improved_network_decomposition}: indeed, by Bernshteyn's theorem, $O(\log n)$ randomized complexity algorithms directly yield Borel solutions on subexponential growth graphs. 
\end{remark}
\begin{theorem}[Borel Brooks coloring for graphs of subexponential growth, \cite{conley2016brooks,Bernshteyn2021LLL}]
	\label{t:subexp}
	Let $\mathcal{G}=(V,E,\fB)$ be a Borel graph of subexponential growth that does not contain a complete graph on $\Delta(\fG)+1$ vertices and satisfies $\Delta(\fG)\ge 3$.
    Then $\chi_\fB(\mathcal{G}) \leq \Delta(\fG)$. 
\end{theorem}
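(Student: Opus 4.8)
The plan is to give a self\-contained proof that reduces the Borel chromatic number from $\Delta(\fG)+1$ to $\Delta(\fG)$ by one round of Borel augmentation, the essential point being that on subexponential\-growth graphs the Panconesi--Srinivasan augmenting paths have \emph{bounded} length. (The statement also follows at once from Theorem~\ref{thm:FastSubexpVizing} via Bernshteyn's correspondence Theorem~\ref{thm:Bernshteyn}, or from the randomized upper bound of \cite{ghaffari_grunau_rozhon2020improved_network_decomposition}; the argument below is independent of these.) Write $\Delta=\Delta(\fG)$ and fix a subexponential $f$ with $\fG\in\mathbb{F}_f$. The heart of the proof is the following boundedness claim: \textbf{Claim.} There is $\ell=\ell(f,\Delta)\in\mathbb{N}$ such that for every partial vertex $\Delta$-coloring $d$ of $\fG$ and every $v\in U_d$ there is an augmenting subgraph $H$ for $v$ (and $d$) with $V(H)\subseteq B_\fG(v,\ell)$.

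To prove the Claim I would inspect the Panconesi--Srinivasan exploration that produces Brooks augmenting paths \cite{panconesi-srinivasan,PanconesiSrinivasan} (applicable since $\Delta\ge 3$ and $\fG$ has no $K_{\Delta+1}$): started at $v$, it explores $\fG$ outward and, for each depth $t$, either outputs an augmenting path (in particular an augmenting subgraph) contained in $B_\fG(v,t)$, or certifies that it has already discovered $\Delta^{\Omega(t)}$ pairwise distinct vertices of $B_\fG(v,t)$. Since $|B_\fG(v,t)|<f(t)$ and $f(t)=o((1+\varepsilon)^t)$ for every $\varepsilon>0$, choosing $\varepsilon$ so that $1+\varepsilon$ lies below the relevant growth base rules out the second alternative once $t$ exceeds a threshold $\ell_0=\ell_0(f,\Delta)$; hence the exploration halts by depth $\ell_0$ and returns an augmenting path of length $O(\ell_0)=:\ell$. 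This is the only step in which subexponential growth is used, and it is sharp, since for exponentially growing graphs the Panconesi--Srinivasan length bound is $\Theta(\log n)$. Alternatively, the Claim can be extracted from the proof of the measurable Brooks theorem of Conley--Marks--Tucker-Drob \cite{conley2016brooks}: the set of vertices on which their construction fails spans a subgraph of exponential growth, hence is empty here.

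Granting the Claim, the construction runs as follows. By Theorem~\ref{thm:BorelGreedy} fix a Borel proper coloring $c_0\from V\to[\Delta+1]$ and put $d_0:=c_0\upharpoonright c_0^{-1}([\Delta])$, a Borel partial $\Delta$-coloring with $U_{d_0}=c_0^{-1}(\{\Delta+1\})$. Set $m:=2\ell+2$. The power graph $\fG^{(m)}$ (vertices joined when their $\fG$-distance is in $\{1,\dots,m\}$) is a Borel graph of degree $<f(m)$, so by Theorem~\ref{thm:BorelGreedy} it admits a Borel proper coloring with finitely many classes $B_1,\dots,B_N$, each $\fG^{(m)}$-independent. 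Define Borel partial $\Delta$-colorings $d_0,\dots,d_N$ inductively: given $d_i$, for every $v\in B_{i+1}\cap U_{d_i}$ choose in a Borel way an augmenting subgraph $H_v$ for $v$ and $d_i$ with $V(H_v)\subseteq B_\fG(v,\ell)$ --- such a Borel selection exists because the set of pairs $(v,H)$ with $H$ augmenting for $v$ and $d_i$ and $|V(H)|\le\ell+1$ is Borel with \emph{finite} vertical sections, hence uniformizable (Luzin--Novikov), and by the Claim its projection contains $U_{d_i}$. Since distinct $v,w\in B_{i+1}$ satisfy $\dist_\fG(v,w)\ge m=2\ell+2$, the sets $V(H_v)$ together with their $\fG$-boundaries are pairwise disjoint, so carrying out all of these augmentations at once yields a well\-defined Borel partial $\Delta$-coloring $d_{i+1}$ with $\dom(d_{i+1})=\dom(d_i)\cup(B_{i+1}\cap U_{d_i})$.

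Finally I would verify that $d_N$ is the desired coloring. By the definition of an augmenting subgraph no vertex ever leaves the domain, so the sets $U_{d_i}$ are decreasing; and every $v\in V$ lies in some $B_j$, so after step $j$ the vertex $v$ is colored. Thus $d_N$ is a total Borel proper coloring with values in $[\Delta]$, i.e.\ $\chi_\fB(\fG)\le\Delta$. The main obstacle in this plan is the Claim --- concretely, checking that the Panconesi--Srinivasan bound localizes to $O(\log_\Delta|B_\fG(v,\cdot)|)$ rather than to $O(\log_\Delta|V|)$ (equivalently, that their exploration procedure witnesses exponential growth in the \emph{graph} metric); everything after that is routine Borel bookkeeping of the kind carried out in the proof of Theorem~\ref{thm:ElekLippner}.
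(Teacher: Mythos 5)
Your route is genuinely different from the paper's: the paper proves this theorem with a one-line reduction, applying Theorem~\ref{thm:FastSubexpVizing} (the $O(\log^* n)$ distributed Brooks algorithm on subexponential-growth graphs) to the class of finite subgraphs of $\fG$ and then invoking Bernshteyn's correspondence (Theorem~\ref{thm:Bernshteyn}) to convert the $o(\log n)$-round deterministic algorithm into a Borel coloring. You instead give a self-contained Borel argument --- greedy $(\Delta+1)$-coloring, a Borel proper coloring of the power graph $\fG^{(m)}$ with finitely many classes, rounds of disjoint local augmentations, and a Luzin--Novikov selection --- which is in effect a hand-Borelization of the LOCAL algorithm in the paper's proof of Theorem~\ref{thm:FastSubexpVizing}. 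What each route buys: the paper's reduction is short because the finite work is already done, whereas your direct argument avoids invoking the correspondence theorem at all (at the cost of redoing the iteration and the measurable selection in the Borel category).

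The one place you worry about --- your Claim, that augmenting subgraphs can always be found within a fixed radius $\ell$ --- is precisely Proposition~\ref{l:augmenting} of the paper, and it admits a proof considerably simpler than the one you sketch. You do not need to ``inspect the Panconesi--Srinivasan exploration'' or argue that it localizes to $O(\log_\Delta|B_\fG(v,\cdot)|)$. Instead, given a partial Borel $\Delta$-coloring $d$ and $v\in U_d$, pass to the finite graph $G'$ which is the connected component of $v$ in the subgraph induced on $B_\fG(v,R)\cap(\dom(d)\cup\{v\})$: this is $K_{\Delta+1}$-free, has maximum degree $\le\Delta\ge 3$, has $U_{d\upharpoonright G'}=\{v\}$, and has at most $f(R)$ vertices. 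Applying Theorem~\ref{t:pancsri} to $G'$ as a black box yields an augmenting path of length at most $C\log|V(G')|\le C\log f(R)$. Since $f$ is subexponential, you can pick $\varepsilon$ with $C\log(1+\varepsilon)<1$ and then $R$ large enough that $f(R)<(1+\varepsilon)^R$, giving $C\log f(R)<R$; the path then lies in $B_\fG(v,R-1)$, and because all $\fG$-neighbors of its vertices already lie in $B_\fG(v,R)$ and uncolored neighbors impose no constraint, it is an augmenting subgraph for $d$ in $\fG$ itself. This is exactly the fix for the gap you flag; with it in place, the rest of your construction (disjointness of the augmentation regions via the $\fG^{(2\ell+2)}$-coloring, Luzin--Novikov uniformization over the finite fibers, termination in $N$ rounds) is routine and correct.
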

\begin{proof}
    Let $f$ be the subexponential function that bounds the growth of $\fG$.
    Define ${\bf F}$ to be the class of finite subgraphs of $\fG$ and observe that we may apply Theorem~\ref{thm:FastSubexpVizing} for ${\bf F}$.
    The rest follows from Theorem~\ref{thm:Bernshteyn}.
\end{proof}

Our approach to prove Theorem~\ref{thm:FastSubexpVizing} is identical to the one in \cite{BernshteynSubexp}, where Bernshteyn and Dhawan proved an analogous theorem for Vizing coloring.
Nevertheless, we provide the details for completeness.
The only difference is that in the place where they use the result of Christiansen, Theorem~\ref{thm:ClassicChristiansen}, to bound the size of an augmenting subgraph for edge colorings, we use the result of Panconesi and Srinivasan \cite{PanconesiSrinivasan} that we recall next.

    \begin{theorem}[Small augmenting subgraphs for vertex colorings, \cite{PanconesiSrinivasan}]
    	\label{t:pancsri}
     Let $G$ be a finite graph that does not contain a complete graph on $\Delta+1$ vertices, where $\Delta\ge \Delta(G)$, $v\in V(G)$ and $c$ be a partial vertex coloring with $\Delta$ colors such that $U_c=\{v\}$.
     Then there is an augmenting subgraph for $v$ that is a path of length $O(\log n)$. 
    \end{theorem}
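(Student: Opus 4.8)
\textbf{Proof proposal for Theorem \ref{t:pancsri}.}

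The plan is to follow the classical strategy of Panconesi and Srinivasan: given the single uncolored vertex $v$, we try to recolor $v$ using an \emph{augmenting path} of alternating colors, and we argue that if no short such path works, then the graph forces a large clique, contradicting the hypothesis $K_{\Delta+1}\not\subseteq G$. The key objects are \emph{Kempe chains}. For a vertex $v$ with two colors $\alpha,\beta$, the $(\alpha,\beta)$-Kempe chain through $v$ is the connected component of $v$ in the subgraph induced on vertices colored $\alpha$ or $\beta$ (together with $v$ itself, which is uncolored). Swapping colors along such a chain is the basic augmenting move: it either frees up a color at $v$ (so we are done, with the chain being our augmenting subgraph) or it is ``blocked'' in a strong structural way.

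First I would set up the trivial base case: since $\deg_G(v)\le\Delta$ and there are $\Delta$ colors, either some color is missing in $N_G(v)$ — in which case the augmenting subgraph is the single vertex $\{v\}$, a path of length $0$ — or $v$ has exactly $\Delta$ neighbors, each carrying a distinct color $1,\dots,\Delta$. In the latter case pick any two neighbors $u_\alpha,u_\beta$ colored $\alpha,\beta$ and look at the $(\alpha,\beta)$-Kempe chain $C$ containing $u_\alpha$. If $u_\beta\notin C$, then swapping colors on the component of $u_\alpha$ makes $\alpha$ available at $v$; the augmenting subgraph is the path from $v$ through that component. So the only obstruction is when, for \emph{every} pair $\alpha\ne\beta$, the two neighbors $u_\alpha,u_\beta$ lie on a common $(\alpha,\beta)$-path $P_{\alpha\beta}$ from $u_\alpha$ to $u_\beta$. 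The heart of the argument is to show that this ``everything is blocked'' situation cannot persist if we are allowed to first perform a bounded sequence of Kempe swaps, unless $v$ together with its neighborhood already contains $K_{\Delta+1}$ (ruled out by hypothesis), OR the blocking paths are genuinely long — and length is exactly what we want to bound.

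The length bound comes from an expansion/counting argument. Consider the set $S_t$ of vertices reachable from $v$ by following an alternating $2$-colored path of length $\le t$ that could become an augmenting path after the appropriate swap. If none of these yields a recoloring of $v$, each such path is ``blocked,'' which as above pins down, at its far endpoint, a local configuration resembling a near-clique; more carefully, one shows (this is the combinatorial core, due to Panconesi--Srinivasan) that the collection of all blocked Kempe chains emanating from the neighbors of $v$, if all of bounded length $\le t$, can be assembled into a $K_{\Delta+1}$ subgraph once $t$ exceeds a constant depending only on $\Delta$ — so in a $K_{\Delta+1}$-free graph some chain must have length $\ge$ that threshold in terms of $t$, and iterating/merging these forced-long chains a logarithmic number of times exhausts all $n$ vertices, giving the $O(\log n)$ bound. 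Concretely, I would argue that if every augmenting attempt up to length $t$ fails, then one can identify $\ge 2^{\Omega(t/\Delta)}$ distinct vertices (the alternating paths branch, roughly doubling at controlled intervals because a blocked chain forces new distinctly-colored blocked chains to continue from its endpoint), so $2^{\Omega(t/\Delta)}\le n$ forces $t\le O(\Delta\log n)=O(\log n)$ for fixed $\Delta$; the first $t$ at which an attempt succeeds then yields an augmenting subgraph that is a path of length $O(\log n)$, and one checks it is genuinely a path (the final successful alternating chain is by construction a path, and the earlier swaps only relabeled colors on vertices all of which lie on that chain or are discarded).

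\textbf{Main obstacle.} The delicate point is not the counting but the \emph{bookkeeping of the Kempe swaps}: performing a swap to unblock one pair $(\alpha,\beta)$ can re-block a previously unblocked pair, so the naive ``unblock them one at a time'' does not terminate. The real content of the Panconesi--Srinivasan argument is a careful ordering/potential-function argument ensuring that the sequence of swaps makes monotone progress (e.g.\ strictly shrinking the set of ``bad'' colors at $v$, or strictly increasing some lexicographic measure of the colored neighborhood), so that after $O(\Delta)=O(1)$ swaps one is left in a state where a single long alternating path does the job — and that the union of all the vertices touched, across all these $O(1)$ swaps plus the final path, forms a single path of total length $O(\log n)$ (or can be pruned to one). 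Getting this potential argument right, and verifying that the $K_{\Delta+1}$-freeness (rather than the weaker ``not a clique'') is exactly what prevents the degenerate terminal configuration, is where the technical work lies. Since the statement cites \cite{PanconesiSrinivasan} as the source, I would present the above as a sketch and refer there for the full swap-scheduling details.
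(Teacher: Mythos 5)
You should note first that the paper does not actually prove Theorem~\ref{t:pancsri}: it is stated purely as a citation to \cite{PanconesiSrinivasan} and invoked as a black box inside the proof of Theorem~\ref{thm:FastSubexpVizing}. So there is no in-paper proof to compare against; what you are really doing is reconstructing the argument from the cited reference, and that should be evaluated on its own terms.

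Your sketch captures the right high-level ingredients (Kempe-style recolorings, a branching tree of attempts, an exponential-growth count exhausting $n$ vertices to force $t\in O(\log n)$, and $K_{\Delta+1}$-freeness to rule out a terminal deadlock). But there is a genuine gap, not just an omitted detail: you repeatedly treat $(\alpha,\beta)$-Kempe components as if they were paths, e.g.\ ``the augmenting subgraph is the path from $v$ through that component'' and ``the final successful alternating chain is by construction a path.'' That is correct for \emph{edge} colorings, where each vertex is incident to at most one edge of each color, so the $\alpha/\beta$ subgraph has maximum degree two. It is false for \emph{vertex} colorings: an $\alpha$-colored vertex may have arbitrarily many $\beta$-colored neighbors, so the subgraph induced on the color classes $\{\alpha,\beta\}$ can have large degree and its components are in general not paths. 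Since the theorem specifically asserts the augmenting subgraph \emph{is} a path, this is exactly the point at which a sketch cannot be glossed over. The Panconesi--Srinivasan construction does not take whole Kempe components; it grows a path one vertex at a time, at each step inspecting the color profile of the current endpoint's neighborhood, and either terminates (with a backward cascade of recolorings along the path that frees a color at $v$) or selects a single next vertex to extend by. The $K_{\Delta+1}$-freeness is what guarantees that at regular intervals the construction has at least two vertex-disjoint continuations, which is what underwrites your ``$2^{\Omega(t/\Delta)}$'' count; as written, that estimate is the right shape but is not supported by your Kempe-component picture. Your paragraph on the bookkeeping/monotone-progress worry is the right list of things to pin down, but, combined with the path-versus-component confusion, the proposal in its current form does not yet yield the stated path conclusion.
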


\begin{proof}[Proof of Theorem~\ref{thm:FastSubexpVizing}]
    Let $f$ be the subexponential function that bounds the growth of the graphs from ${\bf F}$.
    It is clearly enough to prove the claim for the inclusion maximal class ${\bf F}$.

    The main observation, see \cite[Lemma~3.3]{BernshteynSubexp}, that crucially uses the subexponential bound on the growth is that we can always find an augmenting subgraph of constant size.
    
\begin{proposition}
	\label{l:augmenting} There exists an $R \in \N$ so that for every $G \in {\bf F}$, every partial vertex coloring $c$ of $G$ with $\Delta$ colors and every vertex $v \in U_c$ there is an augmenting subgraph $H$ for $v$ contained in $B_G(v,R)$. 
\end{proposition}
\begin{proof}
	 Let $C$ be a constant so that Theorem \ref{t:pancsri} holds with a path of size $C\log n$ for each graph of size $n$. Find some $\varepsilon>0$ with $C \log(1+\varepsilon)<1$ and an $R$ large enough so that $f(R)<(1+\varepsilon)^R$.

	Now let $c$ be an arbitrary partial vertex coloring of an element $G \in {\bf F}$ with $\Delta$ colors and $v \in U_c$.
    Consider the graph $G'$ which is the connected component of $v$ in the graph $G$ restricted to $B_G(v,R) \cap (\dom(c) \cup \{v\})$.
    Let $H$ be an augmenting subgraph in $G'$ for $c$ returned by Theorem \ref{t:pancsri}.
    Then we have
     \[|V(H)|<C\log |V(G')|\leq C\log |B_G(v,R)| \leq C\log f(R)<C\log \left((1+\varepsilon)^R\right) < R.\]
	Recall that an augmenting subgraph is connected by the definition, so, if the size of $H$ is $\leq R-1$, it has to be contained in the ball $B_G(v,R-1)$. But, then $H$ must be an augmenting subgraph for $c$ in the entire graph $G$. 
\end{proof}

	Now, to prove Theorem~\ref{thm:FastSubexpVizing}, take any graph $G \in {\bf F}$.
    Recall that $G^{(k)}$ stands for the $k$th power of $G$, that is, the graph where two vertices are connected if their graph distance in $G$ is at most $k$.
    Clearly, the graph $G^{(2R+2)}$ has degrees bounded by $\Delta^{2R+2}$, so, in $O(\log^*n)$ rounds we can compute a proper vertex coloring of $G^{(2R+2)}$ with $k=\Delta^{2R+2}+1$ colors by Theorem \ref{thm:DistributedGreedy}.
    Enumerate the color classes as $(C_i)_{i\leq k}$. Then, in $k$-many steps inductively, define partial vertex colorings $(c_i)_{i \leq k}$ as follows: if $c_{j-1}$ is defined on $\bigcup_{i<j} C_i$, for every $v \in C_j$ by Proposition \ref{l:augmenting} we can find an augmenting subgraph in $B_G(v,R)$ for $c_{j-1}$ and $v$. Recolor the vertices using this graph to obtain $c_j$.
    Observe that, as the distance between any two elements of $C_j$ is $>2R+2$, $c_j$ will still be a partial vertex coloring. This algorithm terminates after $k$-many steps, yielding a vertex coloring with $\Delta$ colors.
    Finally, observe that as $k$ and $R$ are constants, the overall complexity of the algorithm remains $O(\log^*n)$, as desired. 
\end{proof}

\section{Open problems}\label{sec:Problems}

We finish the survey by listing some exciting open problems that are directly connected to the results discussed in this survey.
\vspace{+0.2cm}

\noindent
{\bf Edge colorings.}
We start with a couple of questions about Vizing colorings.
The main problem here concerns the existence of a fast distributed algorithm for Vizing's theorem, see also \cite[Question~1.9]{BernshteynVizing2} and \cite[Question~1.2]{BernshteynVizing}.

\begin{problem}\label{pr:SublogVizing}
    Is the randomized LOCAL complexity of Vizing coloring $o(\log(n))$?
\end{problem}

In \cite{Christiansen}, Christiansen proved a \emph{local version of Vizing's theorem}, that is, every finite graph $G$ admits an edge coloring $c$ with $\Delta(G)+1$ colors with the property that
$$c(e)\le \max\{\deg_G(v),\deg_G(w)\}+1$$
for every edge $e=\{v,w\}\in E(G)$.
The approach from \cite{Christiansen} does not seem to automatically generalize to the measurable setting.

\begin{problem}
    Is there a measurable version of local Vizing's theorem?
\end{problem}

In Remark~\ref{rem:Baire}, we briefly mentioned the topic of \emph{Baire measurable colorings}.
Qian and Weilacher \cite{QianWeilacher} showed that $\Delta+2$ colors are enough for a Baire measurable edge coloring of a graph of degree bounded by $\Delta$.

\begin{problem}
    Is there a Baire measurable version of Vizing's theorem?
\end{problem}

In Section~\ref{subsec:LyonsNazarov}, we have seen that there is a factor of iid perfect matching on the $\Delta$-regular tree $T_\Delta$.
The following question is still open.
Note that Kun's result \cite{KunPerfectMatching} implies that K\H{o}nig's line coloring theorem fails for measurably bipartite acyclic $\Delta$-regular measured graphs in general, for every $\Delta\ge 3$.

\begin{problem}
    Is there a factor of iid edge coloring with $\Delta$ colors on the $\Delta$-regular tree $T_\Delta$?
    In other words, does measurable K\H{o}nig's line coloring theorem hold for the iid graph on $T_\Delta$?
\end{problem}

An intriguing question related to the results in Section~\ref{subsec:LyonsNazarov} asks about an optimal algorithm for perfect matching on the $\Delta$-regular tree $T_\Delta$.
This can be formalized using he notion of \emph{finitary factors of iid}, which provides a strictly finer framework to measure complexity of LCLs than the one given by the LOCAL model, see \cite{HolroydSchrammWilson2017FinitaryColoring,grebik_rozhon2021toasts_and_tails,grebik_rozhon2021LCL_on_paths} for the exact definition and related results.
On a high-level, finitary factors of iid are randomized deterministic algorithms that do not know the size of the graph, hence, the locality into which every vertex needs to look is a random variable that is almost surely finite.
The complexity is measured by the tail decay of this random variable.

A recent result of Berlow, Bernshteyn, Lyons and Weilacher gives the first example of an LCL problem (on $\mathbb{Z}^2$) that admits a factor of iid solution but does not admit a finitary factor of iid solution.
It would be nice to show that the perfect matching problem on $T_\Delta$ has the same property.
We remark that by \cite{brandt_chang_grebik_grunau_rozhon_vidnyaszky2021LCLs_on_trees_descriptive}, there is no finitary factor of iid perfect matching on $T_\Delta$ of subexponential tail decay.

\begin{problem}
    Is there a finitary factor of iid perfect matching on the $\Delta$-regular tree $T_\Delta$?
\end{problem}

\noindent
{\bf  LCL problems.}
The class of measured graphs that satisfy the mass transport principle is much more studied than the class of general measured graphs.
It is, however, not known if the mass transport principle gives any advantage for solving LCL problems.
To the following problem the discussion at the end of Section~\ref{subsec:ApplicationsVizing} about Radon--Nikodym cocycles might be relevant.

\begin{problem}[Question~6.10 in \cite{berlow2025separating}]\label{pr:PMPvsQUASIPMP}
    Is there an LCL problem that could be solved on measured graphs that satisfy the mass transport principles but not on general measured graphs?
    The same question is open on any subclass of measured graphs that look locally like a fixed graph, e.g., grid, regular tree etc.
\end{problem}

A possible candidate to answer Problem~\ref{pr:PMPvsQUASIPMP} is the \emph{unfriendly coloring problem}.
In its most basic form it asks for a coloring of vertices of a graph $G$ with $2$ colors so that every vertex receives a color that is different from the color of at least half of its neighbors.
Conley and Tamuz \cite{ConleyTamuz} showed that a measured graph $\fG$ admits a measurable unfriendly coloring provided that the Radon--Nikodym cocycle satisfies $1-1/\Delta\le \rho(x,y)\le 1+1/\Delta$ for every edge $\{x,y\}\in \fG$.
Note that this is much stronger than \eqref{eq:Vizing}.

\begin{problem}
    Does every measured graph of bounded degree admits a measurable unfriendly coloring?
    In fact, it is also open whether every Borel graph of bounded degree admits a Borel unfriendly coloring. 
\end{problem}

In Remark~\ref{rem:HigherRandom}, we discussed versions of derandomization in the context of measurable combinatorics.
While the recent counterexamples discussed there answered in negative most of the basic questions, the following two problems are still open.

\begin{problem}
    Let ${\bf F}$ be a class of graphs of bounded degree and linear growth.
    Is it true that an LCL problem on ${\bf F}$ can be solved measurably on measured graphs that are locally in ${\bf F}$ if and only if it can be solved in a Borel way?
\end{problem}

    The following problem is a special case of the celebrated conjecture of Chang and Pettie \cite{chang2019time}.

\begin{problem}
    Let ${\bf F}$ be a class of graphs of bounded degree and of subexponential growth.
    Is the LOCAL complexity of the distributed Lov\'{a}sz Local Lemma on ${\bf F}$ equal to $\Theta(\log^*(n))$?
    That is, is there a speed-up result from randomized LOCAL complexity $O(\log(n))$ to deterministic LOCAL complexity $O(\log^*(n))$ for ${\bf F}$?
\end{problem}

\noindent
{\bf Complexity.}
In Section~\ref{subsec:Complexity}, we mentioned in Theorem~\ref{thm:BrooksComplexity} that Brooks' theorem strongly fails in the Borel context.
In particular, the set of $\Delta$-regular acylic Borel graphs that admit a Borel vertex coloring with $\Delta$ colors is as complicated as it gets.
Related is the question about the complexity of edge colorings.
Thornton \cite{thornton2022algebraic} showed that distinguishing between Borel chromatic index $\Delta$ and $\Delta+1$ for Borel graphs of degree bounded by $\Delta$ is $\mathbf{\Sigma}^1_2$-complete.
As the graphs in his construction are not acyclic and Marks' result shows that Borel chromatic index of $\Delta$-regular acyclic graphs can be anything between $\Delta$ and $2\Delta-1$, it is natural to ask the following.

\begin{problem}
    Is it $\mathbf{\Sigma}^{1}_2$-complete to distinguish between any $k$ and $k+1$ Borel chromatic index on $\Delta$-regular acyclic graphs, where $\Delta\le k\le 2\Delta-2$?
\end{problem}

As mentioned above, the projective complexity in the Borel context reflects to some extent the finite experience: deciding Borel $2$-colorability of a Borel graph is strictly simpler than to decide its $3$-colorability (see \cite{benen,thornton2022algebraic}). More precisely, the codes of the collections of Borel graphs that admit a Borel $2$-coloring form a $\mathbf{\Pi}^1_1$ set, while for $k$-coloring, where $k\ge 3$, the corresponding set is $\mathbf{\Sigma}^1_2$-complete, see also Section \ref{subsec:Complexity}.

\begin{problem}
    Let $\Delta\ge 3$.
    Do $\Delta$-regular acyclic Borel hyperfinite Borel graphs that have Borel chromatic number at most $\Delta$ form a $\Sigma^1_2$-complete?    
\end{problem}

To our knowledge, surprisingly, there are no results about complexity in the measurable context.

\begin{problem}
    Let $k\ge 3$ and fix a Borel measure $\mu$ on some standard Borel space $X$. What is the complexity of the codes\footnote{In some fixed encoding, where all the codes form a Borel set. This is possible if one discards a $\mu$ measure zero set.} of locally countable Borel graphs $\mc{G}$ with $\chi_\mu(\mc{G}) \leq k$?
    
	Is there a precise way in which deciding the measurable $2$-colorability of an acyclic bounded degree graphing is easier than to decide its $3$-colorability?
 \end{problem}

    The straightforward upper bound for both $2$-and $3$-colorings in this situation is $\mathbf{\Sigma}^1_1$, so it would be extremely interesting if the latter was $\mathbf{\Sigma}^1_1$-hard and the former was $\mathbf{\Delta}^1_1$.
    \vspace{+0.2cm}

    A different complexity related question arises from the fact that all the examples considered in complexity considerations in \cite{todorvcevic2021complexity,brandt_chang_grebik_grunau_rozhon_vidnyaszkyhomomorphisms} follow the same pattern: constructing a family of Borel graphs parametrized by the reals, so that each of them admits a Borel $3$-coloring, but altogether it is hard to decide whether such a coloring exists.
    This suggests that there could be a positive result, once one excludes this type of examples. The following question formalizes the most optimistic scenario.

\begin{problem}
	Assume that $\mc{G}$ is a $\Delta$-regular acyclic Borel graph so that there is no smooth Borel superequivalence relation\footnote{See \cite{kechris2024theory} for the definition.} $E$ of $E_\mc{G}$ so that $\mc{G}$ restricted to each $E$ class admits a Borel $\Delta$-coloring, for $\Delta \geq 3$. Does Marks' example, i.e., the Schreier graph of the free part of the left-shift action of $\Gamma_\Delta$ on $2^{\Gamma_\Delta}$, Borel homomorph to $\mc{G}$? 
\end{problem}

    Furthermore, the produced examples use non-compactness of the underlying space in an essential way. Thus, another positive result could be possible when one restricts to nicer Borel graphs, such as continuous graphs on compact spaces.

\begin{problem}
	Assume that $\mc{G}$ is a compact subshift of the free part of $2^{\Gamma_\Delta}$ that does not admit a Borel $\Delta$-coloring, for $\Delta \geq 3$. Does Marks' example Borel homomorph to $\mc{G}$?
    What is the projective complexity of such subshifts?
   
\end{problem}

\noindent
{\bf Generalizations.}
One of the most exciting problems is to transfer the full strength of Martin's theorem to the finite context. Indeed, excluding deterministic algorithms of complexity  $o(\log n)$ for $\Delta$ vertex colorings on trees of degree $\leq \Delta$ only requires the determinacy of clopen games, the proof of which is straightforward, see \cite[Section 20.B]{kechrisclassical}. In contrast, full Borel Determinacy is one of the most involved and exciting arguments of classical descriptive set theory. 

A key feature that distinguishes the Borel case from the LOCAL model is that ``one can see in an arbitrary distance". It would be nice to have a version of the LOCAL model, which incorporates this, but still non-trivial. Let us pose the following -admittedly vague- problem.

\begin{problem}
	\label{p:variant}
	Is there a stronger variant of the LOCAL model (e.g., for which all the LCL problems that are known to admit a Borel solution are efficiently solvable, see \cite{brandt_chang_grebik_grunau_rozhon_vidnyaszky2021LCLs_on_trees_descriptive, grebik_rozhon2021LCL_on_paths}) so that $3$-coloring of acyclic graphs of degree at most $3$ is still not possible, because of Marks' method?
 
\end{problem}

More generally, instead of colorings, it is reasonable to consider homomorphism problems. The following problems are variants of \cite[Problem 8.12]{kechris_marks2016descriptive_comb_survey}.

\begin{problem}
	Characterize the following families of finite graphs:
	\begin{itemize}
		\item Graphs $H$ so that there is a deterministic distributed algorithm of complexity $O(\log^* n)$ on the class of trees of degree $\leq \Delta$ that produces a homomorphism to $H$.
		\item Graphs $H$ so that every $\Delta$-regular acyclic Borel graph admits a Borel homomorphism to $H$.
		\item Graphs $H$ so that there is a factor of iid homomorphism to $H$ from $T_\Delta$.
		\item  Graphs $H$ so that every $\Delta$-regular acyclic Borel graph equipped with a measure $\mu$ admits a Borel homomorphism to $H$ modulo a $\mu$ null set. 
	\end{itemize}
 Is membership in any of these families algorithmically undecidable?

\end{problem}

Let us mention that undecidability phenomenons are known to be present in the case of grid graphs, see \cite[Section 4.3]{GJKS}. However, it is also conceivable that one can have a full understanding of the solvability of such homomorphism problems in the special case of trees, as it is the case in the Baire measurable context, see \cite{brandt_chang_grebik_grunau_rozhon_vidnyaszky2021LCLs_on_trees_descriptive}. 

It seems that even to prove a negative result in these cases, one would need to significantly extend the collection of examples $H$ to which there are definable homomorphisms of the above sort, see \cite{csoka2024fiid}, as, so far all the examples about which we are aware of are very close to $K_{\Delta+1}$.

\bibliographystyle{alpha}
	\bibliography{ref}

\end{document}